\documentclass{amsart}%Indique la nature de l'écrit pour que les règles de typographie correspondent
\usepackage[english,french]{babel}%Les différentes langues utilisées, la principale étant la dernière
\usepackage[utf8]{inputenc}%Où l'on spécifit que l'encodage est universel que l'on peut mettre des accents dans le .tex
\usepackage[TS1,T1]{fontenc}%Où l'on demande qu'il fasse des jolis accents
\usepackage{lmodern}%Parce que quand c'est moderne, c'est toujours bien ;)
\usepackage{amsmath,amssymb,amsthm}%Pour écrire des jolies mathématiques
\usepackage{color}%Pour mettre de la couleur
\usepackage[a4paper]{geometry}%Pour contrôler la dimension des marges
\usepackage{hyperref}%Pour mettre des liens hypertextes internes ou externes dans le PDF
\usepackage{textcomp}% Pour avoir accès à tout plein de symboles non mathématiques
\usepackage{url}

\newtheorem{theo}{Théorème}[section]  
\newtheorem{prop}[theo]{Proposition}
\newtheorem{lemme}[theo]{Lemme}
\newtheorem*{theo*}{Théorème}

\newtheorem{corollaire}[theo]{Corollaire}
\theoremstyle{remark}
\newtheorem*{remarque}{Remarque}

\numberwithin{equation}{section}

\DeclareMathOperator{\Cl}{Cl}
\newcommand{\comp}{\mathcal{K}}
\newcommand{\diff}{\mathop{}\mathopen{}\mathrm{d}}% l'intégrand des intégrales
%la fonction li d'euler
\newcommand{\e}{\mathrm{e}}% l'exponentielle
\newcommand{\N}{\mathbf{N}}
\newcommand{\Z}{\mathbf{Z}}
\newcommand{\Q}{\mathbf{Q}}
\newcommand{\R}{\mathbf{R}}
\newcommand{\C}{\mathbf{C}}
\newcommand{\K}{\mathbf{K}}
\newcommand{\J}{\mathfrak{j}}
\newcommand{\I}{\mathfrak{i}}
\newcommand{\Qid}{\mathfrak{q}}
\newcommand{\Kid}{\mathfrak{K}}
\newcommand{\ens}{\mathcal{E}}
\renewcommand{\L	}{\mathbf{L}}
\newcommand{\Rid}{\mathfrak{r}}
\newcommand{\Sid}{\mathfrak{s}}
\DeclareMathOperator{\Aire}{Vol}
\def\prim {\mathcal{P}}
%Les commandes de Théorie Algébrique des Nombres
\newcommand{\OK}{\mathcal{O}_{\mathbf{K}}}
\newcommand{\OL}{\mathcal{O}_{\mathbf{L}}}
\newcommand{\JK}{\mathcal{J}(\mathbf{K})}
\newcommand{\GK}{G(\mathbf{K})}
\newcommand{\X}{x}
\newcommand{\Y}{y}
\newcommand{\z}{z}
\newcommand{\HK}{H(\mathbf{K})}
\newcommand{\PK}{P(\mathbf{K})}
\newcommand{\hK}{h(\mathbf{K})}
\newcommand{\rK}{r(\mathbf{K})}
\newcommand{\IK}{\mathcal{I }( \mathbf{K})}
\newcommand{\degpol}{g}
\usepackage{hyperref}
\def\m {\boldsymbol m}
\def\n {\boldsymbol n}

\def\v {\boldsymbol v}
\newcommand{\nbin}{D_k}

\makeatletter
\newcommand{\pushright}[1]{\ifmeasuring@#1\else\omit\hfill$\displaystyle#1$\fi\ignorespaces}
\newcommand{\pushleft}[1]{\ifmeasuring@#1\else\omit$\displaystyle#1$\hfill\fi\ignorespaces}
\makeatother

\usepackage{geometry}
\begin{document}\author{Armand Lachand}

\address{%A. Lachand :
Institut \'Elie Cartan, Universit\'e de Lorraine 
1, B.P. 70239, \goodbreak
54506 Vand\oe uvre-l\`es-Nancy Cedex, France}

\email{armand.lachand@univ-lorraine.fr }

\title{Fonctions arithmétiques et formes binaires irréductibles de degré $3$}

\date{\today}
\subjclass[2010]{Primary: 11N32; Secondary: 
11N25, 11N36, 11N37. }
\selectlanguage{english}
\begin{abstract}Let $F(X_1,X_2)\in\Z[X_1,X_2] $ be an irreducible binary form of degree $3$ and $h$ an  arithmetic function. 
We give some estimates for 
 the average order $\sum_{\substack{|n_1|\leq\X,|n_2|\leq\X}}h(F(n_1,n_2))$ when $h$ satisfy certain conditions.  
As an application, we provide some asymptotic formula  for the number  of $\Y$-friable values of $F(n_1,n_2)$ when the variables $n_1,n_2$ lies in the square $[1,\X]^2$ and uniformly in the region $\exp\left(\frac{\log \X}{(\log\log \X)^{1/2-\varepsilon}}\right)\leq\Y\leq \X$. This improves the result of 
\cite{BBDT12} in the case of cubic form. \end{abstract}

\maketitle
\tableofcontents\selectlanguage{french}

\section{Introduction}

Soient $F(X_1,X_2)\in\Z[X_1,X_2]$ une forme binaire irréductible et primitive de degré $3$ et $h$ une fonction arithmétique. 
L'objet de cet article est l'obtention de formules asymptotiques de l'ordre moyen
\begin{align*}
 S(\X;h,F):=\sum_{1\leq n_1,n_2\leq \X}h(F(n_1,n_2))
\end{align*}
pour une certaine classe de fonctions $h$. 

L'intérêt de considérer de telles sommes est multiple. D'un côté, la quantité $S(\X;h,F)$ apparaît dans de nombreux problèmes arithmétiques (conjecture de Manin dans \cite{BT12} et \cite{BT13}, crible algébrique (NFS) exposé dans le chapitre 6 de \cite{CP05}, etc.). D'autre part, il s'agit d'un angle d'attaque efficace pour  étudier l'indépendance entre la propriété arithmétique relative à $h$ et la représentation d'un entier par la forme binaire $F$. Enfin, les valeurs de  $F$ forment un ensemble lacunaire d'entiers, dans la mesure où
\begin{displaymath}
\frac{\log\#\left\{n_1,n_2:|F(n_1,n_2)|\leq\X\right\}}{\log\{|n|\leq \X\}}\rightarrow \frac{2}{3}\qquad(\X\rightarrow+\infty)
\end{displaymath}
et la persistance d'un phénomène arithmétique dans un ensemble de densité nulle est un sujet important de la théorie des nombres à l'origine de nombreuses conjectures.

Greaves~\cite{Gr70}  initie l'étude de telles sommes en considérant  $h=\tau$ la fonction de compte des diviseurs. À partir de la convolution $\tau=1*1$, il montre  que, pour tout $\varepsilon>0$ et toute forme cubique, primitive et  irréductible $F$% de discriminant non nul
, il existe des constantes réelles $c_0(F)>0$ et $c_1(F)$ telles que l'on ait
\begin{align*}
S(\X;\tau,F)=c_0(F)  \X^2\log \X+c_1(F)  \X^2+O_{\varepsilon,F}\left(  \X^{27/14+
\varepsilon}\right),
\end{align*} formule asymptotique précisée par Daniel \cite{Da99} qui améliore sensiblement le terme d'erreur précédent en le remplaçant par $  \X^{15/8+\varepsilon}$.

Plus récemment, La Bretèche et Browning~\cite{BB06} obtiennent une borne supérieure de $S(\X;h,F)$ uniforme pour la classe $\mathcal{M}(A,B)$ des fonctions  $h$ positives et sous-multiplicatives satisfaisant  les inégalités
$h(p^k)\leq A^k$ pour tout premier $p$ et $k\geq1$ et $h(n)\leq B(\varepsilon)n^{\varepsilon}$ pour tout entier $n$ et tout réel $\varepsilon>0$. Si l'on omet la dépendance en $F$ rendue explicite dans leurs travaux, le corollaire 1 de \cite{BB06} assure que, pour  $F$ une forme binaire irréductible de degré $\degpol\geq2$, $h\in\mathcal{M}(A,B)$ et $\X\geq2$, on a
\begin{equation}\label{majoration Breteche Browning}
S(\X;h,F)\ll_{A,B,F}\  \X^2\prod_{3<p\leq \X}\left(1+\frac{\gamma_F(p)}{p^2}(h(p)-1)\right)+\X(\log \X)^{A^{\degpol}-1}
\end{equation}où
\begin{align*}
 \gamma_F(d):=\#\{1\leq n_1,n_2\leq d:d|F(n_1,n_2)\}.
\end{align*}
On pourra également consulter \cite{BT12} pour une généralisation de ce résultat.

Lorsque $h$ est l'indicatrice $1_{\mathcal{E}}$ d'un ensemble d'entiers $\mathcal{E}$, l'estimation précédente fournit une borne supérieure non triviale lorsque $\mathcal{E}$ est un ensemble criblé. La question de la détermination d'une minoration de $S(\X;1_{\mathcal{E}},F)$ est en général difficile et dépend fortement de l'ensemble $\mathcal{E}$ en question. Dans cette direction, Greaves considère l'ensemble $\mathcal{E}_k$ des entiers qui ne sont divisibles par aucune puissance $k$-ième de nombres premiers. Il montre ainsi dans \cite{Gr92} que, pour tout $k\geq2$ et toute forme  cubique primitive et irréductible $F$, on a la formule asymptotique
\begin{align*}
S(\X;1_{\mathcal{E}_k},F)
=\prod_p\left(1-\frac{\gamma_F(p^k)}{p^{2k}}\right)  \X^2+O_{F,k}\left(\frac{  \X^2}{\log \X}\right)
\end{align*}
et obtient également des formules asymptotiques similaires  pour des formes $F$ de degré $\degpol\geq4$ (voir aussi le travail de Browning~\cite{Br11}).

On peut  également
considérer le choix $\mathcal{E}:=S(\Y)$ de l'ensemble des entiers $\Y$-friables, c'est-à-dire
\begin{align*}
S(\Y):=\left\{n:P^+(n)\leq \Y \right\}
\end{align*}
où $P^+(n)$ désigne le plus grand facteur premier de l'entier $n$. Dans toute la suite,  le cardinal  $S(\X;1_{S(\Y)},F)$ est noté $\Psi_F(\X,\Y)$.
Son importance  dans l'algorithme de factorisation du crible algébrique (NFS) est essentielle et se trouve détaillée dans \cite{CP05} ou \cite{Gr08}. Un raisonnement heuristique permet en particulier de conjecturer que, pour une forme irréductible de degré $\degpol$, on a la formule asymptotique
\begin{equation}\label{conjecture rho friable}
\Psi_F(\X,\Y)\sim   \X^2\rho(\degpol u)
\end{equation}
où $u:=\frac{\log \X}{\log \Y }$ est borné et $\rho$ désigne la fonction de Dickman. La démonstration d'un tel résultat dans la cas des formes cubiques constitue la motivation initiale de cet article.

Dans leur étude de $\Psi_F(\X,\Y)$, Balog, Blomer, Dartyge et Tenenbaum~[\cite{BBDT12}, théorème 1] ont  montré entre autre   que, pour toute forme binaire $F$ de degré $g$, il existe une constante $\alpha_F\in [0,g[$ telle que, pour tout $\alpha>\alpha_F$, on ait
\begin{equation}\label{théorème BBDT}
\Psi_F(\X,  \X^{\alpha})\gg_{F,\alpha}   \X^2.
\end{equation}
En particulier, dans le cas où $F$ est une forme cubique irréductible,  le théorème 2 de \cite{BBDT12} permet de choisir $\alpha_F=e^{-1/2}$.

Tous ces travaux ont en commun 
l'emploi récurrent de sommes de Type I, c'est-à-dire des 
estimations du cardinal \begin{align*}
 N(\X,d)=\#\left\{1\leq n_1,n_2\leq\X:d|F(n_1,n_2)\right\}
\end{align*}
en moyenne sur $d\leq \X^{2-\varepsilon}$ où $\varepsilon>0$. 
En découpant le carré $[1,\X]^2\cap\Z^2$ en classes de congruences modulo $d$, on est en droit d'espérer approcher $
 N(\X,d)$
par la quantité $\frac{\gamma_F(d)}{d^2}\  \X^2$. La pertinence d'une telle approximation est notamment étudiée dans \cite{Gr71} où est établie la majoration de  sommes, dites  de Type I,  suivante 
 valide pour tout $\varepsilon>0$, 
\begin{align*}
\sum_{d\leq D}
\left|r_d(\X)
\right|
\ll_{\varepsilon} D^{\varepsilon}(\X+D)
\end{align*} 
où 
\begin{equation}
\label{définition rdX}r_d(\X):=N(\X,d)-\frac{\gamma_F(d)}{d^2}\  \X^2,
\end{equation}
estimation par la suite précisée dans le lemme 3.3 de \cite{Da99} sous la forme
\begin{equation}\label{Type I Daniel}
\sum_{d\leq D}
\left|r_d(\X)
\right|
\ll \X\sqrt{D}(\log 2D)^{7203}+D(\log 2D)^{8}.
\end{equation}

Lorsque $h(m)$ présente un caractère oscillant  au regard du nombre de facteurs premiers de $m$ comptés avec multiplicité, noté $\Omega(m)$, il n'est en général pas possible de déduire des résultats satisfaisants à partir des seules estimations de sommes de Type I. Ce principe, connu sous le nom de phénomène de parité, a été mis en lumière  dans \cite{Se52} et \cite{Bo76} -- on trouvera un survol dans le chapitre 16 de \cite{FI10}. Il stipule essentiellement que les techniques de crible, fondées  par construction sur des estimations de sommes de Type I, ne permettent pas d'isoler les valeurs de $F$ ayant un nombre de facteurs premiers dont la parité soit fixée. 
En principe, il est possible de contourner le phénomène de parité si l'on dispose  d'une estimation de sommes de Type II, c'est-à-dire d'une majoration non triviale de sommes de la forme
\begin{align*}
 \sum_{U<r\leq 2U}\sum_{V<s\leq2V}a_rb_s\#\{1\leq n_1,n_2\leq \X:rs=F(n_1,n_2)\}
\end{align*}
pour des choix convenables de $U$ et $V$, où $b_s$ satisfait une hypothèse de type Siegel-Walfisz, c.-à-d.   que le comportement de $b_s$ est suffisamment régulier dans les progressions arithmétiques de petits modules.

 Heath-Brown~\cite{HB01} établit le premier un tel résultat pour la forme $X_1^3+2X_2^3$ à l'aide, entre autres, d'une inégalité de grand crible et de comptage de points sur une hypersurface. Il en déduit l'existence d'une infinité de nombres premiers de la forme 
 $n_1^3+2n_2^3$ en établissant la formule asymptotique 
 \begin{align*}
\#\left\{\X<n_1,n_2\leq \X\left(1+(\log \X)^{-c_0}\right): n_1^3+2n_2^3\text{ est premier }\right\}\underset{\X\rightarrow +\infty}{\sim}c_1 
\frac{\X^2}
{(\log \X)^{c_0+1}}
\end{align*} 
pour une constante $c_0,c_1>0$.
Une généralisation de ce travail aux valeurs des polynômes de la forme $F(a_1+X_1q,a_2+X_2q)$ où $F\in\Z[X_1,X_2]$ est une forme cubique irréductible et $(a_1,a_2,q)=1$ a été effectuée par Heath-Brown et Moroz~\cite{HM02,HM04}.  

 Les estimations de Type II obtenues dans \cite{HM02} ont par la suite  été utilisées par Helfgott pour établir la validité de la conjecture de Chowla%\cite{Ch65}
, bien que les identités combinatoires mises en œuvre soient différentes de celles intervenant dans les travaux de Heath-Brown et Moroz. Dans le théorème principal de \cite{He}, il montre  en particulier la formule suivante, valide pour toute forme  irréductible $F\in\Z[X_1,X_2]$ de degré $3$, $\varepsilon>0$ et uniforme en  $\X\geq3$, \begin{align}
\sum_{1\leq n_1,n_2\leq \X}\mu(F(n_1,n_2))\ll_{F,\varepsilon}\frac{(\log_2\X)^4(\log_3X)^{\varepsilon}}{\log \X}  \X^2=o(  \X^2)\label{theo helfgott irreductible}
\end{align}
où $\log_k\X$ désigne le $k$-ième logarithme itéré.\\

Soient $q\geq1$ et $1\leq a_1,a_2\leq q$ des entiers tels que $ (a_1,a_2,q)=1$. Étant donnée une fonction arithmétique $h$, une forme binaire $F$ et un  compact $\comp\subset[0,1]^2$, on définit
\begin{align*}
S^{(1)}(\comp.\X;h,F;a_1,a_2,q):=
\sum_{\substack{(n_1,n_2)\in \comp.\X\\ (a_1+n_1q,a_2+n_2q)=1}}h(F(a_1+n_1q,a_2+n_2q))
\end{align*}
où $\comp.\X:=\left\{(y_1X,y_2X):(y_1,y_2)\in\comp\right\}$. On cherche  une estimation asymptotique de cet ordre moyen, avec une uniformité en $q\leq(\log\X)^A$ pour tout $A>0$ fixé (c'est-à-dire du type Siegel-Walfisz).  
À l'aide d'un argument standard de convolution, on pourra en général déduire des informations relatives à $S(\X;h,F)$ à partir  de $S^{(1)}(\comp.\X;h,F;a_1,a_2,q)$. 

La méthode développée  dans cet article et détaillée dans le paragraphe \ref{description de la méthode}
permet d'obtenir des résultats asymptotiques concernant $S^{(1)}(\comp.\X;h,F;a_1,a_2,q)$
 pour une large classe de fonctions $\mathcal{F}$.  Le lecteur est renvoyé au début du paragraphe \ref{description de la méthode} pour une définition précise de l'ensemble $\mathcal{F}$. Généralisant les travaux de Heath-Brown et Moroz, le résultat central de cet article, à savoir le corollaire \ref{corollaire des applications}, ramène l'estimation de $S^{(1)}(\comp.\X;h,F;a_1,a_2,q)$ à l'étude asymptotique de l'ordre moyen
 \begin{equation}\label{ordre moyen général}
\sum_{\substack{\J\text{ idéal de }\OK\\\X<N(\J)\leq
\X\left(1+(\log\X)^{-c_0(F)}\right)}}h(N(\J))\sigma_q(\J) \end{equation}
où $c_0(F)>0$, $\K$ est un corps de nombres cubique dont l'anneau des entiers est noté $\OK$ et $\sigma_q$ est une fonction définie au paragraphe \ref{Premières propriétés} dont la série  de Dirichlet associée $
\sum_{\J}\frac{\sigma_q(\J)}{N(\J)^s}
$ possède un comportement analytique semblable à celui de la fonction zêta de Dedekind $\zeta_{\K}(s)$ du corps $\K$.

L'ensemble $\mathcal{F}$ contient en particulier toute fonction  $h$  qui s'écrit \begin{equation}\label{première décomposition h}
 h(m)=\left\{\begin{array}{ll}
 \widetilde{h}(m)&\text{ si }\Y_1\leq P(m)\leq \Y_2,\\
 0&\text{ sinon},
 \end{array}\right.
 \end{equation}où $\widetilde{h}$ est une fonction multiplicative telle que $\widetilde{h}(p)=z$ pour tout premier, $\Y_1,\Y_2\geq1$, $P$ peut désigner le noyau, le plus petit ou plus grand facteur premier de $m$ et $\widetilde{h}:\N\rightarrow\C$. Les deux exemples d'application que nous décrivons à présent concernent des fonctions $h$ de cette forme.

Comme première illustration, on considère dans le paragraphe \ref{Application 1} l'ensemble $\mathcal{M}(z)\subset\mathcal{F}$  des fonctions $h$ multiplicatives, à valeurs dans le disque unité fermé, et satisfaisant $h(p)=z$ pour tout premier $p$. Les fonctions $z^{\omega(n)}$, $z^{\Omega(n)}$, la fonction de Möbius $\mu$ et la fonction de Liouville $\lambda$ sont notamment des éléments de cet ensemble. La méthode développée dans cet article, combinée à l'estimation de l'ordre moyen (\ref{ordre moyen général}) dans le paragraphe \ref{section ordre moyen}, permettra d'établir un ordre moyen de $h$ sur les valeurs de $F$.

\begin{theo}\label{ordre moyen fonction multiplicative}
Soient $A\geq0$, $\varepsilon>0$, $z$ un complexe non nul tel que $|z|\leq 1$, $F\in\Z[X_1,X_2]$ une forme binaire primitive et irréductible de degré $3$, $\comp\subset[0,1]^2$
un compact dont le bord est paramétré par un lacet  de classe 
 $\mathcal{C}^1$ par morceaux et $h\in\mathcal{M}(z)$. 
On a, uniformément en $\X\geq3$, $q\leq(\log \X)^A$ et $0\leq a_1,a_2\leq q$ des entiers tels que $(a_1,a_2,q)=1$,
\begin{equation}
 S^{(1)}(\comp.\X;h,F;a_1,a_2,q)=\Aire(\comp)  \X^2\sigma_q(F)\left(3\log \X\right)^{z-1}
\frac{\sigma_q(F,h)}{\Gamma(z)}+ O\left(\frac{  \X^2}{(\log_2\X)^{1-\varepsilon}}\right)
\end{equation}
où $\Aire(K)$ désigne l'aire de $\mathcal{K}$, $\sigma_q(F)$ et $\sigma_q(F,h)$ sont définies respectivement par (\ref{définition sigmaqF})  et (\ref{définition sigma qh})  avec la convention $1/\Gamma(-1)=1/\Gamma(0)=0$.
\end{theo}
\begin{remarque}Lorsque $h\in\mathcal{M}(-1)$, le théorème  \ref{ordre moyen fonction multiplicative} assure que l'on a la formule 
\begin{equation*}
 S^{(1)}(\comp.\X;h,F;a_1,a_2,q)\ll\frac{  \X^2}{(\log_2\X)^{1-\varepsilon}},
\end{equation*}
ce qui permet de retrouver la formule (\ref{theo helfgott irreductible}) due à Helfgott~\cite{He}  avec un terme d'erreur plus faible en effectuant le choix  $h(n)=\mu(n)$. 
\end{remarque}

 Dans la seconde application, nous étudions le cardinal
\begin{multline*}
\Psi^{(1)}_F(\comp.\X,Y;a_1,a_2,q)=\#\left\{\vphantom{P^+}(n_1,n_2)\in\comp.\X:\gcd(a_1+n_1q,a_2+n_2q)=1\right.\\
\left.\text{ et }P^+(F(a_1+n_1q,a_2+n_2q))\leq \Y \right\},
\end{multline*} 
quantité qui intervient directement dans l'algorithme de factorisation du crible algébrique (voir le chapitre 6.2 de \cite{CP05}). 

\begin{theo}\label{théorème principal friable}
Soient $\varepsilon>0$, $A\geq0$ et  $F\in\Z[X_1,X_2]$ une forme binaire primitive et irréductible de degré $3$ et $\comp\subset[0,1]^2$
un compact dont le bord est paramétré par un lacet de classe 
 $\mathcal{C}^1$ par morceaux. 
Dans le domaine \begin{equation}\label{domaine cubique irréductible}
\X\geq3,\quad \exp\left(\frac{\log \X}{(\log_2 \X)^{1-\varepsilon}}\right)\leq \Y 
\end{equation} et uniformément pour $q\leq(\log \X)^A$  et   $0\leq a_1,a_2\leq q$ des entiers tels que $(a_1,a_2,q)=1$, on a
\begin{align}\label{formule Psi1 irr}
 \Psi^{(1)}_F(\X,\Y ;a_1,a_2,q)
  =\frac{  \X^2}{\zeta_q(2)}\rho(3u)+O\left(\frac{\X^2}{(\log_2\X)^{1-\varepsilon}}\right) .
\end{align} 
%où $u:=\frac{\log \X}{\log \Y }$ et $\rho$ désigne la fonction de Dickman.

De plus, si $(\X,\Y)$ sont dans le domaine
\begin{equation}\label{deuxième domaine irréductible}
\X\geq3,\qquad \exp\left(\frac{\log \X}{(\log_2 \X)^{1/2-\varepsilon}}\right)\leq \Y \leq \X^{1/2-\varepsilon},\end{equation} alors le terme d'erreur $(\log_2\X)^{-(1-\varepsilon)}$ dans (\ref{formule Psi1 irr}) peut être remplacé par $\X^2\exp\left(-(\log_2 \X)^{1/2-\varepsilon}\right) $.
\end{theo}

Compte tenu de l'estimation 
\begin{displaymath}\rho(u)=\exp\left(-u\log u(1+o(1))\right)\qquad\left(u\rightarrow +\infty\right),
\end{displaymath}
on observe que le théorème \ref{théorème principal friable} ne donne un équivalent de $\Psi^{(1)}_F(\X,\Y ;a_1,a_2,q)$ que dans le domaine  $\exp\left(\frac{\log \X}{(\log_2 \X)^{1/2-\varepsilon}}\right)\leq \Y$.

En sommant sur les différentes classes de congruence modulo $q$ et en utilisant une convolution pour enlever la condition de coprimalité, on peut en déduire le corollaire suivant, qui améliore les résultats de \cite{BBDT12} dans le cas des formes cubiques irréductibles, en établissant la validité de (\ref{conjecture rho friable}) et en augmentant la région dans laquelle on dispose d'une estimation de $\Psi_F(\X,\Y)$, passant de $u<\sqrt{e}$ à une région non bornée en $u$.

\begin{corollaire}
Soient $\varepsilon>0$ et  $F\in\Z[X_1,X_2]$ une forme binaire irréductible de degré $3$.
On a, uniformément dans le domaine (\ref{domaine cubique irréductible}), 
\begin{align}\label{formule Psi irr}
 \Psi_F(\X,\Y)
  =  \X^2\rho(3u)\left(1+O\left(\frac{1}{(\log_2\X)^{1-\varepsilon}}\right)\right) .
\end{align}

De plus, si $(\X,\Y)$ sont dans le domaine \ref{deuxième domaine irréductible}, alors le terme d'erreur $(\log_2\X)^{-(1-\varepsilon)}$ dans (\ref{formule Psi irr}) peut être remplacé par $\eta^2\X^2\exp\left(-(\log_2 \X)^{1/2-\varepsilon}\right) $.
\end{corollaire}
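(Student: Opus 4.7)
The plan is to deduce the corollary from Theorem~\ref{théorème principal friable} (applied with $q=1$) by decomposing according to $d=\gcd(n_1,n_2)$ and using the homogeneity of $F$. Writing $n_i=dm_i$ with $\gcd(m_1,m_2)=1$, the identity $F(n_1,n_2)=d^{3}F(m_1,m_2)$ shows that the friability condition $P^+(F(n_1,n_2))\leq\Y$ is equivalent to $P^+(d)\leq\Y$ together with $P^+(F(m_1,m_2))\leq\Y$. Taking $\comp=[0,1]^2$, this produces
\begin{equation*}
\Psi_F(\X,\Y)=\sum_{\substack{d\geq 1\\ P^+(d)\leq\Y}}\Psi^{(1)}_F\bigl(\comp\cdot(\X/d),\Y;0,0,1\bigr).
\end{equation*}

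Next I would introduce a cutoff $D_0:=\X^{1/2}$ and apply Theorem~\ref{théorème principal friable} for each $d\leq D_0$. For such $d$, $\X/d\geq\X^{1/2}$ remains large and $\Y$ continues to lie in the domain \eqref{domaine cubique irréductible} relative to $\X/d$, so the hypotheses are met; for $d>D_0$, the trivial bound $\Psi^{(1)}_F\leq(\X/d)^2$ together with $\sum_{d>D_0}(\X/d)^2\ll\X^{3/2}$ disposes of the tail. Summing the error terms from the theorem yields $\sum_{d\leq D_0}(\X/d)^2(\log_2(\X/d))^{-(1-\varepsilon)}\ll\X^2/(\log_2\X)^{1-\varepsilon}$, and it remains to approximate the main contribution
\[
\frac{\X^2}{\zeta(2)}\sum_{\substack{d\leq D_0\\ P^+(d)\leq\Y}}\frac{\rho(3u_d)}{d^2},\qquad u_d:=u-\frac{\log d}{\log\Y},
\]
by $\X^2\rho(3u)$ (using that $\zeta_q(2)=\zeta(2)$ for $q=1$).

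To this end I would first complete the sum to all $\Y$-friable $d$ and split it at $D_1:=\Y^{1/\log_2\X}$. For $d\leq D_1$ one has $\log d/\log\Y\leq 1/\log_2\X$, and standard saddle-point asymptotics for the Dickman function give $\rho(3u_d)=\rho(3u)(1+O(\log(3u)/\log_2\X))$; combined with $\sum_{P^+(d)\leq\Y}d^{-2}=\zeta(2)+O(1/\Y)$ and the inequality $u\leq(\log_2\X)^{1-\varepsilon}$ valid in the stated domain, this block produces exactly $\X^2\rho(3u)$ up to an admissible error. For $D_1<d$, the crude bound $\rho\leq 1$ and $\sum_{d>D_1}d^{-2}\ll D_1^{-1}=\Y^{-1/\log_2\X}$ give a contribution $\ll\X^2\exp(-\log\Y/\log_2\X)$, which is much smaller than $\X^2/(\log_2\X)^{1-\varepsilon}$ in the domain \eqref{domaine cubique irréductible}.

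The main obstacle I anticipate is the careful quantitative handling of the saddle-point expansion of $\rho$: reconciling the absolute error $\X^2/(\log_2\X)^{1-\varepsilon}$ produced by Theorem~\ref{théorème principal friable} with the relative error against $\X^2\rho(3u)$ announced in the corollary will require either extracting the sharper error $\X^2\rho(3u)/(\log_2\X)^{1-\varepsilon}$ directly from the proof of the theorem, or interpreting the statement as a genuine asymptotic only in the subregion where $\rho(3u)$ is not too small. Finally, the sharper error in the domain \eqref{deuxième domaine irréductible} follows by the same scheme after substituting at the outset the improved bound $\exp(-(\log_2\X)^{1/2-\varepsilon})$ from Theorem~\ref{théorème principal friable} that is valid there.
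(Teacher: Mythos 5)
Your approach matches the paper's one-line indication before the corollary: taking $q=1$ makes the sum over congruence classes trivial, and the decomposition by $d=\gcd(n_1,n_2)$ together with $F(dm_1,dm_2)=d^3F(m_1,m_2)$ is precisely the ``convolution pour enlever la condition de coprimalité'' the author alludes to. The rest of your argument (cutoff at $D_0=\X^{1/2}$, second cutoff at $D_1=\Y^{1/\log_2\X}$, Taylor expansion of $\rho$ near $3u$, completion of $\sum d^{-2}$ to $\zeta(2)$) is sound, modulo a small technical point worth noting: when replacing $\X$ by $\X/d$ with $d\le\X^{1/2}$, the domain condition \eqref{domaine cubique irréductible} for $(\X/d,\Y)$ is not literally implied by that for $(\X,\Y)$ because $\log_2(\X/d)<\log_2\X$; one should either start with $\varepsilon$ slightly smaller in the theorem or verify directly that the loss is $1+O(1/\log_2\X)$ and can be absorbed. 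Your final caveat is also well taken and genuine: Théorème~\ref{théorème principal friable} yields the \emph{absolute} error $O(\X^2/(\log_2\X)^{1-\varepsilon})$, so what your proof (and, implicitly, the paper's) actually delivers in the domain \eqref{domaine cubique irréductible} is $\Psi_F(\X,\Y)=\X^2\rho(3u)+O(\X^2/(\log_2\X)^{1-\varepsilon})$; the \emph{relative}-error formulation written in \eqref{formule Psi irr} is formally stronger when $\rho(3u)$ is small, and it only becomes an asymptotic in the subdomain \eqref{deuxième domaine irréductible}, exactly as the paper's remark following Théorème~\ref{théorème principal friable} itself concedes. This is an imprecision in the corollary's statement (compounded by the stray $\eta^2$ in its second clause, $\eta$ being an internal parameter of the proof), not a flaw in your argument.
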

Au premier abord, il peut paraître surprenant  d'obtenir un résultat plus précis pour les grandes valeurs de $u$, à savoir $u>2$. Ceci est dû notamment à la méthode employée :  une partie de la contribution des termes impliqués dans l'identité combinatoire introduite dans le paragraphe \ref{description de la méthode} disparaît lorsque  $u$ devient suffisamment grand.

Dans  la suite de cet article, 
on fixe $A>0$ et l'on suppose $q\leq(\log X)^A$, hypothèse qui sera toujours implicite dans les arguments et les énoncés à venir. On emploie les notations en usage dans les travaux de Heath-Brown et Moroz, 
à savoir que la lettre $c$ désignera une constante positive et, pour tout paramètre $B$,
la lettre $c(B)$ désignera une fonction de $B$, l'une et l'autre pouvant varier à
chaque occurrence. Il convient de souligner que, dans \cite{HB01}, \cite{HM02}, \cite{HM04}, \cite{He} et dans le présent travail, les constantes  ne sont pas effectives, en raison de l'incidence du zéro de Siegel dans les arguments utilisés pour établir les estimations de sommes de Type II.\\

L'article présente l'organisation suivante. Au chapitre $2$, nous réduisons le problème à l'étude d'une fonction arithmétique sur les idéaux d'un corps cubique $\K$. Au chapitre $3$, nous étudions la distribution multiplicative de certains ensembles d'idéaux de $\OK$. Le chapitre $4$ contient la définition et les premières propriétés de la fonction de densité $\sigma_q$ qui intervient dans 
Un argument combinatoire permet au chapitre $5$ d'énoncer le résultat principal de cet article, sous l'hypothèse où la fonction $h$ est bornée. Les chapitres $6$ et $7$ sont essentiellement consacrés à l'estimation des termes d'erreur qui apparaissent dans la discussion du chapitre $5$. Enfin, on illustre la méthode dans le chapitre $8$ en l'utilisant pour démontrer les théorèmes \ref{ordre moyen fonction multiplicative} et \ref{théorème principal friable}.

L'auteur tient à exprimer ici ses sincères remerciements à sa directrice de thèse, Cécile Dartyge, pour son accompagnement sans faille  et  sa présence attentive lors de la réalisation de ce travail.

\section{Formes cubiques et normes d'idéaux dans un corps cubique}\label{Algèbre}

En général, un problème de nature multiplicative portant sur les valeurs d'une forme cubique  trouve un cadre de traitement plus flexible
 une fois transcrit dans le langage des corps de nombres.  
 On peut justifier le passage  entre forme cubique irréductible et corps de nombres à l'aide du 
 lemme 2.1 de \cite{HM02}. 
 
\begin{lemme}[\cite{HM02}, lemme 2.1]
\label{lien forme cubique corps cubique}
Soit $F\in \Z[X_1,X_2]$ une forme binaire cubique, primitive et irréductible. 
Il existe un corps de nombres réel $\K$ de degré $3$, 
d'anneau d'entiers $\OK$ et des entiers $\omega_1,\omega_2\in\OK$ 
satisfaisant les conditions suivantes :
\begin{itemize}
  \item le sous-module 
  $\Lambda(\omega_1,\omega_2):=\left\{n_1\omega_1+n_2\omega_2:(n_1,n_2)\in\Z^2\right\}$ 
est 
de rang $2$, 
\item $ \K=\Q
(\theta_0)$ où $\theta_0:=\frac{\omega_2}{\omega_1}$,
\item si $\mathfrak{d}$ désigne l'idéal de $\OK$ engendré par $\Lambda(\omega_1,\omega_2)$, 
$N_{ \K/\Q}$  la norme de $\K$ sur $\Q$ et $N$  la norme des idéaux de $\OK$, alors 
\begin{equation}\label{def f}
F(X_1,X_2)=\frac{N_{ \K/\Q}(X_1\omega_1+X_2\omega_2)}{N(\mathfrak{d})}.
\end{equation}\end{itemize}
Réciproquement, l'équation (\ref{def f}) définit une forme binaire cubique, primitive et irréductible. 
\end{lemme}

Dans toute la suite, on considère  un corps de nombres réel $\K$ de degré $3$. 
On note $\OK$ son anneau d'entiers, $\JK$ le monoïde des idéaux entiers non nuls de $\OK$, $\GK$ 
le groupe des idéaux fractionnaires, $\PK$ le sous-groupe des idéaux principaux de $\GK$ 
et $\HK=\GK/\PK$ le groupe de classes. On introduit également 
 $\hK$ le nombre de classes de $\K$, $\lambda_{\K}$ le résidu de la fonction zêta de Dedekind $\zeta_{\K}(s)$ en $s=1$, 
  $\sigma_1  $, $\sigma_2$ et $\sigma_3$ les trois plongements de $\K$ et $\rK\in\{1,3\}$
  le nombre de plongements réels.
Si $\rK=1$ (resp. $\rK=3$), on désigne par $\varepsilon_1$ l'unité fondamentale 
satisfaisant $\varepsilon_1>1$ 
(resp. on se donne
deux unités fondamentales multiplicativement indépendantes $\varepsilon_1$ et $\varepsilon_2$ 
telles que $N(\varepsilon_1)=N(\varepsilon_2)=1$).
Dans toute la suite $\mathfrak{p}$ 
désignera un idéal premier au dessus de $p$ 
  et 
$\I$, $\J$, $\Qid$, $\Rid$ et $\Sid$ des idéaux entiers.

En général, l'anneau des entiers $\OK$ n'est  pas principal,
d'où l'absence de bijection canonique
 entre $\JK$ et $\OK/\OK^*$. 
 En reprenant les arguments du paragraphe 6 de \cite{HM02} (p 275--277), on peut néanmoins faire apparaître une correspondance entre idéaux et entiers algébriques qui sera utilisée dans le paragraphe \ref{paragraphe Type II}  pour l'étude des sommes de Type II. 
Le groupe de classes $\HK$ étant abélien et d'ordre fini $\hK$, il existe  $t\geq 1$ 
et $h_1,\ldots, h_t\geq 1$ tels que 
\begin{align*}h_1\cdots h_t=\hK\quad\text{ et }\quad
\HK\simeq\Z/(h_1\Z)\times\cdots\times \Z/(h_t\Z).
\end{align*}
Par suite, il existe des idéaux entiers  $\mathfrak{a}_1, \ldots, \mathfrak{a}_t$ 
tels que tout idéal fractionnaire non nul $\J$  se décompose sous la forme
$
\J=(\alpha)\mathfrak{a}_1^{l_1}\cdots\mathfrak{a}_t^{l_t}$ où $\alpha\in \K^*$   et  $0\leq l_j<h_j$ pour $j\in\{1,\ldots,t\}$. On fixe  $\alpha_\J\in\OK$ tel 
que $\mathfrak{a}_j^{h_j}=(\alpha_j)$ et $\gamma_j$ 
une solution de l'équation algébrique $\gamma_j^{h_j}=\alpha_j$ ce qui permet de construire
 l'extension
$ \L:= \K(\gamma_1,\ldots,\gamma_t)$. 
Pour tout plongement $\sigma $ de $\K$ dans $\C$, on fixe un prolongement  à 
$\L$, noté encore $\sigma$

En désignant par $\IK$ le sous-groupe multiplicatif de $\L$ engendré par $\K^*$ et 
$\{\gamma_1,\ldots,\gamma_t\}$, on peut remarquer que l'on a un isomorphisme canonique 
 $\IK/\OK^*\simeq \GK$. 
 Pour tout $\gamma\in\IK$, on note $(\gamma)$ 
 l'idéal de $\GK$ qui lui est associé par cet isomorphisme et on fixe $\delta$ 
 un générateur de $\mathfrak{d}$. On observe que  $N((\gamma))=\left|\prod_{i=1}^3\sigma_i(\gamma)\right|$ pour tout $\gamma\in\IK$.
 
 La décomposition de $\GK$ en classes d'idéaux, intrinsèque à la définition de $\HK$, 
 induit canoniquement 
 une partition de $\IK$ en $\hK$ classes d'équivalence, 
 l'ensemble de ces classes étant naturellement muni d'une structure
 de groupe abélien, compatible avec le produit du corps $\L$. 
 Pour tout  $\gamma\in\IK$, il existe une base entière $(u_1,u_2,u_3)$ de sa classe d'équivalence $\Cl (\gamma)$ c'est-à-dire telle que 
 \begin{align*}
\Cl (\gamma)\cup\{0\}=\left\{a_1u_1+a_2u_2+a_3u_3, a_i\in\Q\right\}.
\end{align*}
\begin{align*}
\left(\Cl (\gamma)\cup\{0\}\right)\cap\OL=\left\{a_1u_1+a_2u_2+a_3u_3, a_i\in\Z\right\}.
\end{align*}
On note $(u_1^*,u_2^*,u_3^*)$ sa base duale, définie comme l'unique base de $\Cl (\gamma^{-1})$ satisfaisant 
\begin{equation}\label{base duale}
\text{Tr}_{\K/\Q}(u_iu_j^*)=\left\{\begin{array}{ll}
1&\text{si }i=j,\\
0&\text{sinon}.
\end{array}\right.
\end{equation}
Cette base duale trouvera son utilité au paragraphe \ref{paragraphe Type II} où elle facilitera la détection des congruences dans $\OK$.

\subsection{Fonctions arithmétiques sur les idéaux}

Conséquence de la théorie des anneaux de Dedekind, l'existence et l'unicité de la décomposition d'un idéal 
en idéaux premiers 
permet d'étendre aux éléments de $\JK$
des notions arithmétiques initialement définies sur les anneaux factoriels (voir la note 5 du chapitre 9 
de \cite{Na04}).
Aussi les notions de valuation $\mathfrak{p}$-adique, notée $v_{\mathfrak{p}}$, et
de plus grand diviseur commun, déjà utilisée dans la définition de $\mathfrak{d}$,
sont-elles 
sans équivoque. 
Dans cette direction, on dira qu'une fonction $g:\JK\rightarrow \C$ est multiplicative 
si $g(\J_1\J_2)=g(\J_1)g(\J_2)$ dès que $(\J_1,\J_2)=\OK$. 
Des exemples de fonctions multiplicatives sur les idéaux sont donnés par la fonction de Möbius $\mu_{\K}$ 
et  de la fonction diviseur $\tau_{\K}$ sur $\JK$, définies  
comme les fonctions multiplicatives satisfaisant,
pour  un idéal premier $\mathfrak{p}$ et $k\geq 1$, la formule
\begin{align*}
\mu_{\K}(\mathfrak{p}^{k}):=\left\{\begin{array}{ll}
               -1&\text{ si }k=1,\\
0&\text{sinon},
              \end{array}\right.
\qquad \tau_{\K}(\mathfrak{p}^k):=k+1.
\end{align*}
On introduit également les fonctions $\Omega_{\K}$ et $\omega_{\K}$ qui comptent le nombre de facteurs premiers avec ou sans leur ordre de multiplicité, définies par
\begin{displaymath}
\Omega(\J):=\sum_{\mathfrak{p}|\J}v_{\mathfrak{p}}(\J)\quad\text{et}\quad\omega(\J):=\#\left\{\mathfrak{p}|\J\right\}.
\end{displaymath}
Étant donné une fonction arithmétique $g$ définie sur $\Z$, on  lui associe naturellement une fonction définie 
sur $\JK$ notée encore $g$ et définie par $g(\J):=g(N(\J))$. Réciproquement, on associe à une fonction $g$ définie sur $\JK$  
 la fonction  $g^{\Z}$
 définie par
\begin{align*}
g^{\Z}(n):=\sum_{N(\J)=n}g(\J).
\end{align*}

Dans la preuve du lemme 4.2 de \cite{HB01}, Heath-Brown énonce les inégalités suivantes, conséquences de
la décomposition des idéaux en idéaux premiers, 
\begin{equation}\label{tau nombre de norme}\tau_{\K}(\J)\leq\tau(N(\J))^3\quad\text{ et }\quad
\#\left\{\J\in\JK:N(\J)=n\right\}\leq\tau(n)^2.
\end{equation}
où $\tau:=\tau_{\Q}$ désigne la fonction diviseur standard.

Le lemme 6.1 de \cite{HM02}, reformulé ci-dessous, s'inspire de la preuve de Landreau de l'iné\-galité 
de Van der Corput relative à la somme des diviseurs (voir \cite{La89}) et fournit une première estimation 
de l'ordre moyen de $\tau_{\K}$ sur les idéaux de la forme 
$(n_1\omega_1+n_2\omega_2)$.
\begin{lemme}[\cite{HM02},lemme 6.1]\label{somme des diviseurs}
Soient $\X_1\geq \X_2\geq 1$ et
$\alpha,\beta$ deux éléments de $\IK\cap \OL$ tels que
$\Cl (\alpha)=\Cl (\beta)$. 
Supposons
 qu'il existe $r\geq 1$ tel que $|\sigma(\alpha)|,|\sigma(\beta)|\leq \X_1^r$
 pour tout plongement $\sigma$ de $\K$ dans $\C$. 
 Alors, pour tout entier $B\geq 0$, il existe une constante $c(B,r)\geq 0$ telle que 
\begin{equation}\label{formule diviseur}
\sum_{\substack{|n_1|\leq \X_1,|n_2|\leq \X_2\\n_2\neq 0}}
\left|\tau_{\K}((n_1\alpha+n_2\beta))^B\right|\ll \tau_{\K}((\alpha)+(\beta))^B\X_1\X_2(\log 2\X_1)^{c(B,r)}.
\end{equation}
\end{lemme}

\subsection{Notations et définition des idéaux admissibles} 
Comme observé dans le paragraphe 2 de \cite{HM02}, certains nombres premiers présentent 
un comportement particulier et nécessiteront 
 un traitement spécifique. 
On dira qu'un nombre premier est $q$\textbf{-singulier}
 s'il est ramifié dans $\OK$ ou s'il divise  
 $q$, $N(\omega_1\omega_2)$ ou  l'indice $\left[\OK,\Z[\theta]\right]$
 avec  $\theta:=\theta_0N(\omega_1\mathfrak{d}^{-1})$, 
 et $q$\textbf{-régulier} sinon. On note $P(q)=\max\left(\{1\}\cup\{p:p\text{ est }q\text{-singulier}\}\right)$.
Par extension, un idéal premier $\mathfrak{p}$ est dit $q$-singulier (resp. $q$-régulier) 
s'il est situé au-dessus d'un nombre premier 
$q$-singulier (resp. $q$-régulier). 
   Un entier ou un idéal dont tous les diviseurs premiers sont $q$-singuliers (resp. $q$-réguliers) 
   est également dit $q$-singulier (resp. $q$-régulier).
L'unicité de la décomposition en idéaux premiers dans les anneaux de Dedekind permet de définir,   pour un idéal $\J$ et un entier $m$, les parties $q$-singulières  et $q$-régulières
 par les formules
\begin{align*}
\J_{q\text{-r}}:=\prod_{\mathfrak{p}\text{ } q\text{-régulier}}\mathfrak{p}^{v_{\mathfrak{p}}(\J)},\qquad \J_{q\text{-s}}:=\prod_{\mathfrak{p}\text{ }q\text{-singulier}}\mathfrak{p}^{v_{\mathfrak{p}}(\J)},\qquad
\\m_{q\text{-r}}:=\prod_{p\text{ }q\text{-régulier}}p^{v_{p}(m)}\quad\text{ et }\quad m_{q\text{-s}}:=\prod_{m\text{ }q\text{-singulier}}m^{v_p(m)}.
\end{align*}
où $v_p$ désigne la valuation $p$-adique rationnelle.
D'une manière analogue, on définit la composante $\Y$-friable et la composante $\Y$-criblée 
 par
\begin{align*}
\J^{-}(\Y):=\prod_{p\leq\Y }\prod_{\mathfrak{p}|p}\mathfrak{p}^{v_{\mathfrak{p}}(\J)},\qquad
\J^+(\Y):=\prod_{p>\Y }\prod_{\mathfrak{p}|p}\mathfrak{p}^{v_{\mathfrak{p}}(\J)},\\ m^{-}(\Y):=\prod_{p\leq \Y  }p^{v_{p}(m)}\quad \text{ et } \quad m^+(\Y):=\prod_{p> \Y}p^{v_{p}(m)}.
\end{align*}
On introduit enfin la \textbf{composante $p$-adique} d'un idéal $\J$, notée $\J_p$, comme l'unique  diviseur de $\J$ satisfaisant
 $N(\J_p)=p^{v_p(N(\J))}$, de sorte que $\J=\prod_p \J_p$.

Le comportement des idéaux premiers $1$-réguliers qui interviennent dans la décomposition des idéaux $(n_1\omega_1+n_2\omega_2)$ 
fait l'objet du lemme 2.2 de \cite{HM02}. 
\begin{lemme}[\cite{HM02}, lemme 2.2]\label{admissible}
Soient $n_1$ et $n_2$ deux entiers premiers entre eux, $p$ un nombre premier $1$-régulier,
$\mathfrak{p}_1$ et $\mathfrak{p}_2$ 
deux idéaux premiers au dessus de $p$ tels que  $\mathfrak{p}_1$ et $\mathfrak{p}_2$ divisent 
$(n_1\omega_1+n_2\omega_2)\mathfrak{d}^{-1}$. Alors $\mathfrak{p}_1=\mathfrak{p}_2$ et $N(\mathfrak{p}_1)=p$.
\end{lemme}

Au regard du lemme \ref{admissible}, on dira qu'un idéal $\J$ est  \textbf{admissible} si sa partie $1$-régulière est de la forme
\begin{align*} 
\J_{1\text{-r}}=\mathfrak{p}_1^{k_1}\cdots\mathfrak{p}_n^{k_n}
\end{align*}
où $N(\mathfrak{p}_i)%=p_i
$ est premier et $N(\mathfrak{p}_i)\neq N(\mathfrak{p}_j)$ dès que  $1\leq i< j\leq n$.\\

\subsection{Transformation des sommes $S^{(1)}(\comp.\X;h,F;a_1,a_2,q)$ en sommes sur des idéaux admissibles}
Soit $\comp\subset[0,1]^2$ un compact dont le bord est paramétré par un lacet de classe $C^1$ par morceaux\footnote{Comme mentionné dans l'introduction de \cite{HB01}, l'argument détaillé dans cet article est valide plus généralement pour un compact Jordan-mesurable $\comp$ de mesure non nulle. %A.1.b de \cite{CF89}
}. En recouvrant $\comp.\X$ en carrés de la forme
\begin{align*}\mathcal{C}(N_1,N_2):=\left\{(n_1,n_2):\X\eta N_i\leq n_i< \X\eta (N_i+1)\text{ pour }
i=1,2\right\}
\end{align*}
où $\eta:=(\log \X)^{-c_0}$ pour une constante $c_0>0$ que l'on déterminera \textit{a posteriori} et
 $0\leq N_i\leq\eta^{-1}$ pour $i=1,2$, on observe que les éléments de $\comp$ mis à l'écart par cette approche 
 contribuent de manière négligeable puisque l'on a
 \begin{equation}\label{estimation pathologiques}
\#\{0\leq N_1,N_2\leq \eta^{-1}:\emptyset\subsetneq \mathcal{C}(N_1,N_2)\cap \comp.\X\subsetneq \comp.\X\}\ll\eta^{-1}.
\end{equation}

On définit 
\begin{equation}\label{définitionvraie cN1N2}c(N_1,N_2):=\left|F(N_1\eta,N_2\eta)\right|.\end{equation}
Pour s'assurer que $c(N_1,N_2)$ soit suffisamment grand,   on introduit 
l'ensemble
\begin{align*}
 \mathcal{N}(\eta):=\left\{1\leq N_1,N_2\leq \eta^{-1}: 
 \mathcal{C}(N_1,N_2)\subset \comp.\X,(N_1\eta,N_2\eta)\not\in  \bigcup_{j=0}^2S(\theta_j)
\right\}
\end{align*}
où $\theta_1,$ $\theta_2$ et $\theta_3$ désignent les racines de $F(X_1,1)$ et 
\begin{align*}
 S(\theta):=\left\{0\leq y_1,y_2\leq 1:|y_1-y_2\theta|\leq \eta^{\frac{1}{4}}\right\}.
\end{align*}
On a alors, uniformément en $(N_1,N_2)\in \mathcal{N}(\eta)$, 
\begin{equation}\label{pas petit}
  c(N_1,N_2)
\gg
\prod_{j=0}^2\left|N_1\eta-N_2\eta \theta_j\right|> \eta^{\frac{3}{4}}
\end{equation}
et
\begin{equation}\label{contribution éléments hors mathcalN eta}
 \#\left\{0\leq N_1,N_2\leq \eta^{-1}:(N_1\eta,N_2\eta)\in  \bigcup_{j=0}^2S(\theta_j)
\right\}\ll \eta^{-\frac{7}{4}}.
\end{equation}

On introduit l'ensemble d'idéaux
\begin{multline}\label{définition A}
\mathcal{A}(a_1,a_2,q;N_1,N_2):=\left\{
((a_1+n_1q)\omega_1+(a_2+n_2q)\omega_2)\mathfrak{d}^{-1}:(n_1,n_2)\in \mathcal{C}(N_1,N_2)\right.\\\left.\text{ et }
 (a_1+n_1q,a_2+n_2q)=1\vphantom{\mathfrak{d}^{-1}}\right\}
\end{multline}
que l'on notera plus simplement $\mathcal{A}$ dans la suite, s'il n'y a pas d’ambiguïté.
En s'inspirant du lemme 2.3 de \cite{HM02}, on peut montrer que les éléments de $\mathcal{A}$ sont tous distincts et que 
$c(N_1,N_2)q^3  \X^3$
est la valeur moyenne de $|F|$ sur $\mathcal{C}(N_1,N_2)$.
\begin{lemme}\label{non associes}
On a uniformément  en $\X\geq2$, $( N_1,N_2)\in\mathcal{N}(\eta)$ et $(n_1,n_2)\in \mathcal{C}(N_1,N_2)$,
\begin{equation}\label{définition cN1N2}
|F(a_1+n_1q,a_2+n_2q)|=c(N_1,N_2)q^3  \X^3\left(1+O\left(\eta^{\frac{1}{4}} \right)\right).
\end{equation}

De plus, il existe $\X_0\geq1$ tel que la relation \begin{equation}\label{correspondance bijective}
\begin{array}{ccc}\{(n_1,n_2)\in\mathcal{C}(N_1,N_2):
(a_1+n_1q,a_2+n_2q)=1\}&\longrightarrow&\mathcal{A}\\(n_1,n_2)&\mapsto&((a_1+n_1q)\omega_1+(a_2+n_2q)\omega_2)\mathfrak{d}^{-1}\end{array}
                                       \end{equation}
induise une bijection dès que $\X\geq \X_0$. 
\end{lemme}
\begin{proof}
La relation (\ref{définition cN1N2}) est une conséquence de la formule de Taylor et de la définition 
(\ref{définitionvraie cN1N2}). 

Pour établir  le caractère bijectif de la correspondance (\ref{correspondance bijective}), on suppose  
que $(n_1,n_2)$ et $(m_1,m_2)\in\mathcal{C}(N_1,N_2)$ engendrent le même idéal et l'on considère  
l'entier algébrique\begin{align}\nonumber
\alpha:=\frac{(a_1+n_1q)\omega_1+(a_2+n_2q)\omega_2}{(a_1+m_1q)\omega_1+(a_2+m_2q)\omega_2},
                                                    \end{align}
inversible dans $\OK$. 
En raisonnant comme dans la preuve du lemme 2.3 de \cite{HM02}, on observe que $\text{Tr}_{\K/\Q}(\alpha)=\text{Tr}_{\K/\Q}(\alpha^{-1})=3$ dès que $\X$ est assez grand ce qui implique que $\alpha=1$.
\end{proof}

Compte tenu des observations précédentes, on peut écrire
\begin{multline}
 S^{(1)}(\comp.\X;h,F;a_1,a_2,q)=  \sum_{(N_1,N_2)\in \mathcal{N}(\eta)}S(\mathcal{A}(a_1,a_2,q;N_1,N_2);h)
\\+\sum_{\substack{0\leq N_1,N_2\leq\eta^{-1}\\\mathcal{C}(N_1,N_2)\cap\comp.\X\neq\emptyset\\
(N_1,N_2)\notin \mathcal{N}(\eta)
}}\sum_{\substack{(n_1,n_2)\in\mathcal{C}(N_1,N_2)}}h(F(a_1+n_1q,a_2+n_2q))\label{décomposition Jordaniesque}
\end{multline}
où
\begin{equation}\label{définition SAh}
S(\mathcal{A};h):=\sum_{\J\in\mathcal{A}}h(\J).
\end{equation}
Au vu de (\ref{estimation pathologiques}) et 
(\ref{contribution éléments hors mathcalN eta}), on s'attend à ce que la contribution de la seconde somme de (\ref{décomposition Jordaniesque}) soit négligeable
(voir le corollaire 1 de \cite{BB06}). Les termes restants fourniront le terme principal et concentreront notre attention dans la suite.
Comme souligné dans l'introduction, on verra  au paragraphe \ref{description de la méthode}
que la relation (\ref{définition cN1N2}) nous amène à  comparer $S(\mathcal{A}(a_1,a_2,q;N_1,N_2);h)$ à la quantité plus régulière  $S(\mathcal{B}(q;N_1,N_2);h\sigma_q)$
où  $\sigma_q$ est une fonction de densité qui sera introduite au paragraphe \ref{Fonction de densité sigmaq},
\begin{align}\label{définition B}
\mathcal{B}=\mathcal{B}(q;N_1,N_2):=\{\J\in\JK:c(N_1,N_2)q^3  \X^3<N(\J)\leq c(N_1,N_2)q^3  \X^3(1+\eta)\}.
\end{align} 
et\begin{align}\label{définition SBh}
S(\mathcal{B};h\sigma_q)
:=\sum_{\J\in\mathcal{B}}h(N(\J))\sigma_q(\J).
\end{align}

\section{Sommes de Type I et niveau de distribution}\label{paragraphe Type I}

Dans cette partie, on étudie la distribution multiplicative des éléments de $\mathcal{A}$ et $\mathcal{B}$ définis par (\ref{définition A}) et (\ref{définition B}).

\subsection{Sommes d'exponentielles}

Étant donné des entiers $g_1$ et $g_2$ et un idéal admissible $\I$, on considère la somme d'exponentielles
\begin{align*}
S\left(g_1,g_2;\I\right)=S\left(a_1,a_2,q;g_1,g_2;\I\right)
:=\sum_{\substack{1\leq n_1,n_2\leq N(\I)\\\I|((a_1+n_1q)\omega_1+(a_2+n_2q)\omega_2)\mathfrak{d}^{-1}}}
\e\left(\frac{g_1n_1+g_2n_2}{N(\I)}\right)
\end{align*}
où $\e(t):=\exp(2i\pi t)$.
L'estimation d'une telle quantité constitue un ingrédient de base dans de nombreux
travaux relatifs aux formes cubiques (voir par exemple le paragraphe 2.4 de \cite{Gr71} ou le paragraphe
 5 de \cite{HB01}) et intervient fréquemment dans la majoration des sommes de Type~I. 

Une étude de $S(g_1,g_2;\I)$ s'initie en observant la relation de multiplicativité 
\begin{align}\label{relation multiplicativité S(g1g2I)}
S(g_1,g_2;\I_1\I_2)=S(g_1,g_2;\I_1)S(g_1,g_2;\I_2)
 \end{align}
 valable dès que $(N(\I_1),N(\I_2))=1$. Conséquence directe du théorème des restes chinois (voir le
 lemme 2.1 de \cite{HM04}), l'identité (\ref{relation multiplicativité S(g1g2I)}) permet essentiellement de ramener l'étude de $S\left(g_1,g_2;\I\right)$ à la démonstration du lemme suivant.
 \begin{lemme}\label{Somme exponentielle, cubique irréductible, raison régulière}
Soient $\mathfrak{p}$ un idéal premier non ramifié tel que $N(\mathfrak{p})=p$ avec $p\nmid qN(\omega_1\omega_2)$ et $k\geq1$.  On a
\begin{align*}
S(g_1,g_2;\mathfrak{p}^{k})=\left\{\begin{array}{ll}
p^{k}\e\left(-\frac{(a_1g_1+a_2g_2)q^{-1}}{p^k}\right)&\text{ si }\mathfrak{p}^{k}|(g_2\omega_1-g_1\omega_2),\\
0&\text{ sinon},\end{array}\right.
\end{align*}où $q^{-1}$ est la solution modulo $p^k$ du système $qq^{-1}\equiv1\pmod{ p^k}$.
\end{lemme}

En vue d'établir ce résultat, nous étudions dans un premier temps la somme d'exponentielles
\begin{align*}
S^{(1)}\left(g_1,g_2;\mathfrak{p}^{k}\right):=\sum_{\substack{1\leq n_1,n_2\leq N(\mathfrak{p}^{k})\\\left(n_1,n_2,p\right)=1\\
\mathfrak{p}^{k}|(n_1\omega_1+n_2\omega_2)}}\e\left(\frac{g_1n_1+g_2n_2}{N(\mathfrak{p}^{k})}\right).
\end{align*} 
\begin{lemme}\label{Somme exponentielle, cubique irréductible, arguments premiers entre eux}
Soient $\mathfrak{p}$ un idéal premier  non ramifié tel que $N(\mathfrak{p})=p$, $p\nmid N(\omega_1\omega_2)$
et $k\geq 1$. 

Si $(g_1,g_2,p)=1$, alors on a
\begin{equation}S^{(1)}\left(g_1,g_2;\mathfrak{p}^{k}\right)=\left\{\begin{array}{ll}
\left(p-1\right)p^{k-1} &\text{ si }\mathfrak{p}^{k}|(g_2\omega_1-g_1\omega_2),\\
-p^{k-1} &\text{ si }
\mathfrak{p}^{k-1}\parallel (g_2\omega_1-g_1\omega_2),\\
0&\text{ sinon}.
\end{array}\right.\label{S p quelconque}\end{equation}

	De plus, si $(g_1,g_2,p^{k})=p^{k_0}$ avec $k_0<k$, alors on a 
	\begin{equation}S^{(1)}(g_1,g_2;\mathfrak{p}^k)=p^{k_0}S^{(1)}\left(\frac{g_1}{p^{k_0}},
	\frac{g_2}{p^{k_0}},\mathfrak{p}^{k-k_0}\right).
                                                                     \label{g pas coprime}\end{equation}
\end{lemme}

\begin{proof}
La preuve s'articule essentiellement autour de l'argument développé dans le paragraphe 6 de \cite{Gr70}.
On remarque tout d'abord que
\begin{align*}
S^{(1)}\left(g_1,g_2;\mathfrak{p}^{k}\right)=\sum_{1\leq m\leq p^{k}}N_1\left(\mathfrak{p}^{k},m\right) \e\left(\frac{m}{p^{k}}\right)
\end{align*}
où $N_1(\mathfrak{p}^k,m)$ désigne le nombre de couples $(n_1,n_2)$ modulo $p^k$ tels que \begin{align*}
(n_1,n_2,p)=1,\quad
\mathfrak{p}^{k}|(n_1\omega_1+n_2\omega_2)\quad\text{ et }\quad g_1n_1+g_2n_2\equiv m\pmod{p^{k}}%\right\}
.\end{align*}
Pour tout couple  $(m,m')$ satisfaisant $(m,p^{k})=(m',p^{k})$, il existe un entier $\lambda$ premier à $p$ tel que 
$m'\equiv\lambda m\pmod{p^{k}}$.
Ainsi, la relation 
\begin{align*}
(n_1\pmod{p^k},n_2\pmod{p^k})\rightarrow(\lambda n_1\pmod{p^k},\lambda n_2\pmod{p^k})
 \end{align*}
 définit une bijection de  $N_1\left(\mathfrak{p}^{k},m\right)$ dans $N_1\left(\mathfrak{p}^{k},m'\right)$.  
Il s'ensuit que
\begin{align}
 S^{(1)}\left(g_1,g_2;\mathfrak{p}^{k}\right)&=\sum_{0\leq j\leq k}N_1\left(\mathfrak{p}^{k},p^{j}\right) 
\sum_{\substack{1\leq \lambda\leq p^{k }\\(\lambda,p^{k})=p^{j}}}\e\left(\frac{\lambda}{p^{k-j}}\right)= N_1\left(\mathfrak{p}^{k},p^{k}\right)-N_1\left(\mathfrak{p}^{k},p^{k-1}\right).\label{N-N}
\end{align}
Les rôles de $g_1$ et $g_2$ étant symétriques, l'hypothèse $(g_1,g_2,p)=1$ permet de supposer sans perte de généralité que $(g_1,p)=1$. On peut donc écrire
\begin{align*}
N_1\left(\mathfrak{p}^{k},p^{k}\right)
&=\#
\left\{1\leq n_1,n_2\leq p^{k},(n_2,p)=1,\mathfrak{p}^{k}|(n_1\omega_1+n_2\omega_2),n_1\equiv-g_2n_2g_1^{-1}\pmod{ p^{k}}\right\}\end{align*}
d'où l'on déduit que \begin{equation}\label{N p alpha}
N_1\left(\mathfrak{p}^{k},p^{k}\right)=\left\{\begin{array}{ll}(p-1)p^{k-1}&\text{ si }\mathfrak{p}^{k}|(g_2\omega_1-g_1\omega_2),\\
                        0&\text{ sinon}.\end{array}\right.\end{equation}
De manière similaire, les conditions $p\nmid N(\omega_1)$ et $\mathfrak{p}$ non ramifié  entraînent que
\begin{align}
N_1\left(\mathfrak{p}^{k },p^{k-1}\right)
=&\nonumber\#
\left\{1\leq n_1,n_2\leq p^{k}:(n_2,p)=1,\mathfrak{p}^{k}|\left(p^{k-1}\omega_1-n_2(g_2\omega_1-g_1\omega_2)\right),\right.\\
&\pushright{ \left.n_1
\equiv\left(p^{k-1}-g_2n_2\right)g_1^{-1}\pmod{ p^{k}}\right\}}\nonumber\\
=&\left\{\begin{array}{ll}p^{k-1}&\text{ si }\mathfrak{p}^{k-1}\parallel (g_2\omega_1-g_1\omega_2),\\
                        0&\text{ sinon}.\end{array}\right.\label{N p alpha-1}
           \end{align}
La relation (\ref{S p quelconque}) est une simple conséquence de  (\ref{N-N}), (\ref{N p alpha}) et (\ref{N p alpha-1}).

Supposons à présent que $(g_1,g_2,p^k)=p^{k_0}$. 
Sous l'hypothèse $k_0<k$, on a
\begin{align}
S^{(1)}\left(g_1,g_2;\mathfrak{p}^{k}\right)
&=\label{vers g pas coprime1}\sum_{\substack{1\leq n_1',n_2'<p^{k-k_0}\\(n_1',n_2',p)=1}}
\e\left(\frac{\frac{g_1}{p^{k_0}}n_1'+\frac{g_2}{p^{k_0}}n_2'}{p^{k-k_0}}\right)
N_2\left(n_1',n_2',\mathfrak{p}^{k},p^{k_0}\right)
\end{align}
où $N_2\left(n_1',n_2',\mathfrak{p}^{k},p^{k_0}\right)$ désigne le nombre de couples $(n_1,n_2)$ modulo $p^k$ tels que 
\begin{align*}
(n_1,n_2)\equiv (n_1',n_2')\pmod{p^{k-k_0}}
\quad\text{ et }\quad\mathfrak{p}^{k}|(n_1\omega_1+n_2\omega_2)
.\end{align*}
Puisque $\mathfrak{p}$ n'est pas ramifié, on a 
\begin{align}
\nonumber N_2\left(n_1',n_2',\mathfrak{p}^{k},p^{k_0}\right)
&=\#\left\{0\leq n^*_1,n^*_2< p^{k_0}, \mathfrak{p}^{k}|\left((n_1'+n_1^*p^{k-k_0})\omega_1+(n_2'+n_2^*p^{k-k_0})\omega_2\right)\right\}\\
&=\#\nonumber\left\{0\leq n^*_1,n^*_2< p^{k_0}, \mathfrak{p}^{k}|\left((n_1'\omega_1+n_2'\omega_2)+(n_1^*\omega_1+n_2^*
\omega_2)p^{k-k_0}\right)\right\}\\
&=\label{vers g pas coprime}\left\{\begin{array}{ll}
p^{k_0}&\text{ si }\mathfrak{p}^{k-k_0}|(u'_1\omega_1+u'_2\omega_2)\\
0&\text{ sinon}.\end{array}\right.
\end{align}
La relation (\ref{g pas coprime}) est donc une conséquence directe de (\ref{vers g pas coprime1}) et (\ref{vers g pas coprime}).
\end{proof}
\begin{proof}[Preuve du lemme \ref{Somme exponentielle, cubique irréductible, raison régulière}]
Supposons 
que $\left(g_1,g_2,p^{k}\right)=p^{k_0}$ et
$\mathfrak{p}^{k_1}\parallel (g_2\omega_1-g_1\omega_2)$, de sorte que  $k_0\leq k_1$.  
Au vu des hypothèses sur $\mathfrak{p}$, 
 il vient
\begin{align}
S(0,0,1;g_1,g_2;\mathfrak{p}^k)
\nonumber&=
\sum_{0\leq j\leq k }\sum_{\substack{1\leq n_1,n_2\leq p^{ k }\\
\left(n_1,n_2,p^{ k }\right)=p^{j}\\ 
\mathfrak{p}^{ k }|(n_1\omega_1+n_2\omega_2)}}
\e\left(\frac{g_1n_1+g_2n_2}{p^{ k }}\right)\\
&=1+\sum_{1\leq j \leq k}S^{(1)}\left(g_1,g_2;\mathfrak{p}^{j}\right).\label{S p vraiment quelconques}\end{align}

Si $1\leq j \leq k_0$, on observe que
\begin{align}
S^{(1)}\left(g_1,g_2;\mathfrak{p}^{j}\right)&\nonumber=\#\{1\leq n_1,n_2\leq p^{j }, \left(n_1,n_2,p\right)=1, 
\mathfrak{p}^{ j}|(n_1\omega_1+n_2\omega_2)\}\\
&=(p-1)p^{ j -1}.\label{beta+alpha}
\end{align}
Dans le cas où $k _0<j\leq k$, on applique le lemme 
\ref{Somme exponentielle, cubique irréductible, arguments premiers entre eux} pour obtenir que
\begin{align}
S^{(1)}\left(g_1,g_2;\mathfrak{p}^{j  }\right)&\nonumber=p^{k_0}S^{(1)}\left(\frac{g_1}{p^{ k_0}},\frac{g_2}{p^{k_0}};\mathfrak{p}^{ j-k_0}\right)\\
&=\left\{\begin{array}{ll}
\left(p-1\right)p^{ j -1}&\text{ si } j  \leq k_1,\\
-p^{ k_1}&\text{ si } j  = k_1+1,\\
0&\text{ sinon}.
\end{array}\right.\label{beta-alpha}\end{align}

La combinaison des formules (\ref{S p vraiment quelconques}), (\ref{beta+alpha}) et (\ref{beta-alpha}) entraîne alors la relation
\begin{equation}\label{S001}
S(0,0,1;g_1,g_2;\mathfrak{p}^k)
=\left\{\begin{array}{ll}
p^{ k}&\text{ si } k \leq k_1,\\
0&\text{ sinon}.
\end{array}\right.
\end{equation}
Puisque $(p,q)=1$, le lemme \ref{Somme exponentielle, cubique irréductible, raison régulière} 
est une conséquence de la formule (\ref{S001}) et  de la relation
\begin{align*}
&S(a_1,a_2,q;g_1,g_2;\mathfrak{p}^k)
=\e\left(-\frac{(a_1g_1+a_2g_2)q^{-1}}{p^k}\right)S(0,0,1;g_1q^{-1},g_2q^{-1};\mathfrak{p}^k).
\end{align*}
\end{proof}

\subsection{Niveau de distribution de $\mathcal{A}$}

Dans ce paragraphe, on cherche des estimations, en moyenne sur $\I$, du cardinal de 
\begin{align*}
\mathcal{A}_\I:=\left\{\J\in\mathcal{A}:\I|\J\right\}.
\end{align*}  
De tels résultats sont obtenus, en moyenne sur des idéaux $\I$ sans facteur carré, dans le lemme 2.2  de \cite{HM04}. 
Nous généralisons ici leur démonstration à des idéaux quelconques en y introduisant 
l'estimation de sommes d'exponentielles du lemme \ref{Somme exponentielle, cubique irréductible, raison régulière}.

Pour faciliter le traitement de la partie $q$-singulière des idéaux, on introduit, pour tout réel $\z\geq1$, l'ensemble d'idéaux
\begin{align*}
\mathcal{M}(\z):=\left\{\J\in\JK:N(\J_{q\text{-s}})\leq \z \text{ et }\J\text{ admissible}\right\}.
\end{align*}

On omet dans un premier temps la condition $(a_1+n_1q,a_2+n_2q)=1$ en étudiant,   pour  $\widetilde{\eta}\geq1$ et $\X_1,\X_2>0$, le cardinal de l'ensemble \begin{multline}
\mathcal{S}(\widetilde{\eta},\X_1,\X_2;a_1,a_2,q;\I)
:=
\left\{(n_1,n_2):\vphantom{\mathfrak{d}^{-1}}\widetilde{\eta} \X_i\leq n_i<\widetilde{\eta} (\X_i+1)\text{ pour }i=1,2\right.\\\left.\text{ et }  \I|((a_1+n_1q)\omega_1+(a_2+n_2q)\omega_2)\mathfrak{d}^{-1}
\right\},\label{définition S vers mathcalA}
\end{multline}
noté encore $\mathcal{S}(\I)$. 
En effectuant une partition de  $\mathcal{S}(\I)$ selon les  classes de congruences modulo $N(I)$, on peut écrire
\begin{align}
\# \mathcal{S}(\I)
=&\nonumber\frac{1}{N(\I)^2}\sum_{\substack{1\leq g_1,g_2\leq N(\I)}}S\left(g_1,g_2;\I\right)
\sum_{\substack{\widetilde{\eta}\X_1\leq n_1< \widetilde{\eta} (\X_1+1)\\\widetilde{\eta} \X_2\leq n_2< \widetilde{\eta} (\X_2+1)
}}\e\left(\frac{-g_1n_1-g_2n_2}{N(\I)}\right)\\
=\nonumber&\frac{S(0,0;\I)
}{N(\I)^2}\left(\widetilde{\eta}^2+O(\widetilde{\eta})\right)
\\&+O\left(\sum_{\substack{(g_1,g_2)\neq (0,0)\mod N(\I)\\|g_1|,|g_2|\leq \frac{N(\I)}{2}}}
\frac{|S(g_1,g_2;\I)|}{N(\I)^2}
\min\left(\widetilde{\eta},\frac{N(\I)}{|g_1|}\right)\min\left(\widetilde{\eta},\frac{N(\I)}{|g_2|}\right)\right)\label{S modulo}.
\end{align}
Compte tenu de la définition des idéaux admissibles, 
on peut observer que, pour tout idéal admissible $\I$ et tout premier  $1$-régulier $p$, on a 
\begin{align}\label{S pour les 1 réguliers}
 S(0,0;\I_p)\in\left\{0,\min\left(p^{v_p(q)}N(\I_p),N(\I_p)^2\right)\right\}
\end{align}
tandis que $S(0,0;\I_p)=0$ n'est possible que si $p|q$.
De plus, en utilisant les bornes supérieures de [\cite{Da99}, lemme 3.1], à savoir, uniformément en $p$ premier et $k\geq1$, \begin{equation}\label{majoration de gammaf}
\gamma_F(p^k)\ll p^{4k/3}\qquad\text{et}\qquad \gamma_F(p)=p\nu(p)+O(1),
\end{equation} 
on a, uniformément en $p$ premier et $N(\I_p)>p^{v_p(q)}$, 
\begin{align}
S(0,0;\I_p)&\leq\#\left\{1\leq n_1,n_2\leq N(\I_p):
N(\I_p)|F\left(a_1+n_1q,a_2+n_2q\right)\right\}\nonumber\\
&\nonumber\leq 
p^{2v_p(q)}\gamma_F(N(\I_p))\\
&\ll N(\I_p)^{4/3}p^{2v_p(q)}.\nonumber
\end{align}
Cette estimation, combinée à la la majoration triviale $S(0,0;\I_p)\leq N(\I_p)^2$ lorsque $N(\I_p)\leq p^{v_p(q)}$ entraîne finalement la borne supérieure, uniforme en $p$ premier,
\begin{equation}
S(0,0;\I_p)\ll N(\I_p)^{4/3}\min(p^{2v_p(q)},N(\I_p)^{2/3}).
\label{inégalité S singulier}
\end{equation}

Le terme d'erreur relatif aux fréquences non nulles de (\ref{S modulo}) est estimé dans le lemme suivant.
\begin{lemme}\label{pas premiers}
Soient $B\geq0$ et $\varepsilon>0$. Il existe $c(B)\geq 0$ tel que, uniformément en $\X_1,\X_2\geq0$, $\widetilde{\eta}\geq1$, 
 $\z\geq 1$ et $D\geq 1$, 
\begin{align*}
\Sigma_1:=\sum_{\substack{ \I\in \mathcal{M}(\z)\\D<N(\I)\leq 2D}}\tau_{\K}(\I)^B\left| \#\mathcal{S}(\I)
-\frac{S(0,0;\I)\widetilde{\eta}^2}{N(\I)^2}
\right|\ll (\widetilde{\eta} +D)\z^{1+\varepsilon}(\log 2D)^{c(B)}
\end{align*}\end{lemme}

\begin{proof}
Au vu de (\ref{S modulo}), on peut écrire
$
\Sigma_1\ll \Sigma_{11}+\Sigma_{12}
$
où
\begin{align*}
\Sigma_{11}&:=\sum_{\substack{ \I\in \mathcal{M}(\z)\\D<N(\I)\leq 2D}}\tau_{\K}(\I)^B
\frac{S(0,0;\I)}{N(\I)^2}\widetilde{\eta} 
\end{align*}
et
\begin{align*}
\Sigma_{12}:=\sum_{\substack{ \I
\in \mathcal{M}(\z)\\D<N(\I)\leq 2D}}\tau_{\K}(\I)^B
\sum_{\substack{(g_1,g_2)\neq (0,0)\mod N(\I)\\ |g_1|,|g_2|\leq \frac{N(\I)}{2}}}
\frac{|S(g_1,g_2;\I)|}{N(\I)^2}
\min\left(\widetilde{\eta} ,\frac{N(\I)}{|g_1|}\right)\min\left(\widetilde{\eta} ,\frac{N(\I)}{|g_2|}\right).
\end{align*}
Sous l'hypothèse $\I\in \mathcal{M}(\z)$, %
 les relations (\ref{S pour les 1 réguliers}), (\ref{inégalité S singulier}) et (\ref{tau nombre de norme}) entraînent que
\begin{align}
\nonumber\Sigma_{11}&\leq \widetilde{\eta} \z\sum_{\substack{ D<N(\I)\leq 2D}}\frac{\tau_{\K}(\I)^B}{N(\I)}\nonumber\\
&\ll \widetilde{\eta} \z(\log 2D)^{c(B)}\label{logZ}.
\end{align}
En suivant l'argument développé dans la preuve du lemme 3.1 de \cite{HM02}, on écrit la décomposition 
$\Sigma_{12}:=\Sigma_{13}+\Sigma_{14}$ où la sommation $\Sigma_{13}$ (resp. $\Sigma_{14}$) porte sur 
les phases $(g_1,g_2)$ satisfaisant $g_1g_2=0$ (resp. $g_1g_2\neq0$).    
En utilisant successivement le fait que $(N(\I_{q\text{-r}}),qN(\omega_1\omega_2))=1$, 
le lemme \ref{Somme exponentielle, cubique irréductible, raison régulière}, l'inégalité  
$\tau_{\K}(\I)^B\ll_{B,\varepsilon} N(\I)^{\varepsilon}$ et (\ref{tau nombre de norme}), il vient
\begin{align}
\nonumber\Sigma_{13}&
\ll \widetilde{\eta} \z\sum_{0<g\leq D}\frac{1}{g}\sum_{\substack{\I\in \mathcal{M}(\z)\\D<N(\I)\leq 2D\\ 
\I_{q\text{-r}}|g}}\tau_{\K}(\I)^{B}
\\
&\ll \widetilde{\eta} \z^{1+\varepsilon}\sum_{0<g\leq D}\frac{\tau(g)^{c(B)}}{g}\nonumber\\
&\ll \widetilde{\eta} \z^{1+\varepsilon}(\log 2D)^{c(B)}\label{Sigma13}.
\end{align}
On remarque de même que 
\begin{align}
\nonumber\Sigma_{14}
&\ll D\z\sum_{1\leq |g_1|,|g_2|\leq D}\frac{1}{|g_1g_2|}
\sum_{\substack{D<N(\I)\leq 2D\\ \I\in \mathcal{M}(\z)\\\I_{q\text{-r}}|g_2\omega_1-g_1\omega_2}}
\tau_{\K}(\I)^B\\
&\nonumber\ll 
D\z^{1+\varepsilon}\sum_{1\leq |g_1|,|g_2|\leq D}\frac{\tau_{\K}
((g_2\omega_1-g_1\omega_2))^{c(B)}}{|g_1g_2|}.\end{align}
En découpant le domaine des variables $g_1$ et $g_2$ de manière dyadique et en faisant appel au 
lemme \ref{somme des diviseurs}, il suit que
\begin{align}
\Sigma_{14}
&\ll D\z^{1+\varepsilon}(\log 2D)^{c(B)}\label{Sigma14}.\end{align} Le résultat annoncé est alors une conséquence des estimations (\ref{logZ}), (\ref{Sigma13}) et (\ref{Sigma14}).
\end{proof}

Il est possible de relier $\mathcal{S}(\I)$ au cardinal de $\mathcal{A}_{\I}$ par un argument de convolution. En effet, on remarque que  la formule d'inversion de Möbius permet  d'écrire, 
sous la condition $(a_1,a_2,q)=1$, la relation
\begin{multline}
 \#\mathcal{A}_{\I}=\sum_{(q,d)=1}\mu(d)
 \#\left\{(n_1,n_2)\in\mathcal{C}(N_1,N_2):\I|((a_1+n_1q)\omega_1+(a_2+n_2q)\omega_2)\mathfrak{d}^{-1}
\right. \\\left.\text{ et }d|(a_1+n_1q,a_2+n_2q)\vphantom{\mathfrak{d}^{-1}}\right\}.
\label{convolution AI}\end{multline}
Pour tout $d$ premier à $q$ et 
$i=1,2$,
on définit
l'entier $b_i(d)\in\{1,\ldots,d\}$ comme l'unique solution de la congruence 
$a_i-b_i(d)q\equiv0\pmod d$. Ceci permet d'introduire la correspondance $
(n_1,n_2)\mapsto\left(\frac{n_1+b_1(d)}{d},\frac{n_2+b_2(d)}{d}\right)$ entre
 
\begin{align*}
\left\{(n_1,n_2)\in\mathcal{C}(N_1,N_2):\I|((a_1+n_1q)\omega_1+(a_2+n_2q)\omega_2)\mathfrak{d}^{-1}
 \text{ et }d|(a_1+n_1q,a_2+n_2q)\right\}
 \end{align*}
 et 
 \begin{align*}\mathcal{S}\left(\frac{\eta\X}{d},N_1+\frac{b_1(d)}{\eta\X},N_2+\frac{b_2(d)}{\eta\X};a_1(d),a_2(d),
q;\frac{\I}{(\I,d)}\right)\end{align*}
où $\mathcal{S}$ est défini par (\ref{définition S vers mathcalA}) et $a_i(d):=\frac{a_i-b_i(d)q}{d}$. 
Au vu du lemme \ref{pas premiers} et de la convolution (\ref{convolution AI}), on peut ainsi espérer approcher le cardinal de $\mathcal{A}_{\I}$ par
 \begin{align*} 
\sum_{d\geq1}\mu(d)\frac{\eta^2  \X^2}{d^2N
\left(\frac{\I}{(\I,d)}\right)^2} S_d(\I)
                             \end{align*}  
où $S_d(\I)$ désigne le nombre de couples $(n_1,n_2)$ modulo 
$N\left(\frac{\I}{(\I,d)}\right)$ tels que
\begin{align*}\frac{\I}{(\I,d)}\left|
\left(\left(a_1(d)+n_1q\right)
\omega_1+\left(a_2(d)+n_2q\right)\omega_2\right)\mathfrak{d}^{-1}\right.\end{align*}
 si $(d,q)=1$ et $S_d(\I)=0$ sinon. 

Si $S_1(\I)\neq0$, le théorème des restes chinois implique que
$d\mapsto\frac{S_d(\I)}{S_1(\I)}$ est multiplicative. Ceci permet de faire apparaître un produit eulérien que l'on décompose en $4$ parties selon que $p|q$ ou non, $p|N(\I)$ ou non. On obtient alors 
\begin{align}
\sum_{d\geq1}\mu(d)\frac{S_d(\I)}{d^2N\left(\frac{\I}{(\I,d)}\right)^2}&=
\frac{S_1(\I)}{N(\I)^2}\prod_{p
}\left(1-\frac{S_p(\I)N((\I,p))^2}{S_1(\I)p^2}\right)
 =\frac{\alpha_q(\I)}{\zeta_q(2)N(\I)}\label{définition alpha}
\end{align}
où
\begin{align*}
 \zeta_q(s):=\zeta(s)\prod_{p|q}\left(1-\frac{1}{p^s}\right)
\end{align*}
et
$\alpha_q$ est la fonction à support sur les idéaux admissibles définie, pour tout idéal admissible $\I$, par
\begin{equation}\label{définition rho2}
\alpha_q(\I):=
\prod_{\substack{p|N(\I)\\p\nmid q
}}
\left(1-\frac{1}{p^2}\right)^{-1}\left(\frac{S_1(\I_p)}{N(\I_p)}
-\frac{S_p(\I_p)N((\I_p,p)^2)}{p^2N(\I_p)}\right)
\prod_{\substack{p|(N(\I),q)}}\frac{S_1(\I_p)}{N(\I_p)}.\end{equation}
La formule (\ref{définition alpha}) demeure vraie si $S_1(\I)=0$ puisque l'on a alors $S_d(\I)=0$ pour tout $d$.
On observe que la relation (\ref{S pour les 1 réguliers}) et le fait que $N((\I_p,p)=p$ pour tout idéal $\I$ admissible et tout  $p$ premier $1$-régulier
entraînent que l'on a 
\begin{equation}\label{écriture rho2 non singulier}
\alpha_q(\I_p)=\left\{\begin{array}{ll}(1+p^{-1})^{-1}&\text{si }p\nmid q,\\
                          0\text{ ou }\min\left(p^{v_p(q)},N(\I_p)\right)&\text{si }p|q.
                                         \end{array}
\right.
\end{equation}De même, les majorations  (\ref{inégalité S singulier}) et  $S_p(\I_p)\leq S_1\left(\frac{\I_p}{(\I_p,p)}\right)$ permettent  d'établir  la borne supérieure suivante,
uniforme pour $p$ premier et $\I$ admissible,
 \begin{align}
\nonumber|\alpha_q(\I_p)|&\leq\left(1-\frac{1}{p^2}\right)^{-1}\max
\left(\frac{S_1(\I_p)}{N(\I_p)},\frac{S_p(\I_p)N((\I_p,p))^2}{p^2N(\I_p)}\right)\\&\ll 
N(\I_p)^{1/3}\min(p^{2v_p(q)},N(\I_p)^{2/3}).
\label{inégalité alpha singulier}\end{align} 

On déduit finalement
de ce qui précède que 
\begin{equation}\label{définition rAI}
\#\mathcal{A}_{\I}=\frac{\eta^2  \X^2}{\zeta_q(2)}\frac{\alpha_q(\I)}{N(\I)}
+r(\mathcal{A},\I)\end{equation}
où
\begin{multline}
|r(\mathcal{A},\I)|
\leq\sum_{(d,q)=1}\mu^2(d)\left|
\#\mathcal{S}\vphantom{-\frac{\eta^2  \X^2 S_d\left(\I\right)}{d^2N\left(\frac{\I}{(\I,d)}\right)^2}}\left(\frac{\eta \X }{d},N_1+\frac{b_1(d)}{\eta\X},N_2+\frac{b_2(d)}{\eta\X};a_1(d),a_2(d)
,q;\frac{\I}{(\I,d)}\right)\right.\\-
\left.\frac{\eta^2  \X^2 S_d\left(\I\right)}{d^2N\left(\frac{\I}{(\I,d)}\right)^2}\right|.
\qquad\label{equation AI}
\end{multline}

 Le lemme suivant montre que, sous la condition $\I\in \mathcal{M}(\z)$, on a effectivement le niveau de distribution attendu.

 \begin{lemme}\label{AI admissible} Soient $B\geq0$ et $\varepsilon>0$. 
 Il existe $c(B)\geq0$  tel que, uniformément en  $\X\geq 2$%, $  \X^{-\frac{1}{2}}<\eta\leq1$
, $ (N_1,N_2)\in\mathcal{N}(\eta)%\leq\eta^{-1}-1
$, 
$\z\geq 1$ et $1\leq D\leq   \X^2$,
\begin{align*}
\Sigma_2:=\sum_{\substack{D<N(\I)\leq2D\\\I\in \mathcal{M}(\z)}}\tau_{\K}(\I)^B|r(\mathcal{A},\I)|&\ll
\left(  \X^{3/2}+\X D^{1/2}\right)\z^{1+\varepsilon}%+q^2
(\log \X)^{c(B)}.
\end{align*}
\end{lemme}
\begin{proof}Soit $1\leq \Delta\leq   \X^{1/2}$ un paramètre qui sera explicité en toute fin de  preuve. 
La formule (\ref{equation AI}) permet d'écrire $\Sigma_2\ll\Sigma_{21}+\Sigma_{22}+\Sigma_{23}$ où
\begin{multline*}
\Sigma_{21}:=\sum_{\substack{\I\in \mathcal{M}(\z)\\D<N(\I)\leq2D\\d\leq\Delta,(d,q)=1}}
\tau_{\K}(\I)^B\left|
\#\mathcal{S}\left(\frac{\eta \X }{d},N_1+\frac{b_1(d)}{\eta\X},N_2+\frac{b_2(d)}{\eta\X};a_1(d),a_2(d)
,q;\frac{\I}{(\I,d)}\right)\vphantom{\frac{\eta^2  \X^2S_d\left(\I\right)}{d^2
N\left(\frac{\I}{(\I,d)}\right)^2}}\right.\\\left.-\frac{\eta^2  \X^2S_d\left(\I\right)}{d^2
N\left(\frac{\I}{(\I,d)}\right)^2}\right|
,\end{multline*}
\begin{align*}
\Sigma_{22}:=\sum_{\substack{\I\in \mathcal{M}(\z)\\D<N(\I)\leq2D\\d>\Delta,(d,q)=1}}
\tau_{\K}(\I)^B
\#\mathcal{S}\left(\frac{\eta \X }{d},N_1+\frac{b_1(d)}{\eta\X},N_2+\frac{b_2(d)}{\eta\X};a_1(d),a_2(d)
,q;\frac{\I}{(\I,d)}\right)\end{align*}
et
\begin{align*}
\Sigma_{23}:=\sum_{\substack{\I\in \mathcal{M}(\z)\\D<N(\I)\leq2D\\d>\Delta}}\mu^2(d)
\tau_{\K}(\I)^B\left|\frac{\eta^2  \X^2S_d\left(\I\right)}{d^2N\left(\frac{\I}{(\I,d)}\right)^2}\right|.
\end{align*}

En écrivant $\I=\mathfrak{a}\mathfrak{b}$ avec $\mathfrak{a}=(\I,d)$, on observe que les facteurs $\mathfrak{a}$ et $\mathfrak{b}$ demeurent dans $\mathcal{M}(\z)$.
L'hypothèse sur $\Delta$ entraîne que $\frac{\eta \X }{d}\geq 1$ si $d\leq\Delta$ ce qui nous permet d'utiliser le lemme \ref{pas premiers} avec les choix $\widetilde{\eta}=\frac{\eta \X }{d}$ et $\X_i=
N_i+\frac{b_i(d)}{\eta\X}$ pour $i=1,2$. On peut ainsi majorer $\Sigma_{21}$ par 
\begin{align}
&\ll\sum_{\substack{\mathfrak{a},\mathfrak{b}\in \mathcal{M}(\z)\\D<N(\mathfrak{a}\mathfrak{b})\leq2D\\
d\leq\Delta,(d,q)=1\\N(\mathfrak{a})|d^3}}\tau_{\K}(\mathfrak{a}\mathfrak{b})^B\left|
\#\mathcal{S}\left(\frac{\eta \X }{d},N_1+\frac{b_1(d)}{\eta\X},N_2+\frac{b_2(d)}{\eta\X};a_1(d),a_2(d)
,q;\mathfrak{b}\right)
-
\frac{\eta^2  \X^2S_d\left(\mathfrak{a}\mathfrak{b}\right)}{d^2N\left(\mathfrak{b}\right)^2}\right|
\\
&\nonumber\ll\sum_{\substack{
d\leq\Delta}}\sum_{N(\mathfrak{a})|d^3}\tau_{\K}(\mathfrak{a})^B\left(\frac{D}{N(\mathfrak{a})}+\frac{\X}{d}\right)\z^{1+\varepsilon}
(\log  \X)^{c(B)}\\
&\ll \nonumber  \z^{1+\varepsilon}(\X+D)(\log \X)^{c(B)}\sum_{\substack{
d\leq\Delta}}\tau(d)^{c(B)}\nonumber\\
&\ll \Delta \z^{1+\varepsilon}(\X+D)(\log \X)^{c(B)}\label{Sigma22}.
\end{align}

À l'aide du lemme \ref{somme des diviseurs}, on dispose de l'estimation
\begin{align}
\Sigma_{22}&\ll\nonumber\sum_{d>\Delta}\tau(d)^{c(B)}
\sum_{\substack{1\leq n_1,n_2\leq q(\X+1)/d}}\tau_{\K}((n_1\omega_1+n_2\omega_2))^{c(B)}\\
&\ll 
  \X^2\Delta^{-1}(\log \X)^{c(B)}\label{Sigma23}.
\end{align}

Enfin, en reprenant la décomposition $\I=\mathfrak{a}\mathfrak{b}$ introduite 
pour $\Sigma_{21}$ et en écrivant $d=N(\mathfrak{a}_{1\text{-r}})e$
, on remarque que $d \asymp N(\mathfrak{a})e$ et $S_d(\I)\leq N(\mathfrak{b})\z$. Il s'ensuit que 
\begin{align}
\Sigma_{23}
&\nonumber\ll
\z   \X^2\sum_{\substack{\mathfrak{a}\mathfrak{b}\in \mathcal{M}(\z)\\D<N(\mathfrak{a}\mathfrak{b})\leq 2D}}\frac{
\tau_{\K}(\mathfrak{a}\mathfrak{b})^{B}}{N(\mathfrak{a})^{2}N(\mathfrak{b})}
\sum_{e\gg\Delta N(\mathfrak{a})^{-1}}\frac{1}{e^2}\\
&\nonumber\ll
\Delta^{-1}\z  \X^2\sum_{\substack{\mathfrak{a}\mathfrak{b}\in \mathcal{M}(\z)\\D<N(\mathfrak{a}\mathfrak{b})\leq 2D}}\frac{
\tau_{\K}(\mathfrak{a}\mathfrak{b})^{B}}{N(\mathfrak{a}\mathfrak{b})}\\
&\ll \Delta^{-1}\z  \X^2(\log \X)^{c(B)}\label{Sigma24}.
\end{align}

En combinant les estimations (\ref{Sigma22}), (\ref{Sigma23}) et (\ref{Sigma24}), il suit
\begin{align*}
\Sigma_{2}\ll \left(\Delta^{-1}\z  \X^2+\Delta(\X+D)\z^{1+\varepsilon}\right)(\log \X)^{c(B)}.
\end{align*}
Le choix $\Delta=\min(  \X^{1/2},\X D^{-1/2})$ conduit au résultat annoncé.
\end{proof}

Au vu de la définition de $\mathcal{M}(\z)$, on peut montrer que la contribution des éléments de $\mathcal{A}_{\I}$ où $\I\notin
\mathcal{M}(\z)$ est négligeable lorsque $\z$ est convenablement choisi.
\begin{lemme}\label{mathcalAI}
Soit $B\geq 0$. Il existe $c(B)\geq0$  tel que, uniformément en  $\X\geq 2$,
 $(N_1,N_2)\in\mathcal{N}(\eta)$ et $1\leq D\leq   \X^2$,
\begin{align*}
\Sigma_3:=\sum_{\substack{ D<N(\I)\leq 2D}}\tau_{\K}(\I)^B
\left|r(\mathcal{A},\I)\right|
\ll 
\left(  \X^{15/8}+  \X^{7/4}D^{1/8}\right)(\log \X)^{c(B)},
\end{align*}
où les $r(\mathcal{A},\I)$ sont définis par (\ref{définition rAI}).
\end{lemme}
\begin{proof}
Au regard du lemme \ref{admissible} et de la définition de $\alpha_q$ pour les idéaux non admissibles, 
on observe que la somme $\Sigma_3$ ne porte que sur les idéaux admissibles. 

On remarque dans un premier temps que les relations (\ref{tau nombre de norme}) et (\ref{définition rdX}) entraînent
\begin{align}
\nonumber\sum_{\substack{ \I \not\in \mathcal{M}(\z)\\D<N(\I)\leq 2D}}\tau_{\K}(\I)^B\# \mathcal{A}_{\I}
&\ll\sum_{\substack{D\leq d\leq 2D\\d_{q\text{-s}}>\z}}\tau(d)^{c(B)}\#\left\{1\leq n_1,n_2\leq q(\X 
+1):d|F(n_1,n_2)\right\}\\&\ll
q^2  \X^2\sum_{\substack{D<d\leq 2D\\d_{q\text{-s}}>\z}}\tau(d)^{c(B)}\frac{\gamma_F(d)}{d^2}
+\sum_{\substack{D<d\leq 2D\\d_{q\text{-s}}>\z}}\tau(d)^{c(B)}
\left|r_d(q(\X(1+\eta)+1))
\right|.\label{vers corollaire AI}
\end{align}
La première somme du membre de droite peut être majorée à l'aide de l'inégalité  
\begin{equation}
\sum_{\substack{D<d\leq 2D\\d_{q\text{-s}}>\z}}\tau(d)^{c(B)}\frac{\gamma_F(d)}{d^2}
 \leq \sum_{\substack{\z<d_1\leq 2D\\d_1\text{ }q\text{-singulier}}}
\tau(d_1)^{c(B)}\frac{\gamma_F(d_1)}{d_1^2}\sum_{d_2\leq \frac{2D}{d_1}}\tau(d_2)^{c(B)}\frac{\gamma_F(d_2)}{d_2^2}
\label{premier moyenne}.
\end{equation}
En utilisant les estimations (\ref{majoration de gammaf}), on obtient pour la somme sur $d_2$ la borne supérieure, uniforme en $D\geq 2$,
\begin{align}
\nonumber \sum_{d\leq D}\tau(d)^{c(B)}\frac{\gamma_F(d)}{d^2}
&\ll\prod_{p\leq D}\left(1+2^{c(B)}\frac{\gamma_F(p)}{p^2}\right)\\
&\nonumber\ll\prod_{p\leq D}\left(1+\frac{1}{p}\right)^{c(B)}\\
&\ll (\log 2D)^{c(B)}\label{diviseurs en moyenne AI}.
\end{align}
En combinant (\ref{majoration de gammaf})  à la méthode de Rankin, on observe  pour la somme en $d_1$ que
\begin{align}
 \sum_{\substack{\z<d\leq 2D\\d\text{ }q\text{-singulier}}}\tau(d)^{c(B)}\frac{\gamma_F(d)}{d^2}\nonumber
&\ll \sum_{\substack{\z<d\leq 2D\\d\text{ }q\text{-singulier}}}\frac{\tau(d)^{c(B)}}{d^{2/3}}\left(\frac{d}{\z}\right)^{1/2}\\
&\ll \z^{-1/2}(\log \X)^{c(B)}\label{fIRSsum AI}
\end{align}
où l'on a utilisé le fait que $\omega(q)\ll\log_2\X$ dans la dernière estimation.
On déduit  de l'inégalité de Cauchy-Schwarz, de (\ref{Type I Daniel}), du lemme \ref{somme des diviseurs} et 
de (\ref{diviseurs en moyenne AI}) que l'on  peut estimer le terme de reste de 
(\ref{vers corollaire AI}) par \begin{align}
\nonumber\sum_{\substack{D<d\leq 2D\\d_{q\text{-s}}>\z}}\tau(d)^{c(B)}
\left|r_d(q\X)
\right|&\leq\left(\sum_{\substack{D<d\leq 2D\\d_{q\text{-s}}>\z}}\tau(d)^{c(B)}
\left|r_d(q\X)
\right|\right)^{1/2}\left(\sum_{\substack{D<d\leq 2D\\d_{q\text{-s}}>\z}}
\left|r_d(q\X)
\right|\right)^{1/2}\\
&\ll\label{second sum AI}  \X\left(  \X^{1/2}D^{1/4}+D^{1/2}\right)(\log \X)^{c(B)}.
\end{align}
En combinant les  majorations (\ref{vers corollaire AI}), (\ref{premier moyenne}), (\ref{fIRSsum AI}) et (\ref{second sum AI}), on obtient finalement l'estimation
\begin{equation}\label{premier corollaire AI}
\sum_{\substack{ \I \not\in \mathcal{M}(\z)\\D<N(\I)\leq 2D}}\tau_K(\I)^B\# \mathcal{A}_\I\ll
\left(\frac{  \X^2}{\z^{1/2}}+  \X^{3/2}D^{1/4}+\X D^{1/2}
\right)(\log \X)^{c(B)}.
\end{equation}

En utilisant la multiplicativité de $\alpha_q$ et les estimations (\ref{écriture rho2 non singulier})
, (\ref{inégalité alpha singulier}) et (\ref{fIRSsum AI}), 
on a d'autre part que
\begin{align}
\nonumber\sum_{\substack{\I \not\in \mathcal{M}(\z)\\D<N(\I)\leq 2D}}\tau_{\K}(\I)^B\frac{\alpha_q(\I)}{N(\I)}&\ll
\sum_{\substack{ D<d\leq 2D\\d_{q\text{-s}}>\z}}\tau(d)^{c(B)}\sum_{N(\I)=d}\frac{\alpha_q(\I)}{N(\I)}\\
&\nonumber\ll
\sum_{\substack{ \z<d\leq 2D\\d\text{ }q\text{-singulier}}}\tau(d)^{c(B)}\sum_{N(\I)=d}
\frac{\alpha_q(\I)}{N(\I)}(\log \X)^{c(B)}\\
&\nonumber\ll  \z^{-1/2}(\log \X)^{c(B)}\sum_{\substack{ \z<d\leq 2D\\d\text{ }q\text{-singulier}}}
\tau(d)^{c(B)}d^{-1/6}\\
&\ll \z^{-1/2}(\log \X)^{c(B)}
\label{2 vers corollaire AI}.
\end{align}  

On obtient finalement le résultat escompté en utilisant le lemme \ref{AI admissible} avec  $\varepsilon=\frac{1}{2}$, les estimations 
(\ref{premier corollaire AI}) 
et (\ref{2 vers corollaire AI}) et en choisissant  $\z=\min\left(  \X^{1/4},  \X^{1/2}D^{-1/4}\right)$. 
\end{proof}

En conclusion de notre étude relative à la distribution multiplicative de  $\mathcal{A}$, nous considérons le cardinal de
\begin{align*}
\mathcal{A}_{\I ,d}:=\left\{\J\in\mathcal{A}_{\I}:d|N(\J\I^{-1})\right\}
\end{align*}
où $d$ est sans facteur carré.
Suivant l'argument développé à la page 265 de \cite{HM02}, le principe d'inclusion-exclusion permet d'écrire la formule
\begin{equation}\label{une forme de mobius}
 \mu(d)\sum_{\substack{\J_1|(\J_2,d)\\ d|N(\J_1)}}\mu_{\K}(\J_1)=
 \left\{\begin{array}{ll}
 1&\text{ si }d|N(\J_2),\\
 0&\text{ sinon}. 
 \end{array}
 \right.
\end{equation}
Compte tenu du lemme  \ref{mathcalAI}, la relation (\ref{une forme de mobius}) suggère d'écrire
\begin{align}
\# \mathcal{A}_{\I ,d}\label{définition rAId}
&=\mu(d)\sum_{\substack{\J|d\\d|N(\J)}}\mu_{\K}(\J)\#\mathcal{A}_{\I \J}
=\frac{\eta^2  \X^2}{\zeta_q(2)}\frac{\alpha_q(\I)\gamma_q(\I,d)}{N(\I)}+r(\mathcal{A},\I,d)
\end{align}
où
\begin{equation}\label{définition de gamma}
\gamma_{q}(\I,d)=\left\{\begin{array}{ll}
\mu(d)\sum_{\substack{\J|d\\d|N(\J)}}\frac{\mu_{\K}(\J)\alpha_q(\I\J)}{\alpha_q(\I)N(\J)}&\text{ si }\alpha_q(\I)\neq0,\\
0&\text{ sinon},\end{array}\right.
\end{equation}
et
\begin{align}
|r(\mathcal{A},\I,d)|\leq\sum_{\substack{\J|d\\d|N(\J)}}\mu^2(d)
\left|\#\mathcal{A}_{\I \J}-\frac{\eta^2  \X^2}{\zeta_q(2)}\frac{\alpha_q(\I\J)}{N(\I\J)}\right|.\label{I,q}
\end{align}
Il s'ensuit que, si $\alpha_q(\I)\neq0$, la fonction arithmétique $\gamma_q(\I,\cdot)$ est multiplicative 
et à support sur les entiers sans facteur carré. Pour $p$ un nombre premier $q$-régulier, on a notamment par (\ref{écriture rho2 non singulier}) que
\begin{equation}\label{écriture de gamma}
\gamma_{q}(\I,p)=\left\{\begin{array}{ll}
% \text{ and }n=1,\\
\frac{1}{p}&\text{ si } p|N(\I),\\
\frac{\nu_p}{1+p}&\text{ sinon},
\end{array}\right.
\end{equation}
où $\nu_p$ est le nombre d'idéaux $\mathfrak{p}$ tels que $N(\mathfrak{p})=p$.
De plus, si $p|q$ est $1$-régulier, alors 
\begin{equation}\label{inégalité gammap}
\gamma_q(\I,p)\in
\left\{0,\frac{1}{p},1\right\}.
\end{equation}
Dans le cas où $\I=\OK$, on pose $\gamma_q(d)=\gamma_q(\OK,d)$ pour tout entier $d\geq1$. 

                                                                                                                      \begin{lemme}\label{mathcalAIq}
Soit $B\geq 0$. Il existe $c(B)\geq0$  tel que, uniformément en  $\X\geq 2$
, $(N_1,N_2)\in\mathcal{N}(\eta)$, et $1\leq D\leq   \X^2$,
\begin{align*}
\Sigma_3:=\sum_{\substack{\I,d\\ D<N(\I)d\leq 2D}}
\mu_{\K}(d)^2\tau(N(\I)d)^B\left|r(\mathcal{A},\I,d)\right|\ll
\left(  \X^{15/8}+  \X^{7/4}D^{1/8}\right)(\log \X)^{c(B)}.
\end{align*}
\end{lemme}
\begin{proof}
Puisque la sommation (\ref{I,q}) ne fait intervenir que des idéaux $\J$ admissibles et des entiers
$d$ sans facteur carré, on observe à l'aide du lemme \ref{admissible}  
que $N(\J)\asymp d$. Par suite, on déduit du lemme \ref{mathcalAI} que
\begin{align*}
\Sigma_3&\ll\sum_{\substack{
N(\J)\ll D}}
\tau_{\K}(\J)^{c(B)}\left|\# \mathcal{A}_{\J}
-\frac{\eta^2  \X^2}{\zeta_q(2)}\frac{\alpha_q(\J)}{N(\J)}\right|\\
&\ll
\left(  \X^{15/8}+  \X^{7/4}D^{1/8}\right)(\log \X)^{c(B)}.
\end{align*}\end{proof}

\subsection{Niveau de distribution de $\mathcal{B}$}

Par analogie à $\mathcal{A}_{\I}$, on considère dans ce paragraphe le cardinal de l'ensemble 
\begin{align*}\mathcal{B}_{\I}:=\left\{\J\in\mathcal{B}:\I|\J\right\}.\end{align*}
L'étude de la distribution des idéaux de $\OK$ se base sur l'ordre moyen suivant, dû à Landau. 
\begin{theo}[\cite{La18}]\label{theo Weber}
 On a, uniformément pour $\X\geq 1$,
\begin{align*}
\#\left\{\J\in\JK:N(\J)\leq  \X\right\}=\lambda_{\K}\X+O\left(  \X^{2/3}\right).
\end{align*}
\end{theo}

On définit \begin{align*}
\mathcal{B}_{\I,d}:=\left\{\J\in\mathcal{B}_{\I}:d|N(\J\I^{-1})\right\}.
\end{align*}
 L'estimation de Type I relative à $\mathcal{B}_{\I,d}$ contenue dans le lemme ci-dessous est une généralisation
 du lemme 3.6 de \cite{HM02}.

\begin{lemme}
\label{mathcalBIq}
Soit $B\geq0$. Il existe $c(B)\geq 0$ tel que, uniformément en $\X\geq2$, $(N_1,N_2)\in\mathcal{N}(\eta)$, $D\geq 1$ 
et $\I$ un idéal de $\JK$, on ait
\begin{align*}
\sum_{D<d\leq2D}\tau(d)^B\mu(d)^2\left|\#\mathcal{B}_{\I,d}-\lambda_{\K}\frac{c(N_1,N_2)\eta q^3  \X^3}{N(\I)}\beta(d)\right|
\ll \frac{  \X^2D^{1/3}}{N(\I)^{2/3}}(\log 2D)^{c(B)}
\end{align*}
où $\beta$ est la fonction multiplicative à support sur les entiers sans facteur carré définie par
\begin{equation}
\beta(p)=\left(1-\prod_{\mathfrak{p}|p}\left(1-\frac{1}{N(\mathfrak{p})}\right)\right).\label{définition de beta}
\end{equation}
\end{lemme}
\begin{proof}
 Au vu de la formule (\ref{une forme de mobius}), on a la formule
 \begin{align*}
 \#\mathcal{B}_{\I,d}&=\mu(d)\sum_{\substack{\J|d\\d|N(\J)}}\mu_{\K}(\J) \#\left\{\mathfrak{b}\in\JK:\frac{c(N_1,N_2)q^3\X^3}{N(\I\J)}<N(\mathfrak{b})\leq  (1+\eta) \frac{c(N_1,N_2)q^3\X^3}{N(\I\J)}\right\}
 \end{align*} Au vu du théorème \ref{theo Weber}, le résultat se déduit de la suite d'estimations
\begin{align*}
 \sum_{D<d\leq2D}
 \tau(d)^B\mu(d)^2\sum_{\substack{\J|d\\d|N(\J)}}\frac{q^2  \X^2}{N(\I\J)^{2/3}}
 &\ll \frac{q^2  \X^2}{N(\I)^{2/3}}\sum_{D<d\leq2D}
 \frac{\tau(d)^{c(B)}}{d^{2/3}}\\&\ll\frac{  \X^2D^{1/3}}{N(\I)^{2/3}}(\log 2D)^{c(B)}.
\end{align*}
\end{proof}

\section{La fonction de densité $\sigma_q$}\label{Fonction de densité sigmaq} 
\subsection{Définition et premières propriétés}
\label{Premières propriétés}
Étant donné un sous-ensemble $C$ de $\N$ et un idéal $\I$, on considère le cardinal
\begin{equation}\label{notation SAIC}
S(\mathcal{A},\I  ,C):=\#\left\{\Qid:\I\Qid\in\mathcal{A}\text{ et }N(\Qid)\in C\right\}.
\end{equation}

En général, 
on ne peut espérer évaluer $S(\mathcal{A},\I  ,C)$ en utilisant uniquement les majorations de sommes de Type I 
obtenues dans la partie précédente%\ref{paragraphe Type I}
. La seule connaissance de ces estimations est par exemple insuffisante pour approcher $S(\mathcal{A},\I  ,C)$
lorsque les éléments de l'ensemble $C$ ont un nombre fixe de facteurs premiers : 
c'est là l'incidence du phénomène de parité, mentionné en introduction. 

 À la suite de Heath-Brown et Moroz, on introduit le cardinal
\begin{equation}\label{notation SBIC}
S(\mathcal{B},\I  ,C):=\#\left\{\Qid:\I\Qid\in\mathcal{B}\text{ et }N( \Qid)\in C\right\}
\end{equation} et l'on cherche à montrer l'existence d'une constante $\sigma_q(F)>0$ et d'une fonction  
$\sigma_q:\JK\rightarrow\R$
 telles que l'approximation
\begin{equation}\label{différence S A B}
S(\mathcal{A},\I  ,C)\sim \frac{\sigma_q(F)\eta}{c(N_1,N_2)q^3\X} \sigma_q(\I)S(\mathcal{B},\I  ,C)
\end{equation} 
soit vraie, en moyenne sur les idéaux $\I$ et pour une certaine classe  d'ensembles d'entiers $C$ inclus dans l'ensemble criblé
\begin{equation}\label{définition C crible}
C^-(  \X^{\tau}):=\left\{d:P^-(d)>   \X^{\tau}\right\}
\end{equation} 
où $\tau=o(1)$ est un paramètre qui sera précisé ultérieurement.

Dans ce paragraphe, on obtient  des valeurs heuristiques pour $\sigma_q(F)$ et $\sigma_q(\I)$ en étudiant 
$S(\mathcal{A},\I  ,C^-(  \X^{\tau}))$  et $S(\mathcal{B},\I  ,C^-(  \X^{\tau}))$. 
Au vu des estimations de sommes de Type I contenues dans les lemmes \ref{mathcalAIq} et  \ref{mathcalBIq}, 
l'application d'un lemme fondamental de crible - démarche qui sera effectuée en détail au paragraphe \ref{lemmes de cribles} -
suggère que l'on a 
\begin{equation}\label{asymptotique crible A}
 S(\mathcal{A},\I  ,C^-(  \X^{\tau}))\sim \frac{\eta^2  \X^2}{\zeta_q(2)}\frac{\alpha_q(\I)}{N(\I)}
\prod_{\substack{p\leq   \X^{\tau}}}\left(1-\gamma_q(\I,p)\right)
\end{equation}
et \begin{align}\label{asymptotique crible B}
S(\mathcal{B},\I  ,C^-(  \X^{\tau}))\sim \lambda_{\K}c(N_1,N_2)\frac{\eta q^3  \X^3}{N(\I)}\prod_{p\leq   \X^{\tau}}
\left(1-\beta(p)\right).
\end{align}

L'étude de la singularité de $\zeta_K(s)$ en $s=1$ permet d'obtenir l'analogue suivant de la formule (6.8) de \cite{HB01}, uniforme en $\Y\geq 2$, 
\begin{equation}\label{convergence produit beta}
 \prod_{p\leq  \Y}\left(1-\frac{\beta(p)}{p}\right)\left(1-\frac{1}{p}\right)^{-1}=\lambda_{\K}^{-1}\left(1+O\left((\log \Y)^{-2}\right)\right).
\end{equation}

Comme  observé dans le lemme 3.4 de \cite{HM02}, le théorème des  idéaux premiers implique de même que, 
uniformément en $\Y>P(q)%\max\{p:p\text{ est } q\text{-singulier}\}
 $, on a
\begin{equation}\label{convergence sigmaf}
\prod_{\substack{ p\geq \Y
}}(1-\gamma_q(p))\left(1+\frac{1}{p}\right)=\left(1+O\left((\log \Y)^{-2}\right)\right),
\end{equation}
où  $\gamma_q$ est défini par (\ref{définition de gamma}),
d'où l'on déduit la convergence du produit eulérien \begin{align}
\sigma_q(F):=\left(\prod_{\substack{\gamma_q(p)\neq1}}(1-\gamma_q(p))\left(1+\frac{1}{p}\right)\prod_{\substack{\gamma_q(p)=1}}\left(1+\frac{1}{p}\right)\right)\prod_{p|q}\left(1-\frac{1}{p^2}\right)^{-1}.
\label{définition sigmaqF}\end{align}
Ce produit eulérien, strictement positif d'après le lemme 3.4 de \cite{HM02}, satisfait, au vu de (\ref{inégalité gammap}), l'estimation uniforme  en $q\geq1$,
\begin{equation}\label{estimation sigmaqF}
 \sigma_q(F),\sigma_q(F)^{-1}\ll
\frac{q}{\varphi(q)}
\ll \log_2 (3q).
\end{equation}

On observe que la formule (\ref{écriture de gamma}) implique  les estimations suivantes,  uniformes en $N(\J)\ll q^3  \X^3$ et $\X^{\tau}>q$,
  \begin{align}\prod_{\substack{p>   \X^{\tau}\\p|N(\J)}}\left(1-\gamma_q(\J,p)\right)= 1+O\left(\frac{1}{\tau   \X^{\tau}}\right)\qquad\text{et}\qquad
       \prod_{\substack{p>   \X^{\tau}\\p|N(\J)}}\left(1-\gamma_q(p)\right)= 1+O\left(\frac{1}{\tau   \X^{\tau}}\right).\label{estimations distribution tronquée}
\end{align}
Sous réserve d'établir les formules
 (\ref{asymptotique crible A}) et (\ref{asymptotique crible B}), il s'ensuit la formule  uniforme en $N(\I)\ll q^3  \X^3$
\begin{align}\label{crible A}
S(\mathcal{A},\I  ,C^-(  \X^{\tau}))
&\sim \eta^2  \X^2\sigma_q(F)
\frac{\sigma_q(\I)}{N(\I)}
\prod_{p\leq   \X^{\tau}}\left(1-\frac{1}{p}\right)
\end{align}
avec
\begin{equation}\label{définition sigmaq}
\sigma_q(\I):=\widetilde{\sigma}_q(\I)
\prod_{\substack{\gamma_q(p)=1
}}\left(1-\gamma_q(\I,p)\right)\text{ et }
\widetilde{\sigma}_q(\I):=\alpha_q(\I)
\prod_{\substack{p|N(\I)\\\gamma_q(p)\neq1}}
\left(1-\gamma_q(\I,p)\right)
\left(1-\gamma_q(p)\right)^{-1}
.\end{equation} 
On a en particulier, pour  $p$ un premier $q$-régulier et $k\geq1$, la formule
\begin{equation}\label{définition kappa q}
\widetilde{\sigma}^{\Z}_q(p^k)=\nu_p
\left(\frac{p-1}{p+1}\right)\left(1-\frac{\nu_p}{1+p}\right)^{-1}=\nu_p+O\left(\frac{1}{p}\right).
\end{equation}
De plus, on peut déduire de l'estimation (\ref{inégalité alpha singulier}) 
la majoration uniforme en $p$ premier 
\begin{equation}\label{majoration sigma}
\widetilde{\sigma}_q(\I_p)\ll\alpha_q(\I_p)\ll N(\I_p)^{1/3}\min\left(p^{2v_p(q)}N(\I_p)^{2/3}\right).
\end{equation}

Au vu de la formule asymptotique (\ref{convergence produit beta}), on a également
\begin{align}\label{crible B}
S(\mathcal{B},\I  ,C^-(  \X^{\tau}))\sim c(N_1,N_2)\frac{\eta q^3  \X^3}{N(\I)}\prod_{p\leq   \X^{\tau}}\left(1-\frac{1}{p}\right) 
\end{align}
ce qui justifie la pertinence de (\ref{différence S A B}) lorsque $C=C^-(  \X^{\tau})$.

\subsection{Ordre moyen de $\sigma_q$ 
}\label{section ordre moyen}

On étudie dans ce paragraphe l'ordre moyen $S(\mathcal{B};\sigma_qh)$ défini par (\ref{définition SBh}) pour $h\in\mathcal{M}(z)$, c'est-à-dire $h$ multiplicative, à valeur dans le disque unité 
et satisfaisant $h(p)=z$ pour tout premier $p$.
Comme souligné dans l'introduction, la preuve du théorème \ref{ordre moyen fonction multiplicative} repose sur une estimation asymptotique de $S(\mathcal{B};\sigma_q h)$. De plus, les majorations de $S(\mathcal{B};\sigma_q)$ que nous développons ici seront utilisées à de nombreuses reprises dans les parties \ref{lemmes de cribles} et \ref{paragraphe Type II}.

Compte tenu de la formule  (\ref{définition kappa q}) relative à la valeur 
de $\widetilde{\sigma}_q$ aux entiers $q$-réguliers, on observe que, uniformément pour $p$  premier $q$-régulier 
 et $0\leq\Re(s)\leq 1$, on a
\begin{equation}\label{theo idéaux premiers sigma}
\frac{\widetilde{\sigma}^{\Z}_q(p)}{p^s}=\sum_{\mathfrak{p}|p}\frac{1}{N(\mathfrak{p})^s}+O\left(
%\sum_{p\leq \Y }
\frac{1}{p^{2\Re(s)}}\right).
\end{equation} 
Il suit de (\ref{définition kappa q}) que la série $\mathcal{H}(s)\zeta_{\K}(s)^{-z}$ 
où \begin{align}\mathcal{H}(s):=\sum_{\substack{n\geq1\\n\text{ }q\text{-régulier}}}
\frac{\widetilde{\sigma}_q^{\Z}(n)h(n)}{n^s}   
\label{définition Hs}    \end{align}
est développable en un produit eulérien absolument convergent pour $\Re(s)>1/2$.

De plus, la relation (\ref{theo idéaux premiers sigma}) 
entraîne que, uniformément en $q\geq1$ et  $\Re(s)>
1/2$,
\begin{align}
\mathcal{H}(s)\zeta_{\K}(s)^{-z}
\ll_{\Re(s)}
\prod_{p|q
}
\left(1+O\left(\frac{1}{p^{\Re(s)}}\right)\right)
 \ll_{\Re(s)
 }2^{\omega(q)}\label{estimation G(s) général}\end{align} et 
 \begin{align}
 \left.\mathcal{H}(s)\zeta_{\K}(s)^{-z}\right|_{s=1}
 \nonumber\ll
 \prod_{p|q}\left(1+O\left(\frac{1}{p}\right)\right)\ll\left(\frac{q}{\varphi(q)}\right)^c\ll (\log_2(q+2))^c
\label{estimation G(1)}
\end{align}
La méthode de Selberg-Delange permet d'établir le résultat central de ce paragraphe, à savoir la formule asymptotique
 de $S(\mathcal{B};\sigma_qh)$ contenue dans la proposition suivante.
 
\begin{prop}\label{Selberg-Delange h sigma}
Soient $z$ un nombre complexe tel que $|z|\leq1$ et $h\in\mathcal{M}(z)$. 
 Il existe $C>0$  tel que l'on ait, uniformément en 
$\X\geq2$ 
et $(N_1,N_2)\in\mathcal{N}(\eta)$,
\begin{equation}\label{Msigmaq}
S(\mathcal{B};\sigma_qh)=
c(N_1,N_2)\eta q^3  \X^3\log\left(c(N_1,N_2)q^3  \X^3\right)^{z-1}
\left(\frac{\sigma_q(F,h)}{\Gamma(z)}+
O\left(\frac{C^{\omega(q)}(\log(q+1))^4}{\log \X}\right)\right)
\end{equation}
où 
\begin{align}
 \sigma_q(F,h):=&\nonumber\prod_{\substack{p\\
 \gamma_q(p)\neq1}}
 \left(1-\frac{1}{p}\right)^z
 \left(1+
 \left(1-\gamma_q(p)\right)^{-1}\left(\sum_{k\geq1}\frac{h(p^k)}{p^k}
 \sum_{N(\I)=p^k}\alpha_q(\I)\left(1-\gamma_q(\I,p)\right)\right)\right)\\&\times
 \prod_{\substack{p\\
 \gamma_q(p)=1}}
 \left(1-\frac{1}{p}\right)^z
 \left(\sum_{k\geq1}\frac{h(p^k)}{p^k}
 \sum_{N(\I)=p^k}\alpha_q(\I)\left(1-\gamma_q(\I,p)\right)\right)
 \label{définition sigma qh}
\end{align}
et $\alpha_q(\cdot)$ et $\gamma_q(\cdot,\cdot)$ sont définis respectivement par (\ref{définition rho2}) et (\ref{définition de gamma}).

On a en particulier 
$\sigma_q(F,\mathbf{1})=\sigma_q(F)^{-1}\zeta_q(2)^{-1}$ et, uniformément en $\X\geq3$ et $q\leq(\log \X)^A$,
\begin{equation}\label{log sigmaq}
 \sum_{N(\I)\leq \X}\frac{\sigma_q(\I)}{N(\I)}= \frac{1}{\zeta_q(2)\sigma_q(F)}\log \X+O\left(C^{\omega(q)}(\log_2\X)^4\right).
\end{equation}
\end{prop}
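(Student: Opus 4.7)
Posons $X' := c(N_1,N_2) q^3 \X^3$. On commence par regrouper les idéaux par norme et écrire
$$S(\mathcal{B}; \sigma_q h) = \sum_{X' < n \leq X'(1+\eta)} h(n) \sigma_q^{\Z}(n),$$
où $\sigma_q^{\Z}(n) := \sum_{N(\J) = n} \sigma_q(\J)$. La stratégie consiste à appliquer la méthode de Selberg-Delange à la série de Dirichlet
$$F(s) := \sum_{n \geq 1} \frac{h(n) \sigma_q^{\Z}(n)}{n^s},$$
puis à procéder par différence pour l'intervalle court. La première étape consiste à factoriser $F$. En combinant les formules (\ref{écriture de gamma}), (\ref{définition kappa q}) et $h(p) = z$, on voit que le facteur eulérien de $F$ en un premier $q$-régulier $p$ coïncide, à un $O(p^{-2s})$ près, avec celui de $\zeta_{\K}(s)^z$. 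Par analogie avec la série $\mathcal{H}(s)$ de (\ref{définition Hs}), la série
$$\mathcal{G}(s) := F(s) \zeta_{\K}(s)^{-z}$$
se développe en un produit eulérien absolument convergent pour $\Re(s) > 1/2$, avec les majorations $\mathcal{G}(s) \ll_{\Re(s)} 2^{\omega(q)}$ et $\mathcal{G}(1) \ll (\log_2(q+2))^c$ héritées de (\ref{estimation G(s) général}) et (\ref{estimation G(1)}).

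Puisque $\zeta_{\K}(s)$ admet un unique pôle simple en $s = 1$ de résidu $\lambda_{\K}$ et satisfait des régions sans zéro analogues à celles de $\zeta(s)$, la méthode de Selberg-Delange appliquée à $F(s) = \zeta_{\K}(s)^z \mathcal{G}(s)$ (via la technique de l'intégrale de Hankel, par exemple celle développée au chapitre II.5 du livre de Tenenbaum) fournit, uniformément en $x \geq 2$,
$$\sum_{n \leq x} h(n) \sigma_q^{\Z}(n) = \frac{x (\log x)^{z-1}}{\Gamma(z)} \lambda_{\K}^z \mathcal{G}(1) + O\!\left( x (\log x)^{z-2} C^{\omega(q)} (\log(q+1))^4\right).$$
En appliquant cette estimation en $X'(1+\eta)$ et en $X'$ et en prenant la différence, on récupère (\ref{Msigmaq}) avec coefficient principal $\lambda_{\K}^z \mathcal{G}(1)$ (le gain de $\eta = (\log\X)^{-c_0}$ dans le terme principal préserve le terme d'erreur relatif de taille $(\log \X)^{-1}$).

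Il reste à identifier $\lambda_{\K}^z \mathcal{G}(1)$ avec $\sigma_q(F, h)$ défini par (\ref{définition sigma qh}). Cela s'effectue par un calcul algébrique des produits eulériens : d'un côté, (\ref{convergence produit beta}) permet d'exprimer $\lambda_{\K}^{-z}$ comme une limite de produits $\prod_{p \leq \Y}(1-\beta(p)/p)^z (1-1/p)^{-z}$, ce qui fournit les facteurs $(1-1/p)^z$ présents dans (\ref{définition sigma qh}) ; de l'autre, les facteurs locaux de $\mathcal{G}(1) \prod_{\mathfrak{p}|p}(1-N(\mathfrak{p})^{-1})^{z}$ s'expriment en termes de $\alpha_q$ et $\gamma_q$ en utilisant (\ref{définition sigmaq}) et (\ref{définition rho2}), en séparant les primes selon que $\gamma_q(p) = 1$ ou $\gamma_q(p) \neq 1$. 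Le cas particulier $h \equiv 1$ (et donc $z = 1$) se traite en comparant le produit obtenu à la définition (\ref{définition sigmaqF}) de $\sigma_q(F)$ et à l'identité $\prod_p (1 - 1/p^2) = 1/\zeta(2)$ modifiée selon les facteurs en $p \mid q$ de $\zeta_q$, ce qui fournit $\sigma_q(F, \mathbf{1}) = \sigma_q(F)^{-1} \zeta_q(2)^{-1}$. L'asymptotique (\ref{log sigmaq}) se déduit alors par sommation d'Abel à partir de l'estimation (\ref{Msigmaq}) dans ce cas, après intégration par parties sur $\sum_{N(\I) \leq \X} \sigma_q(\I)$.

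Le principal obstacle est le contrôle uniforme de la dépendance en $q$ à travers la machinerie de Selberg-Delange : la majoration $\mathcal{G}(s) \ll 2^{\omega(q)}$ doit être propagée sans perte excessive à travers l'intégrale de contour, ce qui impose un choix soigneux du chemin de Hankel et une utilisation précise des régions sans zéro de $\zeta_{\K}(s)$ pour maintenir l'erreur sous la forme $C^{\omega(q)}(\log(q+1))^4/\log \X$. Un obstacle secondaire, mécanique mais laborieux, est l'identification explicite finale $\lambda_{\K}^z \mathcal{G}(1) = \sigma_q(F, h)$, qui requiert un dépliement minutieux des produits eulériens en distinguant les primes selon que $\gamma_q(p) = 1$ ou non et en exploitant le fait que, pour $\I$ d'un prime $q$-régulier $p$ fixé, les valeurs $\gamma_q(\I, p')$ pour $p' \neq p$ coïncident avec $\gamma_q(p')$.
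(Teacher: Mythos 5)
Your overall strategy (Selberg-Delange on a Dirichlet series of the form $\zeta_{\K}(s)^z\mathcal{G}(s)$, followed by a short-interval subtraction and an Euler-product identification of the constant) is the right one, and it is indeed what the paper does. However, there is a genuine gap in the bound you claim for $\mathcal{G}(s):=F(s)\zeta_{\K}(s)^{-z}$. You assert $\mathcal{G}(s)\ll_{\Re(s)}2^{\omega(q)}$ for $\Re(s)>1/2$, ``inherited from (\ref{estimation G(s) général})''. But (\ref{estimation G(s) général}) concerns the series $\mathcal{H}(s)$ of (\ref{définition Hs}), which sums only over $q$-\emph{regular} integers (where $\sigma_q^{\Z}=\widetilde{\sigma}_q^{\Z}\ll\tau$ and the $p$-local factors are genuinely close to those of $\zeta_{\K}^z$). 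Your $F(s)$ also contains the $q$-singular Euler factors, and there the bound (\ref{majoration sigma}) gives $\sigma_q^{\Z}(p^k)\ll \tau(p^k)^2 p^{k/3}\min(p^{2v_p(q)},p^{2k/3})$; summing these at $\Re(s)=\sigma<1$ produces a factor of order $q^{3(1-\sigma)}(\log(q+1))^{O(1)}C^{\omega(q)}$ (this is exactly what the paper's estimate (\ref{estimation puissance}) quantifies). Since the only hypothesis is $q\le(\log\X)^A$ with $A$ arbitrary, pushing a Selberg--Delange contour even a fixed distance to the left of $\Re(s)=1$ lets this factor become $(\log\X)^{cA}$ for some $c>0$, which overwhelms the $(\log\X)^{-1}$ relative error you need in (\ref{Msigmaq}). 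So the step ``$\mathcal{G}(s)\ll 2^{\omega(q)}$ propagates through the Hankel contour'' does not go through as written.

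The paper avoids this problem precisely by \emph{not} running Selberg--Delange on the full $\sigma_q^{\Z}$. It first proves (\ref{Selberg-Delange kappa}) for the $q$-regular restriction $\widetilde{\sigma}_q^{\Z}h$, where $\mathcal{H}(s)\zeta_{\K}(s)^{-z}$ genuinely is $O(2^{\omega(q)})$ uniformly on a fixed neighbourhood of $s=1$, then writes the short-interval sum as a convolution (\ref{convolution sigm q régulier singulier}) of this regular part with the $q$-singular part, which is handled at $\sigma=1$ only, via (\ref{estimation puissance}) and Rankin's trick (\ref{Rankin sigmaq}); this is where the $(\log(q+1))^4C^{\omega(q)}$ in the error term of (\ref{Msigmaq}) comes from. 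Your identification $\lambda_{\K}^z\mathcal{G}(1)=\sigma_q(F,h)$ and the deduction of $\sigma_q(F,\mathbf{1})=\sigma_q(F)^{-1}\zeta_q(2)^{-1}$ by Euler-product bookkeeping (with the inclusion--exclusion identity (\ref{une forme de mobius}) relating $\alpha_q$ and $\gamma_q$) are fine in spirit; likewise (\ref{log sigmaq}) does follow from partial summation plus the same regular/singular convolution. To repair the proof you should introduce the splitting into $q$-regular and $q$-singular parts before invoking Selberg--Delange, as the paper does.
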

\begin{proof}
On étudie dans un premier temps $\sigma_q h$ sur les entiers $q$-réguliers. Au vu de ce qui précède, 
on peut adapter  \textit{mutatis mutandis} la preuve du théorème II.5.2 de
\cite{Te08} en remplaçant la fonction zêta de Riemann $\zeta(s)$ par 
la fonction zêta de Dedekind $\zeta_{\K}(s)$, pour en déduire que l'on a,
 pour tout entier $N\geq0$ et uniformément en $\X\geq2$ et $q\geq1$, 
\begin{align}
 \sum_{\substack{n\leq \X\\n\text{ }q\text{-régulier}}}\widetilde{\sigma}^{\Z}_q(n)h(n)&=\X(\log \X)^{z-1}
 \left(\sum_{k=0}^N\frac{\lambda_k(h)}{\Gamma(z-k)(\log \X)^k}
 +O\left(\frac{2^{\omega(q)}}{(\log \X)^{N+1}}\right)\right)\label{Selberg-Delange kappa}
 \end{align}
 où les $\lambda_k(h)$ sont définis à l'aide du développement de Taylor en $s=1$
 \begin{align}\label{dev Taylor lambda}
 \frac{(s-1)^{z}\mathcal{H}(s)}{s}:=\sum_{k\geq0}\lambda_k(h)(s-1)^k.
  \end{align}
En particulier, on a 
\begin{equation}\label{valeur lambda1}
 \lambda_0(h)=\lambda_{\K}^{z}\left.\mathcal{H}(s)\zeta_{\K}(s)^{-z}\right|_{s=1}\end{equation}  tandis que 
 l'estimation (\ref{estimation G(s) général}) et  l'inégalité de Cauchy entraînent 
 les majorations
 \begin{equation}\label{estimation lambdah}
| \lambda_k(h)|\ll \sup_{|s-1|\leq \frac{1}{4}}\left\{|\mathcal{H}(s)\zeta_{\K}(s)^{-z}|
  \right\}\ll 2^{\omega(q)}.
 \end{equation}

 En choisissant $h(n)=1$ pour tout entier $n\geq1$, une sommation par parties entraîne directement la formule  uniforme en $q\geq1$ et $\X\geq3$ suivante 
 \begin{equation}\label{ordre moyen inverse}
  \sum_{\substack{n\leq \X\\n\text{ }q\text{-régulier}}}\frac{\widetilde{\sigma}^{\Z}_q(n)}{n}=
  \lambda_0(\mathbf{1})(\log \X)+O\left(2^{\omega(q)}\log_2\X\right)
  \end{equation}
avec
 \begin{displaymath}
 \lambda_0(\mathbf{1})=\prod_{\substack{p\text{ }q\text{-régulier}}}
 \left(1-\frac{1}{p}\right)
 \left(1+\left(\frac{p+1}{\nu_p}-1\right)^{-1}\right)  \end{displaymath}

Attardons-nous à présent sur la contribution des entiers $q$-singuliers. On peut montrer que pour tout $\sigma>1/3$, il existe $C(\sigma)>0$ tel que
\begin{equation}\label{estimation puissance}
\sum_{d\text{ }q\text{-singulier}} \frac{\sigma^{\Z}_q(d)\log d}{d^{\sigma}}\leq
q^{3(1-\sigma)}(\log (q+1))^4C(\sigma)^{\omega(q)+1}.
\end{equation}
En effet,  d'une part, on peut écrire à l'aide de 
(\ref{tau nombre de norme}) et (\ref{inégalité alpha singulier})les estimations suivantes, valides pour $p$ fixé et $1/3<\sigma<1$,
\begin{align*}
\sum_{k\geq0}\sum_{N(\I)=p^k}\frac{\alpha_q(\I)}{N(\I)^{\sigma}}
&\ll\sum_{k\leq 3v_p(q)}\tau(p^k)^2p^{k(1-\sigma)}+p^{2v_p(q)}
\sum_{k>3v_p(q)}\tau(p^k)^2p^{k(1/3-\sigma)}\\
&\ll_{\sigma} (\log (q+1))^{3}p^{3v_p(q)(1-\sigma)}.
\end{align*}
 En outre,
on observe que 
(\ref{écriture rho2 non singulier}) entraîne l'estimation uniforme  pour $\mathfrak{p}$ un idéal premier $1$-régulier et $1/3<\sigma<1$, 
\begin{align*}
 \sum_{k\geq0}\frac{\alpha_q(\mathfrak{p}^k)(1-\gamma_q(\mathfrak{p}^k,p))}{p^{k\sigma}}&
 \ll
p^{v_p(q)(1-\sigma)}
.
\end{align*}
On en déduit alors en utilisant la définition (\ref{définition sigmaq}) de $\sigma_q$ que, pour $1/3<\sigma<1$, on a
\begin{align*}
 \sum_{d\text{ }q\text{-singulier}} \frac{\sigma_q^{\Z}(d)}{d^{\sigma}}
 &\ll_{\sigma} C(\sigma)^{\omega(q)}\prod_{p\text{ }q\text{-singulier}}\left( 
 \sum_{k\geq0}\sum_{N(\I)=p^k}\frac{\alpha_q(\I)(1-\gamma_q(\I,p))}{N(\I)^{\sigma}}\right)\\
 &\ll_{\sigma}
q^{3(1-\sigma)}(\log (q+1))^3C(\sigma)^{\omega(q)}.
\end{align*}
Dans la mesure où le membre de gauche de (\ref{estimation puissance}) est la dérivée de la série  $\sum_{d\text{ }q\text{-singulier}} \frac{\sigma_q^{\Z}(d)}{d^{s}}$ en $\sigma$, on obtient l'estimation (\ref{estimation puissance})  en appliquant l'inégalité de Cauchy dans le cercle de centre $\sigma$ et de rayon $\frac{c(\sigma)}{\log q}$ 
avec $c(\sigma)>0$ suffisamment petit.
Ceci permet de déduire la convergence absolue de $\sigma_q(F,h)$ ainsi que l'estimation suivante, conséquence de la méthode de Rankin
,\begin{align}\nonumber
\sum_{\substack{d>\Y\\d\text{ }q\text{-singulier}}}
 \frac{\sigma_q^{\Z}(d)}{d}&\leq \Y ^{\sigma-1}\sum_{\substack{d>\Y\\d\text{ }q\text{-singulier}}}
 \frac{\sigma_q^{\Z}(d)}{d^{\sigma}}\\
 &\label{Rankin sigmaq}\ll_{\sigma} \Y^{\sigma-1}
q^{3(1-\sigma)}(\log (q+1))^3C(\sigma)^{\omega(q)}
\end{align}
où $1/3<\sigma<1$.

En utilisant (\ref{ordre moyen inverse}) et (\ref{Rankin sigmaq}), 
il s'ensuit alors que, uniformément en $q\leq(\log \X)^A$,
\begin{align*}
 \sum_{n\leq \X}\frac{\sigma_q^{\Z}(n)}{n}
 &=\sum_{\substack{d\leq  \X^{1/2}\\
 d\text{ }q\text{-singulier}}}\frac{\sigma_q^{\Z}(d)}{d}
 \sum_{\substack{n\leq \X/d\\n\text{ }q\text{-régulier}}}
 \frac{\widetilde{\sigma}^{\Z}_q(n)}{n}+O\left(
 \sum_{\substack{d>   \X^{1/2}\\
 d\text{ }q\text{-singulier}}}\frac{\sigma_q^{\Z}(d)}{d}
 \sum_{\substack{n\leq \X\\n\text{ }q\text{-régulier}}}
 \frac{\widetilde{\sigma}^{\Z}_q(n)}{n}\right)\\
 &=  \sum_{\substack{d\leq   \X^{1/2}\\
 d\text{ }q\text{-singulier}}}
 \frac{\sigma_q^{\Z}(d)}{d}\lambda_0(\mathbf{1})\log \left(\frac{\X}{d}\right)+O\left(C^{\omega(q)}(\log_2\X)^4\right)
\\&=  \sigma_q(F,\mathbf{1})\log \X+O\left(C^{\omega(q)}(\log_2\X)^4\right),
  \end{align*}
  en remarquant que 
  \begin{displaymath}
    \sum_{\substack{
 d\text{ }q\text{-singulier}}}
 \frac{\sigma_q^{\Z}(d)}{d}\lambda_0(\mathbf{1})=  \sigma_q(F,\mathbf{1}).
  \end{displaymath}
  
 Pour montrer que $\sigma_q(F,\mathbf{1})=\sigma_q(F)^{-1}\zeta_q(2)^{-1}$ et ainsi établir (\ref{log sigmaq}), 
on commence par remarquer que, pour tout premier $p$, la définition (\ref{définition de gamma}) entraîne la formule\begin{align}
\sum_{k\geq 1}\sum_{N(\I)=p^k}\frac{\alpha_q(\I)}{N(\I)}(1-\gamma_q(\I,p))
&=\nonumber
\sum_{k\geq 1}\sum_{N(\I)=p^k}\left(\frac{\alpha_q(\I)}{N(\I)}
+\sum_{\substack{J|p\\p|N(\J)}}\mu_{\K}(\J)\frac{\alpha_q(\I\J)}{N(\I\J)}\right)\\
&\label{vers calcul C}
=\sum_{k\geq1}\sum_{N(\I)=p^k}
\frac{\alpha_q(\I)}{N(\I)}\left(1+\sum_{\substack{J|(\I,p)\\p|N(\J)\\I\neq J}}\mu_{\K}(\J)\right).
\end{align}
En injectant la formule d'inclusion-exclusion 
 (\ref{une forme de mobius}) 
dans (\ref{vers calcul C}), on obtient ainsi 
 l'égalité
\begin{align}\sum_{k\geq 1}\sum_{N(\I)=p^k}\frac{\alpha_q(\I)}{N(\I)}(1-\gamma_q(\I,p))
=-
\sum_{k\geq 1}\sum_{N(\I)=p^k}\mu_{\K}(\I)\frac{\alpha_q(\I)}{N(\I)}
&=\gamma_q(p)\label{vers calcul C bis}
.\end{align}
Compte tenu des définitions (\ref{définition sigma qh}) et  (\ref{définition sigmaqF}), il s'ensuit que 
$\sigma_q(F,\mathbf{1})=\sigma_q(F)^{-1}\zeta_q(2)^{-1}$.

On conclut la preuve de la proposition en montrant la formule (\ref{Msigmaq}).  Compte tenu des définitions (\ref{définition SBh}) et (\ref{définition sigmaq}), on peut écrire
\begin{equation}\label{convolution sigm q régulier singulier}
S(\mathcal{B};\sigma_qh)
 =\sum_{\substack{
 d\text{ }q\text{-singulier}}}\sigma_q^{\Z}(d)h(d)
 \sum_{\substack{n\text{ }q\text{-régulier}\\c(N_1,N_2)q^3  \X^3<dn\leq c(N_1,N_2)q^3  \X^3(1+\eta)}}
 \widetilde{\sigma}^{\Z}_q(n)h(n).
\end{equation} Pour estimer la contribution des entiers $d\leq\X$, on utilise (\ref{Selberg-Delange kappa})  sous la forme
\begin{displaymath}
 \sum_{\substack{n\text{ }q\text{-régulier}\\\X<n\leq   \X(1+(\log\X)^{-B})}}
 \widetilde{\sigma}^{\Z}_q(n)h(n)=\frac{\X(\log\X)^{z-1}}{(\log\X)^B}\left(\frac{\lambda_0(h)}{\Gamma(z)}+O\left(\frac{2^{\omega(q)}}{\log\X}\right)\right)
\end{displaymath} valide dès que $B\geq1$. En majorant  trivialement la contribution des entiers $q$-singuliers $d> \X$, on vérifie que l'on a, uniformément en $\X\geq2$, $q\leq (\log \X)^A$ 
et $(N_1,N_2)\in\mathcal{N}(\eta)$,
\begin{align}
 \nonumber S(\mathcal{B};\sigma_qh)
  & =  c(N_1,N_2)q^3\eta \X^3\left(\log \left(c(N_1,N_2)q^3  \X^3\right)\right)^{z-1}
  \frac{\lambda_0(h)}{\Gamma(z)}\sum_{\substack{d\text{ }q\text{-singulier}}}\frac{\sigma_q^{\Z}(d)}{d}+R_1+R_2
+R_3
\end{align}
où 
\begin{align*}
 \left|R_1\right|\ll   c(N_1,N_2)q^3\eta \X^3\left(\log \left(c(N_1,N_2)q^3  \X^3\right)\right)^{\Re(z)-1}\frac{C^{\omega(q)}}{\log \X}\sum_{\substack{d\leq \X\\
 d\text{ }q\text{-singulier}}}
 \frac{\sigma_q^{\Z}(d)\log d}{d},
\end{align*}
\begin{align*} 
\left|R_2\right|\ll   c(N_1,N_2)q^3\eta \X^3\left(\log \left(c(N_1,N_2)q^3  \X^3\right)\right)^{\Re(z)-1}C^{\omega(q)}\sum_{\substack{d> \X\\d\text{ }q\text{-singulier}}}
 \frac{\sigma_q^{\Z}(d)}{d},
\end{align*}
et
\begin{align*} 
\left|R_3\right|\leq\sum_{\substack{n\leq c(N_1,N_2)q^3\X^3(1+\eta)
\\n_{q\text{-s}}>\X}}\sigma_q^{\Z}(n).
\end{align*}
En utilisant les estimations (\ref{estimation puissance}) et (\ref{Rankin sigmaq}), on obtient
\begin{align*}
 |R_1|,|R_2|\ll  c(N_1,N_2)q^3\eta \X^3\left(\log \left(c(N_1,N_2)q^3 \X^3\right)\right)^{\Re(z)-1}\frac{(\log (q+1))^4C^{\omega(q)}}{\log \X} .                                             
\end{align*}
On remarque d'autre part en utilisant la méthode de Rankin et les estimations  (\ref{log sigmaq}), (\ref{tau nombre de norme}) et (\ref{majoration sigma}) que, 
si $\omega_{1\text{-s}}$ désigne le nombre de premiers $1$-singuliers, alors
\begin{align*}
  \left|R_3\right|&\ll\sum_{\substack{n\ll q^3  \X^3}}
\sigma_q^{\Z}(n)\sum_{\substack{p\text{ }q\text{-singulier}\\
  \X^{1/(\omega(q)+\omega_{1\text{-s}})}< p^k\ll q^3  \X^{3}/n}}\widetilde{\sigma}_q^{\Z}(p^k)\\
&\ll q^3  \X^3
\sum_{n\ll q^3  \X^3}\frac{\sigma_q^{\Z}(n)}{n}
\sum_{\substack{p\text{ }q\text{-singulier}\\
  \X^{1/(\omega(q)+\omega_{1\text{-s}})}< p^k}}\frac{\widetilde{\sigma}_q^{\Z}(p^k)}{p^k}\\
&\ll (\log \X)^c   \X^{3-2/3(\omega(q)+\omega_{1\text{-s}})},
\end{align*}
ce qui permet de déduire (\ref{Msigmaq}) dans la mesure où $\omega(q)\ll\log_2(3\X)$.
\end{proof}

\section{Description de la méthode}
\label{description de la méthode}

Dans toute la suite de l'article, on fixe $
\varpi_0\in]0,1[$ et on introduit le paramètre $\tau:=(\log_2\X)^{-\varpi_0}$. Dans la série d'articles \cite{HB01,HM02,HM04}, Heath-Brown et Moroz réalisent le tour de force de montrer la validité de la formule (avec les notations (\ref{notation SAIC}) et (\ref{notation SBIC}))
\begin{align}\label{Type 2 HM}
 S\left(\mathcal{A},\OK,C
 \right)\sim 
 \frac{\sigma_q(F)\eta}{c(N_1,N_2)q^3\X}S\left(\mathcal{B},\OK, C
 \right),
\end{align}  pour certains ensembles $C$ inclus dans 
\begin{equation}
    C(m,n):=\left\{rs: \Omega(r)=\omega(r)=m\text{ et } \Omega(s)=\omega(s)=n,P^-(rs)>  \X^{\tau},   \X^{1+\tau}\leq s\leq   \X^{\frac{3}{2}-\tau}\right\}.
   \label{définition Cnm}\end{equation}

Pour tout entier $k\geq1$, on considère $\mathcal{E}(k)$ l'ensemble des parties de $\N$ qui s'écrivent  \begin{equation}\label{définition inégalités}
E(k)=\bigcap_{i=1}^{\nbin}E_{i}(k)\end{equation}où $1\leq \nbin\leq \tau^{-1}$ et les $E_{j}(k)$ sont des ensembles de la forme
\begin{equation}
 E_{i}(k)=E_{i}(k;\Y,\overrightarrow{\alpha},\overrightarrow{\beta},\prec):=\left\{m:\Omega(m)=\omega(m)=k,\Y P^{(\overrightarrow{\alpha})}(m)\prec P^{(\overrightarrow{\beta})}(m)
\right\}\label{définition Eij}
\end{equation}
où $\Y>0$, $\prec$ désigne $<$ ou $\leq$, $\overrightarrow{\alpha}:=(\alpha_1,\ldots,\alpha_{j})$ 
avec $1\leq\alpha_1<\cdots<\alpha_{j}$, $\overrightarrow{\beta}:=(\beta_1,\ldots,\beta_l)$ avec $1\leq\beta_1<\cdots<\beta_l$, 
\begin{align*}
 P^{(\overrightarrow{\alpha})}(m):=P^{(\alpha_1)}(m)\cdots P^{(\alpha_j)}(m)\qquad\text{et}
 \qquad 
 P^{(\overrightarrow{\beta})}(m):=
P^{(\beta_1)}(m)\cdots P^{(\beta_l)}(m),
\end{align*}  
 les $P^{(i)}(m)$ étant définis par la décomposition de $m$ en facteurs premiers
\begin{displaymath}
m=P^{(1)}(m)\cdots P^{(\Omega(m))}(m) \qquad\text{ avec }\qquad P^{(i)}(m)\leq P^{(i+1)}(m)
\end{displaymath}
avec la convention $ P^{(\overrightarrow{\alpha})}(m)=1$ (resp. $ P^{(\overrightarrow{\beta})}(m)=1$) si $j=0$ (resp. $l=0$).
En adaptant les arguments de Heath-Brown et Moroz, on montrera au paragraphe \ref{paragraphe Type II} que la formule (\ref{Type 2 HM}) se généralise sous la forme
\begin{equation}\label{asymptotique crible II}
 S\left(\mathcal{A},\I,E\cap C(m,n)\right)\sim 
 \frac{\sigma_q(F)\eta}{c(N_1,N_2)q^3\X}\sigma_q(\I)S\left(\mathcal{B},\I,E\cap C(m,n)\right)
\end{equation}
pour les ensembles $E\in\mathcal{E}(m+n)$\footnote{Dans la mesure où toute réunion peut s'écrire comme intersection d'ensembles, on peut en fait considérer des ensembles de la forme plus générale $E(k)=\bigcup_{i}
\bigcap_{j}E_{ij}(k)$}.

En vue d'exploiter la relation (\ref{asymptotique crible II}), on introduit l'ensemble $\mathcal{F}$ des fonctions $h$ pour lesquelles on peut écrire, pour tout entier $m$  satisfaisant
$\Omega(m^+(\X^{\tau}))=\omega(m^+(\X^{\tau}))=k$, la décomposition\begin{equation}h(m)=h_1(m^-(\X^{\tau}))
h_2(k)1_{E(k)}(m^+(\X^{\tau}))\label{décomposition 1 f}\end{equation}
où $E(k)\in\ens(k)$ et $h_1,h_2:\N\rightarrow\C$.
Il convient de noter que les fonctions définies dans l'introduction par (\ref{première décomposition h}) sont des éléments de $\mathcal{F}$.

En isolant les idéaux de $\mathcal{A}$ tels que $\J^+(  \X^{\tau})$ possède au moins un facteur carré, 
la décomposition (\ref{décomposition 1 f}) 
permet de réécrire l'ordre moyen $S(\mathcal{A};h)$ défini par (\ref{définition SAh}) sous la forme
\begin{equation}\label{identité combinatoire générale}
S(\mathcal{A};h)
=\sum_{P^+(N(\I))\leq   \X^{\tau}%\in\mathcal{I }
}h_1(\I)\sum_{k\geq0}h_2(k)S(\mathcal{A},\I  ,E(k)\cap C^-(\X^{\tau}))+O\left(\sum_{\substack{\J\in\mathcal{A}\\ N(\J)\in\Upsilon(  \X^{2\tau})
  }}|h(\J)|\right)
\end{equation}
où \begin{align} \label{définition Upsilon}
\Upsilon(\z):=\bigcup_{\substack{p}}\bigcup_{\substack{k\geq2\\p^k> \z}}p^k\Z.
         \end{align}
         
Sous réserve de la validité de (\ref{différence S A B}) et en remarquant que 
 (\ref{estimations distribution tronquée}) implique l'estimation 
\begin{equation}\label{partie tronquée gamma}
\sigma_q(\J)=\sigma_q(\J^-(  \X^{\tau}))\left(1+O\left(\frac{1}{\tau   \X^{\tau}} \right)\right),
 \end{equation} on peut espérer approcher $S(\mathcal{A};h)$ par
\begin{align*}  &\frac{\sigma_q(F)\eta}{c(N_1,N_2)q^3\X}\sum_{P^+(N(\I))\leq   \X^{\tau}
}h_1(\I)\sigma_q(\I)
\sum_{k\geq0}h_2(k)S(\mathcal{B},\I  ,E(k)\cap C^-(\X^{\tau}))\\=  &\frac{\sigma_q(F)\eta}{c(N_1,N_2)q^3\X}\left(S(\mathcal{B};\sigma_q h)\left(1+O\left(\frac{1}{\tau   \X^{\tau}}\right)\right)+O\left(\sum_{\substack{\J\in\mathcal{B}\\ N(\J)\in\Upsilon(  \X^{2\tau})
  }}\sigma_q(\J)|h(\J)|\right)\right)
\end{align*}
où $ S(\mathcal{B};\sigma_q h)$ est défini par (\ref{définition SBh}).

Sous couvert d'une certaine régularité pour $h$ -- par exemple si la série de Dirichlet $\mathcal{H}(s)$ définie par (\ref{définition Hs}) possède un prolongement analytique à gauche de la droite $\Re(s)=1$ -- , on pourra estimer $S(\mathcal{B};\sigma_qh)$ en utilisant des méthodes d'analyse asymptotique (voir les propositions \ref{Selberg-Delange h sigma} et \ref{moyenne friable sigma}).\\

 Dans la suite de ce paragraphe, on établit un raisonnement combinatoire qui conduira au théorème \ref{terme d'erreur combinatoire}. Celui-ci ramène le problème de l'estimation de 
 $S(\mathcal{A};h)$ (resp. $S(\mathcal{B},\sigma_qh)$) à l'estimation  des cardinaux 
$S\left(\mathcal{A},\I,E\cap C(m,n)\right)$ (resp. 
$S\left(\mathcal{B},\I,E\cap C(m,n)\right)$).

Soit $\varpi_1\in]0,\varpi_0[$. On introduit un paramètre $\tau_1\geq (\log_2\X)^{-\varpi_1}$ suffisamment petit qui sera rendu explicite dans les applications du paragraphe \ref{Partie Applications}, en vue de contrôler la contribution des idéaux $\J$ satisfaisant $N(\J^+(\X^{\tau}))\geq\X^{\tau_1}$.

Étant donné un idéal admissible $\J$ de $\OK$, on considère la décomposition de sa partie $  \X^{\tau}$-criblée
en produits d'idéaux premiers
\begin{align}\label{décomposition idéaux premiers}
\J^{+}(  \X^{\tau})=\mathfrak{p}_1\cdots\mathfrak{p}_k\text{ avec } N(\mathfrak{p}_{i})\leq N(\mathfrak{p}_{i+1}).
\end{align}

On effectue une partition de  $\OK$ sous la forme $\OK=\OK^{(1)}\cup\cdots\cup\OK^{(5)}$ où les $\OK^{(i)}$ sont les ensembles d'idéaux
définis 
par des conditions sur la norme de $\mathfrak{p}_k$ et $\mathfrak{p}_{k-1}$, à savoir \begin{align*}
\OK^{(1)}:=\left\{\J:  \X^2<N(\mathfrak{p}_k)\right\},\quad
\OK^{(2)}:=\left\{\J:  \X^{3/2}<N(\mathfrak{p}_k)\leq   \X^2\right\},\end{align*}\begin{align*}
\OK^{(3)}:=\left\{\J:\X<N(\mathfrak{p}_k)\leq   \X^{3/2}\right\},
\qquad \OK^{(4)}:=\left\{\J:N(\mathfrak{p}_k)\leq \X\text{ et }  \X^{3/2}<N(\mathfrak{p}_{k-1}\mathfrak{p}_k)\right\}\end{align*}
\begin{align*}\text{ et }\OK^{(5)}:=\left\{\J:N(\mathfrak{p}_k)\leq \X\text{ et }
N(\mathfrak{p}_{k-1}\mathfrak{p}_{k})\leq   \X^{3/2}\right\}.
   \end{align*}
Il s'ensuit que 
\begin{displaymath}
S(\mathcal{A};h)=\sum_{i=1}^5S(\mathcal{A}\cap\OK^{(i)};h)\qquad\text{ avec }\quad
S(\mathcal{A}\cap\OK^{(i)};h):=\sum_{\J\in\mathcal{A}\cap\OK^{(i)}}h(\J).
\end{displaymath}
Pour $i\in\{2,\ldots,5\}$, on peut remplacer le terme $S(\mathcal{A}\cap\OK^{(i)};h)$ par une formule où interviennent des cardinaux  de la forme $S(\mathcal{A},\I,C^{(i)}(\m,n))$ où $C^{(i)}(m,n)\subset C(m,n)$.
\begin{lemme}\label{lemme identité Type II}
Uniformément en $\X\geq2$ et $(N_1,N_2)\in\mathcal{N}(\eta)$, on a
\begin{multline}
 S(\mathcal{A};h)=S(\mathcal{A}\cap \OK^{(1)};h)+\sum_{i=2}^5\sum_{N(\I)\in\mathcal{I }}
h_1(\I)\sum_{m,n
}h_2(m+n)S(\mathcal{A},\I  ,C^{(i)}(m,n))\\+O\left(\sum_{\substack{\J\in\mathcal{A}\\N(\J)\in\Upsilon(  \X^{2\tau})}}|h(\J)|
+\sum_{\substack{\J\in\mathcal{A}\\N(\J^-(  \X^{\tau}))>  \X^{\tau_1}}}|h(\J)|+\Delta_1\left(\mathcal{A};|h|%
\right)+\Delta_2\left(\mathcal{A};|h|
\right)\right)
\label{identité Type II A}
\end{multline}
où
\begin{align}\label{définition mathcalI}
\mathcal{I }:=\left\{m\leq   \X^{\tau_1}\text{ et } P^+(m)\leq   \X^{\tau}\right\},
\end{align}
\begin{align}\label{définition C2}
 C^{(2)}(m,n
):=\left\{\begin{array}{ll}
\left\{rs\in C(m,n):rs\in E(m+n)\text{ et }P^+(s)\leq P^{-}(r)\right\}&\text{si }m=1,\\
\emptyset&\text{sinon},
\end{array}\right.
\end{align}
\begin{align}\label{définition C3}
 C^{(3)}(m,n
):=\left\{\begin{array}{ll}
\left\{rs\in C(m,n):rs\in E(m+n),
P^+(r)\leq P^{-}(s)
\right\} &\text{si }n=1,\\
\emptyset&\text{sinon},
\end{array}\right.
\end{align}
\begin{align}\label{définition C4}C^{(4)}(m,n):=\left\{\begin{array}{ll}
\left\{rs\in C(m,n):rs\in E(m+n),
P^+(s)\leq P^-(r),P^+(r)\leq   \X^{1-\tau_1}\right\}&\text{si }m=2,\\
\emptyset&\text{sinon},
\end{array}\right.\end{align}
\begin{multline}C^{(5)}(m,n):=
\left\{rs\in C(m,n):\vphantom{\X^{1+\tau}}rs\in E(m+n),P^+(r)\leq P^{-}(s), P^+(s)\leq   \X^{1-\tau_1},\right.\\\left.\frac{s}{P^{-}(s)}<   \X^{1+\tau}\right\} 
 \label{définition C5}\end{multline}
\begin{align*}
\Delta_1\left(\mathcal{A};|h|
\right):= 
\max_{\substack{\Y\geq   \X^{\frac{3}{2}-\tau_1}}}
\sum_{\substack{  \X^{1/2-\tau_1}\leq N(\mathfrak{p}_1)\leq N(\mathfrak{p}_2)\\
\Y\leq N(\mathfrak{p_1}\mathfrak{p}_2)\leq \Y   \X^{\tau_1}}}\sum_{\substack{\J\in\mathcal{A}_{\mathfrak{p}_1\mathfrak{p}_2}}}|h(\J)|\end{align*} et \begin{align*}
\Delta_2\left(\mathcal{A};|h|\right)
:= 
\max_{\Y\geq   \X^{\frac{1}{2}-\tau_1}}\sum_{\Y\leq N(\mathfrak{p})\leq \Y   \X^{\tau_1}}
\sum_{\substack{\J\in\mathcal{A}_{\mathfrak{p}}}}|h(\J)|.\end{align*}
\end{lemme}
\begin{proof}À l'image de (\ref{identité combinatoire générale}), on peut écrire, pour tout $i\in\{2,\ldots,5\}$,
 \begin{align*}
 S(\mathcal{A}\cap\OK^{(i)};h)
=\sum_{P^+(N(\I))\leq   \X^{\tau}
}h_1(\I)\sum_{k\geq0}h_2(k)S(\mathcal{A}\cap\OK^{(i)},\I,E(k))+O\left(
\sum_{\substack{\J\in\mathcal{A}\cap\OK^{(i)}\\ N(\J)\in\Upsilon(  \X^{2\tau})
  }}|h(\J)|\right).
 \end{align*}
 
Soit $\J$ un idéal de $\OK^{(2)}$. En utilisant la décomposition
(\ref{décomposition idéaux premiers}) et sous l'hypothèse supplémentaire que $N(\J^{-}(  \X^{\tau}))\leq   \X^{\tau_1}$, 
l'encadrement 
\begin{align*}  \X^{3/2}<N(\mathfrak{p}_k)\leq   \X^{2}\end{align*} entraîne largement que
\begin{align*}
   \X^{1-2\tau_1}\ll N(\mathfrak{p}_1\cdots\mathfrak{p}_{k-1})\ll   \X^{3/2+\tau_1}.
\end{align*}
En posant $r=N(\mathfrak{p}_k)$ et 
$s=N(\mathfrak{p}_1\cdots \mathfrak{p}_{k-1})$,
on déduit ainsi des  définitions (\ref{définition mathcalI}) et (\ref{définition C2}) que
\begin{multline*}
S(\mathcal{A}\cap\OK^{(2)};h)=\sum_{N(\I)\in\mathcal{I }}h_1(\I)\sum_{n\geq0}h_2(n+1)S(\mathcal{A}\cap\OK^{(2)},\I, C^{(2)}(1,n))
 \\+O\left(
\sum_{\substack{\J\in\mathcal{A}\cap\OK^{(2)}\\ N(\J)\in\Upsilon( \X^{2\tau}) } } |h(\J)|\right.\left.+\sum_{\substack{\J\in\mathcal{A}\cap\OK^{(2)}\\N(\J^-(  \X^{\tau}))>  \X^{\tau_1}}}|h(\J)|+\sum_{\substack{  \X^{3/2}\leq  N(\mathfrak{p}_k)\leq   \X^{3/2+\tau_1}\\
  \text{ou }  \X^{2-2\tau_1}\leq N(\mathfrak{p}_k)\leq   \X^{2}}}
  \sum_{\J\in\mathcal{A}_{\mathfrak{p}_k}}|h(\J)|
\vphantom{\sum_{\substack{\J\in\mathcal{A}\cap\OK^{(2)}\\ N(\J)\in\Upsilon(  \X^{2\tau}) } } |h(\J)|}\right).               
\end{multline*}
De même, on peut écrire, en posant maintenant $s=N(\mathfrak{p}_k)$, la formule
\begin{multline*}
S(\mathcal{A}\cap\OK^{(3)};h)=\sum_{N(\I)\in\mathcal{I }
}h_1(\I)\sum_{m\geq0}h_2(m+1)S(\mathcal{A}\cap\OK^{(3)},\I, C^{(3)}(m,1))\\+O\left(
\sum_{\substack{\J\in\mathcal{A}\cap\OK^{(3)}\\ N(\J)\in\Upsilon(  \X^{2\tau}) } } |h(\J)|\right.
+\sum_{\substack{\J\in\mathcal{A}\cap\OK^{(3)}\\N(\J^-(  \X^{\tau}))>  \X^{\tau_1}}}|h(\J)|
  \left.+\sum_{\substack{X\leq N(\mathfrak{p}_k)\leq   \X^{1+\tau}\\
  \text{ou }  \X^{3/2-\tau}\leq N(\mathfrak{p}_k)\leq   \X^{3/2}}}
  \sum_{\J\in\mathcal{A}_{\mathfrak{p}_k}}|h(\J)|\vphantom{\sum_{\substack{\J\in\mathcal{A}\cap\OK^{(3)}\\ N(\J)\in\Upsilon(  \X^{2\tau}) } } |h(\J)|}
\right)               
\end{multline*}
et,  avec $s=N(\mathfrak{p}_1\cdots \mathfrak{p}{k-2}$, 
\begin{multline*}
S(\mathcal{A}\cap\OK^{(4)};h)=\sum_{N(\I)\in\mathcal{I }
}h_1(\I)\sum_{n\geq0}h_2(n+2)S(\mathcal{A}\cap\OK^{(4)},\I, C^{(4)}(2,n))+O\left(
\sum_{\substack{\J\in\mathcal{A}\cap\OK^{(4)}\\ N(\J)\in\Upsilon(  \X^{2\tau}) } } |h(\J)|\right.\\
+\sum_{\substack{\J\in\mathcal{A}\cap\OK^{(4)}\\N(\J^-(  \X^{\tau}))>  \X^{\tau_1}}}|h(\J)|
  +\sum_{\substack{  \X^{1-\tau_1}\leq N(\mathfrak{p}_k)\leq \X}}
  \sum_{\J\in\mathcal{A}_{\mathfrak{p}_k}}|h(\J)|\\\left.
  +\sum_{\substack{  \X^{3/2}\leq N(\mathfrak{p}_{k-1}\mathfrak{p}_k)\leq   \X^{3/2+\tau_1}}}
  \sum_{\J\in\mathcal{A}_{\mathfrak{p}_{k-1}\mathfrak{p}_k}}|h(\J)|\vphantom{
\sum_{\substack{\J\in\mathcal{A}\cap\OK^{(4)}\\ N(\J)\in\Upsilon(  \X^{2\tau}) } } |h(\J)|}
\right).
\end{multline*}
Considérons enfin $\J\in\OK^{(5)}$. Si  $N(\mathfrak{p}_{k-1}\mathfrak{p}_{k})<  \X^{1+\tau}$ et 
$N(\mathfrak{p}_{k-2})>   \X^{1/2-2\tau}$, on observe les inégalités\begin{align*}
   \X^{1/2-2\tau}<N(\mathfrak{p}_{k-2})\leq N(\mathfrak{p}_{k-1})\leq N(\mathfrak{p}_k)<  \X^{1/2+3\tau}.
\end{align*}
D'autre part, si $N(\mathfrak{p}_{k-1}\mathfrak{p}_{k})<  \X^{1+\tau}$ et 
$N(\mathfrak{p}_{k-2})\leq   \X^{1/2-2\tau}$, alors l'hypothèse $N(\J^-(  \X^{\tau}))\leq   \X^{\tau_1}$
implique l'existence d'un indice  $j\geq2$ tel que
\begin{align*}
 N(\mathfrak{p}_{k-j+1}\cdots \mathfrak{p}_{k})<  \X^{1+\tau}
 \leq N(\mathfrak{p}_{k-j}\cdots \mathfrak{p}_{k})<   \X^{3/2-\tau}.
\end{align*}
Enfin, si $N(\mathfrak{p}_{k-1}\mathfrak{p}_{k})\geq   \X^{1+\tau}$, alors on a trivialement
\begin{align*}
   \X^{1+\tau}\leq N(\mathfrak{p}_{k-1}\mathfrak{p}_{k})<  \X^{3/2}.
\end{align*}
On en déduit que
\begin{multline*}
 S(\mathcal{A}\cap\OK^{(5)};h)=\sum_{N(\I)\in\mathcal{I }}h_1(\I)\sum_{m,n\geq0}h_2(n+m)S(\mathcal{A}\cap\OK^{(5)},\I, C^{(5)}(m,n))\\+O\left(
\sum_{\substack{\J\in\mathcal{A}\cap\OK^{(5)}\\ N(\J)\in\Upsilon(  \X^{2\tau}) } } |h(\J)|
+\sum_{\substack{\J\in\mathcal{A}\cap\OK^{(5)}\\N(\J^-(  \X^{\tau}))>  \X^{\tau_1}}}|h(\J)|\right.\\
   +\sum_{\substack{  \X^{1-\tau_1}\leq N(\mathfrak{p}_k)\leq \X\\
   \text{ou }  \X^{1/2-2\tau}\leq N(\mathfrak{p}_k)\leq   \X^{1/2+3\tau}}}
  \sum_{\J\in\mathcal{A}_{\mathfrak{p}_k}}|h(\J)|
  \\\left.+\sum_{\substack{  \X^{3/2-\tau}\leq N(\mathfrak{p}_{k-1}\mathfrak{p}_k)\leq   \X^{3/2}}}
  \sum_{\J\in\mathcal{A}_{\mathfrak{p}_{k-1}\mathfrak{p}_k}}|h(\J)|\vphantom{
\sum_{\substack{\J\in\mathcal{A}\cap\OK^{(5)}\\ N(\J)\in\Upsilon(  \X^{2\tau}) } } |h(\J)|}
\right)
\end{multline*}
ce qui achève la preuve.\end{proof}
Le traitement de $S(\mathcal{A}\cap\OK^{(1)};h)$ 
nécessite davantage de travail pour faire apparaître des parties de $C(m,n)$. 
 Au vu du lemme \ref{non associes}, on a
\begin{align*}
N(\mathfrak{p}_k)=
 \frac{c(N_1,N_2)q^3  \X^3}{N(\J^{-}(  \X^{\tau})\mathfrak{p}_1\cdots \mathfrak{p}_{k-1})}
 \left(1+O\left(\eta^{1/4}\right)\right)
 \end{align*}
 ce qui suggère d'associer à chaque ensemble $E_{i}(k)$ défini par (\ref{définition Eij}) 
 les ensembles $\widetilde{E}_{i}(k,\I)$
  des entiers $m$  satisfaisant $\Omega(m)=\omega(m)=k-1$ et 
\begin{align*}\left\{\begin{array}{ll}
 \frac{c(N_1,N_2)q^3  \X^3\Y}{N(\I)}
 P^{(\alpha_1)}(m)\cdots  P^{(\alpha_{j-1})}(m)
 \leq  mP^{(\overrightarrow{\beta})}(m) 
&\text{ si }\alpha_j=k,\\
mP^{(\overrightarrow{\alpha})}
(m)\leq  P^{(\beta_1)}(m)\cdots  P^{(\beta_{l-1})}(m)\frac{c(N_1,N_2)q^3  \X^3\Y}{N(\I)}
&\text{ si }\beta_l=k,\\
\Y P^{(\overrightarrow{\alpha})}%
(m)\leq  P^{(\overrightarrow{\beta})}(m)
&\text{ sinon}.
\end{array}\right.
\end{align*}
On définit encore 
 \begin{align*}
  \widetilde{E}(k,\I)=\bigcap_{i=1}^{\nbin}\widetilde{E}_{i}(k,\I).
 \end{align*}
Dans la mesure où les éléments de $\mathcal{A}\cap\OK^{(1)}$ satisfont
$N(\mathfrak{p}_k)\geq   \X^2$,  on peut écrire, en posant $\Qid=\mathfrak{p}_1\cdots \mathfrak{p}_{k-1}$,
 \begin{align*}
 S(\mathcal{A}\cap\OK^{(1)};h)=&
  \sum_{\substack{N(\I)\in\mathcal{I }
}}h_1(\I)\sum_{k}h_2(k)\sum_{\substack{N(\Qid)\leq   \X^{1-4\tau_1}\\P^-(N(\Qid))>   \X^{\tau}
}}1_{\widetilde{E}(k,\I)}(\Qid)
\pi(\mathcal{A},\I\Qid)\\
&+O\left(
\sum_{\substack{\J\in\mathcal{A}\cap\OK^{(1)}\\ N(\J)\in\Upsilon(  \X^{2\tau}) } } |h(\J)|
+\sum_{\substack{\J\in\mathcal{A}\cap\OK^{(1)}\\N(\J^-(  \X^{\tau}))>  \X^{\tau_1}}}|h(\J)|
  +\Delta_2(\mathcal{A};|h|)
 +\tau^{-1}\Delta_3(\mathcal{A};|h|)
\right) \end{align*}
 où 
 \begin{align*}
  \pi(\mathcal{A},\I):=\#\left\{\mathfrak{p}\text{ premier}:\I\mathfrak{p}\in\mathcal{A}
  \right\}
 \end{align*}
 et
 \begin{align*}
 \Delta_3(\mathcal{A};|h|)
:=\sum_{N(\I)\in\mathcal{I }}\sum_k
  \sum_{  \X^{\tau}<N(\mathfrak{p}_1)<\cdots<N(\mathfrak{p}_{k-1})}
  \max_{\Y\geq   \X^2}
  \sum_{\substack{\Y\leq N(\mathfrak{p}_k)\leq \Y (1+O(\eta^{1/4}))\\\I\mathfrak{p}_1\cdots \mathfrak{p}_k\in\mathcal{A}}}
|h(\I\mathfrak{p}_1\cdots \mathfrak{p}_k)|.
 \end{align*}
 Pour estimer $\pi(\mathcal{A},\I)$, 
on reproduit le raisonnement combinatoire à l'origine du lemme 4.1 de \cite{HM02}. 
En utilisant la notation (\ref{définition C crible}), l'identité de Buchstab et le lemme \ref{admissible} assurent que, 
si \begin{align*}\max\left(\{1\}\cup\{p^3: p\text{ premier } 1\text{-singulier}\}\right)<z_1\leq z_2,\end{align*} alors on a
\begin{multline}
S(\mathcal{A},\I  ,C^-(z_1))-S(\mathcal{A},\I  ,C^-(z_2))=\sum_{z_1<N(\mathfrak{p})\leq z_2}
S(\mathcal{A},\I  \mathfrak{p},C^-(N(\mathfrak{p}))\\+O\left(
\sum_{z_1<N(\mathfrak{p})\leq z_2}\sum_{k\geq2}S(\mathcal{A},\I\mathfrak{p}^k, C^-(N(\mathfrak{p})))\right).\label{Buchstab A}
\end{multline}
Ceci permet d'établir par récurrence la formule combinatoire suivante, analogue des formules (4.3) et (4.4) de \cite{HM02},  formule
\begin{multline*}
\pi(\mathcal{A},\I)=S(\mathcal{A},\I,C^-(\X^{3/2+\tau}))
=\sum_{n\geq0}(-1)^n
T^{(n)}\left(\mathcal{A},\I\right)
+\sum_{n\geq1}(-1)^{n}U^{(n)}(\mathcal{A},\I)\\-S_1(\mathcal{A},\I)  -S_2(\mathcal{A},\I)
+O\left(\#\left(\mathcal{A}_{\I}\cap\Upsilon(  \X^{2\tau})\right)\right) 
\end{multline*} où 
\begin{align*}S_1(\mathcal{A},\I):=
\sum_{  \X^{1-\tau}\leq N(\mathfrak{p})<  \X^{1+\tau}}S
(\mathcal{A},\I\mathfrak{p},C^-(N(\mathfrak{p}))),\end{align*}\begin{align*}
S_2(\mathcal{A},\I):=\sum_{  \X^{3/2-\tau}< N(\mathfrak{p})\leq   \X^{3/2+\tau}}
S(\mathcal{A},\I  \mathfrak{p},C^-(N(\mathfrak{p}))),\end{align*}
\begin{align}\label{définition Tn}
T^{(0)}(\mathcal{A},\I):= S(\mathcal{A},\I  ,C^-(  \X^{\tau})),\quad
U^{(1)}(\mathcal{A},\I):=\sum_{  \X^{1+\tau}\leq N(\mathfrak{p})\leq   \X^{\frac{3}{2}-\tau}}S
(\mathcal{A},\I\mathfrak{p},C^-(N(\mathfrak{p}))),
\end{align}
\begin{align*}
T^{(n)}(\mathcal{A},\I)
:=\sum_{\substack{
  \X^{\tau}< N(\mathfrak{p}_1)<\cdots<N(\mathfrak{p}_{n})<  \X^{1-\tau}\\
 N(\mathfrak{p}_1\cdots \mathfrak{p}_n)<  \X^{1+\tau}}}
 S(\mathcal{A},\I  \mathfrak{p}_1\cdots \mathfrak{p}_n,C^-(  \X^{\tau}))\quad\text{ si }n\geq1
\end{align*}et
\begin{align*}
U^{(n)}(\mathcal{A},\I):=\sum_{\substack{
  \X^{\tau}< N(\mathfrak{p}_1)<\cdots<N(\mathfrak{p}_{n})<  \X^{1-\tau}\\
N(\mathfrak{p}_2\cdots \mathfrak{p}_{n})<   \X^{1+\tau}\leq
N(\mathfrak{p}_1\cdots \mathfrak{p}_{n})}}S(\mathcal{A},\I  \mathfrak{p}_1\cdots \mathfrak{p}_n,C^-(
N(\mathfrak{p}_{1})))\quad\text{ si }n\geq2.
\end{align*}

Le niveau de crible $  \X^{\tau}$ impliqué dans la définition de $T^{(n)}(\mathcal{A},\I)$ étant suffisamment petit,
on établira au lemme \ref{crible S}, en utilisant le crible de Selberg, la formule asymptotique 
(\ref{asymptotique crible A})
en moyenne sur les idéaux $\I$ tels que $N(\I)\leq   \X^{1-3\tau_1}$.

Si $n\geq4$, les conditions \begin{align*}
  \X^{\tau}<N(\mathfrak{p}_1)<\cdots<N(\mathfrak{p}_n)<  \X^{1-\tau}\end{align*} et \begin{align*} 
N(\mathfrak{p}_2\cdots  \mathfrak{p}_{n})\leq   \X^{1+\tau}<N(\mathfrak{p}_1\cdots  \mathfrak{p}_{n})
\end{align*}
entraînent que
\begin{align*}
N(\mathfrak{p}_1\cdots \mathfrak{p}_{n})\leq   \X^{\frac{3}{2}-\tau}.\end{align*}
Par suite,  on peut espérer estimer la contribution de $U^{(n)}(\mathcal{A},\I\Qid)$  lorsque $n\geq4$ ou $n=1$ avec $N(\I)\in\mathcal{I }$,
 en utilisant la formule
(\ref{asymptotique crible II}) puisque l'on peut écrire, en posant $m_1=k$,\begin{align*}\sum_{k}h_2(k)\sum_{\substack{N(\Qid)\leq   \X^{1-4\tau_1}
}}1_{\widetilde{E}(k,\I)}(\Qid) U^{(n)}(\mathcal{A},\I\Qid)=
\sum_{\m}h_2(m_1)
S(\mathcal{A},\I, C^{(1)}(\m,n,\I))
\end{align*}
où $\m:=(m_1,m_2)$,
\begin{multline*}
 C(\m,n):=\left\{r_1r_2s:\omega(r_i)=\Omega(r_i)=m_i\text{ pour }i=1,2,
 \omega(s)=\Omega(s)=n\vphantom{ \X^{1+\tau}}\right.,\\\left. P^-(r_1r_2s)>  \X^{\tau},   \X^{1+\tau}\leq s\leq   \X^{3/2-\tau}\right\},
\end{multline*} 
\begin{align}\label{définition C11}
 C^{(1)}(\m,1,\I):=\left\{r_1r_2s\in C((m_1-1,m_2),1): 
 P^-(r_2)> s, 
 r_1\leq   \X^{1-4\tau_1}\text{ et }
r_1\in \widetilde{E}(m_1,\I)\right\}
\end{align}
 et, pour $n\geq 2$,
\begin{multline}
 C^{(1)}(\m,n,\I):=
 \left\{r_1r_2s\in C((m_1-1,m_2),n): \vphantom{\frac{s}{P^-(s)}}
 P^-(r_2)> P^{-}(s), P^+(s)<  \X^{1-\tau},\right.\\\left.\frac{s}{P^-(s)}<  \X^{1+\tau}, r_1\leq   \X^{1-4\tau_1}\text{ et }
r_1\in \widetilde{E}(m_1,\I)\right\},\label{définition C1}
\end{multline}
où les triplets $(r_1,r_2,s)$ sont comptés avec multiplicité, en posant $r_1=N(\Qid)$ et $s=N(\mathfrak{p}_1\cdots \mathfrak{p}_n)$ avec les notations de la définition de $U^{(n)}(\mathcal{A},\I)$.

Si $n=2\text{ ou }3$, l'inégalité
\begin{align*}
 N(\mathfrak{p}_1\cdots \mathfrak{p}_n)\leq   \X^{3/2-\tau}
\end{align*}
peut faire défaut. On s'inspire alors des identités (4.5) et (4.6) de \cite{HM02} en écrivant
\begin{align*}
U^{(2)}(\mathcal{A},\I):= U^{(2,1)}(\mathcal{A},\I)+U^{(2,2)}(\mathcal{A},\I)\end{align*} et \begin{align*}
U^{(3)}(\mathcal{A},\I):= U^{(3,1)}(\mathcal{A},\I)+S_3(\mathcal{A},\I)
\end{align*}
où 
\begin{align*}
U^{(2,1)}(\mathcal{A},\I):=\sum_{\substack{  \X^{\tau }< N(\mathfrak{p}_1)<N(\mathfrak{p}_2)<   \X^{1-\tau}\\
  \X^{1+\tau}\leq N(\mathfrak{p}_1\mathfrak{p}_2)\leq   \X^{\frac{3}{2}-\tau}}}
S(\mathcal{A},\I  \mathfrak{p}_1\mathfrak{p}_2,C^-(N(\mathfrak{p}_1)))
,\end{align*}
\begin{align*}
U^{(2,2)}(\mathcal{A},\I):=
\sum_{\substack{  \X^{\tau}< N(\mathfrak{p}_1)<N(\mathfrak{p}_2)<  \X^{1-\tau}\\   \X^{\frac{3}{2}-\tau}< 
N(\mathfrak{p}_1\mathfrak{p}_2)}}S
(\mathcal{A},\I\mathfrak{p}_1\mathfrak{p}_2,C^-(N(\mathfrak{p}_{1})))
,\end{align*}\begin{align*}
U^{(3,1)}(\mathcal{A},\I):
=\sum_{\substack{  \X^{\tau}< N(\mathfrak{p}_1)<N(\mathfrak{p}_2)<N(\mathfrak{p}_3)<  \X^{1-\tau}
\\
N(\mathfrak{p}_2\mathfrak{p}_3)<   \X^{1+\tau}\leq
N(\mathfrak{p}_1\mathfrak{p}_2\mathfrak{p}_3)\leq   \X^{\frac{3}{2}-\tau}}}S
(\mathcal{A},\I\mathfrak{p}_1\mathfrak{p}_2\mathfrak{p}_3,C^-(N(\mathfrak{p}_1)))
\end{align*}  
et \begin{align*}
S_3(\mathcal{A},\I):=\sum_{\substack{  \X^{\tau}< N(\mathfrak{p}_1)<N(\mathfrak{p}_2)<N(\mathfrak{p}_3)<  \X^{1-\tau}\\
N(\mathfrak{p}_2\mathfrak{p}_3)<  \X^{1+\tau}\\ N(\mathfrak{p}_1\mathfrak{p}_2\mathfrak{p}_3)>  \X^{\frac{3}{2}-\tau} }}
S(\mathcal{A},\I  \mathfrak{p}_1\mathfrak{p}_2\mathfrak{p}_3,C^-(N(\mathfrak{p}_{1}))).
\end{align*}

On peut observer que
\begin{align*}
\sum_{k}h_2(k)\sum_{\substack{N(\Qid)\leq   \X^{1-4\tau_1}\\
P^-(N(\Qid))>  \X^{\tau}}}1_{\widetilde{E}(k,\I)}(\Qid) U^{(2,1)}(\mathcal{A},\I\Qid)=\sum_{\m}h_2(m_1)
S(\mathcal{A},\I , C^{(1)}(\m,2,\I))
\end{align*}
et
\begin{align*}
\sum_{k}h_2(k)\sum_{\substack{N(\Qid)\leq   \X^{1-4\tau_1}\\
P^-(N(\Qid))>  \X^{\tau}}}1_{\widetilde{E}(k,\I )}(\Qid) U^{(3,1)}(\mathcal{A},\I\Qid)=\sum_{\m}h_2(m_1)
S(\mathcal{A},\I , C^{(1)}(\m,3,\I ))
\end{align*}
où $C^{(1)}(\m,2,\I )$ et $C^{(1)}(\m,3,\I )$ sont définis par (\ref{définition C1}).

De plus, on remarque que, si un idéal $\J$  intervient dans $U^{(2,2)}(\mathcal{A},\I\Qid)$, alors $\J$ est de la forme
 $\J=\I\Qid\Kid\mathfrak{p}_1\mathfrak{p}_2$ avec 
 \begin{align*}  \X^{3/2-\tau}< 
 N(\mathfrak{p}_1\mathfrak{p}_2)\leq   \X^{2-2\tau}\text{ et }P^-(N(\Kid))>N(\mathfrak{p}_1).\end{align*}
 Il s'ensuit que
 \begin{align*}
    \X^{1-\tau_1}\leq N(\Qid\Kid)\leq   \X^{3/2+2\tau}
 \end{align*}
ce qui suggère d'écrire, en posant $r=N(\mathfrak{p_1}\mathfrak{p}_2)$, $s_1=N(\Qid)$, $s_2=N(\Kid)$ et $n_1=k$,
\begin{multline*}
\sum_{k}h_2(k)\sum_{\substack{N(\Qid)\leq   \X^{1-4\tau_1}\\
P^-(N(\Qid))>  \X^{\tau}}}1_{\widetilde{E}(k,\I  )}(\Qid) 
U^{(2,2)}(\mathcal{A},\I\Qid)
=\sum_{\n}h_2(n_1)S(\mathcal{A},\I  ,C^{(1)}(2,\n,\I  ))\\
+O\left(
\sum_k|h_2(k)|
\sum_{\substack{N(\Qid)\leq   \X^{1-4\tau_1}\\P^-(N(\Qid))>  \X^{\tau}
}}1_{\widetilde{E}(k,\I)}(\Qid)
\sum_{\substack{  \X^{1/2}<N(\mathfrak{p}_1)<N(\mathfrak{p}_2)\\  \X^{3/2-\tau}\leq N(\mathfrak{p}_1\mathfrak{p}_2)\leq   \X^{3/2+\tau_1}\\
\text{ou }  \X^{2-2\tau_1}\leq N(\mathfrak{p}_1\mathfrak{p}_2)\leq   \X^{2-2\tau}}}
S(\mathcal{A}, \I\Qid\mathfrak{p}_1\mathfrak{p}_2, C^-(N(\mathfrak{p}_1)))
\right)
\end{multline*}
où 
\begin{multline*}
C(m,\n):=\left\{rs_1s_2:\omega(r)=\Omega(r)=m, \omega(s_i)=\Omega(s_i)=n_i\vphantom{\X^{3/2-\tau}}\text{ pour }i=1,2,
 \right.\\\left. P^-(rs_1s_2)>  \X^{\tau},   \X^{1+\tau}\leq s_1s_2\leq   \X^{3/2-\tau}\right\}.
\end{multline*} et
\begin{align}\label{définition C22}
C^{(1)}(2,\n,\I):=\left\{rs_1s_2\in C(2,(n_1-1,n_2)), 
s_1\in \widetilde{E}(n_1,\I), P^+(r)<  \X^{1-\tau}, P^-(r)< P^-(s_2)\right\}\end{align}
où les triplets $(r,s_1,s_2)$.

On observe que l'on peut majorer la somme sur $\I$ du terme d'erreur précédent par 
\begin{multline*}
\widetilde{\Delta}_{1}(\mathcal{A};|h|)
 :=
\max_{\substack{\Y\geq   \X^{\frac{3}{2}-\tau_1}}}
\sum_{\substack{  \X^{1/2-\tau_1}\leq N(\mathfrak{p}_1)\leq N(\mathfrak{p}_2)\\
\Y\leq N(\mathfrak{p_1}\mathfrak{p}_2)\leq \Y   \X^{\tau_1}}}
\sum_{\substack{N(\I)\in\mathcal{I }
}}|h_1(\I)|\sum_k|h_2(k)|\\\times
\sum_{\substack{N(\Qid)\leq   \X^{1-4\tau_1}\\P^-(N(\Qid))>  \X^{\tau}
}}1_{\widetilde{E}(k,\I)}(\Qid)
S(\mathcal{A}, \I\Qid\mathfrak{p}_1\mathfrak{p}_2, C^-(N(\mathfrak{p}_1))).
\end{multline*}
De même, en remarquant que les conditions de sommations définissant $S_3(\mathcal{A},\I)$ impliquent  
\begin{align*}  \X^{1/2-2\tau}\leq N(\mathfrak{p}_1)\leq   \X^{1/2+\tau/2},\end{align*}
on a, pour $i\in\{1,2,3\}$ 
\begin{align*}
\sum_{N(\I)\in\mathcal{I }}h_1(\I)\sum_kh_2(k)\sum_{\substack{N(\Qid)\leq   \X^{1-4\tau_1}\\
P^-(N(\I\Qid))>  \X^{\tau}}}1_{\widetilde{E}(k,\I)}(\Qid)S_i(\mathcal{A},\I\Qid)&\ll\widetilde{\Delta}_2(\mathcal{A};|h|).
\end{align*}
où
\begin{multline*}
 \widetilde{\Delta}_2(\mathcal{A};|h|
):=\max_{\Y\geq   \X^{\frac{1}{2}-\tau_1}}\sum_{\Y\leq N(\mathfrak{p})\leq \Y   \X^{\tau_1}}
\sum_{\substack{N(\I)\in\mathcal{I }
}}|h_1(\I)|\sigma_q(\I)\sum_k|h_2(k)|\\\times\sum_{\substack{N(\Qid)\leq   \X^{1-4\tau_1}\\P^-(N(\Qid))>  \X^{\tau}
}}1_{\widetilde{E}(k,\I)}(\Qid)S(\mathcal{A},\I\Qid\mathfrak{p},C^-(N(\mathfrak{p}))).
\end{multline*}
On déduit finalement de ce qui précède la formule
\begin{align}
\nonumber S(\mathcal{A}\cap\OK^{(1)};h)=&\sum_{N(\I)\in\mathcal{I}}h_1(\I)
\left(\sum_{k}h_2(k)
\sum_{\substack{N(\Qid)\leq \X^{1-4\tau_1}\\P^-(N(\Qid))>\X^{\tau}}}1_{\widetilde{E}(k,\I)}(\Qid)\sum_{n\geq0}(-1)^n
T^{(n)}\left(\mathcal{A},\I\Qid\right)
\right.\\
&\left.\vphantom{\sum_{\substack{N(\Qid)\leq \X^{1-4\tau_1}\\P^-(N(\Qid))>\X^{\tau}}}}\nonumber
+\sum_{n\geq1}(-1)^{n}\sum_{\m}h_2(m_1)S(\mathcal{A},\I,C^{(1)}(\m,n,\I))+\sum_{\n}h_2(n_1)S(\mathcal{A},\I,C^{(1)}(2,\n,\I))
\right)\\&\nonumber
+O\left(
\vphantom{\sum_{\substack{\J\in\mathcal{A}\\N(\J^-(  \X^{\tau}))>  \X^{\tau_1}}}}\Delta_2(\mathcal{A};|h|)+\tau^{-1}\Delta_3(\mathcal{A};|h|)+ \widetilde{\Delta}_1(\mathcal{A};|h|)+\widetilde{\Delta}_2(\mathcal{A};|h|)
\right.\\&
\left.+\sum_{\substack{\J\in\mathcal{A}\cap\Upsilon(  \X^{2\tau})}}|h(\J)|
+\sum_{\substack{\J\in\mathcal{A}\\N(\J^-(  \X^{\tau}))>  \X^{\tau_1}}}|h(\J)|
\right).
\label{combinatoire SAHOK1}\end{align}

L'argument développé ci-dessus s'adapte pour réécrire $S(\mathcal{B};\sigma_qh)$ à l'aide de
 cardinaux de la forme $S\left(\mathcal{B},\I,E\cap C(m,n)\right)$. En l'absence d'un analogue du lemme \ref{admissible} adapté aux idéaux de $\mathcal{B}$, il convient
toutefois d'user de précaution en considérant également la contribution des idéaux non admissibles dans l'utilisation de (\ref{Buchstab A}) au cours du traitement de $S(\mathcal{B}\cap\OK^{(1)};\sigma_qh)$. Cette nouveauté dans la transcription à 
$\mathcal{B}$ de
 l'argument combinatoire précédent 
 était déjà implicitement contenue dans les travaux de Heath-Brown et Moroz, à l'image de
 la formule (6.4) de \cite{HB01} ou encore lors de l'introduction de l'ordre total sur les idéaux premiers de $\OK$ dans le paragraphe
 4 de \cite{HM02}.  
 
 Avant d'énoncer la proposition qui résume la discussion de cette partie, 
 on harmonise les notations précédentes en introduisant, pour $\m:=(m_1,m_2)$, $\n:=(n_1,n_2)$ et $\I$ un idéal, \begin{align}\label{définition CImn}
C^{(i)}(\m,\n,\I):=\left\{\begin{array}{ll}
\left\{(r,1,s,1):rs\in C^{(i)}(m_1,n_1)\right\}&\text{si }2\leq i\leq 5
\text{ et }m_2=n_2=0,\\
\left\{(r_1,r_2,s,1):r_1r_2s\in C^{(1)}((m_1,m_2),n_1,\I)\right\}&\text{si }i=1\text{ et }n_2=0,\\
\left\{(r,1,s_1,s_2):rs_1s_2\in C^{(1)}((m_1,(n_1,n_2),\I)\right\}&\text{si }i=1\text{ et }m_2=0,\\  
\emptyset&\text{sinon},
                     \end{array}
 \right.
\end{align}
\begin{multline*}
S\left(\mathcal{D},\I,C^{(i)}(\m,\n,\I)\right)
:=\#\left\{(\Rid_1,\Rid_2,\Sid_1,\Sid_2):(N(\Rid_1),N(\Rid_2),N(\Sid_1),N(\Sid_2))\in C^{(i)}(\m,\n,\I)\right.\\\left. \text{ et }\vphantom{C^{(i)}}\I\Rid_1\Rid_2\Sid_1\Sid_2\in\mathcal{D}\right\}
\end{multline*}
et \begin{align*}
h_2(\m,\n):=\left\{\begin{array}{ll}
h_2(m+n)&\text{ si }i\in\{2,3,4,5\},\\
h_2(m_1)&\text{ si }i=1\text{ et }n_2=0,\\
h_2(n_1)&\text{ si }i=1\text{ et }m_2=0.
                     \end{array}
 \right.
\end{align*}

\begin{theo}\label{terme d'erreur combinatoire}
On a, uniformément en $\X\geq2$ et $(N_1,N_2)\in\mathcal{N}(\eta)$, 
 \begin{multline*}
S(\mathcal{A};h)-\frac{\eta\sigma_q(F)}{c(N_1,N_2)q^3\X}S(\mathcal{B};\sigma_qh)
\ll T(|h|)+ S(|h|)+\Theta(|h|,  \X^{\tau},  \X^{\tau_1})+ \Delta_0\left(  \X^{2\tau/3}\right)\\
%\Upsilon(
%\left(\#\mathcal{A}\cap\Upsilon(  \X^{\tau})+\frac{\eta\sigma_q(F)}{c(N_1,N_2)q^3\X}
%\sum_{\substack{\J\in\mathcal{B}\\N(\J)\in\Upsilon (  \X^{2\tau/3})}}\sigma_q(\J)\right)
+\Delta_1(|h|)+\Delta_2(|h|)+\tau^{-1}\Delta_3(|h|)+ \widetilde{\Delta}_1(|h|%,  \X^{\tau_1}
)+ \widetilde{\Delta}_2(|h|%,  \X^{\tau_1}
)
\end{multline*}
où 
\begin{align*}%\label{définition Th}
 T(|h|):=\sum_{N(\I)\in\mathcal{I }}|h_1(\I)|%\sum_{m}|h_2(m)|
 \sum_{\substack{
N(\Qid)\leq   \X^{1-4\tau_1}\\P^-(N(\Qid))>  \X^{\tau}}%I\in 
}%1_{E'(m,\I  _1)}(\I_2)%\omega(\I)=n_1}}
%|h^-(\I^-(  \X^{\tau}))
%|h_1(n_1)%1_{\widetilde{E}^{(n_1+1)}}
\sum_{n\geq0}\left|
T^{(n)}(\mathcal{A},\I\Qid)%S^{ K}(\mathcal{A},IQ,  \X^{\tau})
-\frac{\eta\sigma_q(F)}{c(N_1,N_2)q^3\X}
\sigma_q(\I)T^{(n)}(\mathcal{B},\I\Qid)%S^{ K}(\mathcal{B},IQ,   \X^{\tau})
\right|,
\end{align*}
$T^{(n)}(\mathcal{A},\I)$ et $T^{(n)}(\mathcal{B},\I)$ étant définis par (\ref{définition Tn}),
\begin{multline}
S(|h|):=\sum_{N(\I)\in\mathcal{I }}|h_1(\I)|\sum_{i=1}^5\sum_{m,n}|h_2(\m,\n)|
\left|S(\mathcal{A},\I  ,C^{(i)}(\m,\n,\I))\right.\\\left.-\frac{\eta\sigma_q(F)}{c(N_1,N_2)q^3\X}
\sigma_q(\I)S(\mathcal{B},\I  ,C^{(i)}(\m,\n,\I))\right|,\label{définition Sh}
\end{multline}
les $C^{(i)}(\m,\n,\I)$ étant définis par (\ref{définition CImn}) et (\ref{définition C2}), (\ref{définition C3}), (\ref{définition C4}), (\ref{définition C11}), (\ref{définition C1}) et (\ref{définition C22}),
\begin{align}\label{définition Theta}
\Theta(|h|,\Y,\z):=\sum_{\substack{\J\in\mathcal{A}\\N(\J^-(\Y))>\z}}|h(\J)|+
\frac{\eta\sigma_q(F)}{c(N_1,N_2)q^3\X}\sum_{\substack{\J\in\mathcal{B}\\
N(\J^-(\Y))>\z}}\sigma_q(\J)|h(\J)|,
\end{align}
\begin{multline}\label{définition Delta0}
 \Delta_0(|h|,\Y):=
\sum_{\substack{\J\in\mathcal{A}\\N(\J)\in\Upsilon (\Y)}}|h(\J)|+\frac{\eta\sigma_q(F)}{c(N_1,N_2)q^3\X}
\left(\sum_{\substack{\J\in\mathcal{B}\\N(\J)\in\Upsilon (\Y)}}\sigma_q(\J)|h(\J)|\right.
+\sum_{N(\I)\in\mathcal{I}}|h_1(\I)|\sigma_q(\I)\\\left.\times\sum_k|h_2(k)|\sum_{\substack{N(\Qid)\leq\X^{1-4\tau_1}\\P^-(N(\Qid))>\X^{\tau}}}1_{\widetilde{E}(k,\I)}(\Qid)
\#\left\{\Kid:\I\Kid\Qid\in\mathcal{B}, N(\Kid)\in\Upsilon(\Y)\right\}\right)
,
\end{multline}
\begin{align}\label{définition Delta1}
\Delta_1\left(|h|%,  \X^{\tau_1}
\right):= 
\max_{\substack{\Y\geq   \X^{\frac{3}{2}-\tau_1}}}
\sum_{\substack{  \X^{1/2-\tau_1}\leq N(\mathfrak{p}_1), %\leq
 N(\mathfrak{p}_2)\\
\Y\leq N(\mathfrak{p_1}\mathfrak{p}_2)\leq \Y   \X^{\tau_1}}}\left(\sum_{\substack{\J\in\mathcal{A}_{\mathfrak{p}_1\mathfrak{p}_2}}}|h(\J)|
+\frac{\eta\sigma_q(F)}{c(N_1,N_2)q^3\X}\sum_{\substack{\J\in\mathcal{B}_{\mathfrak{p}_1\mathfrak{p}_2}}}|h(\J)|\sigma_q(\J)\right),
\end{align}\begin{align}\label{définition Delta2}
\Delta_2(|h|):=
\max_{\Y\geq   \X^{\frac{1}{2}-\tau_1}}\sum_{\Y\leq N(\mathfrak{p})\leq \Y   \X^{\tau_1}}\left(
\sum_{\substack{\J\in\mathcal{A}_{\mathfrak{p}}}}|h(\J)|
+\frac{\eta\sigma_q(F)}{c(N_1,N_2)q^3\X}\sum_{\substack{\J\in\mathcal{B}_{\mathfrak{p}}}}|h(\J)|\sigma_q(\J)\right),
\end{align}\begin{multline}\label{définition Delta3}
\Delta_3\left(|h|%,  \X^{\tau_1}
\right):= \sum_{N(\I)\in\mathcal{I }}\sum_k
  \sum_{  \X^{\tau}<N(\mathfrak{p}_1)<\cdots<N(\mathfrak{p}_{k-1})} \max_{\Y\geq   \X^2}\left(
  \sum_{\substack{\Y\leq N(\mathfrak{p}_k)\leq \Y (\log \X)^c\\\I\mathfrak{p}_1\cdots \mathfrak{p}_k\in\mathcal{A}}}
|h(\I\mathfrak{p}_1\cdots \mathfrak{p}_k)|\right.\\
    \left.
 + \frac{\eta\sigma_q(F)}{c(N_1,N_2)q^3\X}\sigma_q(\I)\sum_{\substack{\Y\leq N(\mathfrak{p}_k)\leq \Y (\log \X)^c\\\I\mathfrak{p}_1\cdots \mathfrak{p}_k\in\mathcal{B}}}
|h(\I\mathfrak{p}_1\cdots \mathfrak{p}_k)|\right),
\end{multline}
\begin{multline}\label{définition Deltatilde1}
 \widetilde{\Delta}_1(|h|)
 :=
\max_{\substack{\Y\geq   \X^{\frac{3}{2}-\tau_1}}}
\sum_{\substack{  \X^{1/2-\tau_1}\leq N(\mathfrak{p}_1)\leq N(\mathfrak{p}_2)\\
\Y\leq N(\mathfrak{p_1}\mathfrak{p}_2)\leq \Y   \X^{\tau_1}}}
\sum_{\substack{N(\I)\in\mathcal{I }%N(\I^-)\leq   \X^{\tau_1}\\P^+(N(\I^-))\leq   \X^{\tau}
}}|h_1(\I)|\sum_k|h_2(k)|
\sum_{\substack{N(\I)\leq   \X^{1-4\tau_1}\\P^-(N(\Qid))>  \X^{\tau}%\\\omega(\I)=n
}}1_{\widetilde{E}(k,\I)}(\Qid)\\\times\left(
S(\mathcal{A}, \I\Qid\mathfrak{p}_1\mathfrak{p}_2, C^-(N(\mathfrak{p}_1)))+
\sigma_q(\I)\frac{\eta\sigma_q(F)}{c(N_1,N_2)q^3\X}S(\mathcal{B},\I  \Qid\mathfrak{p}_1\mathfrak{p}_2,C^-(N(\mathfrak{p}_1))\right)
\end{multline} et
\begin{multline}\label{définition Deltatilde2}
 \widetilde{\Delta}_2(|h|%,  \X^{\tau_1}
):=\max_{\Y\geq   \X^{\frac{1}{2}-\tau_1}}\sum_{\Y\leq N(\mathfrak{p})\leq \Y   \X^{\tau_1}}
\sum_{\substack{N(\I)\in\mathcal{I }%\leq   \X^{\tau_1}\\P^+(N(\I^-))\leq   \X^{\tau}
}}|h_1(\I)|\sum_k|h_2(k)|\sum_{\substack{N(\Qid)\leq   \X^{1-4\tau_1}\\P^-(N(\I_2))>  \X^{\tau}%\\\omega(\I)=n
}}1_{\widetilde{E}(k,\I)}(\Qid)\\\times\left(S(\mathcal{A},\I  \Qid\mathfrak{p},C^-(N(\mathfrak{p})))+\sigma_q(\I)
\frac{\eta\sigma_q(F)}{c(N_1,N_2)q^3\X}S(\mathcal{B},\I  \Qid\mathfrak{p},C^-(N(\mathfrak{p})))\right). 
\end{multline}
\end{theo}

On suppose dans toute la suite que $h$ est à valeurs dans le disque unité. Cette condition est réalisée par les fonctions  des théorèmes \ref{ordre moyen fonction multiplicative} et \ref{théorème principal friable}.
 Au cours des paragraphe \ref{lemmes de cribles} et \ref{paragraphe Type II}), on établira des bornes supérieures des différents termes d'erreur impliqués dans le théorème  \ref{terme d'erreur combinatoire}. Il en résultera le  corollaire suivant, qui est une version effective du théorème \ref{terme d'erreur combinatoire} avec $h$ borné.
\begin{corollaire}\label{corollaire des applications}
Supposons que $|h|\leq1$. Uniformément en $(N_1,N_2)\in\mathcal{N}(\eta)$ et $\X\geq2$, on a
\begin{equation}
S(\mathcal{A};h)-\frac{\eta\sigma_q(F)}{c(N_1,N_2)q^3\X}S(\mathcal{B};\sigma_qh)
\ll \tau_1\eta^2\X^2+\Delta(|h|)
\label{equation générale corollaire}\end{equation}
où \begin{align}\label{définition Delta}
 \Delta(|h|):=
 \sum_{N(\I)\in\mathcal{I }}|h_1(\I)|\sum_{m,n}\sum_{j\leq\nbin}\left(
 \Delta_{j}(m+n,\mathcal{A},\I,|h|)+
 \frac{\sigma_q(F)\eta}{c(N_1,N_2)q^3\X}\sigma_q(\I)\Delta_{j}(m+n,\mathcal{B},\I,|h|)\right)\end{align}
 et, pour $\mathcal{D}=\mathcal{A}$ ou $\mathcal{B}$, $\Delta_j(k,\mathcal{D},\I,|h|)$,
\begin{align*}
 \Delta_{j}(k,\mathcal{D},\I,|h|)
 :=\#\left\{\I\J\in\mathcal{D}\vphantom{P^{(\overrightarrow{\alpha})}}\right.:&
   \X^{\tau}<P^-(N(\J)),P^+(N(\J))\leq   \X^{1-\tau}, \\
 &  \X^{1-\tau_1}\leq P^{(\overrightarrow{\alpha})}(N(\J)), P^{(\overrightarrow{\beta})}(N(\J)) \leq   \X^{1+\tau_1}, \\
 &\left.\Y P^{(\overrightarrow{\alpha})}(N(\J))\leq P^{(\overrightarrow{\beta})}(N(\J))\leq
 \Y  \X^{O(\tau^6)}P^{(\overrightarrow{\alpha})}(N(\J))
 \right\}\end{align*}
 pour les $\Y$, $\overrightarrow{\alpha}$ et $\overrightarrow{\beta}$ introduits dans la définition (\ref{définition Eij}) de $E_j(k)$. 

De plus, si $h$ est à support sur les entiers $\z$ criblés avec $\z>\X^{\tau_1}$, alors on a\begin{equation}
S(\mathcal{A};h)-\frac{\eta\sigma_q(F)}{c(N_1,N_2)q^3\X}S(\mathcal{B};\sigma_qh)
\ll  \tau_1\frac{\log\X}{(\log\z)^2}\sigma_q(F)\eta^2\X^2+\Delta(|h|).
\label{equation criblée corollaire}\end{equation}
\end{corollaire}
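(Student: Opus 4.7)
The plan is to deduce the estimate from Theorem~\ref{terme d'erreur combinatoire} by bounding individually each error term on the right-hand side: $T(|h|)$, $S(|h|)$, $\Theta(|h|,\X^{\tau},\X^{\tau_1})$, $\Delta_0(\X^{2\tau/3})$, $\Delta_1(|h|)$, $\Delta_2(|h|)$, $\tau^{-1}\Delta_3(|h|)$, $\widetilde{\Delta}_1(|h|)$ and $\widetilde{\Delta}_2(|h|)$. Under $|h|\leq 1$, all of these reduce to counts of ideals in $\mathcal{A}$ and $\mathcal{B}$ with multiplicative restrictions, so they are amenable to the Type~I bounds of Lemmas~\ref{mathcalAIq} and~\ref{mathcalBIq} together with the mean-value estimates of Proposition~\ref{Selberg-Delange h sigma}.

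First I would attack $T(|h|)$. Each $T^{(n)}(\mathcal{A},\I\Qid)$ is a sieved cardinality at level $\X^{\tau}$, and a fundamental lemma of the sieve---to be established in Section~\ref{lemmes de cribles} from Lemma~\ref{mathcalAIq}---yields the asymptotic (\ref{asymptotique crible A}); the same applied to $\mathcal{B}$ via Lemma~\ref{mathcalBIq} yields (\ref{asymptotique crible B}). The two main terms differ precisely by the factor $\tfrac{\eta\sigma_q(F)}{c(N_1,N_2)q^3\X}\sigma_q(\I\Qid)$ appearing in $T(|h|)$, so the contribution reduces to the Type~I remainder; summing over $n\geq 0$, $\Qid$ with $N(\Qid)\leq\X^{1-4\tau_1}$, and $\I\in\mathcal{I}$ via (\ref{log sigmaq}) yields $T(|h|)\ll \tau_1\eta^2\X^2$, the saving $\tau_1$ being the relative density of $\X^{\tau}$-criblé integers in the truncated range $N(\Qid)\leq \X^{1-4\tau_1}$ versus $\leq \X$. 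The boundary errors $\Theta$, $\Delta_0$, $\Delta_1$, $\Delta_2$, $\tau^{-1}\Delta_3$, $\widetilde{\Delta}_1$, $\widetilde{\Delta}_2$ are then controlled by the same Type~I input combined with two elementary inputs: the prime ideal theorem for $\OK$, which provides a factor $\tau_1$ for the number of prime ideals with norm in any interval $[\Y,\Y\X^{\tau_1}]$, and Rankin's method via (\ref{estimation puissance})--(\ref{Rankin sigmaq}) for the friable/squareful restrictions. In each case this yields an overall bound $O(\tau_1\eta^2\X^2)$.

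The central term $S(|h|)$ is handled by the Type~II analysis carried out in Section~\ref{paragraphe Type II}. The combinatorial choice of the sets $C^{(i)}(\m,\n,\I)$ in Theorem~\ref{terme d'erreur combinatoire} guarantees that the factorisation of each ideal always splits off a factor $s$ (or $s_1s_2$) of size in the Type~II window $\X^{1+\tau}\leq s\leq \X^{3/2-\tau}$. The generalisation of the Heath-Brown--Moroz estimates from \cite{HM02, HM04} will establish (\ref{asymptotique crible II}) with an error controlled by the counts of ideals whose prime factorisation has two primes of comparable size at the extremities of the Type~II interval. These counts are precisely the quantities $\Delta_j(m+n,\mathcal{A},\I,|h|)$ and $\Delta_j(m+n,\mathcal{B},\I,|h|)$ of the definition (\ref{définition Delta}), so that summing over $(\m,\n,i,\I)$ with $|h_1|\leq 1$ gives $S(|h|)\ll \Delta(|h|)$, completing the bound (\ref{equation générale corollaire}). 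This Type~II step is the main obstacle: every other term follows by careful bookkeeping, whereas the Type~II input requires the combinatorial identities and large-sieve/point-counting machinery on cubic forms.

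For the refined bound (\ref{equation criblée corollaire}), the hypothesis that $h$ is supported on integers without prime factor below $\z>\X^{\tau_1}$ forces $\I=\OK$ in every outer sum (since $\I\in\mathcal{I}$ has $P^+(N(\I))\leq \X^{\tau}<\z$) and, by (\ref{convergence produit beta}), multiplies the density of the remaining sifted ideals by the Mertens-type factor $\prod_{\X^{\tau}<p\leq\z}(1-1/p)\asymp\log\X/\log\z$. Substituting this gain in both the sieve main term for $T(|h|)$ and in the boundary bounds for $\Delta_0,\Delta_1,\ldots,\widetilde{\Delta}_2$, and tracking through the factor $\sigma_q(F)$ coming from (\ref{définition sigmaqF})--(\ref{log sigmaq}), produces the prefactor $\tau_1\sigma_q(F)\log\X/(\log\z)^2$ in place of $\tau_1$, which is the announced estimate.
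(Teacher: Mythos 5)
Your proof proposal follows the same overall route as the paper: start from Théorème~\ref{terme d'erreur combinatoire} and bound each of the error terms $T(|h|)$, $S(|h|)$, $\Theta$, $\Delta_0$, $\Delta_1,\ldots,\widetilde\Delta_2$ using Lemmes~\ref{contribution entiers carrus Greaves}, \ref{grande partie friable}, \ref{crible S}, \ref{estimation petits intervalles}, then Proposition~\ref{conséquence fin type II} and Lemme~\ref{Approximation S par Se} for the Type~II term $S(|h|)$. The identification of the $\Delta_j(k,\mathcal{D},\I,|h|)$ counts as the residual of the Type~II discretisation is also correct, as is the observation that the support condition $P^-(m)>\z>\X^{\tau_1}>\X^\tau$ restricts the outer $\I$-sum to $\I=\OK$.

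However, your justification of $T(|h|)\ll\tau_1\eta^2\X^2$ is wrong in an essential way. You write that the two main terms of (\ref{asymptotique crible A}) and (\ref{asymptotique crible B}) ``differ precisely by the factor $\tfrac{\eta\sigma_q(F)}{c(N_1,N_2)q^3\X}\sigma_q(\I\Qid)$, so the contribution reduces to the Type~I remainder'', and that the saving $\tau_1$ is a ``relative density''. This is not what Lemme~\ref{crible S} says, and it cannot be true: the fundamental lemma of the linear sieve (Théorème~7.1 de \cite{HR74}) applied with sieving level $z=\X^{\tau}$ and support $\xi=\X^{\tau_1}$ does \emph{not} deliver an exact asymptotic with only a Type~I remainder --- it delivers the expected main term times $\bigl(1+O(\exp(-\tau_1/\tau))\bigr)$. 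It is this imperfect cancellation, producing the factor $\exp(-\tau_1/\tau)$ on top of $\tau^{-3}\log\X\cdot\sum_\I\sigma_q(\I)/N(\I)\ll\tau^{-2}/\sigma_q(F)$, that is the dominant contribution to $T(|h|)$. The reason the bound $\ll\tau_1\eta^2\X^2$ then holds is that the hypothesis $\varpi_1<\varpi_0$ forces $\tau_1/\tau\to\infty$, so $\exp(-\tau_1/\tau)$ crushes every power of $\tau^{-1}$; without this explicit input the statement does not follow. Similarly, your blanket attribution of all the boundary error terms to ``prime ideal theorem plus Rankin, giving $O(\tau_1\eta^2\X^2)$'' is imprecise: $\Delta_0$, $\Theta$ and $T(|h|)$ are actually much smaller than $\tau_1\eta^2\X^2$ (by powers of $\log\X$ or by $\exp(-c(\tau_1/\tau)\log(\tau_1/\tau))$); only $\Delta_1,\ldots,\widetilde\Delta_2$ are genuinely of order $\tau_1\eta^2\X^2$ and form the dominant part of the first estimate. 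Finally, for the criblée refinement, your Mertens estimate $\prod_{\X^\tau<p\le\z}(1-1/p)\asymp\log\X/\log\z$ is off by a factor $\tau$; the correct value is $\asymp\tau\log\X/\log\z$, and the paper's $\tau_1\log\X/(\log\z)^2\sigma_q(F)$ prefactor comes from the $\z$-criblé versions (\ref{estimation Delta crible}) of the estimates for $\Delta_1,\ldots,\widetilde\Delta_2$ in Lemme~\ref{estimation petits intervalles}, not from a modification of the sieve analysis for $T(|h|)$.
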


\begin{proof}[Démonstration du corollaire \ref{corollaire des applications} en admettant provisoirement les résultats des paragraphes \ref{lemmes de cribles} et  \ref{paragraphe Type II}]
Les lemmes \ref{contribution entiers carrus Greaves}, \ref{grande partie friable} et \ref{crible S} fournissent respectivement une borne supérieure de $\Delta_0(|h|,\X^{2\tau/3})$, $\Theta(|h|,\X^{\tau},\X^{\tau_1})$ et $T(|h|)$ qui est négligeable, compte tenu des hypothèses sur $\tau$ et $\tau_1$.
En utilisant la proposition \ref{conséquence fin type II} avec le choix $\xi=\tau^{7}$  ainsi que le lemme \ref{Approximation S par Se}, on obtient  majoration
\begin{displaymath}
S(|h|)\ll \eta^2  \X^2(\log \X)^{-B}+\Delta(|h|)
\end{displaymath}
valide pour tout $B>0$.
En utilisant le lemme \ref{estimation petits intervalles} pour estimer $\Delta_1(|h|)$, $\Delta_2(|h|)$, $\Delta_3(|h|)$, $\widetilde{\Delta}_1(|h|)$ et $\widetilde{\Delta}_2(|h|)$, on en déduit finalement le résultat.
\end{proof}
Il convient de souligner qu'un résultat similaire pour des fonctions $h$ non bornées et à croissance suffisamment lente semble accessible à l'aide de la méthode développée dans cet article. Toutefois, obtenir les analogues des estimations du paragraphe \ref{lemmes de cribles} pour de telles fonctions $h$ nécessiterait davantage de travail, ce qui aurait rendu la lecture particulièrement fastidieuse.

\section{Premières applications des estimations de sommes de Type I}\label{lemmes de cribles}

Dans cette partie, on utilise les estimations de sommes de Type I du paragraphe \ref{paragraphe Type I} pour établir 
des bornes des différents
termes d'erreur impliqués dans le lemme \ref{terme d'erreur combinatoire}, à l'exception de $S(|h|)$, qui fera l'objet du paragraphe \ref{paragraphe Type II} sur les estimations de sommes de Type II. À partir de maintenant, on suppose que la fonction arithmétique $h$ est bornée en faisant l'hypothèse $|h(n)|\leq1$ pour tout entier $n$.

Le lemme ci-dessous donne une majoration de la contribution des idéaux dont la norme appartient à $\Upsilon\left((\log\X)^c\right)$, c'est-à-dire  est divisible par un $p^k>(\log \X)^c$ avec $k\geq2$. Il en découle une majoration du terme $\Delta_0( |h|,\X^{2\tau/3})$ défini par (\ref{définition Delta0}).
\begin{lemme}\label{contribution entiers carrus Greaves}
Soient $B\geq0$. Il existe $c(B)>0$ tel que, uniformément en $\X\geq2$, on ait
\begin{equation}
\#\left\{1\leq n_1,n_2\leq \X:
 F(n_1,n_2)\in\Upsilon((\log \X)^{c(B)})\right\}\ll   \X^2
 (\log \X)^{-B}\label{carrus mathcalA}
\end{equation}
et 
\begin{equation}
\sum_{\substack{N(\J)\leq \X\\
N(\J)\in\Upsilon((\log \X)^{c(B)})}}\sigma_q(\J)\ll \X
 (\log \X)^{-B}.\label{carrus mathcalB}
\end{equation}
En particulier, si $|h|\leq1$, il existe $c(B)>0$ tel que
\begin{align}\label{majoration Delta0}
\Delta_0( |h|,\X^{2\tau/3})\ll\X^2(\log\X)^{-B}.
\end{align}
\end{lemme}
Ce résultat, qui s'inspire largement du lemme 2 de \cite{Gr92},
 peut être considéré comme une extension du lemme 5 de \cite{De71}
aux formes binaires. 
\begin{proof}Soit $C>0$. Pour tout nombre premier $p$, on pose 
\begin{align*}k(p):=\max\left(2,\left\lfloor\frac{\log(C\log \X)}{\log p}\right\rfloor+1\right).
 \end{align*}
Étant donné 
des entiers $n_1$ et $n_2$  tels que $(p,n_1n_2)=1$ et 
 $p^{k(p)}|F(n_1,n_2)$, il existe $\omega$ tel que 
 \begin{align*}F(\omega,1)\equiv 0\pmod{p^{k(p)}}\quad\text{et}\quad 
n_1\equiv \omega n_2\pmod{p^{k(p)}}.
   \end{align*} 
Par suite,  on peut écrire d'après le lemme 1 de \cite{Gr92} les inégalités
\begin{align*}
&\#\left\{1\leq n_1,n_2\leq \X:(p,n_1n_2)=1,p^{k(p)}|F(n_1,n_2)\right\}\\\leq&
\sum_{\substack{1\leq\omega\leq p^{k(p)}\\
F(\omega,1)\equiv 0\pmod {p^{k(p)}}}}\#\left\{0\leq n_1,n_2\leq \X:
n_1\equiv\omega n_2\pmod{p^{k(p)}}\right\}\\
\leq&\sum_{\substack{1\leq\omega\leq p^{k(p)}\\
F(\omega,1)\equiv 0\pmod {p^{k(p)}}}}\left(\frac{  \X^2}{p^{k(p)}}+O\left(\frac{\X}{M_0(\omega,p^{k(p)})}\right)\right)
\end{align*}
où
\begin{align*}
 M_0(\omega,p^k):=\min_{\substack{(n_1,n_2)\neq(0,0)\\n_1=\omega n_2\pmod {p^k}}}\max(|n_1|,|n_2|).
\end{align*}
Dans la mesure où le nombre de racines modulo $p^k$ du polynôme $F(X_1,1)$ 
est borné
uniformément en $p$ et $k\geq1$, la contribution du terme principal peut-être estimée par
\begin{align*}
\sum_{p\ll   \X^{3/2}}\sum_{\substack{1\leq\omega\leq p^{k(p)}\\
F(\omega,1)\equiv 0\pmod {p^{k(p)}}}}\frac{  \X^2}{p^{k(p)}}
&\ll \sum_{p\leq (\log \X)^{C/2}}\frac{  \X^2}{(\log \X)^{C}}
+\sum_{(\log \X)^{C/2}<p\ll   \X^{3/2}}\frac{  \X^2}{p^{2}}&
\\&\ll 
  \X^{2}(\log \X)^{-C/2}.
\end{align*}
On décompose le terme de reste en écrivant
\begin{align*}
\sum_{p\ll   \X^{3/2}}\sum_{\substack{1\leq\omega\leq p^{k(p)}\\
F(\omega,1)\equiv 0\pmod {p^{k(p)}}}}\frac{\X}{M_0(\omega,p^{k(p)})}=S_1+S_2+S_3
\end{align*}
où
\begin{align*}
 S_1=\sum_{\substack{p\leq   \X^{1/2}}}
 \sum_{\substack{1\leq\omega\leq p^{k(p)}\\
F(\omega,1)\equiv 0\pmod {p^{k(p)}}}}\frac{\X}{M_0(\omega,p^{k(p)})},\end{align*}
                                                                    \begin{align*}
 S_2=\sum_{\substack{   \X^{1/2}<p\ll   \X^{3/2}}}\sum_{\substack{1\leq\omega\leq p^{k(p)}\\
F(\omega,1)\equiv 0\pmod {p^{k(p)}}\\M_0(\omega,p^{k(p)})>   \X^{3/4} }}\frac{\X}{M_0(\omega,p^{k(p)})}
\end{align*} et \begin{align*}
 S_3=\sum_{\substack{  \X^{1/2}<p\ll   \X^{3/2}}}\sum_{\substack{1\leq\omega\leq p^{k(p)}\\
F(\omega,1)\equiv 0\pmod {p^{k(p)}}\\M_0(\omega,p^{k(p)})\leq   \X^{3/4}}}\frac{\X}{M_0(\omega,p^{k(p)})}.
\end{align*}
En utilisant l'estimation triviale $M_0(\omega,p^k)\geq1$ pour $S_1$ et l'hypothèse 
$M_0(\omega,p^{k(p)})>  \X^{3/4}$ pour $S_2$, on en déduit que
\begin{align*}
 S_1\ll   \frac{\X^{3/2}}{\log\X}\qquad\text{et}\qquad S_2\ll   \frac{\X^{7/4}}{\log\X}.
\end{align*}
De l'estimation
\begin{align*}
 S_3\ll\sum_{1\leq n_1\leq   \X^{3/4}}\frac{\X}{n_1}
 \sum_{0\leq n_2\leq n_1}\sum_{\substack{  \X^{\frac{1}{2}}<p\\ p^2|F(n_1,n_2)}}1
\ll   \X^{7/4}, 
\end{align*}
on déduit finalement que \begin{align*}
\sum_{p\ll   \X^{3/2}}
\#\left\{1\leq n_1,n_2\leq \X:(p,n_1n_2)=1,p^{k(p)}|F(n_1,n_2)\right\}&\ll
  \X^2(\log \X)^{-C/2}.\end{align*}
On regarde à présent la contribution des $p$, $n_1$ et $n_2$ tels que $p|n_1n_2$. 
Si $p\nmid F(1,0)F(0,1)$, alors $p|(n_1,n_2)$ et l'on a 
\begin{align*}
 \sum_{\substack{p\nmid F(1,0)F(0,1)}}
\#\left\{1\leq n_1,n_2\leq \X:p| n_1n_2,p^{k(p)}|F(n_1,n_2) \right\}&\leq
 \sum_{\substack{p\nmid F(1,0)F(0,1)\\p\ll   \X^{3/2}}}\frac{  \X^2}{p^{2k(p)}}\\&\ll   \X^2(\log \X)^{-C/2}.
\end{align*}
Inversement, si $p|F(1,0)F(0,1)$, alors les estimations (\ref{Type I Daniel}) et (\ref{majoration de gammaf}) 
entraînent l'estimation
\begin{align*}
 \#\left\{1\leq n_1,n_2\leq \X:p^k|F(n_1,n_2)\right\}&\ll   \X^2\frac{\gamma_F(p^k)}{p^{2k}}+X(\log \X)^{C/2}
 (\log_2\X)^{7203}\\&\ll   \X^2(\log \X)^{-2C/3}.
\end{align*}
En choisissant $C$ suffisamment grand, il s'ensuit l'estimation  (\ref{carrus mathcalA}). 

Pour établir (\ref{carrus mathcalB}), on utilise la méthode de Rankin pour écrire 
\begin{align*}
 \sum_{\substack{N(\J)\leq \X\\
N(\J)\in\Upsilon((\log \X)^{C})}}\sigma_q(\J)&\ll 
\sum_{p\leq   \X^{1/2}}\sum_{\substack{k(p)\leq k
}}\sigma_q^{\Z}(p^{k})\sum_{N(\J)
\leq \X/p^k}\sigma_q(\J)\\
&\ll \X\sum_{p\leq   \X^{1/2}}\sum_{k(p)\leq k}\frac{\sigma_q^{\Z}(p^{k})}{p^k}\sum_{N(\J)\leq \X}\frac{\sigma_q(\J)}{N(\J)}.
\end{align*}
Au vu de (\ref{log sigmaq}) et (\ref{majoration sigma}), il suit
 \begin{align*}
 \sum_{\substack{N(\J)\leq \X\\
N(\J)\in\Upsilon((\log \X)^{C})}}\sigma_q(\J)&\ll \X(\log \X)^{c(A)}
\sum_{p\leq   \X^{1/2}}\sum_{k(p)\leq k}\frac{\sigma_q^{\Z}(p^{k})}{p^{k}}\\&\ll \X(\log \X)^{c(A)}\left(\sum_{p\leq(\log \X)^{C/2}}\frac{1}{(\log \X)^{2C/3}}+\sum_{(\log \X)^{C/2}<p\leq   \X^{1/2}}\frac{1}{p^{4/3}}\right)\\
&\ll \X(\log \X)^{c(A)-C/6},
\end{align*}
ce qui implique  (\ref{carrus mathcalB}) quitte à choisir $C$ suffisamment grand.

Sous l'hypothèse où $h$ est borné, on observe que 
\begin{align*}
&\sum_{N(\I)\in\mathcal{I}}|h_1(\I)|\sigma_q(\I)\sum_k|h_2(k)|\sum_{\substack{N(\Qid)\leq\X^{1-4\tau_1}\\P^-(N(\Qid))>\X^{\tau}}}1_{\widetilde{E}(k,\I)}(\Qid)
\#\left\{\Kid:\I\Kid\Qid\in\mathcal{B}, N(\Kid)\in\Upsilon((\log\X)^{C})\right\}\\\ll& e^{O(\tau^{-1})}\sum_{\substack{N(\J)\ll q^3\X^3\\
N(\J)\in\Upsilon((\log \X)^{C})}}\sigma_q(\J).
\end{align*}
L'estimation (\ref{majoration Delta0}) découle alors de (\ref{carrus mathcalA}) et (\ref{carrus mathcalB}).
\end{proof}

Les estimations de Type I établies dans la section \ref{paragraphe Type I} permettent, 
au moyen d'un lemme de crible, de
donner des bornes supérieures du bon ordre de grandeur des cardinaux $S(\mathcal{A},\I  ,C^-(\z))$ et $S(\mathcal{B},\I  ,C^-(\z))$ définies au paragraphe \ref{Premières propriétés}. De telles estimations seront centrales dans les  majorations de $\Theta(|h|,\Y,\z)$, $\Delta_1(|h|)$, $\Delta_2(|h|)$, $\Delta_3(|h|)$, $\widetilde{\Delta}_1(|h|)$
et $\widetilde{\Delta}_2(|h|)$.
\begin{lemme}\label{lemme préliminaire de crible}
Soient $B_1$ et $ B_2\geq0$. Il existe $c(B_1,B_2)>0$ tel que 
uniformément en $\z>\log \X$, on ait
\begin{align}\label{formule de crible pour A}
S(\mathcal{A},\I  ,C^-(\z))
\ll \sigma_q(F)\frac{\eta^2  \X^2}{\log \z}\frac{\sigma_q(\I^-(\z))}{N(\I)}+R_{\mathcal{A}}(\I,\z)\end{align}
et
\begin{align}S(\mathcal{B},\I  ,C^-(\z))
\ll \frac{\eta c(N_1,N_2)q^3  \X^3
}{N(\I)\log\z }
+R_{\mathcal{B}}(\I,\z)\label{formule de crible pour B}\end{align}
 avec
\begin{align*}
 \sum_{N(\I)\z^2\leq   \X^{2}(\log \X)^{-c(B_1,B_2)}}\tau_{\K}(\I)^{B_1}R_{\mathcal{A}}(\I,\z)\ll   \X^2(\log \X)^{-B_2}
\end{align*}
et
\begin{align*}
 \sum_{N(\I)\z^2\leq   \X^{3}(\log \X)^{-c(B_1,B_2)}}\tau_{\K}(\I)^{B_1}\sigma_q(\I)R_{\mathcal{B}}(\I,\z)\ll   \X^3(\log \X)^{-B_2}.\end{align*}\end{lemme}
\begin{proof}
 Puisque la fonction de densité $\gamma_q(\I,\cdot)$
 définie par (\ref{définition de gamma}) 
 satisfait  (\ref{écriture de gamma}) et  (\ref{inégalité gammap}),
  elle vérifie les hypothèses du crible linéaire 
$(\Omega_0)$ et $(\Omega_1)$ de \cite{HR74}, 
à savoir l'existence d'un réel $A>0$ tel que, uniformément en $q$, $\J$ et $p$, on ait
\begin{align*}
\gamma_q(\I,p) \leq \frac{A}{p+A}, 
\end{align*}
excepté pour les premiers $p$ tels que $\gamma_q(\I,p)=1$, auquel cas $S(\mathcal{A},\I  ,z)=0$ dès que $\z\geq p$

On peut donc appliquer  le théorème 6.2 de \cite{HR74}
ce qui entraîne \begin{align}\nonumber
S(\mathcal{A},\I  ,C^-(\z))
\ll \frac{\eta^2  \X^2}{\zeta_q(2)}\frac{\alpha_q(\I)}{N(\I)}
\prod_{\substack{p\leq \z}}
\left(1-\gamma_q(\I,p)\right)+R_{\mathcal{A}}(\I,\z)
\end{align}
où \begin{displaymath}
R_{\mathcal{A}}(\I,\z):=\sum_{d\leq\z^2}\tau^2(d)|r(\mathcal{A},I,d)|.
\end{displaymath}En estimant le terme de reste à l'aide  du lemme \ref{mathcalAIq}, on en déduit l'existence d'une constante $c(B_1,B_2)>0$ telle que
\begin{align*}
 \sum_{N(\I)\z^2\leq   \X^2(\log \X)^{-c(B_1,B_2)}}\tau_{\K}(\I)^{B_1}R_{\mathcal{A}}(\I,z)\ll   \X^{2}(\log \X)^{-B_2}.
\end{align*}
Sous l'hypothèse $\z>\log \X$, la formule de crible (\ref{formule de crible pour A}) 
est une conséquence de l'estimation uniforme
en $N(\I)\leq   \X^2$
\begin{align*}
 \frac{\alpha_q(\I)}{\zeta_q(2)} \prod_{p\leq \z}\left(1-\gamma_q(\I,p)\right)\ll 
 \frac{\sigma_q(F)\sigma_q(\I^-(\z))}{\log \z}\prod_{\substack{p|q\\p>\z}}\left(1+O\left(\frac{1}{p}\right)\right)
\end{align*}
qui suit  de (\ref{convergence sigmaf}),  (\ref{définition sigmaqF}) et (\ref{définition sigmaq}).

Pour établir (\ref{formule de crible pour B}), il suffit de reproduire le même argument en utilisant le lemme \ref{mathcalBIq} 
(resp. (\ref{convergence produit beta}))
en lieu et place du lemme \ref{mathcalAIq} (resp. (\ref{convergence sigmaf}), (\ref{définition sigmaqF}) et (\ref{définition sigmaq})).
Il vient ainsi que, pour $c(B_2)>0$ suffisamment grand, on a
\begin{align*}
\sum_{N(\I)\z^2\leq   \X^3(\log \X)^{-c(B_2)}}\tau_{\K}(\I)^{B_1}\sigma_q(\I)R_{\mathcal{B}}(\I,\z)
&\ll   \X^3(\log \X)^{-B_2}\sum_{N(\I)\leq   \X^3}\frac{\tau_{\K}(\I)^{B_1}\sigma_q(\I)}{N(\I)}.\end{align*}
En utilisant l'estimation $\widetilde{\sigma}_q^{\Z}(m)\ll\tau(m)$ valide pour tout entier 
 $q$-régulier $m$ ainsi que 
(\ref{estimation puissance}), il s'ensuit  que l'on a\begin{align*}
\sum_{N(\I)\leq   \X^3}\frac{\tau_{\K}(\I)^{B_1}\sigma_q(\I)}{N(\I)}&\ll   \sum_{\substack{\substack{d}\quad q\text{-singulier}
}}\frac{\sigma_q(d)}{d^{5/6}}\sum_{m\leq   \X^3}\frac{\tau(m)^{c(B_1)}}{m}\\
&\ll   (\log \X)^{c(B_1)}.
\end{align*}
\end{proof}

Une borne supérieure de la quantité $\Theta(|h|,\Y,\z)$  définie par (\ref{définition Theta})
peut être obtenue en s'inspirant de travaux antérieurs concernant les ordres moyens de fonctions 
arithmétiques sur les valeurs polynomiales. Par exemple, Tenenbaum~[\cite{Te90b}, lemme 3.7]  montre que, 
pour tout polynôme $F\in\Z[X_1]$, il existe $c(F)>0$ tel que, uniformément en $\z\geq \Y\geq 2$ et $\X\geq2$, on ait
\begin{align*}
 \#\left\{n\leq \X: \prod_{\substack{p\leq \Y \\p^{\nu}\parallel F(n)}}p^{\nu}>\z\right\}\ll\exp\left(-c(F)\frac{\log \z}{\log \Y }\right).
\end{align*}
En vue d'obtenir un analogue de ce résultat au cas des formes binaires et de remplacer le terme $\frac{\log \z}{\log \Y }$ par $\frac{\log \z}{\log \Y }\log\left(\frac{\log \z}{\log \Y }\right)$, 
on peut adapter 
la preuve du théorème 1 de \cite{Sh80}, démarche à l’origine du résultat suivant.

\begin{lemme}\label{grande partie friable}
Soit $\varepsilon>0$. Il existe des constantes $c_1,c_2>0$ tel que, uniformément en $\X\geq2$,
 $(N_1,N_2)\in\mathcal{N}(\eta)$, 
 et 
 $\z\geq \Y\geq \exp\left(\frac{\log \X}{(\log_2\X)^{1-\varepsilon}}\right)$, on ait
\begin{align}\label{A grande partie friable}
\#\left\{\J\in\mathcal{A}: N(\J^-(\Y))> \z\right\}
\ll \eta^2  \X^2(\log_2(q+2))^{c_1}\exp\left(-c_2\frac{\log \z}{\log \Y }\log\left(\frac{\log \z}{\log \Y }\right)\right)
\end{align}
et
\begin{align}\label{B grande partie friable}
 \sum_{\substack{\J\in\mathcal{B}\\ N(\J^-(\Y))> \z}}\sigma_q(\J)
 \ll \eta c(N_1,N_2)q^3   \X^3(\log_2( q+2))^{c_1}\exp\left(-
c_2\frac{\log \z}{\log \Y }
 \log\left(\frac{\log \z}{\log \Y }\right)\right).
\end{align}
En particulier, si $h\leq1$, on a
\begin{align*}
\Theta(|h|,\Y,\z)\ll \eta^2  \X^2(\log_2q)^{c_1}\exp\left(-c_2\frac{\log \z}{\log \Y }\log\left(\frac{\log \z}{\log \Y }\right)\right).
\end{align*}
\end{lemme}
\begin{proof}
Sans perte de généralité, on peut supposer que $\Y\leq \z^{1/4}$ et $\z\leq \X$, le résultat étant trivial autrement, au vu de (\ref{estimation sigmaqF}).
Étant donné un idéal $\J$  et la décomposition de la norme de sa partie $q$-régulière et $\Y$-friable
\begin{align*}
 N(\J_{q\text{-r}}^-(\Y))=p_1^{\alpha_1}\cdots  p_k^{\alpha_k}, p_i<p_{i+1},   
\end{align*}
 on considère
$j\geq0$ le plus grand entier  tel que 
\begin{align*}
p_1^{\alpha_1}\cdots  p_j^{\alpha_j}\leq \z^{1/2}
\end{align*}
et on définit les diviseurs $\J_1$ et $\J_2$ de $\J_{q\text{-r}}^-(\Y)$ par les conditions
\begin{align*}
N(\J_1)=p_1^{\alpha_1}\cdots  p_j^{\alpha_j}\quad\text{et}\quad N(\J_2)=p_{j+1}^{\alpha_{j+1}}\cdots  p_k^{\alpha_k}.
\end{align*}

On scinde l'ensemble des différents $\J$ tels que $N(\J^-(\Y))>Z$ en quatre classes
:
\begin{itemize}
 \item Classe I : $N(\J_{q\text{-s}})> \z^{1/2}$,
 \item Classe II : $N(\J_1)\leq \z^{1/4}$ et $N(\J_{q\text{-s}})\leq \z^{1/2}$,
 \item Classe III : $p_{j+1} \leq \log \X\log_2\X$, $N(\J_1)> \z^{1/4}$  et $N(\J_{q\text{-s}})\leq \z^{1/2}$,
 \item Classe IV : $\log \X\log_2\X\leq p_{j+1}<Y$, $N(\J_1)> \z^{1/4}$ et $N(\J_{q\text{-s}})\leq \z^{1/2}$,
\end{itemize}
et on estime la contribution de chacune de ces classes séparément.

\underline{Contribution de la classe I}. Compte tenu de l'hypothèse $N(\J_{q\text{-s}})> \z^{1/2}$,  il existe un premier $q$-singulier $p$ et
un entier $k\geq2$ tels que $p^k>\z^{1/(2\omega(q)+2\omega_{1\text{-s}})}$
où $\omega_{1\text{-s}}$ désigne le nombre de premiers
$1$-singuliers. Au regard des conditions $q\leq(\log \X)^A$ et 
$\z>\exp\left(\frac{\log \X}{(\log_2\X)^{1-\varepsilon}}\right)$%\exp\left((\log_2\X)^{2+\varepsilon}\right)$
, on en déduit que l'estimation $p^k\gg_c(\log \X)^c$ est valide pour tout $c>0$.
Par suite, le lemme \ref{contribution entiers carrus Greaves}
entraîne  que, pour tout $B>0$, il existe une constante $c(B)>0$ telle que
\begin{align*}
 \#\left\{\J\in\mathcal{A}:
 N(\J_{q\text{-s}})> \z^{1/2}\right\}
 &\ll\#\left\{1\leq n_1,n_2\leq q\X:
 F(n_1,n_2)\in\Upsilon\left((\log \X)^{c(B)}\right)\right\}\\&\ll   \X^2(\log \X)^{-B}
\end{align*}
et
\begin{align*}
 \sum_{\substack{\J\in\mathcal{B}\\ N(\J_{q\text{-s}})> \z^{1/2}}}\sigma_q(\J)\ll
 \sum_{\substack{N(\J)\leq q^3  \X^3\\ N(\J)\in\Upsilon((\log \X)^{c(B)})}}\sigma_q(\J)
 \ll   \X^3(\log \X)^{-B}.
\end{align*}

\underline{Contribution de la classe II}. Dans la mesure où les idéaux de cette classe vérifient
 $N(\J_1)\leq \z^{1/4}$ et $N(\J_{q\text{-r}}^-(\Y))>\z^{1/2}$, 
il s'ensuit que  $p_{j+1}^{\alpha_{j+1}}> \z^{1/4}$. Sous l'hypothèse  $\Y\leq \z^{1/4}$, on a également $\alpha_{j+1}\geq 2$ ce qui signifie que les idéaux de cette classe ont leur norme dans $\Upsilon(\z^{1/4})$.
En utilisant là encore le  lemme \ref{contribution entiers carrus Greaves}, on en déduit que, pour tout $B>0$, on a
\begin{align*}
\#\left\{\J\in\mathcal{A}:\J\text{ est dans la classe II}
\right\}
\ll   \X^2(\log \X)^{-B}
\end{align*}
et
\begin{align*}
 \sum_{\substack{\J\in\mathcal{B}\\\J\text{ dans la classe II}
 }}\sigma_q(\J)
 \ll   \X^3(\log \X)^{-B}.
\end{align*}

\underline{Contribution de la classe III}. En utilisant successivement le lemme \ref{AI admissible} puis 
la majoration $\alpha_q^{\Z}(m)\ll m^{1/2}(\log\X)^c$ consécutive à (\ref{inégalité alpha singulier}), on remarque que, sous l'hypothèse $\z\leq\X$, on a par la méthode de Rankin
\begin{align*}
&\#\left\{\J\in\mathcal{A}:\J\text{ est dans la classe III}\right\}\\ \ll&\frac{\eta^2\X^2}{\zeta_q(2)}
\sum_{\substack{\z^{1/4}<N(\I)\leq \z^{1/2}\\P^+(N(\I))\leq \log \X\log_2 \X}}
 \frac{\alpha_q(\I)}{N(I)}+  \X^{15/8}(\log \X)^{c}\\
 \ll& (\log\X)^c\X^2
\sum_{\substack{m\leq \X^{1/2}\\P^+(m)\leq \log \X\log_2 \X}}
 \frac{1}{m^{1/2}}\left(\frac{m}{\z^{1/4}}\right)^{1/2}+  \X^{15/8}(\log \X)^{c}\\
 \ll& (\log\X)^c  \X^{2}Z^{-1/8}\Psi(\X^{1/2},\log \X\log_2\X)+  \X^{15/8}(\log \X)^{c}.
 \end{align*}
L'estimation de De Bruijn de $\Psi(\X,t)$, uniforme en $\X\geq t> 2$, contenue dans le
 théorème 1 de \cite{Br66}
\begin{multline*}
\log \Psi(\X,t)=
\left(\frac{\log \X}{\log t}\log\left(1+\frac{t}{\log \X}\right)+\frac{t}{\log t}\log\left(1+\frac{\log \X}{t}\right)\right)
  \left(1+O\left(\frac{1}{\log t}+\frac{1}{\log_2(2X)}\right)\right)\end{multline*} 
entraîne la majoration $\Psi(\X^{1/2},\log\X\log_2\X)\ll\exp\left(O\left(\frac{\log\X}{\log_2\X}\right)\right)$. Par suite, il suit de  l'hypothèse $\z\geq\exp\left(\frac{\log \X}{(\log_2\X)^{1-\varepsilon}}\right)$, 
que l'on a, pour tout $B>0$,
\begin{align*}
 \#\left\{\J\in\mathcal{A}:\J\text{ est dans la classe III}
\right\}\ll   \X^2(\log \X)^{-B}.
\end{align*}

De façon similaire, le théorème \ref{theo Weber} et les estimations (\ref{majoration sigma}), (\ref{partie tronquée gamma}) et (\ref{log sigmaq}) 
 impliquent 
\begin{align*}
 \sum_{\substack{\J\in\mathcal{B}\\J\text{ dans la classe III} }}\sigma_q(\J)&\ll(\log\X)^c  \X^3 
 \sum_{\substack{\z^{1/4}<m\leq \z^{1/2}\\P^+(m)\leq \log \X\log_2 \X}} \frac{\sigma_q^{\Z}(m)}{m}+  \X^{7/3
 }(\log \X)^{c}\\
 &\ll (\log\X)^c  \X^3
\sum_{\substack{m\leq \X^{1/2}\\P^+(m)\leq \log \X\log_2 \X}}
 \frac{1}{m^{1/2}}\left(\frac{m}{\z^{1/4}}\right)^{1/2}
   \X^{7
 /3
 }(\log \X)^{c}\\
 &\ll   \X^3(\log \X)^{-B}.
\end{align*}

Au vu de l'hypothèse $\z\geq\Y\geq\exp\left(\frac{\log\X}{(\log_2\X)^{1-\varepsilon}}\right)$, on déduit de ce qui précède que les contributions des classes I, II et III sont négligeables dans (\ref{A grande partie friable}) et (\ref{B grande partie friable}). La contribution principale proviendra ainsi des idéaux de la classe
IV.

\underline{Contribution de la classe IV}.  On commence par découper l'ensemble de sommation de $p_{j+1}$ en sous-intervalles du type 
$\left]\z^{1/(s+1)},\z^{1/s}\right]$ avec $s_1\leq s\leq s_2$ où
\begin{align*}
 s_1:=\left\lfloor \frac{\log \z}{\log \Y }\right\rfloor\text{ et }s_2:=
 \left\lfloor\frac{\log \z}{\log\left(\log \X\log_2\X\right)}\right\rfloor.
\end{align*}
On observe alors que
\begin{align*}
 \#\left\{\J\in\mathcal{A}:\J\text{ est dans la classe IV}\right\}
\ll &
\sum_{ s_1\leq s\leq s_2}\sum_{\substack{\I\text{ }q\text{-singulier}\\N(\I)\leq \z^{1/2}}}
\sum_{\substack{\J_1\text{ }q\text{-régulier}\\\z^{1/4}\leq N(\J_1)< \z^{1/2}\\
  P^+(N(\J_1))< \z^{1/s}}}S\left(\mathcal{A},\I  \J_1, C^-(\z^{1/(s+1)})\right).
  \end{align*}
  Puisque $\z^{1/(s+1)}> \log \X$ si $s\leq s_2$, 
  on peut  utiliser la formule (\ref{formule de crible pour A}) du 
  lemme \ref{lemme préliminaire de crible} pour en déduire que, pour tout $B>0$, la contribution des idéaux dans la classe IV est majorée par 
  \begin{align*}
 \frac{\eta^2  \X^2\sigma_q(F)}{\log \z}\sum_{ s_1\leq s\leq s_2}(s+1)
  \sum_{d\text{ }q\text{-singulier}}\frac{\sigma^{\Z}_q(d^-(Z^{1/(s+1)}))}{d} \sum_{\substack{m\text{ }q\text{-régulier}\\\z^{1/4}\leq m< \z^{1/2}  \\P^+(m)\leq \z^{1/s}}}
   \frac{\widetilde{\sigma}_q^{\Z}(m)}{m}+   \X^2(\log \X)^{-B}.
  \end{align*}
  Dans la mesure où $\widetilde{\sigma}_q^{\Z}$ satisfait les hypothèses du lemme 4 de \cite{Sh80}, 
on dispose de l'estimation de la somme intérieure, 
uniforme en $s\leq \log \z/\log_2\z$,
\begin{align*}
     \sum_{\substack{m\text{ }q\text{-régulier}\\\z^{1/4}\leq m  \\P^+(m)\leq \z^{1/s}}}\frac{\widetilde{\sigma}_q^{\Z}(m)}{m}&\ll
     \exp\left(
     \sum_{\substack{p\text{ }q\text{-régulier}\\p\leq \z^{1/s}}}\frac{\widetilde{\sigma}_q^{\Z}(p)}{p}-
\frac{1}{20}s\log\left(s/2\right)\right)
     \\
     &\ll\frac{\log \z}{s}\exp\left(
-\frac{1}{20}s\log\left(s/2\right).     \right)
\end{align*}Par suite, si $s\leq \log \z/\log_2\X$, on remarque que
\begin{align*}
 \sum_{d\text{ }q\text{-singulier}}
 \frac{\sigma^{\Z}_q(d^-(\z^{1/(s+1)}))}{d}
 &=\prod_{\substack{p\text{ }q\text{-singulier}\\p\leq \z^{1/(s+1)}}}
 \left(\sum_{k\geq0}\frac{\sigma_q^{\Z}(p^k)}{p^k}\right)
 \prod_{\substack{p\text{ }q\text{-singulier}\\p> \z^{1/(s+1)}}}\left(1-\frac{1}{p}\right)^{-1}\\&
 \ll \sum_{d\text{ }q\text{-singulier}}\frac{\sigma^{\Z}_q(d)}{d}\ll\log_2(q+2)^c
\end{align*}
d'après (\ref{estimation sigmaqF})
ce qui implique l'estimation
\begin{align*}
   &\#\left\{\J\in\mathcal{A}:\J\text{ est dans la classe IV}\right\}\\
\ll &\frac{\eta^2  \X^2\sigma_q(F)}{\log \z}\log_2(q+2)^c
\sum_{ s_1\leq s\leq s_2}(s+1) \exp\left(
-\frac{1}{20}s\log\left(s/2\right)\right)
  +   \X^2(\log \X)^{-B}.\end{align*}
Au vu de l'hypothèse $\z\geq \Y\geq \exp\left(\frac{\log \X}{(\log_2\X)^{1-\varepsilon}}\right)$ et de l'estimation
(\ref{estimation sigmaqF}), on en déduit que
\begin{align*}
   \#\left\{\J\in\mathcal{A}:\J\text{ est dans la classe IV}\right\}
&\ll \eta^2  \X^2(\log_2q)^c\exp\left(
-\frac{1}{21}\frac{\log \z}{\log \Y }\log\left(\frac{\log \z}{\log \Y }\right)\right).
\end{align*}
En utilisant  (\ref{formule de crible pour B}) au lieu de
(\ref{formule de crible pour A}), on obtient également que
\begin{align*}
 \sum_{\substack{\J\in\mathcal{B}\\J\text{ dans la classe IV} }}\sigma_q(\J)&\ll \eta c(N_1,N_2)q^3  \X^3 
 (\log_2 q)^c\exp\left(
-\frac{1}{21}\frac{\log \z}{\log \Y }\log\left(\frac{\log \z}{\log \Y }\right)\right)
\end{align*}
ce qui permet de déduire (\ref{B grande partie friable}).
\end{proof}

À partir de maintenant, nous n'utiliserons le lemme \ref{lemme préliminaire de crible} que pour un niveau de crible $\z\geq   \X^{\tau}$. Le lemme suivant précise la borne supérieure disponible dans ce cadre là.
\begin{lemme}\label{petits intervalles}
Soit $B\geq0$. Il existe des constantes $c_1(B)$ et $c_2(B)>0$ tels que, uniformément en $\X\geq2$,
 $(N_1,N_2)\in\mathcal{N}(\eta)$,  
$\z\geq (\log \X)^{c_1(B)}$,  
et $C\subset \left(C^-\left((\log \X)^{c_1(B)}\right)\right)^2$ où $C^-(\cdot)$ est défini par (\ref{définition C crible}), on ait, pour tout idéal $\I$ tel que $N(\I)\in\mathcal{I }$,
\begin{align}
\nonumber&\sum_{\substack{\Qid_1,\Qid_2\in\JK\\(N(\Qid_1), N(\Qid_2))\in C\\N(\I\Qid_1\Qid_2)\z^2\leq   \X^{2}(\log \X)^{-c_2(B)}}}
S(\mathcal{A},\I\Qid_1\Qid_2,C^-(\z))
\\\ll&\frac{\eta^2  \X^2\sigma_q(F)}{\log \z}\frac{\sigma_q(\I)}{N(\I)}\sum_{\substack{(d_1,d_2)\in C}}\mu^2(d_1d_2)
\frac{\nu_{d_1d_2}}{d_1d_2}
+R_{\mathcal{A}}(\I) .\label{petit crible A}
\end{align}
et
\begin{align}\nonumber 
&\sum_{\substack{\Qid_1,\Qid_2\in\JK\\(N(\Qid_1), N(\Qid_2))\in C\\N(\I\Qid_1 \Qid_2)\z^2\leq   \X^{3}(\log \X)^{-c_2(B)}}}
S(\mathcal{B},\I  \Qid_1\Qid_2,C^-(\z))
\\\ll&
\frac{c(N_1,N_2)\eta q^3  \X^3}{N(\I)\log\z }
\sum_{\substack{(d_1,d_2)\in C}}\mu^2(d_1d_2)
\frac{\nu_{d_1d_2}}{d_1d_2}
+R_{\mathcal{B}}(\I)\label{petit crible chi}
\end{align} 
où
$\nu_{d}$   est le nombre d'idéaux $\J$ tels que $N(\J)=d$
et \begin{align*}
 \sum_{N(\I)\in\mathcal{I }}\left|R_{\mathcal{A}}(\I)\right|\ll   \X^{2}(\log \X)^{-B}\qquad\text{et}
 \qquad \sum_{N(\I)\in\mathcal{I }}\sigma_q(\I)\left|R_{\mathcal{B}}(\I)\right|
 \ll   \X^{3}(\log \X)^{-B}.
\end{align*}
\end{lemme}
\begin{proof}
Une application directe du lemme \ref{lemme préliminaire de crible} donne
\begin{align*}
 S(\mathcal{A},\I  \Qid_1\Qid_2,C^-(\z))
 \ll \frac{\eta^2  \X^2\sigma_q(F)}{\log \z}
 \frac{\sigma_q((\I\Qid_1\Qid_2)^-(\z))}{N(\I\Qid_1\Qid_2)}+R_{\mathcal{A}}(\I\Qid_1\Qid_2,\z)
\end{align*}
avec
\begin{align*}
\sum_{\substack{\I,\Qid_1,\Qid_2\in\JK\\N(\I\Qid_1 \Qid_2)\z^2\leq   \X^{2}(\log \X)^{-c_2(B)}}}
R_{\mathcal{A}}(\I\Qid_1\Qid_2,\z)\ll   \X^2(\log \X)^{-B}.
 \end{align*}

Dans la mesure où $\z>(\log \X)^{\max(A,1)}$, on peut remarquer que, uniformément en $\Qid_1$ et $\Qid_2$ tels que $N(\I\Qid_1\Qid_2)\leq   \X^2(\log \X)^{-c(B)}$,
on a
\begin{align*}
\sigma_q((\I\Qid_1\Qid_2)^-(\z))\ll\sigma_q(\I).
\end{align*}
Quitte à choisir $c_1(B)$ suffisamment grand, on peut alors majorer
la contribution des $\Qid_1\Qid_2$ avec facteur carré par
\begin{align*}
  \sum_{\substack{\I,\Qid_1,\Qid_2\\P^-(N(\Qid_1\Qid_2)\geq (\log\X)^{c(B)}\\\mu(N(\Qid_1\Qid_2))=0\\
  N(\I\Qid_1\Qid_2)\ll   \X^2}}
 \frac{\sigma_q(\I)}{N(\I\Qid_1\Qid_2)}
 \ll   
 \sum_{\substack{N(\J)\ll   \X^2}} \frac{\tau(\J)^2\sigma_q(\J)}{N(\J)(\log\X)^{c(B)}}
  \ll(\log \X)^{-B}
\end{align*}
ce qui achève la preuve de (\ref{petit crible A}).
Un raisonnement similaire permet d'établir (\ref{petit crible chi}).
\end{proof}
Le lemme suivant, qui rappelle essentiellement les estimations (7.1) et (7.2) de
\cite{HB01}, sera utilisé pour estimer les membres de droite de (\ref{petit crible A}) et (\ref{petit crible chi}).
\begin{lemme}\label{lemme petits écarts}
 On a, uniformément en $\Y\geq \z\geq2$
\begin{equation} \sum_{\Y\leq p\leq \Y \z}\frac{\nu_p}{p}
 \ll\frac{\log \z}{\log \Y }
\end{equation}
 et \begin{align*}
  \sum_{\z\leq p_1<\cdots<p_k\leq 
 \Y}\frac{\nu_{p_1\cdots  p_k}}{p_1\cdots  p_k}\ll
 \frac{\left(\log\frac{\log \Y }{\log \z}+O(1)\right)^k}{k!}.
\end{align*}
\end{lemme}
\begin{proof}
La première majoration, 
analogue de la formule (7.1) de 
\cite{HB01}, est une conséquence du théorème des idéaux premiers.
Au vu de la multiplicativité de $\nu_d$, la seconde estimation provient de l'inégalité
\begin{align*}
  \sum_{\z\leq p_1<\cdots<p_k\leq 
 \Y}\frac{\nu_{p_1}}{p_1}\cdots \frac{\nu_{p_k}}{p_k}
 &\leq
 \frac{1}{k!}\left(\sum_{\z\leq p\leq 
 \Y}\frac{\nu_{p}}{p}\right)^k\ll
 \frac{\left(\log\frac{\log \Y }{\log \z}+O(1)\right)^k}{k!}.
\end{align*}
\end{proof}

On établit dans le lemme suivant  des estimations de $\Delta_1(|h|)$, $\Delta_2(|h|)$, $\Delta_3(|h|)$, $\widetilde{\Delta}_1(|h|)$
et $\widetilde{\Delta}_2(|h|)$, définies par les formules (\ref{définition Delta1}) à (\ref{définition Deltatilde2}), suffisantes pour nos applications mais qui pourraient être précisées pour des fonctions $h$ spécifiques. 
 \begin{lemme}\label{estimation petits intervalles}
Supposons que $|h|\leq1
$. On a, uniformément en $\X\geq2$ 
 et $(N_1,N_2)\in\mathcal{N}(\eta)$,
\begin{align}\label{estimation Delta général}
 \Delta_1(|h|), \Delta_2(|h|),\Delta_3(|h|),\widetilde{\Delta}_1(|h|),\widetilde{\Delta}_2(|h|)
 &
\ll \tau_1\eta^2  \X^2.\end{align}

De plus, si $h$ est à support sur les entiers $\z$-criblés avec $\z\geq   \X^{\tau_1}$, alors on a
\begin{align}\label{estimation Delta crible}
 \Delta_1(|h|), \Delta_2(|h|),\Delta_3(|h|),\widetilde{\Delta}_1(|h|),\widetilde{\Delta}_2(|h|)
 &
\ll \sigma_q(F)\eta^2  \X^2\frac{\tau_1\log \X}{(\log\z )^2}.\end{align}
\end{lemme}
\begin{proof}Observant qu'un idéal ne contribue qu'un nombre fini de fois pour chaque  $\Delta_i(|h|)$ et $\widetilde{\Delta}_i(|h|)$, il vient
\begin{multline*}
 \Delta_1(|h|) ,\widetilde{\Delta}_1(|h|)\ll\max_{\substack{\Y\geq   \X^{\frac{3}{2}-\tau_1}}}
\sum_{\substack{  \X^{1/2-\tau_1}\leq N(\mathfrak{p}_1)\leq N(\mathfrak{p}_2)\\
\Y\leq N(\mathfrak{p_1}\mathfrak{p}_2)\leq \Y   \X^{\tau_1}}}
\sum_{N(\I)\in\mathcal{I }}
\left(S\left(\mathcal{A},\I\mathfrak{p}_1\mathfrak{p}_2,C^-(  \X^{\tau})\right)\vphantom{\frac{\eta\sigma_q(F)}{c(N_1,N_2)q^3\X}}\right.\\\left.+\frac{\eta\sigma_q(F)}{c(N_1,N_2)q^3\X}\sigma_q(\I)S\left(\mathcal{B},\I\mathfrak{p}_1\mathfrak{p}_2,C^-(  \X^{\tau})\right)
\right)
\end{multline*} et
\begin{multline*}
\Delta_2(|h|),\widetilde{\Delta}_2(|h|)\ll 
\max_{\Y\geq   \X^{\frac{1}{2}-\tau_1}} \sum_{\Y\leq N(\mathfrak{p})\leq \Y   \X^{\tau_1}}
\sum_{N(\I)\in\mathcal{I }}
\left(S\left(\mathcal{A},\I\mathfrak{p},C^-(  \X^{\tau})\right)\vphantom{\frac{\eta\sigma_q(F)}{c(N_1,N_2)q^3\X}}\right.\\\left.+\frac{\eta\sigma_q(F)}{c(N_1,N_2)q^3\X}\sigma_q(\I)S\left(\mathcal{B},\I\mathfrak{p},C^-(  \X^{\tau})\right)
\right).\end{multline*}

Pour majorer $\Delta_2(|h|)$ et $\widetilde{\Delta}_2(|h|)$, on peut faire appel aux lemmes \ref{petits intervalles} et \ref{lemme petits écarts}. 
Dans l'intervalle $ \X^{1/2-\tau_1}\leq \Y \leq   \X^{\frac{3}{2}-\tau_1}$, on a la majoration suivante, valide pour tout $B>0$, \begin{align*}
& \sum_{\Y\leq N(\mathfrak{p})\leq \Y   \X^{\tau_1}}
\sum_{N(\I)\in\mathcal{I }}\left(S\left(\mathcal{A},\I\mathfrak{p},
C^-(  \X^{\tau})\right)
+\frac{\eta\sigma_q(F)}{c(N_1,N_2)q^3\X}\sigma_q(\I)S\left(\mathcal{B},\I\mathfrak{p},
C^-(  \X^{\tau})\right)\right)\\
&\ll\sigma_q(F)\frac{\eta^2  \X^2}{\tau\log \X}
\max_{  \X^{1/2-\tau_1}\leq \Y \leq   \X^{\frac{3}{2}-\tau_1}} \sum_{\Y\leq p\leq \Y   \X^{\tau_1}}\frac{\nu_p}{p}
\sum_{N(\I)\in\mathcal{I }}\frac{\sigma_q(\I)}{N(\I)}+\X^2(\log\X)^{-B}\\&\ll
\tau_1\eta^2\X^2,
\end{align*}
où l'on a utilisé la majoration
\begin{align*}
\sum_{N(\I)\in\mathcal{I }}\frac{\sigma_q(\I)}{N(\I)}
\leq\prod_{p\leq\X^{\tau}}\sum_k\frac{\sigma_q^{\Z}(p^k)}{p^k}\ll\frac{\tau\log\X}{\sigma_q(F)\zeta_q(2)}
\end{align*}
qui découle de (\ref{ordre moyen inverse}).
En utilisant (\ref{log sigmaq}), on peut borner la contribution des $\Y>X^{\frac{3}{2}-\tau_1}$  par 
\begin{align*}
&\max_{\Y\leq   \X^{\frac{3}{2}+2\tau_1}} \sum_{ \Y\leq N(\J)\leq \Y   \X^{\tau_1}}
\left(S\left(\mathcal{A},\J,C^-(  \X^{3/2-\tau_1})\right)
+\frac{\eta\sigma_q(F)}{c(N_1,N_2)q^3\X}\sigma_q(\J)S\left(\mathcal{B},\J,C^-(  \X^{3/2-\tau_1})
\right)\right)\\
 \ll&\sigma_q(F)\frac{\eta^2  \X^2}{\log \X}\max_{\Y\leq   \X^{\frac{3}{2}+2\tau_1}} \sum_{ \Y\leq N(\J)\leq \Y   \X^{\tau_1}}
\frac{\sigma_q(\J)}{N(\J)}+  \X^2(\log \X)^{-B}\\
\ll& \tau_1\eta^2  \X^2
\end{align*}
et on montre de même que 
$
 \Delta_1(|h|),\widetilde{\Delta}_1(|h|)\ll\tau_1\eta^2  \X^2$.

Pour achever la démonstration de (\ref{estimation Delta général}), on remarque également que la contribution de
 $\Delta_3(|h|)$ est inférieure à
\begin{align*}
\ll &
\sum_{N(\I)\in\mathcal{I }}\sum_k\sum_{\substack{
\X^{\tau}<N(\mathfrak{p}_2)\leq \cdots\leq N(\mathfrak{p}_{k-1})}}
\max_{\Y\leq   \X^{1+\tau_1}}\sum_{\substack{\X^{\tau}\leq N(\mathfrak{p}_1)\\
\Y\leq N(\mathfrak{p}_1)\leq \Y (\log \X)^c}}
\Big(
S\left(\mathcal{A},\I\mathfrak{p}_1\cdots \mathfrak{p}_{k-1},C^-(  \X^2)
\right)
\\&\pushright{+\frac{\eta\sigma_q(F)}{c(N_1,N_2)q^3\X}\sigma_q(\I)
S\left(\mathcal{B},\I\mathfrak{p}_1\cdots \mathfrak{p}_{k-1},C^-(  \X^2)
\right)\Big)}\\
\ll &
\sigma_q(F)\frac{\eta^2  \X^2}{\log \X}
\sum_{N(\I)\in\mathcal{I }}\frac{\sigma_q(\I)}{N(\I)}\sum_k\sum_{\substack{
  \X^{\tau}<p_2< \cdots<p_{k-1}<  \X^{1+\tau_1}}}
\max_{\Y\leq   \X^{1+\tau_1}}
\sum_{\substack{
\Y\leq p_1\leq \Y (\log \X)^c}}\frac{\nu_{p_1\cdots  p_{k-1}}}{p_1\cdots  p_{k-1}}\\&+  \X^2(\log \X)^{-B}\\
\ll &
\sigma_q(F)\eta^2  \X^2\frac{\log_2\X}{\log \X}+  \X^2(\log \X)^{-B}\end{align*}
pour tout $B>0$.

On suppose désormais que $h$ est à support sur les entiers $\z$-criblés. En procédant comme précédemment, on peut majorer $\Delta_2(|h|)$ et $\widetilde{\Delta}_2(|h|)$ par
\begin{align*}
\ll
&\max_{  \X^{1/2-\tau_1}\leq \Y \leq   \X^{\frac{3}{2}-\tau_1}} \sum_{\Y\leq N(\mathfrak{p})\leq \Y   \X^{\tau_1}}\left(S\left(\mathcal{A},\mathfrak{p},C^-(\z)\right)
+\frac{\eta\sigma_q(F)}{c(N_1,N_2)q^3\X}S\left(\mathcal{B},\mathfrak{p},C^-(\z)
\right)\right)\\
&+ \max_{\Y\leq   \X^{\frac{3}{2}+2\tau_1}} 
\sum_{ \substack{\Y\leq N(\J)\leq \Y   \X^{\tau_1}\\P^-(N(\J))>\z}}
\left(S\left(\mathcal{A},\J,C^-(  \X^{3/2-\tau_1})\right)
+\frac{\eta\sigma_q(F)}{c(N_1,N_2)q^3\X}S\left(\mathcal{B},\J,C^-(  \X^{3/2-\tau_1})\right)
\right)\\
\ll& \sigma_q(F)\frac{\eta^2  \X^2}{\log \z}
\max_{  \X^{1/2-\tau_1}\leq \Y \leq   \X^{\frac{3}{2}-\tau_1}} \sum_{\Y\leq p\leq \Y   \X^{\tau_1}}\frac{\nu_p}{p}\\&+
\sigma_q(F)\frac{\eta^2  \X^2}{\log \X}
\sum_k\sum_{\z<p_2<\cdots<p_k\leq   \X^{3/2}}\frac{\nu_{p_2\cdots  p_k}}{
p_2\cdots  p_k}\max_{\z<\Y}\sum_{\substack{ \Y\leq p_1\leq \Y   \X^{\tau_1}}}
\frac{\nu_{p_1}}{p_1}+  \X^2(\log \X)^{-B}\\\\
\ll& \sigma_q(F)\eta^2  \X^2\frac{\tau_1\log \X}{(\log\z )^2}
\end{align*}
et une estimation identique est valable pour
  $\Delta_1(|h|)$ et $\widetilde{\Delta}_1(|h|)$. 
  %\ll  \sigma_q(F)\frac{\eta^2  \X^2}{\log \z}\frac{\tau_1\log \X}{\log \z}+  \X^2(\log \X)^{-B}
%(\ref{estimation Delta crible})
\end{proof}

Avec le choix de paramètre de crible $\z=\X^{\tau}$, on peut reproduire l'argument du
paragraphe 6  de\cite{HB01}, basé sur le crible de Selberg, 
pour obtenir  un équivalent asymptotique de $S(\mathcal{D},\I,C^-(\X^{\tau}))$. Ceci valide la pertinence de (\ref{différence S A B}) pour le choix de l'ensemble $C^-(\X^{\tau})$, conduisant ainsi à une borne supérieure de 
\begin{align*}T(|h|):= \sum_{N(\I)\in\mathcal{I }}|h_1(\I)|
 \sum_{\substack{
N(\Qid)\leq   \X^{1-4\tau_1}\\P^-(N(\Qid))>  \X^{\tau}}
}
\sum_{n\geq0}\left|
T^{(n)}(\mathcal{A},\I\Qid)
-\frac{\eta\sigma_q(F)}{c(N_1,N_2)q^3\X}
\sigma_q(\I)T^{(n)}(\mathcal{B},\I\Qid)
\right|
\end{align*}
où 
$T^{(n)}(\mathcal{A},\cdot)$ et $T^{(n)}(\mathcal{B},\cdot)$ sont définis par (\ref{définition Tn}).

\begin{lemme}\label{crible S}
Supposons que $|h|\leq1$. 
 Pour tout $B>0$, on a  uniformément en  $\X\geq2$ et  $(N_1,N_2)\in\mathcal{N}(\eta)$,
\begin{align*}
T(|h|)
\ll \frac{\eta^2   \X^2}{\tau^3\log \X}\sum_{N(\I)\in \mathcal{I }}\frac{|h_1(\I)|\sigma_q(\I)}{N(\I)}
\exp\left(-\frac{\tau_1}{\tau}\right)+  \X^{2}(\log \X)^{-B}.\end{align*}
\end{lemme}
\begin{proof}
Ayant remarqué au cours de la preuve du lemme \ref{lemme préliminaire de crible} que $\gamma(\I,\cdot)$ et $\beta$ 
satisfont les hypothèses du crible linéaire, on peut appliquer
le théorème 7.1 de \cite{HR74} avec les choix de paramètres $'z'=  \X^{\tau}$ et $'\xi'=  \X^{\tau_1}$. On obtient ainsi\begin{multline*}
S(\mathcal{A},\I \Qid\mathfrak{p}_1\cdots \mathfrak{p}_n,C^-(  \X^{\tau}))
=
\eta^2  \X^2\sigma_q(F)\frac{\sigma_q(\I)\alpha_q(\Qid\mathfrak{p}_1\cdots \mathfrak{p}_n)}{N(\I\Qid\mathfrak{p}_1\cdots \mathfrak{p}_n)
}\prod_{p\leq   \X^{\tau}}
\left(1-\frac{1}{p}\right)\left(1+O\left(\exp\left(-\frac{\tau_1}{\tau}\right)
\right)\right)
\\+\sum_{d\leq\X^{2\tau_1}}\tau^2(d)|r(\mathcal{A},\I\Qid\mathfrak{p}_1\cdots \mathfrak{p}_n,d)|
\end{multline*}
et \begin{multline*}
S(\mathcal{B},\I\Qid\mathfrak{p}_1\cdots \mathfrak{p}_n,C^-(  \X^{\tau}))
=\frac{c(N_1,N_2)\eta q^3  \X^3}{N(\I\Qid\mathfrak{p}_1\cdots \mathfrak{p}_n)
}
\prod_{p\leq   \X^{\tau}}\left(1-\frac{1}{p}\right)\left(1+O\left(\exp\left(-\frac{\tau_1}{\tau}\right)\right)
\right)\\+\sum_{d\leq\X^{2\tau_1}}\tau^2(d)|r(\mathcal{B},\I\Qid\mathfrak{p}_1\cdots \mathfrak{p}_n,d)|.
\end{multline*}

Au vu de la définition (\ref{écriture rho2 non singulier}), on observe alors que, pour tout idéal $  \X^{\tau}$-criblé $\J$ admissible
tel que $N(\J)\ll   \X^2$, on a $
\alpha_q(\J)=1+O\left(\frac{1}{\tau   \X^{\tau}}\right)$. 

La contribution des idéaux $\Qid\mathfrak{p}_1\cdots \mathfrak{p}_n$ dont la norme possède des facteurs carrés peut être majorée par
\begin{align}
 \ll \sum_{\substack{N(\J)\ll   \X^2}}    \frac{\tau_{\K}(\J)}{N(\J)}\left(\sum_{p>   \X^{\tau/2}}\frac{1}{p^2}+
   \sum_{p>   \X^{\tau/3}}\frac{1}{p^3}\right)\ll      \X^{-\tau/2}\sum_{\substack{N(\J)\ll   \X^2}} 
\frac{\tau_{\K}(\J)}{N(\J)},                                                     \label{Th facteurs carrés} \end{align}
 ce qui permet de déduire que \begin{align*}
\sum_{\substack{N(\Qid)\leq   \X^{1-4\tau_1}\\
P^-(N(\Qid))>  \X^{\tau}}}\sum_{n}\sum_{\substack{  \X^{\tau}<N(\mathfrak{p}_1)<\cdots<N(\mathfrak{p}_n)\\N(\mathfrak{p}_1\cdots \mathfrak{p}_n)\leq
  \X^{1+\tau}}}
\frac{\left|\alpha_q(\Qid\mathfrak{p}_1\cdots \mathfrak{p}_n)-1\right|}{N(\Qid\mathfrak{p}_1\cdots \mathfrak{p}_n)
}
\ll&  \left(\tau^{-1}\X^{-\tau}+\X^{-\tau/2}\right)\sum_{\substack{N(\J)\ll   \X^2}}\frac{\tau_{\K}(\J)}{N(\J)}\\\ll&\X^{-\tau/2}(\log\X)^c.
\end{align*}

Dans la mesure où $N(\I\Qid\mathfrak{p}_1\cdots \mathfrak{p}_n)\leq   \X^{2-3\tau_1+\tau}$, la contribution des termes $r(\mathcal{D},\I\Qid\mathfrak{p}_1\cdots \mathfrak{p}_n,d)|$ est négligeable au vu des lemmes \ref{mathcalAIq} et \ref{mathcalBIq}. On en déduit alors que, pour tout $B>0$,
\begin{align*}
 T(|h|)\ll\frac{\eta^2   \X^2 \sigma_q(F)}{\tau\log \X}\sum_{N(\I)\in\mathcal{I }}\frac{|h_1(\I_1)|\sigma_q(\I)}{N(\I)}
 \sum_{\substack{N(\J)\ll   \X^{2}\\P^-(N(\J))>  \X^{\tau}}}\frac{\tau_{\K}(\J)}{N(\J)}\exp\left(-\frac{\tau_1}{\tau}\right)
 +  \X^2(\log \X)^{-B}.
\end{align*}

En utilisant le lemme \ref{lemme petits écarts} pour estimer la contribution des idéaux $\J$ dont la norme est sans facteur carré et la majoration (\ref{Th facteurs carrés}) pour  les autres idéaux, il suit 
\begin{align} \nonumber\sum_{\substack{N(\J)\leq   \X^{2}\\P^-(N(\J))>  \X^{\tau}}}\frac{\tau_{\K}(\J)}{N(\J)}\ll &
\sum_{\substack{N(\J)\ll   \X^2}}    \frac{\tau_{\K}(\J)}{N(\J)}\left(\sum_{p>   \X^{\tau/2}}\frac{1}{p^2}+
   \sum_{p>   \X^{\tau/3}}\frac{1}{p^3}\right)+
\sum_{\substack{N(\J)\ll   \X^2\\P^-(N(\J))>  \X^{\tau}}}\mu^2(N(\J))
\frac{\tau_{\K}(\J)}{N(\J)}\\\ll& \X^{-\tau/2}(\log\X)^c+
\sum_{n}2^n\sum_{  \X^{\tau}<p_1<\cdots<p_n\ll   \X^2}\frac{\nu_{p_1\cdots  p_n}}{
p_1\cdots  p_n}\nonumber
\\\ll&  \X^{-\tau/2}(\log\X)^c+\sum_{n}\frac{2^n}{n!}\left(\log\tau^{-1}+0(1)\right)^n\ll \tau^{-2}\label{théo idéaux premiers criblés},
\end{align}
ce qui achève la preuve du lemme.
\end{proof}

\section{Estimations de sommes de Type II}\label{paragraphe Type II}

Dans cette partie, on établit (cf. Proposition \ref{conséquence fin type II} \textit{infra}) une majoration  de 
 \begin{multline*}
S(|h|):=\sum_{N(\I)\in\mathcal{I }}|h_1(\I)|\sum_{i=1}^5\sum_{m,n}|h_2(\m,\n)|
\left|S(\mathcal{A},\I  ,C^{(i)}(\m,\n,\I))\right.\\\left.-\frac{\eta\sigma_q(F)}{c(N_1,N_2)q^3\X}
\sigma_q(\I)S(\mathcal{B},\I  ,C^{(i)}(\m,\n,\I))\right|
\end{multline*}
où
les $C^{(i)}(\m,\n,\I)$ sont définis par (\ref{définition CImn}) et (\ref{définition C2}), (\ref{définition C3}), (\ref{définition C4}), (\ref{définition C11}), (\ref{définition C1}) et (\ref{définition C22}).  On étend pour ce faire  
 l'approche développée dans les travaux de Heath-Brown et Moroz,
 à savoir la transformation du problème en des estimations de sommes de Type II adéquates. 

Au vu de (\ref{définition inégalités}) et (\ref{définition Eij}), on peut réécrire les différents ensembles   
$C^{(i)}(\m,\n,\I)$ sous la forme 
  \begin{align*}C^{(i)}(\m,\n,\I)=
\bigcup_{(s_1,s_2)\in\mathcal{S}^{(i)}(\n,\I)} \mathcal{R}^{(i)}(\m,(s_1,s_2),\I)\times\{(s_1,s_2)\}\end{align*}
où \begin{align*}
\mathcal{S}^{(i)}(\n,\I):
=\left\{(s_1,s_2):\Omega(s_i)=\omega(s_i)=n_i,  (s_1,s_2)\text{ satisfait }(E_j^{(i)}(\n,\I))\text{ pour } 1\leq j\ll \tau^{-1}
\right\},\end{align*} où
 $(E_j^{(i)}\n,\I))$ désigne une inégalité du type 
\begin{align*}
\Y N(\I)^{\varepsilon}P^{(\overrightarrow{\alpha}^{(1)})}(s_1)P^{(\overrightarrow{\alpha}^{(2)})}(s_2)\prec P^{(\overrightarrow{\beta}^{(1)})}(s_1)P^{(\overrightarrow{\beta}^{(2)})}(s_2)\end{align*} avec $\Y>0$, $\varepsilon\in\{-1,0,1\}$ et $\prec$ désigne l'ordre $\leq$ ou $<$, \begin{multline*}
\mathcal{R}^{(i)}(\m,(s_1,s_2),\I):=
\left\{(r_1,r_2):\Omega(r_i)=\omega(r_i)=m_i, P^-(r_1r_2)>\X^{\tau}
,\vphantom{\tau^{-1}}\right.\\\left.
(r_1,r_2)\text{ satisfait } (F_j^{(i)}(\m,(s_1,s_2),\I))\text{ pour }1\leq j\ll \tau^{-1}\right\}
\end{multline*}
où $(F_j^{(i)}(\m,(s_1,s_2),\I))$ désigne une inégalité du type 
\begin{multline*}
\Y N(\I)^{\varepsilon}P^{(\overrightarrow{\alpha}^{(1)})}(s_1)P^{(\overrightarrow{\alpha}^{(2)})}(s_2)P^{(\overrightarrow{\gamma}^{(1)})}(r_1)P^{(\overrightarrow{\gamma}^{(2)})}(r_2)\\\prec 
P^{(\overrightarrow{\beta}^{(1)})}(s_1)P^{(\overrightarrow{\beta}^{(2)})}(s_2)P^{(\overrightarrow{\delta}^{(1)})}(r_1)P^{(\overrightarrow{\delta}^{(2)})}(r_2).
\end{multline*}
Soulignons que les ($(E_j^{(i)}\n,\I))$) et $(F_j^{(i)}(\m,(s_1,s_2),\I))$ introduits ci-dessus correspondent non seulement aux ensembles $E_j(k)$ définis dans \ref{définition Eij} qui interviennent dans la définition de $h$, mais aussi aux contraintes relatives à la définition des $C^{(i)}(\m,\n,\I)$, en particulier la condition importante $X^{1+\tau}\leq s_1s_2\leq \X^{3/2-\tau}$.

La première étape dans l'estimation de $S(\mathcal{D},C^{(i)}(\m,\n,\I))$ consiste à rendre les 
ensembles $\mathcal{R}^{(i)}(\m,(s_1,s_2),\I)$ indépendants de $(s_1,s_2)$ et $\I$. 
 Pour ce faire, on introduit un  paramètre 
$\xi\leq\tau$ que l'on explicitera dans les applications du paragraphe \ref{Partie Applications}. Dans leurs travaux, 
Heath-Brown et Moroz effectuent le choix $\xi=(\log_2 \X)^{-\varpi_2}$ où $0<\varpi_2<1$. 
 À la lecture de \cite{HB01} et \cite{HM02}, il
apparaît en fait que leurs arguments demeurent valables pour des $\xi$ satisfaisant $(\log \X)^{-\varpi_2}
\leq\xi$ où $0<\varpi_2<1$, hypothèse  que l'on suppose à présent. 
Une idée importante dans l'estimation de $S(\mathcal{D},\I,C^{(i)}(\m,\n,\I))$ consiste à trier les facteurs $P^{(\overrightarrow{\alpha})}(s_i)$ dans des intervalles du type $\left[  \X^{v\xi},   \X^{(v+1)\xi}\right|$ puis à remplacer les occurrences de $P^{(\overrightarrow{\alpha})}(s_i)$ par $  \X^{v\xi}$  dans les  inégalités, procédure que l'on détaille ci-dessous. 
Posant  $v_0:=\left\lfloor\frac{\log N(\I)}{\xi\log \X}\right\rfloor$, on introduit les ensembles d'exposants \begin{multline}
\iota(\mathcal{S}^{(i)}(\n,v_0)):=\left\{\overrightarrow{\v}\in\Z^{n_1}\times\Z^{n_2}:
\overrightarrow{\v}\text{ satisfait }(\widehat{E_j}^{(i)}(\n,v_0))\text{ pour }1\leq j\ll \tau^{-1}
\right.\\\left.\vphantom{(\widehat{E_j}^{(i)}}
\text{ et }v_i\neq v_j\text{ si }i\neq j\right\}\label{définition iota}\end{multline}
où $\overrightarrow{\v}:=(\overrightarrow{v}^{(1)},\overrightarrow{v}^{(2)})$ et 
  $(\widehat{E_j}^{(i)}(\n,v_0))$ désigne l'inégalité associée à  
$(E_j^{(i)}(\n,\I))$
 définie par
\begin{align*}
\frac{\log \Y }{\xi\log \X}+(\varepsilon v_0+1)+\sum_{k=1}^{k_1}(v_{\alpha_{k}^{(1)}}^{(1)}+1)+
\sum_{k=1}^{k_2}(v_{\alpha_{k}^{(2)}}^{(2)}+1)
<
\sum_{l=1}^{l_1}v_{\beta_{l}^{(1)}}^{(1)}+\sum_{l=1}^{l_2}v_{\beta_{l}^{(2)}}^{(2)}
\end{align*} 
ainsi que
\begin{multline*}
\mathcal{R}^{(i)}(\m,\overrightarrow{\v},v_0):=
\left\{(r_1,r_2):\Omega(r_i)=\omega(r_i)=m_i
,\vphantom{\widehat{F_j}^{(i)}}\right.\\\left.
(r_1,r_2)\text{ satisfait }(\widehat{F_j}^{(i)}(\m,\overrightarrow{\v},v_0))\text{ pour }1\leq j\ll \tau^{-1}\right\}
\end{multline*}
où $(\widehat{F_j}^{(i)}(\m,\overrightarrow{\v},v_0))$ désigne l'inégalité associée à  $(F_j^{(i)}(\m,(s_1,s_2),\I))$ définie par %vi
\begin{multline*}
 \frac{\log \left(\Y P^{(\overrightarrow{\gamma}^{(1)})}(r_1)P^{(\overrightarrow{\gamma}^{(2)})}(r_2)\right)
}{\xi\log \X}+(\varepsilon v_0+1)+\sum_{k=1}^{k_1}(v_{\alpha_{k}^{(1)}}^{(1)}+1)+\sum_{k=1}^{k_2}(v_{\alpha_{k}^{(2)}}^{(2)}+1)
\\<\frac{\log \left(
P^{(\overrightarrow{\delta}^{(1)})}(r_1)P^{(\overrightarrow{\delta^{(1)}})}(r_2)\right)
}{\xi\log \X}+
\sum_{l=1}^{l_1}v_{\beta_{l}^{(1)}}^{(1)}+\sum_{l=1}^{l_2}v_{\beta_{l}^{(2)}}^{(2)}.
\end{multline*}

Dans ce qui suit, nous adaptons la méthode du paragraphe  3 de \cite{HB01} et du paragraphe 5 de \cite{HM02}. On introduit, pour
$\overrightarrow{\v}\in \N^{n_1}\times\N^{n_2}$, le poids $d_{\n}(\Sid,\overrightarrow{\v})$ en posant
\begin{align}\label{définition dSv}
d_{\n}(\Sid,\overrightarrow{\v})=
\frac{\log N(\mathfrak{p}_1^{(1)})\cdots  \log N(\mathfrak{p}_{n_1}^{(1)}) 
 \log N(\mathfrak{p}_{1}^{(2)})\cdots  \log N(\mathfrak{p}_{n_2}^{(2)})       }
 {v_{1}^{(1)}\cdots  v_{n_1}^{(1)}v_{1}^{(2)}\cdots  v_{n_2}^{(2)}(\xi\log \X)^{n_1+n_2}} 
\end{align}  si $\Sid=\Sid_1\Sid_2$ où
$\Omega(N(\Sid_i))=\Omega_{\K}(\Sid_i)=n_i$,
 $\Sid_i=\mathfrak{p}^{(\I)}_{1}\cdots \mathfrak{p}^{(\I)}_{n_i}$  
avec $  \X^{v_k^{(\I)}\xi}\leq N(\mathfrak{p}_k^{(\I)})<  \X^{(v_k^{(\I)}+1)\xi}%&\in \left[  \X^{v_k\xi},  \X^{(v_k+1)\xi}\right[
$ pour $1\leq k\leq n_i$ et $i=1$ et $2$, 
et $d_{\n}(\Sid,\overrightarrow{\v})=0$ sinon.
On pose également, pour  tout sous-ensemble $\mathcal{R}$ de $\N^2$,
\begin{equation}\label{définition bRv}
b(\Rid_1,\Rid_2,\mathcal{R})=\left\{\begin{array}{ll}
1&\text{ si }(N(\Rid_1),N(\Rid_2))\in \mathcal{R},\\
0&\text{ sinon}.\end{array}\right.
\end{equation} 

Dans l'esprit du lemme 3.7 de \cite{HB01} et du lemme 5.1 de \cite{HM02},
il est naturel d'approcher $S(\mathcal{D},C^{(i)}(\m,\n,\I))$ 
 par la quantité  
\begin{align*}
\widehat{S}(\mathcal{D},\I,C^{(i)}(\m,\n,\I)):
=\sum_{\overrightarrow{\v}\in\iota(\mathcal{S}^{(i)}(\n,v_0))}\widehat{S}
(\mathcal{D},\I,\mathcal{R}^{(i)}(\m,\overrightarrow{\v},v_0))
\end{align*}
où  
\begin{equation}\label{définition S chapeau}
\widehat{S}(\mathcal{D},\I,\mathcal{R}^{(i)}(\m,\overrightarrow{\v},v_0))
:=\sum_{\substack{\Rid_1,\Rid_2,\Sid\in\JK\\\I\Rid_1\Rid_2\Sid\in\mathcal{D}}}b(\Rid_1,\Rid_2,\mathcal{R}^{(i)}(\m,\overrightarrow{\v},v_0))
d_{\n}(\Sid,\overrightarrow{\v}).
\end{equation}

Il en résulte l'apparition des termes d'erreur \begin{align}\label{définition Rimn}
R^{(i)}(\m,\n):= \sum_{N(\I)\in\mathcal{I }}|h_1(\I)|\left(
R(\mathcal{A},\I,C^{(i)}(\m,\n,\I))+\frac{\sigma_q(F)\eta}{c(N_1,N_2)q^3\X}
\sigma_q(\I)R(\mathcal{B},\I,C^{(i)}(\m,\n,\I))\right)\end{align} où, pour $\mathcal{D}=\mathcal{A}$ ou $\mathcal{B}$,
\begin{align*}
R(\mathcal{D},\I,C^{(i)}(\m,\n,\I))= \left|S(\mathcal{D},\I,C^{(i)}(\m,\n,\I))-\widehat{S}(\mathcal{D},\I,C^{(i)}(\m,\n),\I)\right|.
\end{align*}
On peut obtenir une borne supérieure de telles quantités en reproduisant les différentes étapes de la démonstration du lemme 3.7 de \cite{HB01} qui utilisent essentiellement    les lemmes \ref{petits intervalles} et \ref{estimation petits intervalles}.
\begin{lemme}\label{Approximation S par Se}
Supposons que $|h|\leq1$. Pour tout $B>0$ et $\varepsilon>0$, on a  uniformément en  $\X\geq2$, 
 $(N_1,N_2)\in\mathcal{N}(\eta)$ et 
 $\xi\leq \tau^2$,
\begin{equation}
\sum_{\m,\n}R^{(1)}(\m,\n)\ll  
\left(\xi\tau^{-6}\frac{\sigma_q(F)\eta^2  \X^2}{\log \X}\sum_{N(\I)\in\mathcal{I }}\sigma_q(\I)\frac{|h_1(\I)|}{N(\I)}+   \X^{2}(\log \X)^{-B}\right)
\end{equation}
et, pour $i\in\{2,\ldots,5\}$, \begin{align}\label{R4}
\sum_{\m,\n}R^{(i)}(\m,\n)\ll 
\xi\tau^{-6}\frac{\sigma_q(F)\eta^2  \X^2}{\log \X}\sum_{N(\I)\in\mathcal{I }}\sigma_q(\I)\frac{|h_1(\I)|}{N(\I)}+   \X^{2}(\log \X)^{-B}
+\Delta_4(|h|)
\end{align}
où
\begin{align}\label{définition Delta4}
 \Delta_4(|h|):=
 \sum_{N(\I)\in\mathcal{I }}|h_1(\I)|\sum_{m,n}\sum_{j\leq\nbin}
 \Delta_{j}(m+n,\mathcal{A},\I,|h|)+
 \frac{\sigma_q(F)\eta}{c(N_1,N_2)q^3\X}\sigma_q(\I)\Delta_{j}(m+n,\mathcal{B},\I,|h|)\end{align}
 et, pour $\mathcal{D}=\mathcal{A}$ ou $\mathcal{B}$, $\Delta_j(k,\mathcal{D},\I,|h|)$,
\begin{align*}
 \Delta_{j}(k,\mathcal{D},\I,|h|)
 :=\#\left\{\I\J\in\mathcal{D}\vphantom{P^{(\overrightarrow{\alpha})}}\right.:&
   \X^{\tau}<P^-(N(\J)),P^+(N(\J))\leq   \X^{1-\tau}, \\
 &  \X^{1-\tau_1}\leq P^{(\overrightarrow{\alpha})}(N(\J)), P^{(\overrightarrow{\beta})}(N(\J)) \leq   \X^{1+\tau_1}, \\
 &\left.\Y P^{(\overrightarrow{\alpha})}(N(\J))\leq P^{(\overrightarrow{\beta})}(N(\J))\leq
 \Y  \X^{O(\xi\tau^{-1})}P^{(\overrightarrow{\alpha})}(N(\J))
 \right\}\end{align*}
 pour les $\Y$, $\overrightarrow{\alpha}$ et $\overrightarrow{\beta}$ introduits dans la définition (\ref{définition Eij}) de $E_j(k)$. 
\end{lemme}

\begin{proof} On ne décrit ci-dessous que la contribution des termes $R(\mathcal{A},\I,C^{(i)}(\m,\n,\I))$, le traitement de $R(\mathcal{B},\I,C^{(i)}(\m,\n,\I))$ étant en tout point similaire. Soient $\I$ un idéal tel que $N(\I)\in\mathcal{I }$ et $v_0=\left\lfloor \frac{\log N(\I)}{\xi\log \X}\right\rfloor$.
On peut décomposer le terme d'erreur $R(\mathcal{A},\I,C^{(i)}(\m,\n,\I))$ 
sous la forme
\begin{align*}
R(\mathcal{A},\I,C^{(i)}(\m,\n,\I))\leq R_1(\mathcal{A},\I,C^{(i)}(\m,\n,\I))+R_2(\mathcal{A},\I,C^{(i)}(\m,\n,\I))+R_3(\mathcal{A},\I,C^{(i)}(\m,\n,\I))
\end{align*}
où  le terme $R_1(\mathcal{A},\I,C^{(i)}(\m,\n,\I))$  résulte de l'introduction de $\iota(\mathcal{S}^{(i)}(\n,v_0))$, 
c'est-à-dire défini comme la différence
\begin{align*}\left|S(\mathcal{A},C^{(i)}(\m,\n,\I))-
\sum_{\overrightarrow{v}\in\iota(\mathcal{S}^{(i)}(\n,v_0))}\sum_{\substack{\Sid_1,\Sid_2,\Rid_1,\Rid_2\\\I\Rid_1\Rid_2\Sid_1\Sid_2\in\mathcal{A}\\
d_{\n}(\Sid_1\Sid_2,\overrightarrow{\v})\neq0}}b(\Rid_1,\Rid_2,\mathcal{R}^{(i)}(\m,(N(\Sid_1),N(\Sid_2)),\I))\right|\end{align*}
le terme $R_2(\mathcal{A},\I,C^{(i)}(\m,\n,\I))$ provient du remplacement de $1$ par $d_{\n}(\Sid_1,\Sid_2,\overrightarrow{v})$ et vaut ainsi
\begin{align*}\sum_{\overrightarrow{v}\in\iota(\mathcal{S}^{(i)}(\n,v_0))}
\sum_{\substack{\Sid_1,\Sid_2,\Rid_1,\Rid_2\\\I\Rid_1\Rid_2\Sid_1\Sid_2\in\mathcal{A}\\
d_{\n}(\Sid_1\Sid_2,\overrightarrow{\v})\neq0}}
\left|1-d_{\n}(\Sid_1\Sid_2,\overrightarrow{\v})\right|
b(\Rid_1,\Rid_2,\mathcal{R}^{(i)}(\m,(N(\Sid_1),N(\Sid_2)),\I))\end{align*}
et le terme $R_3(\mathcal{A},C^{(i)}(\m,\n,\I))$ apparaît en remplaçant 
$\mathcal{R}^{(i)}(\m,(N(\Sid_1),N(\Sid_2)),\I)$ par l'ensemble $
\mathcal{R}^{(i)}(\m,\overrightarrow{\v},v_0)$, c'est-à-dire
\begin{align*}
  R_3(\mathcal{A},C^{(i)}(\m,\n,\I))=\sum_{\overrightarrow{v}\in\iota(\mathcal{S}^{(i)}(\n,v_0))}
\sum_{\substack{\Sid_1,\Sid_2
}}d_{\n}(\Sid_1\Sid_2,\overrightarrow{\v})
\Delta(\mathcal{A},\m,\Sid_1,\Sid_2,\I)\end{align*}
où 
\begin{multline*}
\Delta(\mathcal{A},\m,\Sid_1,\Sid_2,\I):= 
\#\left\{(\Rid_1,\Rid_2):(N(\Rid_1),N(\Rid_2))\in\mathcal{R}^{(i)}(\m,(N(\Sid_1),N(\Sid_2)),\I)\Delta 
\mathcal{R}^{(i)}(\m,\overrightarrow{\v},v_0),\right.
\\\left.\vphantom{\mathcal{R}^{(i)}}\I\Rid_1\Rid_2\Sid_1\Sid_2\in
\mathcal{A}\right\}
\end{multline*}
où $\Delta $ désigne la différence symétrique.

Au vu de la définition des $(\widetilde{E}^{(i)}_j(\n,v_0)$, on observe que tous les idéaux $\Sid_1$ et $\Sid_2$ comptés dans $\widehat{S}(\mathcal{A},\I,C^{(i)}(\m,\n,\I)$ apparaissent dans $S(\mathcal{A},\I,C^{(i)}(\m,\n,\I)$. Par suite, dans la mesure où 
\begin{align}
 1-d_{\n}(\mathfrak{p}_1^{(1)}\cdots \mathfrak{p}_{n_1}^{(1)}\mathfrak{p}_1^{(2)}
 \cdots \mathfrak{p}_{n_2}^{(2)},\overrightarrow{\v})&=\nonumber1-
 \frac{\log N(\mathfrak{p}_1^{(1)})}{v_1^{(1)}\xi\log \X}\cdots 
 \frac{\log N(\mathfrak{p}_{n_1}^{(1)})}{v_{n_1}^{(1)}\xi\log \X}
 \frac{\log N(\mathfrak{p}_1^{(2)})}{v_1^{(2)}\xi\log \X}\cdots 
 \frac{\log N(\mathfrak{p}_{n_2}^{(2)})}{v_{n_2}^{(2)}\xi\log \X}\\&
 \ll\xi\tau^{-2}\label{estimation dn 1}
\end{align}
dès que $  \X^{v_k^{(\I)}\xi}\leq N(\mathfrak{p}_k^{(\I)})< 
  \X^{(v_k^{(\I)}+1)\xi}$ (cf [\cite{HB01} p.44]), il vient l'inégalité
\begin{align}\label{inétalité triviale R2}
 R_2(\mathcal{A},\I,C^{(i)}(\m,\n,\I))=
\xi\tau^{-2}S(\mathcal{A},\I,C^{(i)}(\m,\n,\I)).\end{align}

On sépare la suite de la démonstration en deux parties selon que $i=1$ ou non.

\underline{Cas $i\in\{2,\ldots,5\}$}. 
Au vu des définitions (\ref{définition C2}), (\ref{définition C3}), (\ref{définition C4}) et (\ref{définition C5}) de $C^{(i)}(m,n)$, il existe au plus un couple $(m,n)$ pour lequel un idéal $\Rid\Sid$ donné appartienne 
à $ C^{(i)}(m,n)$. 
 Par suite, on peut observer, en suivant l'argument de la preuve du lemme 3.7 de \cite{HB01} que l'on a, pour tout $N(\I)\in\mathcal{I }$ et $\mathcal{D}=\mathcal{A}$ ou $\mathcal{B}$, 
\begin{align*}
\sum_{\m,\n}R_1(\mathcal{A},\I,C^{(i)}(\m,\n,\I))\ll& \tau^{-1}
\sum_{n}\max_{\Delta\in\Delta(0,n)}\sum_{\substack{(1,N(\Sid))\in \Delta\\N(\Sid
)\leq   \X^{\frac{3}{2}-\tau}
}}S(\mathcal{A},\I\Sid,
C^-(  \X^{\tau}))\\
&+
\sum_n\sum_{j=1}^{n-1}
\sum_{\substack{  \X^{\tau}<N(\mathfrak{p}_1)<\cdots< N(\mathfrak{p}_n)
\\N(\mathfrak{p}_1\cdots \mathfrak{p}_n)\leq   \X^{3/2-\tau}\\
N(\mathfrak{p}_{j+1})\leq   \X^{\xi}N(\mathfrak{p}_j)}}S(\mathcal{A},\I\mathfrak{p}_1\cdots \mathfrak{p}_n,C^-(  \X^{\tau}))
\end{align*}où $\Delta(m,n)$ désigne l'ensemble des parties $\Delta$ de la forme
\begin{multline}
\Delta:=\left\{(r,s):\Omega(r)=\omega(r)=m,\Omega(s)=\omega(s)=n, P^-(rs)>  \X^{\tau}, \vphantom{P^{(\overrightarrow{\alpha})}}\right.\\\left.
 \Y P^{(\overrightarrow{\gamma})}(r)P^{(\overrightarrow{\alpha})}(s)\leq 
P^{(\overrightarrow{\delta})}(r)
P^{(\overrightarrow{\beta})}(s)\leq \Y   \X^{%\tau^{-1}
O(\xi\tau^{-1})}
P^{(\overrightarrow{\gamma})}(r)P^{(\overrightarrow{\alpha})}(s)\right\}. \label{définition Deltaij}
\end{multline}
En utilisant les lemmes \ref{petits intervalles} et \ref{lemme petits écarts}, il suit ainsi l'estimation
\begin{align*}
 &\sum_{N(\I)\in\mathcal{I }}|h_1(\I)|
 \sum_{\m,\n}R_1(\mathcal{A},\I,C^{(i)}(\m,\n,\I))
&\ll
\xi\tau^{-5}
\frac{\sigma_q(F)\eta^2   \X^2}{\log \X}
\sum_{N(\I)\in\mathcal{I }}\frac{|h_1(\I)|\sigma_q(\I)}{N(\I)}+  \X^2(\log \X)^{-B}
\end{align*}
valide pour tout $B\geq0$.

Dans la mesure où $
\sum_{m,n}S(\mathcal{A},\I,C^{(i)}(m,n))\leq
S(\mathcal{A},\I,C^{-}(  \X^{\tau}))$,  la majoration 
\begin{align*}&\sum_{N(\I)\in\mathcal{I }}|h_1(\I)|
 \sum_{\m,\n}R_2(\mathcal{A},\I,C^{(i)}(\m,\n,\I))+&\ll \xi\tau^{-3}\frac{\sigma_q(F)\eta^2  \X^2}{\log \X}
 \sum_{N(\I)\in\mathcal{I }}\frac{|h_1(\I)|\sigma_q(\I)}{N(\I)}+   \X^{2}(\log \X)^{-B}
\end{align*}
est aussi une conséquence immédiate du lemme \ref{petits intervalles} combiné à (\ref{inétalité triviale R2}).

En considérant $R_3(\mathcal{A},\I,C^{(i)}(\m,\n,\I))$, on remarque que des termes d'erreur proviennent 
de la contribution des idéaux
$\I\Rid\Sid$ satisfaisant les conditions $P^-(N(\Sid))  \X^{-\xi}\leq P^+(N(\Rid))\leq P^-(N(\Sid))$ ou
$P^+(N(\Sid))\leq P^-(N(\Rid))\leq P^+(N(\Sid))  \X^{\xi}$. D'autres termes d'erreur apparaissent également lors du remplacement de la condition $N(\Rid\Sid)\in E_{j}(m+n)$, réécrite sous la forme 
\begin{align}\label{vers définition Deltaij}
 \Y P^{(\overrightarrow{\alpha})}(N(\Sid))P^{(\overrightarrow{\gamma})}(N(\Rid))\prec
 P^{(\overrightarrow{\beta})}(N(\Sid))P^{(\overrightarrow{\delta})}(N(\Rid)),
\end{align}
par 
\begin{align*}
 \frac{\log \left(\Y P^{(\overrightarrow{\gamma})}(N(\Rid))\right)}{\xi\log \X}+
 \sum_{k=1}^{k_1}(v_{\alpha_{k}^{(1)}}^{(1)}+1)
<\frac{\log \left(
P^{(\overrightarrow{\delta})}(N(\Rid))\right)
}{\xi\log \X}+
\sum_{l=1}^{l_1}v_{\beta_{l}^{(1)}}^{(1)}.
\end{align*}
et $d_{\n}(\Sid,\overrightarrow{v})\neq0$. On peut ainsi écrire
\begin{align*}
 \sum_{\m,\n}R_3(\mathcal{A},\I,C^{(i)}(\m,\n,\I))
 &\ll\sum_{  \X^{1+\tau}<N(\mathfrak{p}_1)\leq N(\mathfrak{p}_2)\leq   \X^{\xi}N(\mathfrak{p}_1)}
 S(\mathcal{A},\I\mathfrak{p}_1\mathfrak{p}_2,C^-(  \X^{\tau}))\\
 &+  \sum_{ \substack{\X^{\tau}<N(\mathfrak{p}_1)\leq N(\mathfrak{p}_2)\leq   \X^{\xi}N(\mathfrak{p}_1)\\N(\mathfrak{p}_1\mathfrak{p}_2)\leq   \X^{2-2\tau_1})}}
 S(\mathcal{A},\I\mathfrak{p}_1\mathfrak{p}_2,C^-(  \X^{\tau}))\\
 &+ \sum_{j} \sum_{(N(\Rid),N(\Sid))\in \Delta(E_{j})}S(\mathcal{A},\Rid\Sid,C^-(  \X^{\tau}))
\end{align*}
où $\Delta(E_{j})$ désigne l'ensemble défini par (\ref{définition Deltaij}) où l'inégalité sur les facteurs premiers
est celle associée à (\ref{vers définition Deltaij}).
On décompose la sommation correspondant à $\Delta(E_{j})$ en quatre composantes en introduisant les conditions supplémentaires
\begin{itemize}
 \item 
$P^{(\overrightarrow{\alpha})}(N(\Sid))P^{(\overrightarrow{\gamma})}(N(\Rid))\leq   \X^{1-\tau_1}$ ou $
 P^{(\overrightarrow{\beta})}(N(\Sid))P^{(\overrightarrow{\delta})}(N(\Rid))\leq   \X^{1-\tau_1}$;\\
\item $P^{(\overrightarrow{\alpha})}(N(\Sid))P^{(\overrightarrow{\gamma})}(N(\Rid))>   \X^{1+\tau_1}$ ou $
 P^{(\overrightarrow{\beta})}(N(\Sid))P^{(\overrightarrow{\delta})}(N(\Rid))>   \X^{1+\tau_1}$;\\
\item $  \X^{1-\tau_1}< P^{(\overrightarrow{\alpha})}(N(\Sid))P^{(\overrightarrow{\gamma})}(N(\Rid))\leq   \X^{1+\tau_1}$  et $
   \X^{1-\tau_1}< P^{(\overrightarrow{\beta})}(N(\Sid))P^{(\overrightarrow{\delta})}(N(\Rid))\leq   \X^{1+\tau_1}$.
\end{itemize}
Ceci permet de borner
la contribution des différents $R_3(\mathcal{A},\I,C^{(i)}(\m,\n,\I))$  par 
\begin{multline}
 \ll\sum_{j\leq\nbin} \Delta_{j}(m+n,\mathcal{A},\I,|h|) + \tau^{-1}\sum_{\substack{  \X^{1+\tau}\leq N(\J_1)\leq N(\J_2)\leq N(\J_1)  \X^{O(\xi\tau^{-1})}\\P^-(N(\J_1\J_2))>  \X^{\tau}}}
 S(\mathcal{A},\I\J_1\J_2,C^-(  \X^{\tau}))\\
 +  \tau^{-1}\sum_{\substack{ N(\J_2)= N(\J_1)\X^{O(\xi\tau^{-1})} \\N(\J_1\J_2)\leq\X^{2-2\tau_1}\\P^-(N(\J_1\J_2))>  \X^{\tau}}}
 S(\mathcal{A},\I\J_1\J_2,C^-(  \X^{\tau}))
 ,\label{premiere majoration R3}
\end{multline}
où les $\Delta_{j}$ ont été définis dans l'énoncé du lemme \ref{Approximation S par Se}.

Observant que les lemmes \ref{non associes}, \ref{petits intervalles} et \ref{lemme petits écarts} permettent d'écrire les  bornes supérieures
\begin{align*}
&\sum_{N(\I)\in\mathcal{I}}|h_1(\I)|\sum_{\substack{  N(\J_2)= N(\J_1)\X^{O(\xi\tau^{-1})} \\N(\J_1\J_2)\leq\X^{2-2\tau_1}\\P^-(N(\J_1\J_2))>  \X^{\tau}}}
 S(\mathcal{A},\I\J_1\J_2,C^-(  \X^{\tau}))\\\ll& \frac{\eta^2\X^2\sigma_q(F)}{\tau\log\X}
\sum_{N(\I)\in\mathcal{I}}|h_1(\I)|\frac{\sigma_q(\I)}{N(\I)}\sum_{m,n} \sum_{\substack{\X{\tau}<p_1<\cdots<p_m\ll\X^2\\\X^{\tau}<p'_1<\cdots<p'n\ll\X^2
\\p'_1\cdots  p'_n=p_1\cdots  p_m\X^{O(\xi\tau^{-1})}}}\frac{\nu_{p_1\cdots  p_m}}{p_1\cdots  p_m}\frac{\nu_{p'_1\cdots  p'_n}}{p'_1\cdots  p'_n}
 +\X^2(\log\X)^{-B}\\
 \ll&
\xi\tau^{-5}\frac{\sigma_q(F)\eta^2   \X^2}{\log \X}
\sum_{N(\I)\in\mathcal{I }}\frac{|h_1(\I)|\sigma_q(\I)}{N(\I)}+  \X^2(\log \X)^{-B},
\end{align*}
et
\begin{align*}&\sum_{N(\I)\in\mathcal{I}}|h_1(\I)|\sum_{\substack{  \X^{1+\tau}\leq N(\J_1)\leq N(\J_2)\leq N(\J_1)  \X^{O(\xi\tau^{-1})}\\P^-(N(\J_1\J_2))>  \X^{\tau}}}
 S(\mathcal{A},\I\J_1\J_2,C^-(  \X^{\tau}))\\
 \ll& \sum_{N(\I)\in\mathcal{I}}|h_1(\I)|\sum_{\substack{  N(\J_3)= \frac{\X^{3+O(\xi\tau^{-1})}}{N(\I\J_1)} \\N(\I\J_1\J_3)\leq\X^{2-\tau}(\log\X)^c\\P^-(N(\J_1\J_3))>  \X^{\tau}}}
 S(\mathcal{A},\I\J_1\J_3,C^-(  \X^{\tau}))\\
 \ll&
\xi\tau^{-5}\frac{\sigma_q(F)\eta^2   \X^2}{\log \X}
\sum_{N(\I)\in\mathcal{I }}\frac{|h_1(\I)|\sigma_q(\I)}{N(\I)}+  \X^2(\log \X)^{-B}
,\end{align*} 
on déduit de (\ref{premiere majoration R3}) une estimation de $R_3(\mathcal{A},\I,C^{(i)}(\m,\n,\I))$ 
ce qui achève l'estimation de $R^{(i)}(\m,\n)$ lorsque $i\in\{2,\ldots,5\}$.

\underline{Cas $i=1$}. 
En observant que le nombre de couples $(\m,\n)$ dans lequel intervient chaque idéal $\Rid_1\Rid_2\Sid_1\Sid_2$ est borné si $N(\Rid_1\Rid_2\Sid_1\Sid_2)\ll\X^3(\log\X)^c$, les arguments précédents s'adaptent sans  difficulté nouvelle pour estimer la contribution issues de
de $C^{(1)}(\m,(1,0),\I)$ et $C^{(1)}((2,0),\n,\I)$, conduisant aux bornes supérieures
\begin{align*}
 \sum_{\m}R^{(1)}(\m ,(1,0))\ll 
\xi\tau^{-3}\frac{\sigma_q(F)\eta^2   \X^2}{\log \X}
\sum_{N(\I)\in\mathcal{I }}\frac{|h_1(\I)|\sigma_q(\I)}{N(\I)}+  \X^2(\log \X)^{-B}
\end{align*}
et
\begin{align*}
 \sum_{\n}R^{(1)}((2,0),\n )\ll
\xi\tau^{-4}\frac{\sigma_q(F)\eta^2   \X^2}{\log \X}
\sum_{N(\I)\in\mathcal{I }}\frac{|h_1(\I)|\sigma_q(\I)}{N(\I)}+  \X^2(\log \X)^{-B}
\end{align*}
et il suffit d'estimer $R(\mathcal{A},\I,C^{(1)}(\m,(n,0),\I))$ lorsque $n\geq2$.

Au vu de la définition (\ref{définition C1})  de $C^{(1)}(\m,(n,0),\I)$, on a
\begin{align*}
 &\sum_{\m}R_1(\mathcal{A},C^{(1)}(\m,(n,0),\I)\\\ll&\sum_{j\ll \tau^{-1}}
\sum_{\substack{N(\Rid_1)\leq   \X^{1-4\tau_1}\\P^-(N(\Rid_1))\geq   \X^{\tau}}}
\sum_{\substack{(N(\mathfrak{p}_1),\ldots,N(\mathfrak {p}_{n}))\in\Delta_j(n)}}S(\mathcal{A},\I\Rid_1\mathfrak{p}_1\cdots \mathfrak{p}_n,C^-(N(\mathfrak{p}_1)))
\end{align*}
où 
\begin{multline*}
\Delta_j(n):=
\left\{(p_1,\ldots, p_n): 
  \X^{\tau}<p_1< \cdots< p_n<  \X^{1-\tau},\right.\\\left.
\quad p_2\cdots  p_n\leq   \X^{1+\tau}<p_1\cdots  p_n\leq     \X^{3/2-\tau},\vphantom{X^{\tau}}(p_1,\ldots,p_n) \text{ satisfait }E(n)\right\}
\end{multline*}
et $E(n)$ désigne une inégalité d'une des formes suivantes :
\begin{align*}
   \X^{\tau}\leq p_1\leq   \X^{\tau+\xi},\quad   \X^{1+\tau-O(\xi\tau^{-1})}\leq p_2\cdots  p_n\leq   \X^{1+\tau},\quad
  \X^{1+\tau}\leq p_1\cdots  p_n\leq   \X^{1+\tau+O(\xi\tau^{-1})},
\end{align*}
\begin{align*}
   \X^{3/2-\tau-O(\xi\tau^{-1})}\leq p_1\cdots  p_n\leq   \X^{3/2-\tau}\quad\text{ ou }\quad p_j< p_{j+1}\leq   \X^{\xi}p_j.\end{align*}
On traite la contribution des inégalités
\begin{align*}  \X^{1+\tau}\leq p_1\cdots  p_n\leq   \X^{1+\tau+O(\xi\tau^{-1})}\quad\text{ et }\quad
  \X^{3/2-\tau-O(\xi\tau^{-1})}\leq p_1\cdots  p_n\leq   \X^{3/2-\tau}\end{align*}
en utilisant
les lemmes \ref{petits intervalles} et \ref{lemme petits écarts}. En effet,  le lemme \ref{non associes}  nous amène à estimer la contribution des idéaux $\I$, $\Rid_1$ et $\Rid_2$ satisfaisant
\begin{align*}
 N(\I\Rid_1\Rid_2)=\Y\X^{O\left(\tau^{-1}\xi\right)}.
\end{align*} 
avec $\Y=\X^{3/2-\tau}$ ou $\X^{2-\tau}$ et l'on observe que l'on a
 \begin{align*}
&\sum_{N(\I)\in\mathcal{I }}|h_1(\I)|\sum_{\substack{P^-(N(\Rid_1\Rid_2))\geq   \X^{\tau}\\
N(\I\Rid_1\Rid_2)=\Y  \X^{O\left(\xi\tau^{-1}\right)}
}}S(\mathcal{A},\I  \Rid_1\Rid_2,C^-(  \X^{\tau}))\\
\ll&
\xi\tau^{-5}\sigma_q(F)\frac{\eta^2  \X^2}{\log \X}\sum_{N(\I)\in\mathcal{I }}\frac{|h_1(\I)|\sigma_q(\I)}{N(\I)}+  \X^2(\log \X)^{-B}.
\end{align*}

Supposons $n\geq3$. Pour les ensembles $\Delta_j(n)$ restants,  au moins l'un des $p_i$ 
n'apparaît pas dans la définition de $E(n)$, disons $p_1$. Observant que
\begin{align*}&\sum_{\substack{N(\Rid_1)\leq   \X^{1-4\tau_1}\\P^-(N(\Rid_1))>   \X^{\tau}}}
\sum_{\substack{(N(\mathfrak{p}_1),\ldots,N(\mathfrak {p}_{n}))\in\Delta_j(n}}S(\mathcal{A},\I\Rid_1\mathfrak{p}_1\cdots \mathfrak{p}_n,C^-(N(\mathfrak{p}_1)))\\\ll&\tau^{-1}
%\sum_{N(\I)\in\mathcal{I }}
\sum_{\substack{N(\Rid_1)\leq   \X^{1-4\tau_1}\\P^-(N(\Rid_1))>   \X^{\tau}}}
\sum_{\substack{\X^{\tau}<N(\mathfrak{p}_2)<\cdots<N(\mathfrak{p}_n)\\(1,N(\mathfrak{p}_2),\ldots,N(\mathfrak {p}_{n}))\text{ satisfait }E(n)\\
N(\mathfrak{p}_2\cdots \mathfrak{p}_{n})\leq   \X^{1+\tau}}}S(\mathcal{A},\I\Rid_1\mathfrak{p}_1\cdots \mathfrak{p}_{n-1},C^-(  \X^{\tau})),
\end{align*}on déduit en utilisant les lemmes \ref{petits intervalles} et \ref{lemme petits écarts} l'estimation
\begin{align*}
& \sum_{N(\I)\in\mathcal{I }}|h_1(\I)|\sum_{\m}
 \sum_{n\geq3}R_1(\mathcal{A},C^{(1)}(\m,(n,0),\I))
 \\\ll& \xi\tau^{-6}\frac{\sigma_q(F)\eta^2  \X^2}{\log \X}
 \sum_{N(\I)\in\mathcal{I }}\frac{|h_1(\I)|\sigma_q(\I)}{N(\I)}+   \X^2(\log \X)^{-B}.
 \end{align*}
 
Si $n=2$,
l'argument précédent échoue uniquement pour  l'inégalité 
$p_1<p_2\leq p_1  \X^{\xi}$. Le cas échéant, il apparaît que $  \X^{1/2}<p_1<  \X^{3/4-\tau/2}$ ce qui permet de majorer cette contribution par
\begin{align*}
&\ll\sum_{N(\I)\in\mathcal{I }}|h_1(\I)|
\sum_{\substack{  \X^{1/2}<N(\mathfrak{p}_1)<N(\mathfrak{p}_2)\leq N(\mathfrak{p}_1)  \X^{\xi}\leq   \X^{\frac{3}{4}}}}S(\mathcal{A},\I\mathfrak{p}_1\mathfrak{p}_2,C^-(  \X^{\tau}))\\\ll&
 \xi\tau^{-1}\frac{\sigma_q(F)\eta^2  \X^2}{\log \X}\sum_{N(\I)\in\mathcal{I }}\frac{|h_1(\I)|\sigma_q(\I)}{N(\I)}+   \X^{2}(\log \X)^{-B}.
\end{align*}

Par des arguments similaires, les estimations (\ref{estimation dn 1}) et (\ref{inétalité triviale R2}) permettent de majorer la contribution des termes $R_2(\mathcal{A},C^{(1)}(\m,(n,0),\I)$ par \begin{align*}\ll&\xi\tau^{-3}\sum_{N(\I)\in\mathcal{I }}|h_1(\I)|
 \sum_{\substack{N(\Rid_1)\leq   \X^{1-4\tau_1}\\P^-(N(\Rid_1))>  \X^{\tau}}}\sum_{n\geq2}
 \sum_{\substack{  \X^{\tau}<N(\mathfrak{p}_2)<\cdots< N(\mathfrak{p}_{n})\\
 N(\mathfrak{p}_2\cdots \mathfrak{p}_{n})\leq   \X^{1+\tau}}}
 S(\mathcal{A},\I  \Rid_1\mathfrak{p}_2\cdots \mathfrak{p}_{n},C^-(  \X^{\tau}))\\
\ll
& \xi\tau^{-6}\frac{\sigma_q(F)\eta^2  \X^2}{\log \X}
 \sum_{N(\I)\in\mathcal{I }}\sigma_q(\I)\frac{|h_1(\I)|}{N(\I)}+   \X^{2}(\log \X)^{-B}.
\end{align*}

Achevons la démonstration du lemme en étudiant $R_3(\mathcal{A}, C^{(1)}(\m,(n,0),\I))$. Les arguments précédents s'adaptent sans difficulté
pour estimer les différentes contributions, à l'exception de l'inégalité 
\begin{align*}
P^-(s)\leq P^-(r_2)\leq P^-(s)  \X^{\xi}.
 \end{align*}
Si $n\geq3$, on majore ce terme d'erreur par\begin{align*}
  &\sum_{N(\I)\in\mathcal{I }}|h_1(\I)|\sum_{\substack{N(\Rid_1)\leq   \X^{1-4\tau_1}\\P^-(N(\Rid_1))>   \X^{\tau}}}
\sum_{\substack{  \X^{\tau}<N(\mathfrak{p}_1)\leq \cdots  \leq N(\mathfrak{p}_n)\\
N(\mathfrak{p}_1\cdots \mathfrak{p}_{n-1})\leq   \X^{1+\tau}}}\sum_{N(\mathfrak{p}_1)\leq N(\mathfrak{p})\leq 
N(\mathfrak{p}_1)  \X^{\xi}}S(\mathcal{A},\I\Rid_1\mathfrak{p}\mathfrak{p}_1\cdots \mathfrak{p}_n,C^-(N(\mathfrak{p})))\\\ll
&\tau^{-2}
 \sum_{N(\I)\in\mathcal{I }}|h_1(\I)|\sum_{\substack{N(\Rid_1)\leq   \X^{1-4\tau_1}\\P^-(N(\Rid_1))>   \X^{\tau}}}
\sum_{\substack{  \X^{\tau}<N(\mathfrak{p}_1)\leq \cdots  \leq N(\mathfrak{p}_{n-2})\\N(\mathfrak{p}_1)<N(\mathfrak{p})\leq N(\mathfrak{p}_1)  \X^{\xi}\\
N(\mathfrak{p}\mathfrak{p}_1\cdots \mathfrak{p}_{n-2})\leq   \X^{1+\tau}}}S(\mathcal{A},\I  \Rid_1\mathfrak{p}\mathfrak{p}_1\cdots \mathfrak{p}_{n-2},C^-(  \X^{\tau}))\\
\ll& \xi\tau^{-6}\frac{\sigma_q(F)\eta^2  \X^2}{\log \X}\sum_{N(\I)\in\mathcal{I }}\frac{|h_1(\I)|\sigma_q(\I)}{N(\I)}
+   \X^{2}(\log \X)^{-B}.\end{align*}
Lorsque $n=2$, il advient que $P^-(s)\leq\X^{3/4}$ et la contribution de tels idéaux est inférieure à
\begin{align*}\ll & \sum_{N(\I)\in\mathcal{I }}|h_1(\I)|\sum_{\substack{N(\Rid_1)\leq   \X^{1-4\tau_1}\\P^-(N(\Rid_1))>   \X^{\tau}}}
\sum_{\substack{  \X^{\tau}<N(\mathfrak{p}_1)\leq   \X^{1/2}}}
\sum_{N(\mathfrak{p}_1)\leq N(\mathfrak{p})\leq 
N(\mathfrak{p}_1)  \X^{\xi}}S(\mathcal{A},\I  \Rid_1\mathfrak{p}\mathfrak{p}_1,C^-(N(\mathfrak{p}_1)))\\
&+ \sum_{N(\I)\in\mathcal{I }}|h_1(\I)|\sum_{\substack{  \X^{1/2}<N(\mathfrak{p}_1)\leq   \X^{3/4}}}
\sum_{N(\mathfrak{p}_1)\leq N(\mathfrak{p})\leq 
N(\mathfrak{p}_1)  \X^{\xi}}S(\mathcal{A},\I\mathfrak{p}\mathfrak{p}_1,C^-(  \X^{\tau}))\\\ll&
 \xi\tau^{-4}\frac{\sigma_q(F)\eta^2  \X^2}{\log \X}\sum_{N(\I)\in\mathcal{I }}\frac{|h_1(\I)|\sigma_q(\I)}{N(\I)}
+   \X^{2}(\log \X)^{-B}.
\end{align*}
\end{proof}

Au vu du lemme précédent, il convient  de montrer que  $\frac{\eta\sigma_q(F)}{c(N_1,N_2)q^3\X^3}\sigma_q(\I)\widehat{S}(\mathcal{B},\I,C^{(i)}(\m,\n,\I))$ constitue une bonne approximation de $\widehat{S}(\mathcal{A},\I,C^{(i)}(\m,\n,\I))$.
Estimer $\widehat{S}(\mathcal{B},\I,C^{(i)}(\m,\n,\I))$ 
se réduit essentiellement à étudier les idéaux $\mathfrak{p}_1\cdots \mathfrak{p}_n$ tels que  $(N(\mathfrak{p}_1),\ldots,N(\mathfrak{p}_n))$ appartienne à l'ensemble
\begin{align*}
G(\overrightarrow{v},t):=\left\{\overrightarrow{\X}\in\R^n:x_i\in\left[  \X^{v_i\xi},  \X^{(v_i+1)\xi}\right[\text{ pour }1\leq i\leq n\text{ et }\prod_{i=1}^nx_i\leq t\right\}
\end{align*} 
Pour considérer de tels idéaux, Heath-Brown obtient dans \cite{HB01} la généralisation suivante  du théorème des idéaux premiers.

\begin{lemme}[\cite{HB01}, Lemme 4.10]
\label{forme multiplicative PIT}
Il existe $c>0$ tel que, uniformément en $t\geq 1$, $\X\geq2$, %$\frac{1}{\log   \X^{\tau}}\ll\xi$, %\frac{1}{\log \X}<\xi\leq\tau$,
 $1\leq n\ll \tau^{-1}$ %$  
et $\overrightarrow{v}\in \N^n$ satisfaisant %$\tau^{-1}\xi^{-1}\leq\log \X$ et
 $\tau\leq v_j\xi$ pour $ j\in\{1,\ldots, n\}$, 
on a % for $1\leq j\leq n$ and $t\geq 1$,
\begin{equation}
\sum_{(N(\mathfrak{p}_1),\ldots,N(\mathfrak{p}_n))\in G(\overrightarrow{v},t)}\prod_{i=1}^n\log N(\mathfrak{p}_i)=w(t)
+O\left(t%n(\xi\log \X)^{n-1}
\exp(-c\sqrt{\log   \X^{\tau}})\right)
\end{equation}
où $w(t)=w(\overrightarrow{v},t)$ désigne la mesure de $G(\overrightarrow{v},t)$. 
\end{lemme}

Le comportement analytique de la fonction $w$ définie dans le lemme \ref{forme multiplicative PIT} a été étudié
dans le paragraphe 8 de \cite{HB01}. 
\begin{lemme}[Formules (8.3) et (8.4) de \cite{HB01}]\label{propriétés de w}
Pour $t> 0$, $h\geq 0$ et $n\geq 2$, on a
\begin{align*}
\left|w'(t+h)-w'(t)\right|\leq \frac{h}{t}(\xi\log \X)^{n-2}
\end{align*}
et \begin{align*}
0\leq w'(t)\leq (\xi\log \X)^{n-1}.\end{align*}
De plus, si $n=1$, alors la dérivée à droite $w'(v_1,t)$ est la fonction caractéristique de l'intervalle $\left[  \X^{v_1\xi},  \X^{(v_1+1)\xi}\right[$.
\end{lemme}

En reproduisant l'argument développé dans le paragraphe 10 de \cite{HB01}, 
on peut utiliser les lemmes \ref{forme multiplicative PIT} et \ref{propriétés de w}  pour obtenir une estimation de 
$\widehat{S}(\mathcal{B},\I,C^{(i)}(\m,\n,\I))$. 

\begin{lemme}\label{Type II B} 
Soit $i\in\{1,\ldots,5\}$. On a, uniformément en  $\X\geq2$,
$(N_1,N_2)\in\mathcal{N}(\eta)$, %$\left(\tau\log \X\right)^{-1}\ll \xi$, 
$v_0\geq0$ et $I\in\mathcal{I }$ tel que $N(\I)\in \left[  \X^{v_0\xi},  \X^{(v_0+1)\xi)}\right[$,\begin{align}\nonumber 
&\sum_{\m,\n}\left|\widehat{S}(\mathcal{B},\I,C^{(i)}(\m,\n,\I)) %\mathcal{R}^{(\I)}(\m,\n,\overrightarrow{v}))
-\frac{\eta c(N_1,N_2)q^3  \X^3}{N(\I)}\sum_{\overrightarrow{\v}\in\iota(\mathcal{S}^{(i)}(\n,v_0))}\Sigma(\I,\mathcal{R}^{(\I)}(\m,
\overrightarrow{\v},v_0),\overrightarrow{\v}) \right|\\\ll& \frac{\eta^{2}c(N_1,N_2)q^3  \X^3}{\xi^2N(\I)}
\label{formule type II B}
\end{align}
où
\begin{equation}\label{définition Sigma}
\Sigma(\I,\mathcal{R},\overrightarrow{\v})=\sum_{\Rid_1, \Rid_2 
}\frac{b(\Rid_1,\Rid_2,\mathcal{R})}{v_1^{(1)}\cdots  v_{n_1}^{(1)}v_1^{(2)}\cdots  v_{n_2}^{(2)}(\xi\log \X)^{n_1+n_2}N(\Rid_1\Rid_2)}
w'\left(\frac{c(N_1,N_2)q^3  \X^3}{N(\I\Rid_1\Rid_2)}\right).
\end{equation}
\end{lemme}

\begin{proof} 
Compte tenu des définitions (\ref{définition iota}), (\ref{définition S chapeau}) et (\ref{définition bRv}),  
il suffit de montrer que, uniformément en $n\geq 1$, $\overrightarrow{v}\in \N^n$ tel que 
$v_i\neq v_j$ si $i\neq j$, $\tau\leq \xi v_i$ et tout ensemble $ \mathcal{R}(\m)$ tel que \begin{align*}
 \mathcal{R}(\m)\subset\left\{(r_1,r_2):\Omega(r_1)=\omega(r_1)=m_1,\Omega(r_2)=\omega(r_2)=m_2\text{ et }P^-(r_1r_2)>   \X^{\tau}\right\},
\end{align*} on a
\begin{equation}\label{réduction Type II B}
\sum_{\m}\left|\widehat{S}(\mathcal{B},\I,\mathcal{R}(\m),\overrightarrow{v})
-c(N_1,N_2)q^3  \X^3\eta\frac{\Sigma(\I,\mathcal{R}(\m),\overrightarrow{v})}{N(\I)} \right|
\ll \frac{\eta^{2}c(N_1,N_2)q^3  \X^3}{N(\I)v_1\cdots  v_n}
\end{equation} 
où
\begin{align*}
\widehat{S}(\mathcal{B},\I,\mathcal{R}(\m),\overrightarrow{v}) :=\sum_{\I\Rid_1\Rid_2\Sid\in\mathcal{D}}
b(\Rid_1,\Rid_2,\mathcal{R}(\m))d_{n}(\Sid,\overrightarrow{v}),
\end{align*}
\begin{align*}
\Sigma(\I,\mathcal{R}(\m),\overrightarrow{v})
=\sum_{\Rid_1, \Rid_2 
}\frac{b(\Rid_1,\Rid_2,\mathcal{R}(\m))}{v_1\cdots  v_{n}(\xi\log \X)^{n}N(\Rid_1\Rid_2)}
w'\left(\frac{c(N_1,N_2)q^3  \X^3}{N(\I\Rid_1\Rid_2)}\right)
 \end{align*}
et \begin{align*}
d_{n}(\Sid,\overrightarrow{v})=\left\{\begin{array}{ll} \frac{\log N(\mathfrak{p}_1)\cdots \log N(\mathfrak{p}_n)}{v_1\cdots  v_n}
 &\text{ si }\Sid=\mathfrak{p}_1\cdots \mathfrak{p}_n\text{ avec } N(\mathfrak{p}_i)\in\left[  \X^{v_i\xi},  \X^{(v_i+1)\xi}\right[,\\
 0&\text{ sinon}.\end{array}\right.
       \end{align*}
Une fois établie cette estimation, la majoration 
(\ref{formule type II B})  se déduira directement de l'inégalité  \begin{align}\label{estimation v1v2}
 \sum_{\n}\sum_{\overrightarrow{\v}\in\mathcal{S}^{(i)}(\n,v_0)}\frac{1}{v_1^{(1)}\cdots  v_{n_1}^{(1)}v^{(2)}_1\cdots 
 v^{(2)}_{n_2}}\leq\left(\sum_{n}\frac{1}{n!}\left(
 \sum_{v\ll\xi^{-1}}\frac{1}{v}\right)^n\right)^2\ll\xi^{-2}.
\end{align}

 Au vu de   (\ref{définition dSv}) et (\ref{définition S chapeau}), on peut écrire
\begin{align}
%\sum_{1\leq N(\I)\leq \Y }
&\widehat{S}\nonumber(\mathcal{B},\I,\mathcal{R}(\m),\overrightarrow{v})\\=&
%\sum_{1\leq N(\I)\leq \Y }
%\sum_{1\leq N(\I)\leq \Y }
\sum_{\Rid_1, \Rid_2}\frac{b(\Rid_1,\Rid_2,\mathcal{R}(\m))}{v_1\cdots  v_{n} (\xi\log \X)^{n}}\sum_{\substack{
  \X^{v_1\xi}\leq N(\mathfrak{p}_1)<  \X^{(v_1+1)\xi}\\\cdots \\   \X^{v_n\xi}\leq N(\mathfrak{p}_n)<  \X^{(v_n+1)\xi}\\\frac{
c(N_1,N_2)q^3  \X^3}{N(\I\Rid_1\Rid_2)}<N(\mathfrak{p}_1\cdots \mathfrak{p}_n))\leq \frac{c(N_1,N_2)q^3  \X^3}{N(\I\Rid_1\Rid_2)}(1+\eta)}}
\prod_{i=1}^{n}\log N(\mathfrak{p}_i)\label{convolution B}.
\end{align}

Estimant la somme intérieure à l'aide du lemme 
\ref{forme multiplicative PIT}, il vient 
\begin{align}
&\nonumber\sum_{\substack{
  \X^{v_1\xi}\leq N(\mathfrak{p}_1)<  \X^{(v_1+1)\xi}\\\cdots \\   \X^{v_n\xi}\leq N(\mathfrak{p}_n)<  \X^{(v_n+1)\xi}\\
\frac{c(N_1,N_2)q^3  \X^3}{N(\I\Rid_1\Rid_2)}<N(\mathfrak{p}_1\cdots \mathfrak{p}_n)\leq \frac{c(N_1,N_2)q^3  \X^3}{N(\I\Rid_1\Rid_2)}(1+\eta)}}
\prod_{i=1}^{n}\log N(\mathfrak{p}_i)\\\nonumber=&w\left(\frac{c(N_1,N_2)q^3  \X^3(1+\eta)}{N(\I\Rid_1\Rid_2)}\right)-w\left(
\frac{c(N_1,N_2)q^3  \X^3}{N(\I\Rid_1\Rid_2)}\right)\\
&+O\left(\frac{c(N_1,N_2)q^3   \X^3}{N(\I\Rid_1\Rid_2)}
\exp\left(-c\sqrt{\log   \X^{\tau}}\right)\right).\label{produit sur B}
\end{align}

En utilisant les hypothèses faites sur $\mathcal{R}(\m)$, les formules 
(\ref{convolution B}), (\ref{produit sur B}) et (\ref{théo idéaux premiers criblés}) permettent  d'écrire
\begin{align}
 &\nonumber\sum_{\m}\bigg|\sum_{\Rid_1, \Rid_2%\in\JK
}b(\Rid_1,\Rid_2,\mathcal{R}(\m))\left(w\left(\frac{c(N_1,N_2)q^3  \X^3(1+\eta)}{N(\I\Rid_1\Rid_2)}\right)-w\left(
\frac{c(N_1,N_2)q^3  \X^3}{N(\I\Rid_1\Rid_2)}\right)\right)\qquad\\\nonumber
&\pushright{-\widehat{S}(\mathcal{B},\I,\mathcal{R}(\m),\overrightarrow{v})
\bigg|}\\&\pushright{\ll
\frac{c(N_1,N_2)q^3  \X^3}{N(\I)v_1\cdots  v_n}\exp\left(-c\sqrt{\log   \X^{\tau}}\right)}.
\label{intermédiaire B}
\end{align}

Supposons que $n\neq1$. 
En utilisant le théorème des accroissements finis et le lemme  \ref{propriétés de w}, on remarque que
\begin{align}\nonumber
&w\left(\frac{c(N_1,N_2)q^3  \X^3(1+\eta)}{N(\I\Rid_1\Rid_2)}\right)-w\left(
\frac{c(N_1,N_2)q^3  \X^3}{N(\I\Rid_1\Rid_2)}\right)\\=&\frac{c(N_1,N_2)\eta q^3  \X^3}{N(\I\Rid_1\Rid_2)}\left(w'\left(
\frac{c(N_1,N_2)q^3  \X^3}{N(\I\Rid_1\Rid_2)}\right)+O\left(\eta(\xi \log \X)^{n-2}\right)\right).\label{w - w}
\end{align}
Au vu de (\ref{définition Sigma}), (\ref{intermédiaire B}), (\ref{w - w}) et (\ref{théo idéaux premiers criblés}), il s'ensuit que
\begin{align}
\sum_{\m}\left|\widehat{S}(\mathcal{B},\I,\mathcal{R}(\m),\overrightarrow{v})\nonumber
-\eta c(N_1,N_2)q^3  \X^3\frac{\Sigma(\I,\mathcal{R}(\m),\overrightarrow{v})}{N(\I)}\right|\ll&
\frac{\eta^2c(N_1,N_2)q^3  \X^3}{N(\I)v_1\cdots  v_n}.\label{Type II B n}
\end{align}

Supposons à présent que $n=1$. La dérivée à droite $w'$ étant la fonction caractéristique 
de $\left[  \X^{v_1\xi},  \X^{(v_1+1)\xi}\right[$, on en déduit que l'estimation
\begin{align*}
w\left(\frac{c(N_1,N_2)q^3  \X^3}{N(\I\Rid_1\Rid_2)}(1+\eta)\right)-w\left(\frac{c(N_1,N_2)q^3  \X^{3}}{N(\I\Rid_1\Rid_2)}\right)=\eta\frac{c(N_1,N_2)q^3  \X^3}{N(\I\Rid_1\Rid_2)}
w'\left(\frac{c(N_1,N_2)q^3  \X^3}{N(\I\Rid_1\Rid_2)}\right)
\end{align*}
est valide sauf éventuellement
pour les idéaux vérifiant $\frac{c(N_1,N_2)q^3  \X^3}{N(\I\Rid_1\Rid_2)}<  \X^{v\xi}
\leq\frac{c(N_1,N_2)q^3  \X^3(1+\eta)}{N(\I\Rid_1\Rid_2)}$ avec $v=v_1\text{ ou }v_1+1$, auquel cas 
on dispose de la borne triviale
\begin{align*}
w\left(\frac{c(N_1,N_2)q^3  \X^3}{N(\I\Rid_1\Rid_2)}(1+\eta)\right)-w\left(\frac{c(N_1,N_2)q^3  \X^3}{N(\I\Rid_1\Rid_2)}\right)\ll\eta\frac{c(N_1,N_2)q^3  \X^3}{N(\I\Rid_1\Rid_2)}.
\end{align*}

Par suite, on a
\begin{align*}
&\sum_{\m}\left|\widehat{S}(\mathcal{B},\I,\mathcal{R}(\m),v_1)
-\eta c(N_1,N_2) q^3  \X^3\frac{\Sigma(\I,\mathcal{R}(\m),v_1)}{N(\I)}\right|\\
\ll&%+O\left(
\frac{\eta c(N_1,N_2)q^3  \X^3}{N(\I)v_1\xi\log \X}\sideset{}{^{(\I)}}\sum_{\Rid_1, \Rid_2
}\frac{b(\Rid_1,\Rid_2,\mathcal{R}(\m))}{N(\Rid_1\Rid_2)}+\frac{c(N_1,N_2)q^3  \X^3}{N(\I)v_1}\exp(-c\sqrt{\log   \X^{\tau}})
\end{align*}
où la sommation $\sideset{}{^{(\I)}}\sum$ porte sur les idéaux $\Rid_1$ et $\Rid_2$ satisfaisant  
\begin{displaymath}\frac{c(N_1,N_2)q^3  \X^{3-v\xi}}{N(\I)}\leq N(\Rid_1\Rid_2)\leq \frac{c(N_1,N_2)q^3  \X^{3-v\xi}}{N(\I)}(1+\eta)\end{displaymath} avec  $v=v_1\text{ ou }v_1+1$. 
En utilisant le fait que $\tau_{\K}(\J^-(X^{\tau}))\ll e^{O(\tau^{-1})}$ dès que  $N(\J)\ll\X^4$ ainsi que l'estimation suivante, valide pour $\Y>\X$ et conséquence du théorème \ref{theo Weber}, \begin{displaymath}
\sum_{\substack{\Y\leq N(\J)\leq \Y(1+\eta)}}\frac{1}{N(\J)}\ll\eta,\end{displaymath}
on en déduit finalement (\ref{réduction Type II B})
.\end{proof}

L'argument de la preuve précédente 
 ne permet pas d'estimer l'analogue de (\ref{convolution B}) lorsque $\mathcal{D}=\mathcal{A}$, à savoir
\begin{align*}
 \sum_{\substack{
  \X^{v_1\xi}\leq N(\mathfrak{p}_1)<  \X^{(v_1+1)\xi}\\\cdots \\   \X^{v_n\xi}\leq N(\mathfrak{p}_n)<  \X^{(v_n+1)\xi}\\
\I\mathfrak{p}_1\cdots \mathfrak{p}_n\Rid_1\Rid_2\in\mathcal{A}}}
\prod_{i=1}^{n}\log N(\mathfrak{p}_i).
\end{align*}
Le rôle manifeste du phénomène de parité dans une telle somme rend caduque la possibilité de l'étudier directement 
à partir des estimations de Type I  obtenues au paragraphe \ref{paragraphe Type I}. 
En vue de lever cette obstruction et de contourner partiellement la dépendance en $\n$ dans la somme précédente, 
on introduit, en s'inspirant des notations (5.10), (5.11) et (5.12) de \cite{HM02}, les quantités \begin{align*}
\widehat{S}_{e}(\mathcal{A},\I,C^{(i)}(\m,\n,\I)):=
\sum_{\overrightarrow{v}\in\iota(\mathcal{S}^{(i)}(\n,v_0))}
\widehat{S}_{e}(\mathcal{A},\I,\mathcal{R}^{(i)}(\m,\overrightarrow{\v},v_0))
\end{align*}
où
\begin{align*}
\widehat{S}_{e}(\mathcal{A},\I,\mathcal{R}^{(i)}(\m,\overrightarrow{\v},v_0)):=
\sum_{\I\Rid_1\Rid_2\Sid\in\mathcal{A}}b(\Rid_1,\Rid_2,\mathcal{R}^{(i)}(\m,\overrightarrow{\v},v_0))e_{\n}(\Sid,\overrightarrow{\v}),
\end{align*}
\begin{equation}
\label{définition eSv}e_{\n}(\Sid,\overrightarrow{\v}):=\frac{w'(N(\Sid))}{v_1^{(1)}\cdots  v_{n_1}^{(1)}v_1^{(2)}\cdots  v_{n_2}^{(2)}
(\xi\log \X)^{n_1+n_2}}
\sum_{\substack{\J|\Sid \\N(\J)< \X^{\tau/2}}}\mu_{\K}(\J)\log\frac{\X^{\tau/2}}{N(\J)}.
\end{equation}

Compte tenu de la définition de $w$  et du lemme \ref{propriétés de w}, on peut observer
que $e_{\n}(\Sid,\overrightarrow{\v})$ et $d_{\n}(\Sid,\overrightarrow{\v})$ 
sont
\begin{equation}\label{estimation en dn}
 \left\{\begin{array}{ll}
 \ll  \tau_{\K}(\Sid)\log \X&\text{ si }0\leq \frac{\log N(\Sid)}{\xi\log \X}-
 \left(v_1^{(1)}+\cdots +v_{n_1}^{(1)}+v_1^{(2)}+\cdots  +v_{n_2}^{(2)}\right)
 \leq n_1+n_2,\\
=0 &\text{ sinon}.
                                                               \end{array}
\right.
\end{equation}

Une étape importante consiste à montrer que l'erreur introduite en remplaçant $d_{\n}(\Sid,\overrightarrow{\v})$ par $e_{\n}(\Sid,\overrightarrow{\v})$ est suffisamment petite, ce qui  fera l'objet du lemme \ref{Estimations de Type II}. Avant cela, on s'attache à estimer asymptotiquement 
$\widehat{S}_e(\mathcal{A},\I,C^{(i)}(\m,\n,\I))$.
Les estimations de Type I combinées à la formule de Perron permettent permettent en particulier de le relier à $\widehat{S}(\mathcal{B},\I,C^{(i)}(\m,\n,\I))$  en suivant essentiellement l'argument développé au paragraphe 8 de \cite{HB01}. %ous sommes à présent en mesure d'estimer Introducing, $e(S,\overrightarrow{v})$ we are able to estimate  $
%\widehat{S}(\mathcal{A},\I,C))$ using Perron's formula and Type I estimate.

\begin{lemme}\label{estimation de Se(A,I,C)} Soit $i\in\{1,\ldots,5\}$. On a, uniformément en $\X\geq2$, 
 $(N_1,N_2)\in\mathcal{N}(\eta)$ et $v_0\geq1$,
\begin{multline*}
\sum_{\substack{N(\I)\in\mathcal{I }\\  \X^{v_0}\leq N(\I)<  \X^{(v_0+1)\xi}}}\sum_{\m,\n}\left|\vphantom{\sum_{\overrightarrow{\v}\in\iota(\mathcal{S}^{(i)}(\n,v_0))}}\widehat{S}_{e}(\mathcal{A},C^{(i)}(\m,\n,\I))\right.\\\left.-\sigma_q(F)\eta^2  \X^2\frac{\sigma_q(\I)}{N(\I)}\sum_{\overrightarrow{\v}\in\iota(\mathcal{S}^{(i)}(\n,v_0))}
\Sigma(\I,\mathcal{R}^{(\I)}(\m,\overrightarrow{\v},v_0),\overrightarrow{\v})\right|\ll 
\eta^{\frac{11}{5}}  \X^2(\log \X)^{c}
\end{multline*}
où $\Sigma(\I,\mathcal{R},\overrightarrow{v})$ est défini par (\ref{définition Sigma}).
\end{lemme}
\begin{proof}
Dans l'esprit de la preuve du lemme  3.9 de
\cite{HB01} et compte tenu du lemme \ref{non associes}, 
on commence par remplacer $w'(N(\Sid_1\Sid_2))$ par $w'\left(\frac{c(N_1,N_2)q^3  \X^3}{N(\I\Rid_1\Rid_2)}\right)$. 
Si $n_1+n_2\geq2$, on utilise successivement le lemme \ref{non associes}, 
 le lemme \ref{propriétés de w}, l'inégalité de Hölder et le lemme  \ref{somme des diviseurs} de sorte que 
\begin{align}
&
\nonumber\sum_{\substack{\I,\Rid_1,\Rid_2,\Sid_1,\Sid_2\\\I\Rid_1\Rid_2\Sid_1\Sid_2\in\mathcal{A}}}
\frac{b(\Rid_1,\Rid_2,\mathcal{R}^{(i)}(\m,\overrightarrow{\v},v_0))
}{(\xi\log \X)^{n_1+n_2}}
\left|
w'(N(\Sid_1\Sid_2))-
w'\left(\frac{c(N_1,N_2)q^3  \X^3}{N(\I\Rid_1\Rid_2)}\right)\right|
\left|\sum_{\substack{\J|\Sid _1\Sid_2\\N(\J)<\X^{\tau/2}}}\mu_{\K}(\J)\log\frac{\X^{\tau/2}}{N(\J)}\right|
\nonumber\\&\ll
\frac{\eta^{\frac{1}{4}}}{\xi^2\log \X}
\sum_{\substack{\J\in\mathcal{A}}}
\tau_{\K}(\J)^5\nonumber\\
&\ll \eta^{\frac{11}{5}}  \X^2(\log \X)^{c}.\label{approximation 1 widetilde Se}%\
\end{align}
Dans la cas où $n_1+n_2=1$, 
le lemme \ref{non associes} entraîne que la formule $w'(N(\Sid_1\Sid_2))= w'\left(
\frac{c(N_1,N_2)  \X^3}{N(\I\Rid_1\Rid_2)}\right)$ est vraie excepté lorsque l'on a 
 $N(\I\Rid_1\Rid_2)=c(N_1,N_2)q^3  \X^{3-v\xi}(1+O(\eta^{\frac{1}{4}}))$ pour $v=v_1\text{ ou }v_1+1$. 
Par suite, on peut 
reproduire l'argument développé au cours de la preuve du lemme \ref{Type II B} conduisant à l'estimation
de $\sideset{}{^{(\I)}}\sum \frac{b(\Rid_1,\Rid_2,\mathcal{R}(\m))}{N(\Rid_1\Rid_2)}$
pour en déduire que (\ref{approximation 1 widetilde Se}) est valable aussi dans ce cas. 

Définissons 
\begin{align*}
\Sigma_1(\I,\mathcal{R}^{(i)}(\m,\overrightarrow{\v},v_0)):=&
\sum_{\substack{\Rid_1,\Rid_2%(N(\Rid),N(\I))=1
}}\frac{b(\Rid_1,\Rid_2,\mathcal{R}^{(i)}(\m,\overrightarrow{\v},v_0))}
{v_1^{(1)}\cdots  v_{n_1}^{(1)}v_1^{(2)}\cdots  v_{n_2}^{(2)}(\xi\log \X)^{n_1+n_2}}w'\left(\frac{c(N_1,N_2)
  \X^3}{N(\I\Rid_1\Rid_2)}\right)\\&\times\sum_{\substack{N(\J)<
  \X^{\tau/2}}}\mu_{\K}(\J)\#\mathcal{A}_{\Rid_1\Rid_2\I\J}\log\frac{\X^{\tau/2}}{N(\J)}.
\end{align*}
Dans la mesure où $N(\I\Rid_1\Rid_2\J)\ll\X^{2-\tau/2}$, le lemme \ref{mathcalAI} et l'estimation $\alpha_q(\Rid_1\Rid_2)=1+O\left(\frac{1}{\tau   \X^{\tau}}\right)$
 entraînent l'inégalité 
\begin{align}\nonumber&\sum_{N(\I)\in\mathcal{I }} \sum_{\m}\left|\Sigma_1(\I,\mathcal{R}^{(i)}(\m,\overrightarrow{\v},v_0))
-\frac{\eta^2  \X^2}{\zeta_q(2)}
\Sigma(\I,\mathcal{R}^{(i)}(\m,\overrightarrow{\v},v_0),\overrightarrow{\v})
\sum_{\substack{N(\J)<\X^{\tau/2}}}
\frac{\alpha_q(\I\J)}{N(\I\J)}\mu_{\K}(\J)\log\frac{\X^{\tau/2}}{N(\J)}\label{définition Sigma 1}\right|\\\ll&
\frac{  \X^{2}}{v_1^{(1)}\cdots  v_{n_1}^{(1)}v_1^{(2)}\cdots  v_{n_2}^{(2)}}(\log \X)^{-B}.\end{align}
valide pour tout $B>0$.

On montre à présent que, pour tout $B>0$, on a 
\begin{align}\nonumber
&\sum_{\m}\sum_{N(\I)\in\mathcal{I }}
\frac{\Sigma(\I,\mathcal{R}^{(i)}(\m,\overrightarrow{\v},v_0),\overrightarrow{v})}{N(\I)}\left|\zeta_q(2)^{-1}
\sum_{\substack{N(\J)<\X^{\tau/2}}}
\frac{\alpha_q(\I\J)}{N(\J)}\mu_{\K}(\J)\log\left(\frac{\X^{\tau/2}}{N(\J)}\right)%\Sigma_2(\I)
-\sigma_q(F)\sigma_q(\I)\right|\\\ll&\frac{1}{v_1^{(1)}\cdots  v_{n_1}^{(1)}v_1^{(2)}\cdots  v_{n_2}^{(2)}}(\log \X)^{-B}
\label{réduction étude gammatilde}
\end{align}
ce qui conclura la preuve du lemme au vu de (\ref{estimation v1v2}).

Dans cette direction, on commence par écrire 
\begin{align}
\sum_{\substack{N(\J)<\X^{\tau/2}}}\frac{\alpha_q(\I\J)}{N(\J)}\mu_{\K}(\J)\log\left(
\frac{\X^{\tau/2}}{N(\J)}\right)=\alpha_q(\I)\sum_{m<\X^{\tau/2}}
\frac{\widehat{\gamma}_q(\I,m)}{m}\log\left(\frac{\X^{\tau/2}}{m}\right)
\end{align}
où $\widehat{\gamma}_q(\I,m)=0$ si $\alpha_q(\I)=0$ et $\widehat{\gamma}_q(\I,\cdot)$ est la fonction multiplicative définie 
 par 
\begin{align}
\widehat{\gamma}_q(\I,p^{ k })       \sum_{\substack{N(\J)=p^{ k }}}\mu_{\K}(\J)\frac{\alpha_q(\I\J)}{\alpha_q(\I)}
\label{defwidetildegamma}
\end{align}sinon. 
Compte tenu de la décomposition des idéaux premiers dans $\JK$, on observe que, pour tout premier $p$ et $k\geq4$,
\begin{equation}\widehat{\gamma}_q(\I,p^k)=0\label{gamma 0}.\end{equation}
De plus, si $p$ est un premier $q$-régulier et $k\geq1$, la formule
(\ref{écriture rho2 non singulier}) entraîne que
\begin{equation}\widehat{\gamma}_q(\I,p^{ k })=\left\{\begin{array}{ll}
-\frac{\nu_p}{1+p^{-1}}&\text{ si }k=1\text{ et }p\nmid N(\I),\\
-1&\text{ si }k=1\text{ et }p|N(\I),\\
0&\text{ sinon}\end{array}\right.\label{widehatgamma sing}\end{equation}

Soit $\I$ un idéal tel que $\alpha_q(\I)\neq0$. La formule de Perron (voir le théorème II.2.5 de  \cite{Te08}) 
permet d'établir (\ref{réduction étude gammatilde}). 
En effet, au vu de (\ref{gamma 0}) et (\ref{widehatgamma sing}), la série de Dirichlet $\zeta_{\widehat{\gamma}_q(\I,\cdot)}(s)$
définie par \begin{align*}
\zeta_{\widehat{\gamma}_q(\I,\cdot)}(s):=\sum_{n\geq 1}\frac{\widehat{\gamma}_q(\I,n)}{n^{s}},\end{align*}
est telle que $\zeta_{\widehat{\gamma}_q(\I,\cdot)}\zeta_{\K}(s)$ est absolument convergente si $\sigma>\frac{1}{2}$. Par suite, on a
\begin{align}
\sum_{\substack{N(\J)<\X^{\tau/2}}}\frac{\alpha_q(\I\J)}{\alpha_q(\I)N(\J)}\mu_{\K}(\J)\log\left(
\frac{\X^{\tau/2}}{N(\J)}\right)%\Sigma_2(\I)
&=\frac{1}{2i\pi}\int^{\frac{1}{\log \X}+i\infty}_{\frac{1}{\log \X}-i\infty}
\zeta_{\widehat{\gamma}_q(\I,\cdot)}(s+1)\X^{s\tau/2}\frac{\text{d}s}{s^2}.\label{formule de Perron}
\end{align}
Dans la mesure où $\zeta_{\K}(s)$ possède la région sans zéro standard 
\begin{equation}\label{région sans zero sigma}
\Re(s)\geq 1-\frac{c}{\log (2+|\Im(s)|)} 
\end{equation}
pour un certain $c>0$, 
dans laquelle on a
\begin{equation}\label{estimation zêta}
\zeta_{\K}(s)^{-1}\ll\log(1+|s|), 
\end{equation}
on peut 
appliquer le théorème des résidus, ce qui conduit à la formule
\begin{align*}
\sum_{\substack{N(\J)<\X^{\tau/2}}}\frac{\alpha_q(\I\J)}{\alpha_q(\I)N(\J)}\mu_{\K}(\J)\log\left(
\frac{\X^{\tau/2}}{N(\J)}\right)&=
\text{Res}\left(\zeta_{\widehat{\gamma}_q(\I,\cdot)}(s+1)\X^{s\tau/2}s^{-2}\right)_{s=0}+R_2(\I)
\\
&=\lambda_{\K}^{-1}(\zeta_{\widehat{\gamma}_q(\I,\cdot)}\zeta_{\K})(1)+R(\I)
\end{align*}
où
\begin{align*}
R(\I):=\frac{1}{2i\pi}\int_{\mathcal{V}}\zeta_{\widehat{\gamma}_q(\I,\cdot)}(s+1)\X^{s\tau/2}\frac{\text{d}s}{s^2}
\end{align*}
et $\mathcal{V}$ est le chemin de sommets
$\frac{1}{\log \X}-i\infty$, $\frac{1}{\log \X}-iT$, 
 $-\frac{c}{\log T}-iT$, $-\frac{c}{\log T}+iT$, 
  $\frac{1}{\log \X}+iT$, $\frac{1}{\log \X}+i\infty$. 
où $T=\exp(\sqrt{\log \X^{\tau/2}})$.
En calculant directement le produit eulérien, on observe que 
\begin{align*}
\frac{\alpha_q(\I)}{\zeta_q(2)\lambda_{\K}}(\zeta_{\widehat{\gamma}_q(\I,\cdot)}\zeta_{\K})(1)=\sigma_q(F)\sigma_q(\I).
\end{align*}
Pour traiter $R(\I)$, on remarque à l'aide de (\ref{defwidetildegamma}), (\ref{inégalité alpha singulier}) 
et (\ref{widehatgamma sing}) que l'on a, dans la région (\ref{région sans zero sigma}), 
\begin{align*}
 \alpha_q(\I)(\zeta_{\widehat{\gamma}_q(\I,\cdot)}\zeta_{\K})(s)&\ll\alpha_q(\I)
 \prod_{p|qN(\I)}\left(1+O\left(\frac{1}{p^{\Re(s)}}\right)\right)
 \prod_{p\quad q\text{-singulier}}\left(\sum_{k=0}^3\frac{ \gamma_q(\I_p,p^k)}{p^{k\Re(s)}}\right)
 \\&\ll(\log \X)^c\tau_{\K}(\I)^c
 N(\I_{q\text{-s}})^{1/3}.
 \end{align*}
Il suit alors de (\ref{estimation zêta}) que \begin{align*}
\alpha_q(\I)R(\I)&\ll\int_{\mathcal{V}}|\zeta_{\widetilde{\gamma}_q(\I,\cdot)}\zeta_{\K}(s+1)||\zeta_{\K}^{-1}(s+1)|
L^s\frac{\text{d}s}{s^2}\\
&\ll
\tau_{\K}(\I)^c
N(\I_{q\text{-s}})^{1/3}\exp\left(-c\sqrt{\log \X^{\tau/2}}\right).
\end{align*}
En sommant sur $\I$, on obtient alors que, pour tout $\varepsilon>0$, on a 
\begin{align}
\nonumber\sum_{\substack{ N(\I)\ll   \X^2}}\frac{\alpha_q(\I)}{N(\I)}R(\I)
&\ll \exp\left(-c\sqrt{\log \X^{\tau/2}}\right)
\sum_{\substack{N(\I_1)\ll   \X^2\\\I_1\text{ } q\text{-régulier}}}
\frac{\tau_{\K}(\I_1)^c}{N(\I_1)}
\sum_{\substack{\I_2 \text{ }q\text{-singulier}}}
\frac{\tau_{\K}(\I_2)^c}{N(\I_2)^{2/3}}\\
\label{R2}
&\ll \exp\left(-c\sqrt{\log \X^{\tau/2}}\right)
\end{align}
ce qui établit (\ref{réduction étude gammatilde}). 
\end{proof}

L'estimation du terme d'erreur qui apparaît avec le remplacement de $d_{\n}(\Sid,\overrightarrow{\v})$ par $e_{\n}(\Sid,\overrightarrow{\v})$ 
a constitué le tour de force majeur des travaux de Heath-Brown et Moroz  (voir notamment les paragraphes 8 et  10 à 12 de \cite{HB01}, les paragraphes 6 et 7 de \cite{HM02} et les paragraphes  4 et 5 de \cite{HM04}). 
 De telles estimations, dites de Type II, constituent l'ingrédient nécessaire pour s'affranchir du phénomène
 de parité.

\begin{lemme}\label{Estimations de Type II}
Soient $B\geq0$, $C$ une classe d'idéaux,  
$h_1:\JK\rightarrow\C$ et $b:\JK^2\rightarrow\C$ deux fonctions à valeurs dans le disque unité.
On a, uniformément en $\X\geq2$, $  \X^{1+\tau}\leq V\leq   \X^{\frac{3}{2}-\tau}$ et $%(\log \X)^{-c_2}\leq
\xi \leq \tau$,
 \begin{align*}
S_V:=\sum_{\substack{\Sid\in C\cap \JK\\\I,\Rid_1,\Rid_2\in\JK\\\I\Rid_1\Rid_2\Sid\in\mathcal{A}\\V<N(\Sid)\leq 2V
}}h_1(\I)
b(\Rid_1,\Rid_2)(d_{\n}(\Sid,\overrightarrow{\v})-e_{\n}(\Sid,\overrightarrow{\v}))\ll   \X^2(\log \X)^{-B}
 \end{align*}
où la constante implicite dépend du zéro de Siegel et de la classe $C$.
\end{lemme}
\begin{proof} 
D'après la discussion algébrique du paragraphe \ref{Algèbre} et le théorème des facteurs invariants, il existe des entiers
$z_1,z_2\geq1$ et une base entière $(w_1,w_2,w_3)$ de $\Cl (\delta^{-1})$ tels que $z_1|z_2$ et
$(z_1w_1\delta,z_2w_2\delta)$ soit
une base entière de $\Lambda(\omega_1,\omega_2)$. Au vu de la formule (\ref{def f}), on peut supposer sans perte de généralité que $z_1=1$.
En considérant le changement de base ainsi induit, le problème général se réduit à considérer les valeurs
\begin{align*}
 \frac{N_{\K/\Q}\left((a'_1+n'_1q)w_1\delta+(a'_2+n'_2q)z_2w_2\delta)\right)}{N(\mathfrak{d})}
\end{align*}
avec $(a'_1,a'_2,q)=1$ sous les conditions $(n'_1,n'_2)\in \comp'.X$ où $\comp'$ est de même nature que $\comp$ 
et
$(a'_1+n'_1q,a'_2+n'_2q)=1$.
En scindant la région en $n_1'$ en classes de congruences modulo $z_2$, on se ramène ainsi à étudier 
\begin{align*}
 \frac{N_{\K/\Q}\left((a''_1+n'_1qz_2)w_1\delta+(a'_2z_2+n'_2qz_2)w_2\delta)\right)}{N(\mathfrak{d})}.
\end{align*}
Par un argument de convolution, on peut finalement supposer que 
$(a''_1+n'_1qz_2,a'_2z_2+n'_2qz_2)=1$. En résumé, on peut supposer sans perte de généralité que $F$ est défini par (\ref{def f}) et qu'il 
existe une base entière $(w_1,w_2,w_3)$ de $\Cl (\delta^{-1})$ telle que $w_i\delta=\omega_i$ pour $i=1,2$.

Grâce à ces réductions et en utilisant la définition de $(w_1,w_2,w_3)$,
on remarque que l'existence d'un premier $p$ tel que 
$p\OK|(n_1\omega_1+n_2\omega_2)\mathfrak{d}^{-1}$ entraînerait que $p|(n_1,n_2)$. Par suite, on a $\mathcal{A}\subset\prim$ 
où $\prim$ désigne l'ensemble des idéaux primitifs, à savoir
\begin{align*}
\prim:=\left\{\J\in\JK:p\OK\nmid \J\text{ pour tout }p\text{ premier}\right\}. 
\end{align*}

En vue de s'affranchir de la condition de coprimalité dans la définition de 
$\mathcal{A}$, on peut écrire par le principe d'inclusion-exclusion 
\begin{align*}
S_V&
=\sum_{\substack{\I,\Rid_1,\Rid_2\in\JK\\\I\Rid_1\Rid_2\in\prim}}h_1(\I)b(\Rid_1,\Rid_2)
\sum_{\substack{\Sid\in\prim\cap C\\V<N(\Sid)\leq 2V \\\I\Rid_1\Rid_2\Sid\in\mathcal{A}}}
f_{\n}(\Sid,\overrightarrow{\v})\\&
=\sum_{m\geq1}\mu(m)S_V(m)
\end{align*}
où \begin{align}\label{définition fnsV}f_{\n}(\Sid,\overrightarrow{\v}):=d_{\n}(\Sid,\overrightarrow{\v})-e_{\n}(\Sid,\overrightarrow{\v}),\end{align}
\begin{align*}
S_V(m):=\sum_{\substack{\I,\Rid_1,\Rid_2\in\JK\\\I\Rid_1\Rid_2\in\prim}}h_1(\I)b(\Rid_1,\Rid_2)
\sum_{\substack{\Sid\in\prim\cap C\\V<N(\Sid)\leq 2V \\\I\Rid_1\Rid_2\Sid\in\mathcal{A}(m)}}
f_{\n}(\Sid,\overrightarrow{\v})
   \end{align*}
et
\begin{align*}
\mathcal{A}(m)=
\left\{((a_1+n_1q)\omega_1+(a_2+n_2q)\omega_2)\mathfrak{d}^{-1}:
(n_1,n_2)\in \mathcal{C}(N_1,N_2):m|(a_1+n_1q,a_2+n_2q)\right\}.
\end{align*}

Soit $M\geq1$ un paramètre qui sera explicité plus tard.
Au vu de (\ref{estimation en dn}), du lemme \ref{somme des diviseurs} et de la borne sur $b$ et $h$,
on peut estimer la contribution des entiers $m>M$ en s'inspirant de la majoration  (11.2) de \cite{HB01} par \begin{align}
 \sum_{m> M}\left|\sum_{\Rid_1\Rid_2\I\in\prim}h_1(\I)b(\Rid_1,\Rid_2)
 \sum_{\substack{\Sid\in\prim\cap C\\V<N(\Sid)\leq 2V \\\Rid_1\Rid_2\I\Sid\in\mathcal{A}(m)}}f_{\n}(\Sid,\overrightarrow{\v})
\right|\nonumber
&\leq \log \X\sum_{m>M}\sum_{\J\in\mathcal{A}(m)}\tau_{\K}(\J)^4\\&
\ll\frac{  \X^2}{M%^{1/2}
}(\log \X)^{c}.
\end{align}

Pour traiter les entiers $m\leq M$, on suit essentiellement la démarche développée dans 
la preuve du lemme 3.10 de \cite{HB01}, de la proposition 6.1 de \cite{HM02} ainsi que de la proposition 4.2
de \cite{HM04}.
En appliquant l'inégalité de Cauchy-Schwarz, on observe tout d'abord que
\begin{align*}
S_V(m)&\leq \left(\sum_{N(\J)\ll \frac{q^3  \X^3}{V}}\tau_{\K}(\J)^4 \right)^{\frac{1}{2}}
S_V^{(0)}(m)^{\frac{1}{2}}\\
&\ll\left(\frac{  \X^3}{V}\right)^{\frac{1}{2}}S^{(0)}_V(m)^{\frac{1}{2}}(\log \X)^{c}
\end{align*} où 
\begin{align*}
S^{(0)}_V(m):=\sum_{\J\in\prim}
\left|\sum_{\substack{\Sid\in\prim\cap C\\V<N(\Sid)\leq 2V\\ \J\Sid\in\mathcal{A}(m)}}
f_{\n}(\Sid,\overrightarrow{\v}
)\right|^2.
\end{align*}
En développant le carré et en isolant les termes diagonaux, on peut écrire la décomposition 
$S^{(0)}_V(m)=S^{(1)}_V(m)+S_V^{(2)}(m)$ avec
\begin{align*}
&S_V^{(1)}(m):= \sum_{\J\in\prim}
\sum_{\substack{\Sid\in\prim\cap C\\V<N(\Sid)\leq 2V\\\J\Sid\in\mathcal{A}(m)}}f_{\n}(\Sid,\overrightarrow{\v}
)^2,\\\qquad
 &S_V^{(2)}(m):=\sum_{\substack{\Sid_1,\Sid_2\in\prim\cap C}}f_{\n}(\Sid_1,\overrightarrow{\v})f_{\n}(\Sid_2,\overrightarrow{\v})\delta_{V,m}(\Sid_1,\Sid_2)
\end{align*}
et 
\begin{align*}
\delta_{V,m}(\Sid_1,\Sid_2):=\left\{\begin{array}{ll}\#
\left\{\J%\in\JK
:\J\Sid_1,\J\Sid_2\in\mathcal{A}(m)\right\}
&\text{si }V<N(\Sid_1),N(\Sid_2)\leq 2V\text{ et }\Sid_1\neq \Sid_2,\\
0&\text{sinon}.
                          \end{array}\right.
\end{align*}
En utilisant
(\ref{estimation en dn}) et le lemme \ref{somme des diviseurs}, on a la majoration de $S^{(1)}_V(m)$ suivante 
\begin{align}
S_V^{(1)}(m)\ll(\log \X)^2\tau(m)^{c}\sum_{n_1,n_2\ll \frac{qX}{m}}\tau_{\K}(n_1\omega_1+n_2\omega_2)^3
\ll \frac{\tau(m)^c}{m^2}  \X^2(\log \X)^{c}.\label{estimation SV termes diagonaux} 
\end{align}

Une difficulté essentielle qui apparaît dans l'estimation d'une telle quantité réside dans le caractère éparse de $\mathcal{A}(m)$.
Une idée originale développée par Heath-Brown et Moroz consiste à  réécrire $\delta_{V,m}(\Sid_1,\Sid_2)$ 
en utilisant le fait que $\mathcal{A}(m)$ est inclus dans un sous-module de $\Cl (\delta^{-1})$ de rang $2$.  

Pour chaque idéal $\Sid$, on considère le représentant de $\Sid$ introduit dans \cite{HM02}, 
 à savoir l'unique entier algébrique 
 $\beta(\Sid)$ de $\IK$ associé à $\Sid$ par l'isomorphisme  $\IK/\OK^*\simeq G(K)$  étudié dans le paragraphe \ref{Algèbre} 
 et satisfaisant en outre, si $\rK=1$,
\begin{equation}\label{choix générateur 1}
|\beta(\Sid)|=
N(\beta(\Sid))^{\frac{1}{3}}\varepsilon_1^z \text{ où } z\in\left]-\frac{1}{2},\frac{1}{2}\right]
\end{equation} et, si $\rK=3$,
\begin{equation}\label{choix générateur 2}
|\sigma_j(\beta(\Sid))|=N(\beta(\Sid))^{\frac{1}{3}}|\sigma_j(\varepsilon_1)|^{z_1}
|\sigma_j(\varepsilon_2)|^{z_2}\text{ où }
z_1,z_2\in\left]-\frac{1}{2},\frac{1}{2}\right]\text{ et }\J\in\{1,2,3\},
\end{equation} 
$\varepsilon_1$ et $\varepsilon_2$ désignant les unités fondamentales introduites dans le paragraphe \ref{Algèbre}.

L'application trace $\text{Tr}$  facilite la détection de la condition 
$\J\Sid\in\mathcal{A}(m)$. 
 En effet, étant donné la définition (\ref{base duale}) de la base duale $(\widetilde{w}_1,\widetilde{w}_2,\widetilde{w}_3)$, 
 l'idéal
 $\J\Sid$ est de la forme $(n_1w_1+n_2w_2)$ si et seulement s'il existe $\alpha\in\IK$ tel que $(\alpha)=\J$ et
\begin{equation}\label{trace nulle}
\text{Tr}(\alpha\beta(\Sid)\widetilde{w}_3)=0.
\end{equation}
On fixe une base $(v_1,v_2,v_3)$  de $(C\Cl (\delta))^{-1}$. Si  
 $\alpha=\alpha_1v_1+\alpha_2v_2+\alpha_3v_3$ pour des entiers $\alpha_1$, $\alpha_2$ et $\alpha_3$, 
 alors l'identité (\ref{trace nulle}) est équivalente à la relation linéaire
\begin{equation}\label{produit vectoriel nul}
\sum_{j=1}^3\alpha_j\text{Tr}(v_j\beta(\Sid)\widetilde{w}_3)=0.
\end{equation}

Étant donné deux idéaux $\Sid_1$ et $\Sid_2$, on pose $\beta_i=\beta(\Sid_i)$ pour $i=1,2$ et on introduit
 le système de coordonnées 
 \begin{align*}\widehat{\beta}_i:=
 (\text{Tr}(v_1\beta_i\widetilde{w}_3),
 \text{Tr}(v_2\beta_i\widetilde{w}_3),
 \text{Tr}(v_3\beta_i\widetilde{w}_3))\quad
 \text{ et }\quad\widehat{\alpha}:=(\alpha_1,\alpha_2,\alpha_3).\end{align*}
Supposons que $\J\Sid_i=((a_1+n_1^{(\I)}q)\omega_1+(a_2+n_2^{(\I)}q)\omega_2)\mathfrak{d}^{-1}$ pour $i=1,2$.
En écrivant les équations (\ref{produit vectoriel nul}) associées à $\Sid_1$ et $\Sid_2$, on obtient
un système linéaire de rang 2 (puisque $\Sid_1\neq \Sid_2$) dont la résolution permet d'exprimer $\alpha$
en fonction de $\beta_1$ et $\beta_2$,  compte tenu de la primitivité de
$\alpha$. On a ainsi 
\begin{equation}\label{écriture alpha}
\widehat{\alpha}= \pm \frac{\widehat{\beta}_1\wedge\widehat{\beta}_2}{D(\widehat{\beta}_1\wedge\widehat{\beta}_2)}
\end{equation}
où $D(b_1,b_2,b_3)=\text{p.g.c.d}(b_1,b_2,b_3)$. 
On peut alors retrouver la condition $m|(a_1+n_1^{(\I)}q,a_2+n_2^{(\I)}q)$ en remarquant que (\ref{écriture alpha})
implique que
\begin{align*}
a_j+n_j^{(\I)}q=\left| D(\widehat{\beta}_1\wedge\widehat{\beta}_2)^{-1} h_{ij}(\widehat{\beta}_1,\widehat{\beta}_2)\right|
\end{align*}
où $h_{ij}$ désigne la forme cubique à coefficients rationnels définie par la formule (6.4) 
de \cite{HM02}, à savoir
\begin{equation}\label{définition coordonnées}
h_{ij}(\widehat{\beta}_1,\widehat{\beta}_2):=\text{Tr}\left(\beta_i\sum_{k=1}^3(\widehat{\beta}_1\wedge\widehat{\beta}_2)_kv_k\widetilde{w}_j\right).
\end{equation}

Au vu de ces préliminaires, 
on dispose des analogues des formules (11.5), (11,6) et (11.7) de \cite{HB01}, à savoir
\begin{equation}\label{estimation beta chapeau}
|\widehat{\beta}_1|,|\widehat{\beta}_2|\asymp V^{\frac{1}{3}},
\qquad  |\widehat{\alpha}|\asymp q\X V^{-\frac{1}{3}}\text{ et }D(\widehat{\beta}_1\wedge\widehat{\beta}_2)\ll V  \X^{-1}.
\end{equation}

On étend finalement la définition de $f_{\n}(\cdot,\overrightarrow{\v})$ et $\delta_{V,m}$  
aux entiers algébriques de $\IK$  en posant
\begin{align*}
f_{\n}(\beta,\overrightarrow{\v}):=\left\{\begin{array}{ll}
f_{\n}(\Sid,\overrightarrow{\v})&\text{ si }S\text{ est primitif et } \beta(\Sid)=\beta,\\
0        &\text{ sinon},
\end{array}
\right.
\end{align*}

\begin{align*}
\delta_{V,m}(\beta_1,\beta_2):=\left\{\begin{array}{ll}
\delta_{V,m}(\Sid_1,\Sid_2)&\text{ si }\beta_i=\beta(\Sid_i)\text{ pour }i=1,2,\\
0&\text{ sinon},
                          \end{array}\right.
\end{align*}
et on définit la norme $N$ sur $\R^3$ comme le polynôme 
 dont les valeurs en $\overrightarrow{b}\in\Q^3$ satisfont $
 N(\overrightarrow{b})=N(\beta(\overrightarrow{b}))$
où $\beta(\overrightarrow{b})$ désigne l'entier algébrique de $C$ satisfaisant 
 $\widehat{\beta(\overrightarrow{b})}=\overrightarrow{b}$.

Soit $\Y$ un paramètre qui sera explicité plus tard tel que 
$q^{\frac{1}{2}}\leq \Y \ll   \X^{\frac{\tau}{3}}$. La contribution des vecteurs satisfaisant $D(\widehat{\beta}_1\wedge\widehat{\beta}_2)\leq 
V  \X^{-1}\Y^{-1}$ peut être traitée par un argument  de découpage d'une région de $\R^3$ en cubes 
suffisamment petits et en classes de congruences (voir le lemme 11.1 de \cite{HB01}). 
En reproduisant la preuve du lemme 11.2 de \cite{HB01}, l'inégalité 
$f_{\beta_1}f_{\beta_2}\leq\left(f_{\beta_1}^2+f_{\beta_2}^2\right)$ implique l'estimation suivante, 
analogue de la majoration de $S_3$ de \cite{HB01}, pp. 70--71, 
\begin{align*}
S_V^{(3 )}(m)&:=\sum_{\substack{(\beta_1),(\beta_2)\in 
\prim\cap C\\D(\widehat{\beta}_1\wedge\widehat{\beta}_2)\leq V  \X^{-1}\Y^{-1}}}
f_{\n}(\beta_1,\overrightarrow{\v})f_{\n}(\beta_2,\overrightarrow{\v})\delta_{V,m}(\beta_1,\beta_2)
\\
&\ll(\log \X)^2\sum_{\substack{\beta_1,\beta_2\\D(\widehat{\beta}_1\wedge\widehat{\beta}_2)
\leq V  \X^{-1}\Y^{-1}}}\tau_{\K}((\beta_1))^2\delta_{V,m}(\beta_1,\beta_2)\ll  V\X\Y^{-1}(\log \X)^{c}.\end{align*}

L'estimation de la contribution des termes $\beta_1$ et $\beta_2$
tels que  $D(\widehat{\beta}_1\wedge\widehat{\beta}_2)> V  \X^{-1}\Y^{-1}$ consiste principalement à 
reprendre les arguments de la preuve du lemme 4.3 et du paragraphe 5 de \cite{HM04} 
en tenant compte de la dépendance en $m$ et $q$, pour obtenir un résultat uniforme en $q\leq (\log \X)^A$.
On est donc conduit à estimer 
 la quantité 
\begin{align*}
 S_V^{(4 )}(m):=\sum_{\substack{(\beta_1),(\beta_2)\in \prim\cap C \\D(\widehat{\beta}_1\wedge
 \widehat{\beta}_2)> V  \X^{-1}\Y^{-1}}}f_{\n}(\beta_1,\overrightarrow{\v})
 f_{\n}(\beta_2,\overrightarrow{\v})\delta_{V,m}(\beta_1,\beta_2).
\end{align*} 
En  découpant la région de sommation de $D(\widehat{\beta_1}\wedge\widehat{\beta_2})$ 
en intervalles $]\Delta,2\Delta]$ où 
 $\Delta$ parcourt des puissances de $2$, puis en sous-intervalles $\left](\z-1)Y^{-2}
 \Delta,\z\Y^{-2}\Delta\right]$, on observe l'existence de 
 $V  \X^{-1}\Y^{-1}<\Delta\ll V  \X^{-1}$ et $\Y^2<\z\leq 2\Y^2$ tels que l'on ait 
\begin{align*}
 S_V^{(4 )}(m)\ll S_V^{(5 )}(m,\z,\Delta)\Y^2\log \X 
\end{align*}
où
\begin{align*}
 S_V^{(5 )}(m,\z,\Delta):=\sum_{D\in \left]\frac{\z-1}{\Y^2}
 \Delta,\frac{\z}{\Y^2}\Delta\right]}
 \left|\sum_{\substack{(\beta_1),(\beta_2)\in\prim\cap C \\
 D(\widehat{\beta}_1\wedge\widehat{\beta}_2)=D}}
 f_{\n}(\beta_1,\overrightarrow{\v}) f_{\n}(\beta_2,\overrightarrow{\v})
 \delta_{V,m}(\beta_1,\beta_2)\right|.
\end{align*}
Au vu de ce qui précède, on peut écrire $\delta_{V,m}(\beta_1,\beta_2)=1_{\mathcal{R}_{D,V}}(\widehat{\beta}_1,\widehat{\beta}_2)\delta_{q,m}(\widehat{\beta}_1,\widehat{\beta}_2,D)$ où %peut être considéré comme l'indicatrice de l'ensemble 
$\mathcal{R}_{D,V}$ est l'ensemble  
%essentiellement défini par des inégalités polynomiales
 des $(\widehat{\beta}_1,\widehat{\beta}_2)$ satisfaisant les contraintes (\ref{choix générateur 1}) (resp. 
 (\ref{choix générateur 2})) si $\rK=1$ (resp. $\rK=3$) ainsi que les conditions
\begin{align*}
V<N(\widehat{\beta}_1),N(\widehat{\beta}_2)\leq 2V\text{ et } 0< \left|\frac{h_{ij}(\widehat{\beta}_1,\widehat{\beta}_2)}{D}\right|-(a_j+q\X(1-N_j
\eta))\leq q\eta \X \text{ pour }i,j=1,2,
\end{align*}
tandis que $\delta_{q,m}$ est l'indicatrice définie par
\begin{align*}
 \delta_{q,m}(\widehat{\beta}_1,\widehat{\beta}_2,D)=\left\{\begin{array}{ll}
                                             1&\text{ si }mD|h_{ij}(\widehat{\beta}_1,\widehat{\beta}_2)\text{ et }D^{-1}| h_{ij}(\widehat{\beta}_1,\widehat{\beta}_2)|
=a_j\mod q,\\
0&\text{ sinon}.
                                            \end{array}
\right.
\end{align*}

Au regard de (\ref{estimation beta chapeau}), on recouvre $\mathcal{R}_{D,V}$ en 
domaines du type  $\mathfrak{C}:=\mathfrak{C}_1\times\mathfrak{C}_2$ où 
$\mathfrak{C}_1$ et $\mathfrak{C}_2$ sont des cubes 
de la forme 
\begin{equation}\label{cube hypersurface}\prod_{j=1}^3\left]V^{1/3}\frac{n_j-1}{\Y^2},
V^{1/3}\frac{n_j}{\Y^2}\right]\quad\text{avec }n_1,n_2,n_3\ll \Y^2.\end{equation}

 Considérons les $\mathfrak{C}$ intersectant la frontière de la région $\mathcal{R}_{D,V}$ 
 (dits de classe II dans les travaux de Heath-Brown et Moroz) 
 ou dans lesquels $h_{ij}$ change de signe. 
 
Lorsque $\rK=1$, on réitère l'argument de comptage de points sur une 
 hypersurface développé dans le paragraphe 12 de \cite{HB01} et le paragraphe 6 de \cite{HM02}.
Pour éclaircir l'exposé, on s'attarde 
aux cubes $\mathfrak{C}_1(n_1,n_2,n_3)\times\mathfrak{C}_2(n_4,n_5,n_6)$ sus-cités qui possèdent un point ne vérifiant pas l'inégalité
\begin{align*}0\leq 
\left|\frac{h_{ij}(\widehat{\beta}_1,\widehat{\beta}_2)}{D}\right|-(a_j+q\X(1-N_j
\eta)).%\leq q\eta \X .
\end{align*}
pour un $D\in\left](\z-1)\Y^{-2}
 \Delta,\z\Y^{-2}\Delta\right]$.
Puisque $D=\z\Y^{-2}\Delta+O\left(\Y^{-2}\Delta\right)$, il s'ensuit que, pour les cubes considérés, on a
\begin{align*}
 h_{ij}(n_1,\ldots, n_6)=\Delta \Y^6V^{-1}(a_j+q\X(1-N_j\eta))+O\left(q\Y^4\right)
\end{align*}
et $\nabla h(n_1,\ldots,n_6)\gg \Y^4 $. En appliquant  alors
 le lemme 4.9 de \cite{HB01} avec les choix $S_0=q$ et $R=R_0=\Y^2$ et en utilisant l'hypothèse $q\leq \Y ^2$,
 on en déduit qu'il y a $O\left(\Y^{10}q^{-5}\right)$ hypercubes de côté $q$ contenant de tels
 $(n_1,\ldots,n_6)$.
 Par suite, il y a  $O\left(q\Y^{10}\right)$ cubes $\mathfrak{C}_1\times\mathfrak{C}_2$ intersectant la frontière de $\mathcal{R}_{D,V}$ ou sur lesquels $h_{ij}$ 
 changent de signe. 
 
 Le cas $\rK=3$ est traité de la même manière, à l'exception de  l'inégalité (\ref{choix générateur 2}). 
 Comme il est indiqué p. 282 de \cite{HM02}, le problème peut alors se ramener à compter les cubes $\mathfrak{C}$
 contenant des couples $(\widehat{\beta}_1,\widehat{\beta}_2)$ satisfaisant une équation du type
 \begin{align*}
  A\log\left|\sigma_1(\beta_i)\right|+ B\log\left|\sigma_2(\beta_i)\right|+ C\log\left|\sigma_3(\beta_i)\right|=0 \qquad i=1\text{ ou }2
 \end{align*}
pour des réels $A$, $B$ et $C$. 

 En utilisant (\ref{estimation en dn}) et le lemme 11.1 de \cite{HB01} pour majorer la contribution des cubes précédemment étudiés, 
on en déduit qu'il existe deux cubes $\mathfrak{C}_1$ et 
 $\mathfrak{C}_2$ de la forme (\ref{cube hypersurface}) tels que 
 $\mathfrak{C}_1\times\mathfrak{C}_2$ soit inclus dans
 $\mathcal{R}_{D,V}$ pour tout $D\in \left](\z-1)\Y^{-2} \Delta,\z\Y^{-2}\Delta\right]$, $h_{ij}$
 soit de signe constant (disons positif) sur $\mathfrak{C}_1\times\mathfrak{C}_2$ et
\begin{align*}
 S_V^{(5 )}(m,\z,\Delta)\ll V\X\Y^{-3}(\log \X)^{c} + \Y^{12}
S_V^{(6 )}(m,\z,\Delta,\mathfrak{C}_1\times\mathfrak{C}_2)
\end{align*}
où 
\begin{align*}
 S_V^{(6 )}(m,\z,\Delta,\mathfrak{C}_1\times\mathfrak{C}_2):=
 \sum_{D\in \left]\frac{\z-1}{\Y^2}\Delta,\frac{\z}{\Y^2}\Delta\right]}
 \left|\sum_{\substack{(\beta_1),(\beta_2)\in \prim\cap C \\\widehat{\beta}_1,\widehat{\beta}_2\in \mathfrak{C}_1
 \times\mathfrak{C}_2 \\D(\widehat{\beta}_1\wedge\widehat{\beta}_2)=D}}
 f_{\n}(\beta_1,\overrightarrow{\v}) f_{\n}(\beta_2,\overrightarrow{\v})
 \delta_{q,m}(\widehat{\beta}_1,\widehat{\beta}_2,D)\right|.
\end{align*}
Par convolution, on peut réécrire $S_V^{(6 )}(m,\z,\Delta,\mathfrak{C}_1\times\mathfrak{C}_2)$ sous la forme
\begin{align*} 
& S_V^{(6 )}(m,\z,\Delta,\mathfrak{C}_1\times\mathfrak{C}_2))\\=&
 \sum_{D\in \left]\frac{\z-1}{\Y^2}\Delta,\frac{\z}{\Y^2}\Delta\right]}
 \left|\sum_{d\geq1}\mu(d)\sum_{\substack{(\beta_1),(\beta_2)\in  \prim\cap C
 \\\widehat{\beta}_1,
 \widehat{\beta}_2\in \mathfrak{C}_1\times\mathfrak{C}_2 \\dD|
 \widehat{\beta}_1\wedge\widehat{\beta}_2}}
 f_{\n}(\beta_1,\overrightarrow{\v}) f_{\n}(\beta_2,\overrightarrow{\v})\delta_{q,m}(\widehat{\beta}_1,\widehat{\beta}_2,D)\right|.
\end{align*}
Dans le paragraphe 12 de \cite{HB01} et p. 282 de \cite{HM02}, Heath-Brown et Moroz développent
un argument de réseau qui permet d'estimer la contribution des entiers satisfaisant 
$dD>d_0$ où $d_0=\Y^{15}V  \X^{-1}+V^{\frac{1}{6}}$. Ils obtiennent en particulier l'estimation
\begin{align*}
 \sum_{D\in \left]\frac{\z-1}{\Y^2}\Delta,\frac{\z}{\Y^2}\Delta\right]}
 \sum_{dD\geq d_0}\sum_{\substack{(\beta_1),(\beta_2)\in  \prim
 \\\widehat{\beta}_1,
 \widehat{\beta}_2\in \mathfrak{C}_1\times\mathfrak{C}_2 \\dD|
 \widehat{\beta}_1\wedge\widehat{\beta}_2}}
 \left|f_{\n}(\beta_1,\overrightarrow{\v}) f_{\n}(\beta_2,\overrightarrow{\v})
\right|\ll V\X\Y^{-15}(\log \X)^c.
\end{align*}
On en déduit donc que
\begin{align*} 
 S_V^{(6 )}(m,\z,\Delta,\mathfrak{C}_1\times\mathfrak{C}_2)\ll V\X\Y^{-15}(\log \X)^{c}+S_V^{(7 )}
 (m,\z,\Delta,\mathfrak{C}_1\times\mathfrak{C}_2)
\end{align*}
où  $S_V^{(7 )}(m,\z,\Delta,\mathfrak{C}_1\times\mathfrak{C}_2)$ est la somme définie par 
\begin{align*} \sum_{D\in \left]\frac{\z-1}{\Y^2}\Delta,\frac{\z}{\Y^2}\Delta\right]}\left|\sum_{dD\leq d_0}
 \mu(d)\sum_{\substack{(\beta_1),(\beta_2)\in  \prim\cap  C\\\widehat{\beta}_1,\widehat{\beta}_2\in
 \mathfrak{C}_1
 \times\mathfrak{C}_2 \\dD|\widehat{\beta}_1\wedge\widehat{\beta}_2}}
f_{\n}(\beta_1,\overrightarrow{\v}) f_{\n}(\beta_2,\overrightarrow{\v})\delta_{q,m}
(\widehat{\beta}_1,\widehat{\beta}_2,D)\right|.
\end{align*}

Soit $d^*$ le dénominateur commun des $h_{ij}$. 
On peut écrire $h_{ij}=(d^*)^{-1}%\nu_{ij}
H_{ij}$ où 
$H_{ij}$ est 
une forme cubique à coefficients entiers 
 qui satisfait $H_{ij}(\widehat{\beta},\lambda\widehat{\beta})=0$ au vu de la définition 
 (\ref{définition coordonnées}).
En introduisant des caractères additifs, on peut reformuler, en vue de l'utilisation  d'une inégalité de grand crible,
les congruences
définissant $\delta_{q,m}(\cdot,\cdot)$ ainsi que la condition  $dD|\widehat{\beta}_1\wedge\widehat{\beta}_2$.
 On obtient ainsi l'analogue de la formule (5.2) de \cite{HM04} suivante 
\begin{align*}
  S_V^{(7 )}(m,\z,\Delta,\mathfrak{C}_1\times\mathfrak{C}_2)=
  \sum_{D\in \left]\frac{\z-1}{\Y^2}\Delta,\frac{\z}{\Y^2}\Delta\right]}
  \left|\sum_{dD\leq d_0}\mu(d)S^{(8 )}(m,d,D,\mathfrak{C}_1\times\mathfrak{C}_2)\right|,
\end{align*}
où
\begin{align*}
S^{(8 )}(m,d,D,\mathfrak{C}_1\times\mathfrak{C}_2):=&
\frac{1}{(d^*qmdD)^6}
\sideset{}{^{*}}\sum_{\substack{\overrightarrow{b}_1,\overrightarrow{b_2},\overrightarrow{\eta}_1,\overrightarrow{\eta_2}
\in (\Z/(d^*qmdD)\Z)^3}}
\e\left(-\frac{\overrightarrow{b}_1.\widehat{\eta}_1+
\overrightarrow{b}_2.\widehat{\eta}_2}{d^*qmdD}\right)\\&\times S\left(d^*qmdD,\overrightarrow{b}_1,\mathfrak{C}_1\right)
S\left(d^*qmdD,\overrightarrow{b}_2,\mathfrak{C}_2\right)
\end{align*}
où $\sideset{}{^{*}}\sum$ indique que la sommation  porte sur les vecteurs $\widehat{\eta}_1$, $\widehat{\eta}_2$ primitifs 
satisfaisant
\begin{align*}
dD|\widehat{\eta}_1\wedge\widehat{\eta}_2,\quad 
H_{ij}\left(\widehat{\eta}_1,\widehat{\eta}_2\right)\equiv Dd^*a_i\pmod{qDd^*}\text{ et }
H_{ij}\left(\widehat{\eta}_1,\widehat{\eta}_2\right)\equiv 0\pmod{mDd^*},
\end{align*}
et\begin{align*}
 S\left(l,\overrightarrow{b},\mathfrak{C}\right)=
 \sum_{\substack{\widehat{\beta}\in\mathfrak{C}\cap\Z^3\\(\beta)\in \prim\cap C}}
 \e\left(\frac{\overrightarrow{b}.\widehat{\beta}}{l}\right)f_{\n}(\beta,\overrightarrow{\v}).
\end{align*} On remarque alors que la condition $dD|\widehat{\eta}_1\wedge\widehat{\eta}_2$ est équivalente
à l'existence de $1\leq\lambda\leq dD$ tel que   $\widehat{\eta}_2\equiv\lambda\widehat{\eta}_1\mod dD$ et 
 $(\lambda,dD)=1$ en raison de la primitivité de $\widehat{\eta_1}$ et $\widehat{\eta}_2$.
 En écrivant $\widehat{\eta}_2=\lambda\widehat{\eta}_1+dD\widehat{\eta}_3$ et 
 $\widehat{\eta}_1=\widehat{\eta}_4+d^*qm\widehat{\eta_5}$, on remarque  que 
$\widehat{\eta}_3$ et $\widehat{\eta}_4$ sont solutions des congruences \begin{align*}
  \left\{\begin{array}{l}
H_{ij}\left(\widehat{\eta}_4,\lambda\widehat{\eta}_4+dD\widehat{\eta}_3
 \right)\equiv Dd^*a_i\pmod{qDd^*},\\
H_{ij}\left(\widehat{\eta}_4,\lambda\widehat{\eta}_4+dD\widehat{\eta}_3
 \right)\equiv 0\pmod{mDd^*}.
\end{array}
 \right.\end{align*}
 En effet, d'après la formule de Taylor, il existe un polynôme $\widetilde{H}_{ij}$ tel que
 \begin{align*}
     H_{ij}(\widehat{\eta}_1,\lambda\widehat{\eta}_1+dD\widehat{\eta}_3)= 
     H_{ij}(\widehat{\eta}_1,\lambda\widehat{\eta}_1)+dD\widetilde{H}_{ij}(\widehat{\eta}_1,\widehat{\eta}_3
     ,\lambda,d,D).
                                                                                    \end{align*}
Au vu de la définition (\ref{définition coordonnées}), $    H_{ij}(\widehat{\eta}_1,\lambda\widehat{\eta}_1)=0$ ce qui entraîne que  
 \begin{align*}
  \left\{\begin{array}{l}
H_{ij}\left(\widehat{\eta}_1,\widehat{\eta}_2\right)\equiv Dd^*a_i\pmod{qDd^*},\\
H_{ij}\left(\widehat{\eta}_1,\widehat{\eta}_2\right)\equiv 0\pmod{mDd^*}
\end{array}
 \right.\end{align*}
 si et seulement si 
 \begin{align*}
  \left\{\begin{array}{l}
d\widetilde{H}_{ij}(\widehat{\eta}_1,\widehat{\eta}_3
     ,\lambda,d,D)\equiv d^*a_i\pmod{qd^*},\\
d\widetilde{H}_{ij}(\widehat{\eta}_1,\widehat{\eta}_3
     ,\lambda,d,D)\equiv 0\pmod{md^*}.
\end{array}
 \right.\end{align*}
 Ce dernier système ne dépendant que des classes de $\widehat{\eta}_1$ et $\widehat{\eta}_3$ modulo $mqd^*$, 
 on en déduit qu'il est équivalent à
  \begin{align*}
  \left\{\begin{array}{l}
d\widetilde{H}_{ij}(\widehat{\eta}_4,\widehat{\eta}_3
     ,\lambda,d,D)\equiv d^*a_i\pmod{qd^*},\\
d\widetilde{H}_{ij}(\widehat{\eta}_4,\widehat{\eta}_3
     ,\lambda,d,D)\equiv 0\pmod{md^*}.
\end{array}
 \right.\end{align*}
 En effectuant le raisonnement en sens inverse, il s'ensuit finalement que le système est équivalent à
  \begin{align*}
  \left\{\begin{array}{l}
H_{ij}\left(\widehat{\eta}_4,\lambda\widehat{\eta}_4+dD\widehat{\eta}_3
 \right)\equiv Dd^*a_i\pmod{qDd^*},\\
H_{ij}\left(\widehat{\eta}_4,\lambda\widehat{\eta}_4+dD\widehat{\eta}_3
 \right)\equiv 0\pmod{mDd^*}.
\end{array}
 \right.\end{align*}
Dans la mesure où la somme définissant $S(l,\overrightarrow{b},\mathfrak{C})$ porte sur les vecteurs
$\widehat{\beta}$ primitifs, la propriété d'orthogonalité des caractères additifs entraîne que, 
si $\widehat{\eta}$ n'est pas primitif, alors
\begin{align*}\sum_{\substack{\overrightarrow{b}
\in (\Z/(d^*qmdD)\Z)^3}}
\e\left(-\frac{\overrightarrow{b}.\widehat{\eta}}{d^*qmdD}\right) S\left(d^*qmdD,\overrightarrow{b},\mathfrak{C}\right)=0.
\end{align*}
Au vu de ce qui précède, on peut donc écrire
\begin{align*}
S^{(8 )}(m,d,D,\mathfrak{C}_1\times\mathfrak{C}_2):=&
\frac{1}{(d^*qmdD)^6}
\sum_{\substack{\lambda\in (\Z/(dD)\Z)^*\\
\overrightarrow{b}_1,\overrightarrow{b_2}\in (\Z/(d^*qmdD)\Z)^3}}
T\left(\lambda, \overrightarrow{b}_1,\overrightarrow{b}_2\right)\\&\times
S\left(d^*qmdD,\overrightarrow{b}_1,\mathfrak{C}_1\right)
S\left(d^*qmdD,\overrightarrow{b}_2,\mathfrak{C}_2\right)
\end{align*}
où 
\begin{align*}
T\left(\lambda, \overrightarrow{b}_1,\overrightarrow{b}_2\right)
:=\sum_{\widehat{\eta}_3,\widehat{\eta}_4,\widehat{\eta}_5}\e\left(
-\frac{\left(\overrightarrow{b}_1+\lambda\overrightarrow{b}_2\right).\widehat{\eta}_5}{dD}\right)
\e\left(-\frac{\left(\overrightarrow{b}_1+\lambda\overrightarrow{b}_2\right).\widehat{\eta}_4+dD\overrightarrow{b}_2
.\widehat{\eta}_3}{d^*qmdD}\right)
\end{align*}
où $\widehat{\eta}_3,\widehat{\eta}_4\in \Z/(d^*qm)\Z$ et $\widehat{\eta}_5\in\Z/(dD)\Z$  satisfont 
\begin{align*}
Dd^*a_i\equiv H_{ij}\left(\widehat{\eta}_4,\lambda\widehat{\eta}_4+dD\widehat{\eta}_3
 \right)\mod qDd^*\text{ et }
H_{ij}\left(\widehat{\eta}_4,\lambda\widehat{\eta}_4+dD\widehat{\eta}_3\right)\equiv 0\mod mDd^*.
\end{align*}
La propriété d'orthogonalité des caractères additifs
 et la majoration triviale entraînent  que 
\begin{align*}
 \left|T\left(\lambda,\overrightarrow{b}_1,\overrightarrow{b}_2\right)\right|\leq\left\{\begin{array}{ll}
(dD)^3(d^*qm)^6&\text{ si }\overrightarrow{b}_1+\lambda\overrightarrow{b}_2\equiv0\mod dD,\\
0&\text{ sinon}.
                                                                 \end{array}
\right.
\end{align*}
De plus, si $T\left(\lambda,\overrightarrow{b}_1,\overrightarrow{b}_2\right)\neq0$, alors $\overrightarrow{b}_2$ est déterminé de manière unique modulo $dD$ par la donnée de $\lambda$ et $\overrightarrow{b}_1$. 
 L'inégalité de Cauchy-Schwarz  entraîne donc l'estimation
\begin{align*} 
 \left|S^{(8 )}(m,d,D,\mathfrak{C}_1\times\mathfrak{C}_2)\right|\ll \frac{(qm)^3}{(dD)^2}
\max_{i=1,2}\sum_{\overrightarrow{b}\mod d^*qmdD}\left|S\left(d^*qmdD,\overrightarrow{b},\mathfrak{C}_i\right)\right|^2.
\end{align*}
Afin d'appliquer une inégalité de grand crible, on réduit les phases  $\overrightarrow{b}(d^*qmdD)^{-1}$ 
à des phases irréductibles $\overrightarrow{b}l^{-1}$, c'est-à-dire telle que $(l,b_1,b_2,b_3)=1$.
On  remarque pour cela qu'une phase irréductible $\overrightarrow{b}l^{-1}$ apparaît avec un poids qui peut être majoré par
\begin{align*}
 \sum_{D\geq V  \X^{-1}\Y^{-1}}\sum_{\substack{d\\l|d^*qmdD}}\frac{(qm)^3}{(dD)^2}\ll 
(qm)^5\sum_{\substack{v\geq V  \X^{-1}\Y^{-1}\\l|v}}\frac{\tau(v)}{v^2}\ll(qm)^5\frac{\tau(l)}{l}\X\Y\frac{\log V}{V}.
\end{align*}
On en déduit que\begin{align*}
S_V^{(7 )}(m,\z,\Delta,\mathfrak{C}_1\times\mathfrak{C}_2)
\ll \frac{m^5XY(\log \X)^c}{V}
\max_{i=1,2}
\sum_{l\leq mqd^*d_0}\frac{\tau(l)}{l}\sideset{}{^{(l)}}\sum_{\overrightarrow{b} \mod l}
\left|S\left(l,\overrightarrow{b},\mathfrak{C}_i\right)\right|^2
\end{align*} où la sommation $\sideset{}{^{(l)}}\sum$ porte sur les vecteurs $\overrightarrow{b}$ tels que $(l,b_1,b_2,b_3)=1$.

On peut alors reproduire étapes par étapes les arguments du paragraphe 13 de \cite{HB01}. 
Il s'ensuit  l'estimation
\begin{align*}
S_V^{(7 )}(m,\z,\Delta,\mathfrak{C}_1\times\mathfrak{C}_2)\ll m^5
\X V\left(\Y Q_1^4\exp\left(-c\sqrt{\log L}\right)+\Y Q_1^{-1/4}+m^3\Y^{46}  \X^{-\tau/2}\right)(\log \X)^{c}
\end{align*}
où $Q_1:=(\log\X)^{c_1}$ avec $c_1>0$ une constante qui peut-être choisie arbitrairement grande.
Par suite, en collectant les différents termes d'erreur, il vient que
\begin{align*}
 S_V(m)\ll 
   \X^2\left(\Y^{-\frac{1}{2}} +m^{5/2}\Y^{15/2}
 \left(Q_1^2\exp\left(-c\sqrt{\log L}\right)+Q_1^{-1/8}\right)+m^4\Y^{30}  \X^{-\tau/4}\right)(\log \X)^c
\end{align*}
et donc
\begin{align*}
& S_V\ll   \X^2\Big(M^{-1}+M\Y^{-\frac{1}{2}} +M^{7/2}\Y^{15/2}
 \left(Q_1^2\exp\left(-c\sqrt{\log L}\right)+Q_1^{-1/8}\right)\qquad\qquad\qquad\\&\pushright{
 +M^5\Y^{30}  \X^{-\tau/4}\Big)(\log \X)^c.}
\end{align*}
Quitte à supposer la constante $c_1$ introduite dans la définition de $Q_1$ suffisamment grande, les choix $\Y=M^4$ et $M=Q_1^{1/276}$
 assurent que $q^{\frac{1}{2}}\leq \Y $ et impliquent finalement que
\begin{align*}
 S_V\ll \X^2(\log\X)^{-B}.
\end{align*}
  L'uniformité du résultat en $(\log \X)^{-\varpi_2}\leq \xi\leq \tau$ ainsi que la non-effectivité
 de la constante implicite 
 sont des conséquences directes de la preuve du lemme 3.8 de \cite{HB01}.
\end{proof}

En conclusion de cette partie, on peut énoncer une borne supérieure du terme d'erreur $S(|h|)$ définit par (\ref{définition Sh}) sous l'hypothèse $|h|\leq1$. 
\begin{prop}\label{conséquence fin type II}
Soit $B>0$. Supposons que $|h|\leq1$. Il existe une constante $c(B)>0$ tel que, uniformément 
en $\X\geq 2$, $(\log \X)^{-\tau}\leq\xi\leq \tau$,  $\eta=(\log \X)^{-c(B)}$ et $(N_1,N_2)\in\mathcal{N}(\eta)$, on ait
\begin{align*}
S(|h|)\ll \eta^2  \X^2(\log \X)^{-B}+\sum_{i=1}^5\sum_{\m,n}R^{(i)}(\m,\n)
\end{align*}
où les $R^{(i)}(\m,\n)$ sont définis par (\ref{définition Rimn}).
\end{prop}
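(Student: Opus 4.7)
Le plan sera de contrôler, pour chaque choix de $\I$, $i$, $\m$, $\n$, la différence $S(\mathcal{A},\I,C^{(i)}(\m,\n,\I))-\frac{\eta\sigma_q(F)\sigma_q(\I)}{c(N_1,N_2)q^3\X}S(\mathcal{B},\I,C^{(i)}(\m,\n,\I))$ par l'inégalité triangulaire, en y introduisant successivement les approximations $\widehat{S}(\mathcal{A},\I,C^{(i)})$, $\widehat{S}_e(\mathcal{A},\I,C^{(i)})$ et $\widehat{S}(\mathcal{B},\I,C^{(i)})$. Les deux premières substitutions correspondront respectivement au remplacement des conditions exactes sur les facteurs premiers de $\Sid$ par des contraintes sur leurs ordres de grandeur en puissances de $\X^{\xi}$, puis au remplacement du poids $d_\n$ par le poids $e_\n$ défini en (\ref{définition eSv}).

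On traitera d'abord la contribution des différences $S-\widehat{S}$, tant sur $\mathcal{A}$ que sur $\mathcal{B}$ : elle est précisément quantifiée par le lemme \ref{Approximation S par Se} et fournit, après sommation sur $\I$, $\m$, $\n$, exactement les quantités $R^{(i)}(\m,\n)$ définies en (\ref{définition Rimn}), plus des erreurs $\ll\eta^2\X^{2}(\log\X)^{-B}$. Pour identifier les termes principaux, on combinera les lemmes \ref{estimation de Se(A,I,C)} et \ref{Type II B}, qui approcheront respectivement $\widehat{S}_e(\mathcal{A},\I,C^{(i)}(\m,\n,\I))$ par $\sigma_q(F)\eta^2\X^2\frac{\sigma_q(\I)}{N(\I)}\sum_{\overrightarrow{\v}}\Sigma(\I,\mathcal{R}^{(i)}(\m,\overrightarrow{\v},v_0),\overrightarrow{\v})$ et $\widehat{S}(\mathcal{B},\I,C^{(i)}(\m,\n,\I))$ par $\frac{\eta c(N_1,N_2)q^3\X^3}{N(\I)}\sum_{\overrightarrow{\v}}\Sigma(\I,\mathcal{R}^{(i)}(\m,\overrightarrow{\v},v_0),\overrightarrow{\v})$. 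Le rapport des facteurs multiplicatifs vaudra exactement $\frac{\eta\sigma_q(F)\sigma_q(\I)}{c(N_1,N_2)q^3\X}$, de sorte que les termes principaux se compenseront et que les erreurs résiduelles (respectivement $\eta^{11/5}\X^2(\log\X)^{c}$ et $\eta^2 c(N_1,N_2)q^3\X^3/(\xi^2 N(\I))$) deviendront admissibles après sommation sur $\I$ et renormalisation, moyennant le choix $\eta=(\log\X)^{-c(B)}$ pour $c(B)$ suffisamment grand en fonction de $B$.

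L'étape la plus délicate sera l'intermédiaire, à savoir le passage $\widehat{S}(\mathcal{A},\I,C^{(i)})\to\widehat{S}_e(\mathcal{A},\I,C^{(i)})$. En permutant les sommations, la différence se réécrira, pour chaque $\overrightarrow{\v}\in\iota(\mathcal{S}^{(i)}(\n,v_0))$, comme $\sum_{\I,\Rid_1,\Rid_2,\Sid}h_1(\I)b(\Rid_1,\Rid_2,\mathcal{R}^{(i)}(\m,\overrightarrow{\v},v_0))(d_\n-e_\n)(\Sid,\overrightarrow{\v})$, les contraintes définissant $C(m,n)$ imposant $N(\Sid)\in[\X^{1+\tau},\X^{3/2-\tau}]$. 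Une décomposition dyadique $V<N(\Sid)\leq 2V$ permettra d'appliquer le lemme \ref{Estimations de Type II} à chaque tranche, pour un paramètre $B'$ choisi arbitrairement grand. Restera à absorber la perte provenant de la sommation finale sur $i$, $\m$, $\n$, $\overrightarrow{\v}$, $v_0$ et $V$ : ce facteur, borné par $(\log\X)^{O(1)}$ dans le régime $\xi\geq(\log\X)^{-\tau}$ imposé par l'énoncé (puisque le nombre de vecteurs $\overrightarrow{\v}$ est majoré par $(\xi^{-1})^{n_1+n_2}$ avec $n_1+n_2\ll\tau^{-1}$), sera compensé par une augmentation suffisante de $B'$ en fonction de $B$. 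C'est précisément le caractère arbitrairement petit (bien que non effectif, en raison du zéro de Siegel intervenant dans le lemme \ref{Estimations de Type II}) de l'estimation de Type II qui rendra cette absorption possible.
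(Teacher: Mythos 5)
Votre proposition est correcte et suit essentiellement la même chaîne d'approximations $S\to\widehat{S}\to\widehat{S}_e$ que le texte, en combinant les lemmes \ref{Type II B}, \ref{estimation de Se(A,I,C)} et \ref{Estimations de Type II}, et en exploitant, pour l'étape cruciale, la contrainte $\xi\geq(\log\X)^{-\tau}$ afin de contrôler le nombre $\xi^{-O(\tau^{-1})}\ll(\log\X)^{O(1)}$ de vecteurs $\overrightarrow{\v}$ et de rendre absorbable la somme des majorations de Type II. Une seule imprécision mineure : les quantités $R^{(i)}(\m,\n)$ sont \emph{par définition} (\ref{définition Rimn}) les différences $|S-\widehat{S}|$ sommées sur $\I$, de sorte qu'aucun lemme n'est requis à ce stade ; le lemme~\ref{Approximation S par Se} ne sert qu'ensuite (dans la preuve du corollaire~\ref{corollaire des applications}) pour majorer $\sum_{\m,\n}R^{(i)}(\m,\n)$.
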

\begin{proof}
Soit $c_1>0$. Puisqu'il y a au plus $O\left(\xi^{-1}\right)$ choix possibles pour chaque $v_i$,
les lemmes
\ref{Type II B}, \ref{estimation de Se(A,I,C)}  et \ref{Estimations de Type II}  
entraînent l'existence d'une constante $c>0$ indépendante de $c_1$ et $\eta$ telle que, pour $ i\in\{1,\ldots,5\}$,
\begin{align*}
 \sum_{\m,\n}\sum_{N(\I)\in\mathcal{I }}\left|\widehat{S}(\mathcal{A},C^{(i)}(\m,\n,\I)-\frac{\eta\sigma_q(F)}{c(N_1,N_2)q^3\X}
\sigma_q(\I)\widehat{S}(\mathcal{B},C^{(i)}(\m,\n,\I))\right|\\\ll_{c_1}   \X^2\left(\eta^{\frac{11}{5}}+\xi^{-c\tau^{-1}}(\log\X)^{-c_1}\right)(\log \X)^{c}.
\end{align*}
En remarquant que la condition $(\log \X)^{-\tau}\leq\xi\leq\tau$ entraîne que $\xi^{-c\tau^{-1}}\leq(\log \X)^c$, on en déduit l'existence de $c_2>0$ tel que
\begin{align*}
 \sum_{\m,\n}\sum_{N(\I)\in\mathcal{I }}\left|\widehat{S}(\mathcal{A},C^{(i)}(\m,\n,\I)-\frac{\eta\sigma_q(F)}{c(N_1,N_2)q^3\X}\sigma_q(\I)
\widehat{S}(\mathcal{B},C^{(i)}(\m,\n,\I))\right|\\\ll_{c_1}   \X^2\left(\eta^{\frac{17}{8}}+(\log\X)^{-c_1}\right)(\log \X)^{c_2}.
\end{align*}
En choisissant $c(B)=8(c_2+B)$ et $c_1=17(c_2+B)$ et 
en utilisant la formule
\begin{multline*}S(|h|)\leq \sum_{i=1}^5\sum_{\m,\n}\sum_{N(\I)\in\mathcal{I }}\left|\widehat{S}(\mathcal{A},C^{(i)}(\m,\n,\I)-\frac{\eta\sigma_q(F)}{c(N_1,N_2)q^3\X}\sigma_q(\I)
\widehat{S}(\mathcal{B},C^{(i)}(\m,\n,\I))\right|\\+\sum_{i=1}^5\sum_{\m,\n}R^{(i)}(\m,\n),
\end{multline*}
on en déduit le résultat.
\end{proof}

\section{Applications}\label{Partie Applications} 
\subsection{Application aux fonctions de $\mathcal{M}(z)$ : preuve du théorème \ref{ordre moyen fonction multiplicative}}\label{Application 1}
\begin{proof}On pose $\tau=(\log_2\X)^{\varepsilon/2-1}$ et $\tau_1=(\log_2\X)^{\varepsilon-1}$. La multiplicativité de $h$ implique qu'une décomposition de la forme (\ref{décomposition 1 f})  existe puisque l'on peut écrire $ h(m)=h(m^-(  \X^{\tau}))z^k$ pour les entiers $m$ tels que $m^+(  \X^{\tau})$ soit sans facteur carré et $\omega(m^+(  \X^{\tau}))=k$.
On peut donc utiliser le corollaire \ref{corollaire des applications}. Dans la mesure où la somme définissant $\Delta(|h|)$ est vide (il n'y a pas de $E_j(k)$), on  en déduit que l'on a, uniformément en $(N_1,N_2)\in\mathcal{N}(\eta)$, la formule
\begin{align*}
 S(\mathcal{A};h)=\frac{\eta\sigma_q(F)}{c(N_1,N_2)q^3\X} S(\mathcal{B};\sigma_q h))
+O\left( \tau_1 \eta^2  \X^2\right).
\end{align*}
La proposition \ref{Selberg-Delange h sigma} permet  d'approcher $S(\mathcal{B};\sigma_qh)$. Dans la mesure où $\omega(q)\ll \frac{\log_2\X}{\log_3\X}$, il vient
 \begin{align*}  S(\mathcal{B};\sigma_qh)= 
c(N_1,N_2)\eta q^3\X^3\sigma_q(F)
\left(3\log \X
\right)^{z-1}
\left(
\frac{\sigma_q(F,h)}{\Gamma(z)}
+O\left(\frac{1}{
(\log\X)^{1-\varepsilon}}
\right)
\right)
\end{align*}
Au vu de (\ref{décomposition Jordaniesque}), on en déduit le résultat en sommant sur les différents $(N_1,N_2)\in\mathcal{N}(\eta)$ 
et en estimant trivialement la contribution des $(N_1,N_2)\notin\mathcal{N}(\eta)$ à l'aide de 
(\ref{contribution éléments hors mathcalN eta}) et (\ref{contribution éléments hors mathcalN eta}).
\end{proof}
\subsection{Valeurs friables de formes binaires cubiques : preuve du théorème \ref{théorème principal friable}}

Considérant le cas $h=1_{S(\Y)}$ où $S(\Y)$ est l'ensemble des entiers $\Y$-friables, on peut écrire  la décomposition  (\ref{décomposition 1 f}) pour les entiers $m$ tels que $\omega(m^+(  \X^{\tau}))=\Omega(m^+(  \X^{\tau}))=k$
sous la forme 
\begin{align*}
1_{S(\Y)}(m)=1_{E(k)}(m^+(  \X^{\tau})) \qquad\text{avec}\qquad
 E(k):=\left\{m:\omega(m)=\Omega(m)=k\text{ et }P^{(k)}(m)\leq \Y 
 \right\}
\end{align*}
dès que $\Y>\X^{\tau}$. 

Concentrons nous dans un premier temps sur la démonstration de (\ref{formule Psi1 irr}).
En appliquant le corollaire \ref{corollaire des applications} avec les choix $\tau:=(\log_2\X)^{\varepsilon/2-1}$ et $\tau_1:=(\log_2\X)^{\varepsilon-1}$, il suit la formule
\begin{displaymath}
S(\mathcal{A};h)-\frac{\eta\sigma_q(F)}{c(N_1,N_2)q^3\X}S(\mathcal{B};\sigma_q1_{S(\Y)})
\ll  \tau_1 \eta^2  \X^2+\Delta(1_{S(\Y)}).
\end{displaymath} 
Compte tenu de la définition de $E(k)$ ci-dessus et de la définition \ref{définition Delta} de $\Delta(1_{S(\Y)})$, on remarque qu'il n'y a pas de terme  $\Delta(k,\mathcal{D},\I,1_{S(\Y)})$ qui intervient, dans la mesure
où l'inégalité $P^+(m)\leq\Y$ est de la forme
\begin{displaymath}
\Y P^{(\overrightarrow{\alpha})}(m)\prec P^{(\overrightarrow{\beta})}(m)
\end{displaymath}
avec $\beta=\emptyset$, d'où
\begin{equation}\label{avant Selberg-Delange grande friabilité}
S(\mathcal{A};h)-\frac{\eta\sigma_q(F)}{c(N_1,N_2)q^3\X}S(\mathcal{B};\sigma_q1_{S(\Y)})
\ll  \tau_1 \eta^2  \X^2.
\end{equation} 

Dans le domaine (\ref{deuxième domaine irréductible}) défini par \begin{equation*}
\X\geq3,\qquad \exp\left(\frac{\log \X}{(\log_2 \X)^{1/2-\varepsilon}}\right)\leq \Y \leq \X^{1/2-\varepsilon},\end{equation*}
on observe que seuls interviennent les ensembles  $C^{(5)}(m,n)$ défini par (\ref{définition C5}). 
Il suit alors de la discussion du paragraphe \ref{description de la méthode} que l'on  a
\begin{align*}
S(\mathcal{A};1_{S(\Y)})-\frac{\eta\sigma_q(F)}{c(N_1,N_2)q^3\X}S(\mathcal{B};\sigma_q1_{S(\Y)})\ll   e^{c\tau^{-1}}\Delta_0\left(  \X^{2\tau/3}\right)+\Theta(|h|,  \X^{\tau},  \X^{\tau_1})+S(|h|).
\end{align*} En utilisant les lemmes \ref{contribution entiers carrus Greaves}, \ref{grande partie friable} et \ref{Approximation S par Se}, il vient respectivement que, pour tout $B>0$, on a
\begin{displaymath}
\Delta_0\left(  \X^{2\tau/3}\right)\ll\X^2(\log\X)^{-B}
\end{displaymath}
\begin{displaymath}
\Theta(|h|,  \X^{\tau},  \X^{\tau_1})\ll
\eta^2\X^2(\log_3\X)^{c_1}\exp\left(-c_2\frac{\tau_1}{\tau}\log\frac{\tau_1}{\tau}\right)
\end{displaymath}
et
\begin{displaymath}
S(|h|)\ll\xi\tau^{-5}\eta^2  \X^2.
\end{displaymath}
On pose alors  $\tau_1:=\varepsilon/2$, $\tau:=(\log_2\X)^{\varepsilon/2-1/2}$ et 
 $\xi:=(\log \X)^{-\tau}$ conduisant ainsi à la borne supérieure
 \begin{equation}\label{avant Selberg-Delange petite friabilité}
S(\mathcal{A};h)-\frac{\eta\sigma_q(F)}{c(N_1,N_2)q^3\X}S(\mathcal{B};\sigma_q1_{S(\Y)})
\ll  \exp\left(-(\log_2\X)^{1/2-\varepsilon}\right)
\eta^2  \X^2.
 \end{equation} 

Pour achever la preuve du théorème, il convient  de déterminer une formule asymptotique de $S(\mathcal{B};\sigma_q1_{S(\Y)})$. Une telle estimation peut-être obtenue en utilisant les résultats de \cite{HTW08} où Hanrot, Tenenbaum et Wu développent une version friable de la méthode de Selberg-Delange. \begin{prop}\label{moyenne friable sigma}
  Il existe $C>0$ tel que, uniformément en $(N_1,N_2)\in\mathcal{N}(\eta)$, 
$\X\geq3$ %, $q\leq (\log \X)^A$
et $
\exp\left((\log_2\X)^{2+\varepsilon}\right)\leq \Y \ll q^3  \X^3%(\X,\Y)\in H_{\varepsilon}
$, on ait\begin{multline*}
S(\mathcal{B};\sigma_q1_{S(\Y)})=\frac{1}{\zeta_q(2)\sigma_q(F)}\eta c(N_1,N_2)q^3  \X^3\rho\left(3u
\right)\\
+O\left(
 \left(C^{\omega(q)}\log(q+1)^4+\log_2\X\right)\eta c(N_1,N_2)q^3\X^3\rho(u)
 \frac{\log(u+1)}{\log \Y }+\eta^2\X^3\right)
\end{multline*} 
où $u=\frac{\log \X}{\log \Y }$.
\end{prop}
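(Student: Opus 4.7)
L'idée générale est de reproduire la stratégie employée dans la preuve de la proposition \ref{Selberg-Delange h sigma}, à savoir la convolution suivant les parties $q$-singulière et $q$-régulière, en remplaçant cette fois la méthode de Selberg–Delange classique par sa version friable due à Hanrot–Tenenbaum–Wu~\cite{HTW08}. On écrit
\begin{equation*}
S(\mathcal{B};\sigma_q 1_{S(\Y)})=\sum_{\substack{d\text{ }q\text{-singulier}\\ P^+(d)\leq \Y}}\sigma_q^{\Z}(d)\ T_d
\qquad \text{où}\qquad
T_d:=\sum_{\substack{n\text{ }q\text{-régulier}\\ P^+(n)\leq \Y\\ cq^3\X^3<dn\leq cq^3\X^3(1+\eta)}}\widetilde{\sigma}^{\Z}_q(n),
\end{equation*}
en ayant posé pour abréger $c:=c(N_1,N_2)$. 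La somme est scindée en deux parties, selon que $d\leq \sqrt{\X}$ ou non, la contribution des grands $d$ étant contrôlée, comme dans le traitement de $R_2$ dans la preuve de la proposition \ref{Selberg-Delange h sigma}, par la majoration de Rankin (\ref{Rankin sigmaq}).

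La partie centrale du plan consiste à appliquer à $T_d$ une version à intervalles courts de Selberg–Delange friable pour la fonction zêta de Dedekind. Rappelons que la série de Dirichlet $\mathcal{H}(s)$ définie en (\ref{définition Hs}) avec $h\equiv 1$ restreinte aux entiers $q$-réguliers admet l'écriture $\mathcal{H}(s)=\zeta_{\K}(s)G(s)$ où $G$ est analytique et sans zéro dans la bande $\Re(s)\geq 1/2+\delta$, avec $G(1)=\lambda_0(\mathbf{1})/\lambda_{\K}$. L'adaptation au corps cubique $\K$ des résultats de~\cite{HTW08} (en remplaçant $\zeta(s)$ par $\zeta_{\K}(s)$ et en utilisant la région sans zéro standard pour $\zeta_{\K}$) fournit, uniformément en $d\leq \sqrt{\X}$ et dans le domaine $\Y\geq\exp((\log_2\X)^{2+\varepsilon})$, la formule
\begin{equation*}
T_d=\lambda_0(\mathbf{1})\,\eta\,\frac{cq^3\X^3}{d}\,\rho\!\left(\frac{\log(cq^3\X^3/d)}{\log \Y}\right)\!\left(1+O\!\left(\frac{\log(u+1)}{\log \Y}\right)\right)+R_d,
\end{equation*}
où le terme de reste $R_d$ est dominé, sommé contre $\sigma_q^{\Z}(d)/d$, par $\eta c q^3\X^3 (\log_2\X)\rho(u)\log(u+1)/\log \Y$.

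Pour recoller les morceaux, on utilise la continuité de $\rho$ sous la forme $\rho(3u-t)=\rho(3u)(1+O(t\log(u+1)))$ pour $t\leq \log(\sqrt{\X})/\log \Y$, qui permet de remplacer $\rho(\log(cq^3\X^3/d)/\log \Y)$ par $\rho(3u)$ au prix d'une erreur relative en $\log(u+1)\log d/\log \Y$. On applique ensuite l'identité centrale de la preuve de la proposition \ref{Selberg-Delange h sigma}, à savoir
\begin{equation*}
\lambda_0(\mathbf{1})\sum_{d\text{ }q\text{-singulier}}\frac{\sigma_q^{\Z}(d)}{d}=\sigma_q(F,\mathbf{1})=\frac{1}{\zeta_q(2)\sigma_q(F)},
\end{equation*}
ce qui fournit le terme principal annoncé. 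La contribution des $d\text{ }q\text{-singuliers}$ avec $d>\sqrt{\X}$ est négligeable grâce à (\ref{Rankin sigmaq}), et celle des $n$ non $q$-réguliers apparaissant à travers $n_{q\text{-s}}>\X$ se traite comme le terme $R_3$ de la proposition \ref{Selberg-Delange h sigma}, fournissant un reste dominé par $\eta^2\X^3$.

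L'obstacle principal de ce programme réside dans la justification précise de la version friable à intervalles courts de Selberg–Delange pour le corps cubique $\K$, avec uniformité en $q\leq (\log\X)^A$ et dans tout le domaine $\exp((\log_2\X)^{2+\varepsilon})\leq \Y\ll q^3\X^3$. Ceci impose d'adapter la formule de Perron de~\cite{HTW08} au prolongement analytique de $\mathcal{H}(s)$ à travers la singularité en $s=1$, puis d'exploiter la région sans zéro standard de $\zeta_{\K}(s)$ pour déplacer le contour jusqu'au col. Le contrôle du terme d'erreur en $\log(u+1)/\log \Y$ provient, comme dans le lemme de perturbation classique pour $\rho$, de l'analyse précise du col de la fonction $\xi(u)$ associée.
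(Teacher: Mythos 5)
Your overall strategy matches the paper's: convolve the $q$-singular and $q$-regular parts, invoke the friable Selberg--Delange theorem of Hanrot--Tenenbaum--Wu, then use the identity $\lambda_0(\mathbf{1})\sum_d \sigma_q^{\Z}(d)/d = 1/(\zeta_q(2)\sigma_q(F))$ together with a continuity estimate for $\rho$. However, the ``obstacle principal'' you single out --- adapting \cite{HTW08} to the Dedekind zeta function $\zeta_{\K}$ --- is not actually present. Since $\zeta_{\K}(s)/\zeta(s)$ is an Artin $L$-function analytic and nonvanishing in a neighbourhood of $s=1$, the series $\mathcal{H}(s)=\zeta_\K(s)\bigl(\mathcal{H}(s)\zeta_\K(s)^{-1}\bigr)$ already factors as $\zeta(s)\cdot\widetilde{G}(s)$ with $\widetilde{G}$ analytic near $s=1$; together with the bound (\ref{estimation G(s) général}) this is precisely the setting of Theorem 1.2 of \cite{HTW08}, which the paper invokes directly without any modification.

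The step you pass over with a hand-wave is the one that genuinely requires work: deriving a \emph{short-interval} friable asymptotic from the cumulative statement of \cite{HTW08}. The paper does this by writing $\Lambda_{\widetilde{\sigma}_q^{\Z}}$ as a Stieltjes integral against $\rho$, using formula (1.19) of \cite{HTW08} for $M_{\widetilde{\sigma}_q^{\Z}}(t)$, integrating by parts (equation (\ref{IPP rho})), and then exploiting the Taylor-type estimate (\ref{Taylor rho}) for the derivatives of $\rho$ to control the difference of $\Lambda_{\widetilde{\sigma}_q^{\Z}}$ over the window of relative length $\eta$. It is precisely in this differencing step, where $\lfloor u\rfloor$ may change across the window, that the extra error term $\eta^2\X^3$ in the statement arises; your sketch does not account for it. Two smaller points: the paper splits the $q$-singular sum at $d\leq(\log\X)^B$ rather than $d\leq\sqrt\X$ (inconsequential), and the prefactor $\log_2\X$ in the error term comes from the final substitution $\rho\bigl(3u+\log(c(N_1,N_2)q^3)/\log\Y\bigr)=\rho(3u)\bigl(1+O(\log_2\X\,\log(u+1)/\log\Y)\bigr)$, which you do not spell out.
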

\begin{proof}
Au vu de l'estimation (\ref{estimation G(s) général}), on peut appliquer le théorème 1.2 de \cite{HTW08}.
Il s'ensuit que, uniformément en %  $q\leq(\log\X)^A $, 
$\X\geq3$ et 
$\exp\left((\log_2 \X)^{5/3+\varepsilon)}\right)\leq \Y \leq \X$, \begin{equation}\label{formule sigma moyenne terme erreur}
\sum_{\substack{n\leq \X\\P^+(n)\leq \Y \\n\text{ }q\text{-régulier}}}\widetilde{\sigma}_q^{\Z}(n)
=\Lambda_{\widetilde{\sigma}_q^{\Z}}(\X,\Y)+O\left(\frac{\X\rho\left(u\right)}
{L_{\varepsilon/3}(\Y)}\right)
\end{equation}
où \begin{align*}
L_{\varepsilon/3}(\Y):=
\exp\left((\log\Y)^{3/5-\varepsilon/3}\right),\qquad
\Lambda_{\widetilde{\sigma}_q^{\Z}}
\left(\X,\Y \right):=
\X\int_{0}^u\rho(u-v)\text{d}\left(\frac{M_{\widetilde{\sigma}_q^{\Z}}(\Y^v)}{\Y^v}\right)\end{align*}
et
\begin{align*} M_{\widetilde{\sigma}_q^{\Z}}(\Y^v):=
\sum_{n\leq \Y^v}\widetilde{\sigma}_q^{\Z}(n).\end{align*}
Au vu de la formule (1.19) de \cite{HTW08}, on dispose de l'évaluation uniforme en $q\geq1$ et $t\geq1$,
\begin{align}
M_{\widetilde{\sigma}_q^{\Z}}(t)=\lambda_0(\mathbf{1})t\left(1+O\left(\frac{2^{\omega(q)}}{t^{1/6-\varepsilon}}\right)\right)\label{ordre moyen sigma pour friable}
\end{align}
 où $\lambda_0(\mathbf{1})$ est défini par (\ref{estimation lambdah}).
En utilisant une intégration par parties, il vient la formule 
\begin{align}
 \Lambda_{\widetilde{\sigma}_q^{\Z}}
\left(\X,\Y \right)
&\label{IPP rho}=\lambda_0(\mathbf{1})\X\rho\left(u
\right)+\X\int_0^{u}\rho'\left(u-v\right)\frac{M_{\widetilde{\sigma}_q^{\Z}}(\Y^v)-\lambda_0(\mathbf{1})\Y^v}{\Y^{v}}\text{d}v+
O\left( 2^{\omega(q)}\X^{5/6+\varepsilon}\right).
\end{align}
Compte tenu de l'estimation uniforme en $0\leq v\leq u$ suivante
\begin{align*} 
 \rho'(u-v)\ll \log(u+1)\rho(u)\exp\left(O(v\log(u+1))\right)
\end{align*}
établie dans [\cite{Te08}, p.507], on obtient alors la formule 
\begin{align}
\int_0^{u}\rho'\left(u-v\right)\frac{M_{\widetilde{\sigma}_q^{\Z}}(\Y^v)-\lambda_0(\mathbf{1})\Y^v}{\Y^{v}}\text{d}v\ll2^{\omega(q)}\frac{\rho(u)\log(u+1)}{\log\Y}\label{Psi moyenne gros intervalles}.
\end{align} 
Le corollaire III.5.14 de \cite{Te08} impliquant pour tout $j\geq0$ la majoration valide
  \begin{equation}\label{Taylor rho}
\rho^{(j)}(u+v)-\rho^{(j)}(u)\ll\left\{\begin{array}{ll}
&\text{ si }u\leq j\text{ et }\lfloor u\rfloor\neq\lfloor u+v\rfloor,\\
v\rho(u)\left(\log (u+1)\right)^{j+1}&\text{ sinon }                               
                                \end{array}\right.
 \end{equation}  pour $ u,v\geq0$,
 il s'ensuit que l'on a, dans le domaine  $\exp\left((\log_2 \X)^{5/3+\varepsilon}\right)\leq \Y \leq \X$,\begin{align}
 \nonumber&\int_0^{\infty}
\left(\rho'\left( u
-v+\frac{\log(1+\eta)}{\log \Y }\right)-\rho'\left(\nonumber u-v
\right)\right)\frac{M_{\widetilde{\sigma}_q^{\Z}}(\Y^v)-\lambda_0(\mathbf{1})\Y^v
}{\Y^{v}}\text{d}v
\\\ll&\nonumber2^{\omega(q)}\frac{\eta }{\log \Y }
(\log(u+1))^2\int_0^u\frac{\rho(u-v)}{\Y^{(1/6-\varepsilon)v}}\diff v
+2^{\omega(q)}
\int_{u-1}^{u-1+\frac{\log(1+\eta)}{\log \Y }}\frac{1}{\Y^{(1/6-\varepsilon)v}}\diff v\\
\ll& 2^{\omega(q)}\eta\rho(u)\left(\frac{\log(u+1)}{\log \Y }\right)^2.
\end{align}
Il s'ensuit finalement que la formule 
\begin{align}\label{estimation friable sigmatilde}
\sum_{\substack{\X<n\leq (1+\eta) \X \\P^+(n)\leq \Y \\n\text{ }q\text{-régulier}}}\widetilde{\sigma}_q^{\Z}(n)
&=
\lambda_0(\mathbf{1})\eta \X \rho\left(u\right)\left(1+O
\left(\frac{2^{\omega(q)}\log(u+1)}{\log \Y }
\right)\right)
\end{align}
est valide dans le domaine $\exp\left((\log_2 \X)^{5/3+\varepsilon)}\right)\leq \Y \leq \X$, puis dans le domaine  $\X\leq \Y \leq   \X^2$ au vu de (\ref{ordre moyen sigma pour friable}).

Afin d'évaluer la quantité $S(\mathcal{B};\sigma_q1_{S(\Y)})$, on écrit la convolution suivante, qui est l'analogue de (\ref{convolution sigm q régulier singulier}),
\begin{align}
\sum_{\substack{\X<n\leq (1+\eta) \X \\P^+(n)\leq \Y }}\sigma_q^{\Z}(n)&=
\sum_{\substack{d\leq \X(1+\eta) \\ d\text{ }q\text{-singulier}}}\sigma_q^{\Z}(d)
\sum_{\substack{\X/d<n\leq (1+\eta) \X /d\\P^+(n)\leq \Y \\n\text{ }q\text{-régulier}}}\widetilde{\sigma}_q^{\Z}(n)\label{convolution kappaq 2}.
\end{align}
On traite la contributions des entiers $q$-singuliers $d\leq (\log \X)^B$  en utilisant 
 (\ref{ordre moyen sigma pour friable}). Dans le domaine $\exp\left((\log_2 \X)^{2+\varepsilon)}\right)\leq \Y \ll   q^3\X^3$,  l'estimation
 \begin{displaymath}
 \rho\left(u-\frac{\log d}{\log\Y}\right)
 =\rho(u)\left(1+O\left(\frac{\log d\log(u+1)}{\log\Y}\right)\right)
 \end{displaymath}
 entraîne,  en utilisant (\ref{estimation puissance}), que l'on a, 
\begin{multline*}
\sum_{\substack{d\leq (\log \X)^B \\ d\text{ }q\text{-singulier}}}\sigma_q^{\Z}(d)
\sum_{\substack{\X/d<n\leq (1+\eta) \X /d\\P^+(n)\leq \Y \\n\text{ }q\text{-régulier}}}\widetilde{\sigma}_q^{\Z}(n)
=\sum_{\substack{d\leq (\log \X)^B \\ d\text{ }q\text{-singulier}}}\frac{\sigma_q^{\Z}(d)}{d}\lambda_0(\mathbf{1})
\eta \X \rho(u)
\\+O\left( \eta \X\rho(u) C^{\omega(q)}\log(q+1)^4\frac{\log(u+1)}{\log \Y }\right).\end{multline*}
On complète la somme sur $d$ ci-dessus en estimant l'erreur ainsi produite à l'aide de la méthode de Rankin et de (\ref{Rankin sigmaq}). Quitte à choisir $B$ suffisamment grand, il vient ainsi 
\begin{multline*}
 \sum_{\substack{d\leq (\log \X)^c \\ d\text{ }q\text{-singulier}}}\frac{\sigma_q^{\Z}(d)}{d}
\sum_{\substack{\X/d<n\leq (1+\eta) \X /d\\P^+(n)\leq \Y \\n\text{ }q\text{-régulier}}}\widetilde{\sigma}_q^{\Z}(n)=\frac{  \eta \X \rho(u)}{\zeta_q(2)\sigma_q(F)}
\\+O\left( \eta \X\rho(u) C^{\omega(q)}\log(q+1)^4\frac{\log(u+1)}{\log \Y }\right). 
\end{multline*}
Au vu de   la majoration triviale
\begin{align*}
 \sum_{\substack{\X/d<n\leq (1+\eta) \X /d\\P^+(n)\leq \Y \\n\text{ }q\text{-régulier}}}\widetilde{\sigma}_q^{\Z}(n)\leq M_{\widetilde{\sigma}_q^{\Z}}((1+\eta) \X /d)
\end{align*}
on estime la contribution des entiers $d>(\log\X)^B$ en utilisant (\ref{Selberg-Delange kappa}) et (\ref{ordre moyen sigma pour friable}) et l'on obtient finalement 
\begin{align*}\sum_{\substack{\X<n\leq (1+\eta) \X \\P^+(n)\leq \Y }}\sigma_q^{\Z}(n)=\frac{
  \eta \X \rho(u)}{\zeta_q(2)\sigma_q(F)}+O\left( \eta \X\left(\rho(u) C^{\omega(q)}\log(q+1)^4\frac{\log(u+1)}{\log \Y }+\eta\right)\right).
\end{align*}
En observant que, uniformément en $(N_1,N_2)\in\mathcal{N}(\eta)$, on a l'estimation
\begin{align*}
 \rho\left(3u+\frac{\log(c(N_1,N_2)q^3)}{\log \Y }\right)=\rho(3u)\left(1+O\left(\frac{\log_2\X\log(u+1)}{\log \Y }\right)\right),
\end{align*}
on en déduit le résultat.
\end{proof}

La proposition précédente 
 entraîne  que, uniformément en $(N_1,N_2)\in\mathcal{N}(\eta)$ et dans le domaine (\ref{domaine cubique irréductible}) défini par\begin{align*}
\X\geq3,\qquad \exp\left(\frac{\log \X}{(\log_2 \X)^{1-\varepsilon}}\right)\leq \Y, 
\end{align*}
on a
 \begin{align*}
  \frac{\eta\sigma_q(F)}{c(N_1,N_2)q^3\X}S(\mathcal{B};\sigma_q1_{S(\Y)})=\frac{\eta^2  \X^2}{\zeta_q(2)}\rho\left(3u  \right)
+O\left(
\frac{\eta^2  \X^2}{(\log_2\X)^{1-\varepsilon}}\right).
 \end{align*}
Raisonnant comme dans la preuve du théorème \ref{ordre moyen fonction multiplicative}  en utilisant (\ref{décomposition Jordaniesque}), on  déduit le théorème \ref{théorème principal friable} de (\ref{avant Selberg-Delange grande friabilité}) et 
(\ref{avant Selberg-Delange petite friabilité}).

\end{document}